\newtheorem{theorem}{Theorem}[section]
\newtheorem*{theorem*}{Theorem}
\newtheorem*{claim*}{Claim}
\newtheorem{proposition}[theorem]{Proposition}
\newtheorem*{proposition*}{Proposition}
\newtheorem*{lemma*}{Lemma}
\newtheorem*{fact*}{Fact}
\newtheorem*{hypothesis*}{Hypothesis}
\newtheorem{definition}[theorem]{Definition}
\newtheorem{remark}[theorem]{Remark}
\newtheorem{assumption}[theorem]{Assumption}
\newcommand{\savehyperref}[2]{\texorpdfstring{\hyperref[#1]{#2}}{#2}}
\newcommand{\Sref}[1]{\hyperref[#1]{\S\ref*{#1}}}
\renewcommand{\mathbb}{\varmathbb} 
\renewcommand{\leq}{\leqslant}
\renewcommand{\geq}{\geqslant}
\newcommand{\abs}[1]{\left\lvert#1\right\rvert}
\newcommand{\floor}[1]{\lfloor #1 \rfloor}
\newcommand{\norm}[1]{\left\lVert#1\right\rVert}
\newcommand{\R}{\mathbb R}
\definecolor{DSgray}{cmyk}{0,0,0,0.7}
\newcommand{\II}{\mathbbm{1}}
\newcommand{\EE}{\mathbb{E}}
\newcommand{\RR}{\mathbb{R}}
\newcommand{\Pro}{\mathbb{P}}
\newcommand{\LL}{\mathcal{L}}
\newcommand{\CC}{\mathcal{C}}
\newcommand{\FF}{\mathcal{F}}
\newcommand{\HH}{\mathcal{H}}
\newcommand{\MM}{\mathcal{M}}
\newcommand{\Scal}{\mathcal{S}}
\newcommand{\NN}{\mathcal{N}}
\newcommand{\UU}{\mathcal{U}}
\renewcommand{\b}[1]{\left\{ #1\right\}}
\renewcommand{\c}[1]{\left[ #1\right]}
\newcommand{\p}[1]{\left( #1\right)}
\newcommand{\iid}{\overset{i.i.d}{\sim}}
\newcommand{\inner}[1]{\left\langle#1\right\rangle}
\newtheorem{theo}{Theorem}[section]
\newtheorem{lem}[theo]{Lemma}
	\title{Nonparametric regression on random geometric graphs sampled from submanifolds}
\author{Paul Rosa \\
 Department of Statistics\\
 University of Oxford\\
 \texttt{paul.rosa@stats.ox.ac.uk} \\
  \And
 Judith Rousseau\\
 Department of Statistics
 University of Oxford and \\
  CEREMADE, CNRS, Universit\'e Paris-Dauphine, PSL University}
\begin{document}

	\maketitle

	\begin{abstract}
	We consider the nonparametric regression problem when the covariates are located on an unknown compact submanifold of a Euclidean space. Under defining a random geometric graph structure over the covariates we analyse the asymptotic frequentist behaviour of the posterior distribution arising from Bayesian priors designed through random basis expansion in the graph Laplacian eigenbasis. Under H\"older smoothness assumption on the regression function and the density of the covariates over the submanifold, we prove that the posterior contraction rates of such methods are minimax optimal (up to logarithmic factors) for any positive smoothness index.
	\end{abstract}
	
\section{Introduction}\label{section:intro}

When the data is high dimensional, it is common practice to assume a lower dimensional structure. This idea dates back at least to Pearson's principal component analysis \cite{pearson_liii_1901} and the Johnson-Lindenstrauss lemma \cite{johnson_extensions_1986,ailon_fast_2009} where a linear projection onto a lower dimensional subspace of variables is performed while still retaining much of the statistical information. However, a linear constraint on the possible intrinsic shape of the data can be restrictive, and nonlinear dimensionality reduction has received much attention during the past two decades. Examples include kernel PCA \cite{scholkopf_nonlinear_1998}, Isomap \cite{tenenbaum_global_2000}, locally linear embeddings \cite{roweis_nonlinear_2000}, Laplacian eigenmaps and diffusion maps \cite{coifman_diffusion_2006}. These techniques typically take into account intrinsic geometric properties of the data. 

 Inference of intrinsic geometric properties of data is often believed to be a powerful way to gain insight in complex high dimensional statistical problems. Examples include its support \cite{genovese_minimax_2012,aamari_nonasymptotic_2019} under the Hausdorff metric, differential quantities \cite{aamari_nonasymptotic_2019,aamari_optimal_2023}, the geodesic distance \cite{aamari_optimal_2023,bernstein_graph_nodate}, the intrinsic dimension \cite{kim_minimax_2019,denti_generalized_2022} or even the Laplace-Beltrami operator eigenpairs \cite{garcia_trillos_error_2020,calder_improved_2020,dunson_spectral_2021,wormell_spectral_2021}. In the same way, inferring the intrinsic topological structure in the data has also received a lot of interest since the modern development of topological data analysis as field within data science, with in particular the study of the persistence diagrams and modules \cite{chazal_introduction_2021,noauthor_structure_nodate,divol_understanding_2021,divol_density_2019,loiseaux_framework_2023,loiseaux_stable_2024}.

The Laplacian eigenmaps, introduced initially by \cite{belkin_laplacian_2001,belkin_laplacian_2003} is a popular non linear method for dimensionality reduction, together with a tool for inference in high dimension setups. It has also been used successfully in the context of spectral clustering \cite{liu_spectral_2014,von_luxburg_tutorial_2007}, supervised and semisupervised learning \cite{green_minimax_2021,shi_adaptive_2023,sanz-alonso_unlabeled_2022} : in this paper we consider the use of Laplacian eigenmaps in the context of  the reknown nonparametric regression problem where 
\begin{equation}\label{reg}
y_i = f(x_i) + \varepsilon_i, \quad \varepsilon_i\stackrel{iid}{\sim}\mathcal N(0, \sigma^2) , \quad i\leq n;
\end{equation}
with covariates $x_i \in \mathbb R^D$ and $D$ is possibly large. Then, a way to formalize a low dimensional structure to the problem is to assume that the covariates $X_i$'s belong to some low dimensional submanifold  $\mathcal M$ of $\mathbb R^D$, also known as the manifold hypothesis. This is the setup we are considering.  We also consider both the supervised learning setup where the observations consist of $(y_i, x_i)_{i=1}^n$, under model \eqref{reg}; and the semi-supervised setup where in addition another sample $(x_i)_{i=n+1}^N$ of covariates are observed without their labels $y_i$.

Laplacian eigenmaps approaches are based on the construction of  a graph $G = (V, E)$ whose vertices are the covariates $(x_i)_{i=1}^N$ and where an edge between two vertices $x_i $ and $x_j$ exists if and only if $\|x_i-x_j\|\leq h$ for some predefined threshold $h$. From that the Laplacian of the graph $L : L^2\p{V} \to L^2 \p{V}$, is constructed together with its spectral decomposition 
$L = \sum_{j=1}^N \lambda_j \inner{u_j|\cdot}_{L^2\p{V}} u_j, 0\leq \lambda_1\leq  \cdots \leq \lambda_N$ for some appropriate Euclidian space structure on $V$ to be defined in Section \ref{section:model_and_notations}. Dimensionality reduction is then obtained by considering the projection operator
$\sum_{j=1}^J \inner{u_j|\cdot}_{L^2\p{V}} u_j$ for some truncation level $J$. This dimension reduction can be used in different inferential contexts

In the context of the regression problem \eqref{reg}, the vector $f_N = (f(x_1) , \cdots, f(x_N))^T$ is modelled as an element of $\text{span}( u_1, \cdots, u_J)$. 
This construction has been studied with variants in the definition of the graph Laplacian $L$,   for instance by \cite{green_minimax_2021,shi_adaptive_2023}, where the PCR-LE estimator $\hat f_N$, i.e. it  is the minimizer of $\|Y - f_N\|^2$ over $\Sigma^J := \text{span}( u_1, \cdots, u_J)$ (in the fully supervised case $N=n$) while   \cite{sanz-alonso_unlabeled_2022,fichera_implicit_2024,dunson_graph_2022} are using a Bayesian estimation procedure based on Gaussian processes defined via $L$.\\
In this paper we are interested in understanding the capacity  of  graph Laplacian regression methods of capturing both the unknown low dimensional submanifold  support $\mathcal M$ of the design, together with the unknown smoothness $\beta$ of the regression functions. \\

\textbf{Related works :} \\

The asymptotic behaviour of least square estimators has been studied in \cite{green_minimax_2021,shi_adaptive_2023}  where they  obtain minimax  convergence rates when the support $\mathcal M$ of the covariates is an open set of $\mathbb R^D$ and under a Sobolev regularity condition. Extension to the case where $\mathcal M$ is a $d$ dimensional submanifold of $\mathbb R^D$, with $d < D$, is also considered in \cite{green_minimax_2021} however only for regression functions which have Sobolev smoothness restricted to the set $\{1,2,3\}$. \\
Bayesian estimators of $f$ based on the graph Laplacian have been considered in the literature as well for instance in  \cite{sanz-alonso_unlabeled_2022,fichera_implicit_2024,dunson_graph_2022}, however the asymptotic properties in terms of posterior contraction rates around the true regression function have only been derived in some restricted cases. In \cite{dunson_graph_2022}, it is shown that in the case $N=n$ and when the true regression vector $f_{0,N} = \p{f_0(x_i)}_{i=1}^N$ belongs to $\Sigma^J$ for $J \lesssim \p{\frac{n}{\ln n}}^{\frac{d}{2q+2d}}, q \geq \frac{d}{2}$ then the posterior contraction rate (with respect to the empirical $L^2$ norm) is upper bounded by a multiple of $\p{\frac{\ln n}{n}}^{\frac{d}{2q+2d}}$, under some kind of separability assumption between the eigenvalues of the Laplace-Beltrami operator of $\MM$. Assuming that $f_{0,N} \in \Sigma^J$ avoids studying the approximation of $f_{0,N}$ by an element in $\Sigma^J$, which is a nontrivial problem where one only has access to smoothness assumptions on the function $f_0 : \MM \to \RR$. A step in this direction has been made in \cite{sanz-alonso_unlabeled_2022}, where it is established that if the number of unlabeled covariates $\b{x_{n+1},\ldots,x_N}$ is large enough, i.e at least $N \geq n^{2d}$, then posterior contraction at a minimax optimal rate (up to logarithmic factors) of the true regression vector $f_{0,N}$ is possible, provided that its smoothness $\beta$ (more precisely : $f_0$ belongs to the Besov space $B_{\infty \infty}^\beta \p{\MM}$, see \cite{coulhon_heat_2012}) satisfies $\beta > d-\frac{1}{2}$ using a graph Mat\'ern process. Notably, as is pointed out in the paper, their assumptions rule out the fully supervised setting and every other intermediate cases. Furthermore, adaptation to the smoothness $\beta$ is not discussed. In both papers \cite{dunson_graph_2022,sanz-alonso_unlabeled_2022}, the authors control the contraction of the posterior distribution on $f_N$ by controlling the convergence of the graph Laplacian (or the associated heat kernel) to the  Laplacian-Beltrami operator of $\mathcal M$. As we will see, it is not fundamentally needed to study the nonparametric regression problem; this is also the point of view of \cite{green_minimax_2021,shi_adaptive_2023}.\\
Thus, the question the approximation of $f_{0,N}$ by elements of $\Sigma^J$ for $J = o(N)$, when $f_0$ is a smooth function on an unknown manifold $\MM$ remains unanswered. In this paper we therefore aim to bridge this gap in the literature.\\

\textbf{Our contributions :} \\
In this paper, our main aim is to study posterior contraction rates for priors based on the graph Laplacian for the recovery of the regression vector $f_{0,N}$. To do that we first show that any $\beta$ H\"older function on $\mathcal M,~\beta > 0$ can be well approximated (in some sense to be made precise later in the paper) by a function in $\Sigma^J$  for $J\geq n^{d/(2\beta+d)}$ (up to $\log n $ terms) , where $d$ is the intrinsic dimension of the submanifold $\mathcal M$. To do that we use a novel approximation argument by first constructing an approximation of $f_{0,N}$ by a vector of the form $e^{-tL/h^2}f_t$ for some well chosen $t>0$ and $f_t \in \RR^V$. Thanks to the exponential factor we can then control well the difference between $e^{-tL/h^2}f_t$ and its projection on $\Sigma^J$. This approximation argument is valid for any $\beta > 0$. This result has an interest in its own right; in particular we apply it to the frequentist PCR-LE estimator.

We also propose general classes of priors on $f_N$ supported on $\Sigma^J$ for which we characterize the posterior contraction rates, both for the empirical loss $\| f-f_0\|_n^2= \sum_{i=1}^n (f(x_i)-f_0(x_i))^2/n$ and for the global loss $\| f-f_0\|_N^2= \sum_{i=1}^N (f(x_i)-f_0(x_i))^2/N$, as functions of the truncation level $J$ and the connectivity parameter $h$.

Finally by considering hyperpriors on $h$ and $J$ we propose a novel class of priors that achieve minimax adaptive posterior contraction rates (up to $\ln n$ terms), both under the $\| \cdot \|_n$ and the $\|\cdot \|_N$ (pseudo-)distances.

Hence, contrarywise to \cite{sanz-alonso_unlabeled_2022} we do not need a very large number of unlabelled covariates which were used to ensure an accurate discrete to continuum approximation of the graph Laplacian eigenpairs and compared to \cite{green_minimax_2021,shi_adaptive_2023,sanz-alonso_unlabeled_2022} we significantly weaken the smoothness assumptions on $f_0$ (our assumptions actually match those considered in the continuous setting, see e.g \cite{vaart_information_2011,rosa_posterior_2023}). Also note that we do not require separability assumptions on the eigenvalues of the Laplace-Beltrami operator on $\MM$, as opposed to \cite{dunson_graph_2022}.
	
	\textbf{Outline :} We start by describing our geometrical and statistical model as well as our notations in Section \ref{section:model_and_notations}. We then describe our priors and the associated posterior contraction results in Section \ref{section:main_results}; with non adaptive results in Section \ref{subsubsection:rates_non_adaptive_priors} and adaptive results in Section \ref{subsubsection:rates_adaptive_priors}. Convergence rates for the PCR-LE estimator are provided in Section \ref{subsubsection:rates_non_adaptive_priors}. The approximation theory for $f_{0,N}$ by $\Sigma^J$ is provided in Section \ref{subsection:approximation_results}. Main proofs are provided in Section \ref{sec:main_proofs}. Finally, the appendix contains additional proofs.

	\section{Model and notations}\label{section:model_and_notations}
	
	In this paper we consider the semi-supervised nonlinear regression model where we observe labelled data $(y_i, x_i)_{i\leq n}$ together with unlabelled data $(x_i)_{i=n+1}^N$ (in particular $N = N_n \geq n$ depends on $n$). Note that $N=n$ corresponds to the supervised case. In this work we make the following growth assumption on $N$ : there exists $b>0$ such that $N_n \leq n^b$ for all $n$. Note that in particular we always have the inequality $\ln n \leq \ln N \leq b \ln n$. The nonparametric regression model is then 
	\begin{equation}\label{model:nonparametric_regression}
	\begin{split}
			y_i &= f_0(x_i) + \varepsilon_i,\quad  \varepsilon_i \iid \NN \p{0,\sigma^2}, \quad i = 1, \ldots, n , \\
			x_i &\iid \mu_0 , \quad  i = 1,\ldots,N
	\end{split}
		\end{equation}	
	where $\sigma > 0$, $ f_0 \in \CC^\beta \p{\MM}$ with $\MM$ an unknown compact connected submanifold of regularity $\alpha \geq \p{\beta+3} \vee 6$ of $\mathbb R^D$ (see appendix \ref{sec:regularity_M_holder_spaces} for a precise definition of $\alpha$ and the space $\CC^\beta\p{\MM}$) and $\mu_0$ is a probability distribution on $\MM$ (the connectedness assumption could actually be dropped by working on each connected component separately). We assume that $\mu_0$ is absolutely continuous with respect to the volume measure on $\MM$, with density $ p_0$ and $f_0, \MM, p_0$ are unknown quantities. Without loss of generality we consider the case where $\sigma$  is known, since the case of unknown $\sigma$ can be easily derived from the results here, using the approach of  \cite{naulet_aspects_2018}. Moreover, it would also be possible to handle non Gaussian errors as done in Ghosal \& Van der Vaart \cite{ghosal_fundamentals_2017}, section 8.5.2.

	We consider an unweighted random geometric graph $G = G^{(h)} = \p{V,E^{(h)}}$ based on $x_{1:N} = (x_i)_{i=1}^N$  generated as follows: the vertex set is $V = \b{x_1,\ldots,x_N}$ and the edges are defined by $E_{ij} = E_{ij}^{(h)} = 1$ if and only if  $\norm{x_i - x_j} < h$, for some $ h>0$, where $\norm{\cdot}$ denotes the Euclidean distance in $\R^D$. Hereafter we denote $x_i \sim x_j \iff E_{ij}=1$.

	We define the \emph{normalized graph Laplacian}
	\[
	L = L^{(h)} = I - D^{-1}A
	\]
	where $D = D^{(h)} = diag(\mu)$ is the \emph{degrees matrix} $D_{xy} = \delta_x^y \mu_x, \mu_x = \mu_x^{(h)} = \# \b{y : \norm{x-y} < h}$ and $A = A^{(h)}$ is the \emph{adjacency matrix} $A_{xy} = A_{xy}^{(h)} = \mu_{xy} = \II{}_{x \sim y}$. We also define the \emph{normalized degree measure} $\nu = \nu^{(h)} = \frac{\mu}{\mu \p{V}}$. The graph Laplacian $L$ is a nonnegative self-adjoint operator from $L^2 \p{\nu}$ to itself, where the inner product is given by
	\[
	\forall f,g \in L^2\p{\nu}, \inner{f|g}_{L^2\p{\nu}} = \sum_{y \in V} f(y) g(y) \nu_y
	\]
	and $L$ is given explicitely by the formula
	\[
	\forall f \in \RR^V, \p{Lf}(x) = \frac{1}{\mu_x} \sum_{y \sim x} \p{f(x) - f(y)}
	\]
	We will often use the following identity
	\[
	\forall f,g \in L^2\p{\nu}, \inner{f|L g}_{L^2\p{\nu}} = \frac{1}{2 \mu\p{V}} \sum_{x \sim y} \p{f(x)-f(y)}\p{g(x)-g(y)}
	\]
	The negative graph Laplacian $-L$ satisfies 
	\begin{enumerate}
		\item $\forall x \in V, -L_{xx} \geq  0$
		\item $\forall x\neq y \in V, -L_{xy} = \frac{\mu_{xy}}{\mu_x} \geq 0$
		\item $\forall x \in V, \sum_{y \in V} -L_{xy} = \sum_{y \in V} \frac{\mu_{xy}}{\mu_x} - 1 = 0$
	\end{enumerate}
	As a consequence (see e.g Norris \cite{norris_markov_1997}), for each $t \geq 0, e^{-tL}$ is a row stochastic matrix. It is the transition matrix in time $t$ of the continuous time Markov chain $\p{\tilde{W}_t}_{t \geq 0}$ going from vertex $x$ to vertex $y$ with rate $\frac{\mu_{xy}}{\mu_x}$. This chain has a constant unit speed and a jump matrix given by $D^{-1}A$ : equivalently $\tilde{W}_t = Y_{N_t}$ where $N$ is a Poisson process with unit rate and $\p{Y_n}_{n \geq 0}$ is an independent Markov chain with transition matrix $D^{-1}A$. We will denote by $\Pro_x^{(h)}$ the probability distribution of the corresponding Markov chains (in either discrete or continuous times) starting from $x \in V$. For convenience in what follows we will define $\LL = h^{-2} L$ and $W_t = \tilde{W}_{t/h^2}$.
	
	It is shown for instance in \cite{garcia_trillos_error_2020,calder_improved_2020} that under appropriate conditions the graph Laplacian $\LL$ converges in some sense to the true Laplace-Beltrami operator $\Delta$ of the limiting manifold $\MM$ (up to a proportionality constant). Since $W$ is a continuous time Markov chain with transition matrix $e^{-t\LL}$, this shows that $W$ can actually be seen in a way as a numerical approximation of a Brownian motion on $\MM$, i.e an $\MM-$valued continuous time Markov process with infinitesimal generator $-\Delta$.
	
	For each $x \in V, t \geq 0$ we define $p_t \p{x,\cdot} = p_t^{(h)} \p{x,\cdot}$ as the density of $e^{-t\LL}\p{x,\cdot}$ with respect to $\nu$
	\[
	\forall y \in V, p_t \p{x,y} = \frac{e^{-t \LL}\p{x,y}}{\nu_y}
	\]
	and we call the matrix $p_t$ the \emph{heat kernel of $\LL$ with respect to $\nu$}. If $\p{\lambda_j,u_j}_{j=1}^N = \p{\lambda_j^{(h)},u_j^{(h)}}_{j=1}^N$ is an orthonormal eigendecomposition of $\LL$ in $L^2 \p{\nu}$, i.e.
	\[
	\forall j,l \in \b{1,\ldots,N}, \LL u_j = \lambda_j u_j, \inner{u_j|u_l}_{\nu} = \delta_j^l
	\]
	(the existence of which is guaranteed by the finite dimensional spectral theorem) then we have
	\[
	\forall t \geq 0, e^{-t \LL} = \sum_{j=1}^N e^{-t\lambda_j} \inner{u_j | \cdot}_\nu u_j = \frac{1}{\mu \p{V}} \sum_{j=1}^N e^{-t\lambda_j} u_j u_j^T 
	\]
	Hence
	\[
	\forall t \geq 0, x,y \in V, p_t \p{x,y} = \sum_{j=1}^N e^{-t\lambda_j} u_j(x) u_j(y)
	\]
	In particular $p_t$ is a symmetric matrix.\\
	
	In addition to the $L^2\p{\nu}$ structure we will also use the uniform norm
	\[
	\forall f \in \RR^V, \|f\|_{L^\infty \p{\nu}} = \max_{1 \leq i \leq N} \abs{f(x_i)}
	\]
	
	As well as the associated operator norm
	\[
	\| A \|_{L^\infty \p{\nu}} = \sup_{\|f\|_{L^\infty\p{\nu}} \leq 1} \|Af\|_{L^\infty \p{\nu}}
	\]
	defined for any linear endomorphism $A : L^\infty \p{\nu} \to L^\infty \p{\nu}$.

	
	\textbf{Notations :} We denote by $\Pro_0$ the frequentist probability distribution of $\mathbb{X}^n := \p{x_{1:N}, y_{1:n}}$, i.e 
	\[
	\Pro_0(dx_{1:N} , dy_{1:n}) = \prod_{i=1}^N p_0(x_i) \mu(dx_i) \prod_{i=1}^n \NN\p{y_i | f_0(x_i),\sigma^2} dy_i
	\]
	as well as $P_f = \bigotimes_{i=1}^n \NN \p{f_i,\sigma^2}$ for any $f \in \RR^n$, with the abuse of notation $P_{f_0} = P_{\p{f_0(x_i)}_{i=1}^n}$. We will also denote the probability distribution of the infinite sequences $\p{x_i}_{i \geq 1}, \p{y_i}_{i \geq 1}$ by $P_0^\infty$. We denote the geodesic metric on $\MM$ by $\rho : \MM \times \MM \to \RR_+$. For any $x \in \RR^D, r > 0$ we define $B_{\RR^D}(x,r) = \b{y \in \RR^D : \norm{x-y} < r}$ the Euclidean ball and $B_\rho(x,r) = \b{y \in \MM : \rho(x,y)<r}$ the geodesic ball. For any measurable subset $A$ of $\RR^d, d \geq 1$, $vol(A)$ will denote its $d-$dimensional Lebesgue measure.
	
	For any open subset $\UU$ of $\RR^d, d,k \geq 1$ and $f : \UU \to \RR$ we define its $\CC^k\p{\UU}$ norm by $\norm{f}_{\CC^k \p{\UU}} = \max_{\alpha \in \mathbb{N}^d, \abs{\alpha} \leq k} \sup_{x \in \UU} \abs{\frac{\partial^{\abs{\alpha}}f}{\partial x^\alpha}(x)}$. We also define $\CC^\infty \p{\UU} = \cap_{k \geq 0} \CC^k \p{\UU}$. For non integer $\beta \in (k,k+1), k \in \mathbb{N}$ we define the H\"older class $\CC^\beta \p{\UU}$ class as the space of all functions $f \in \CC^k\p{\UU}$ satisfying
	\[
	\norm{f}_{\CC^\beta \p{\UU}} := \norm{f}_{\CC^k\p{\UU}} + \max_{\alpha \in \mathbb{N}^d, \abs{\alpha} = k} \sup_{x \neq y \in \UU} \frac{\abs{\frac{\partial^{\abs{\alpha}} f}{\partial x^\alpha} - \frac{\partial^{\abs{\alpha}} f}{\partial x^\alpha}}}{\norm{x-y}^{\beta-k}} < +\infty
	\]
	If $V$ is a finite dimensional vector space and $\UU \subset \RR^d, d \geq 1$, we say that a mapping $f : \UU \to V$ is of class $\CC^\beta\p{\UU}$ if $T \circ f : \UU \to \RR \in \CC^\beta \p{\UU}$ for any $T \in V^*$ (linear form on $V$). Equivalently, each coordinate function of $f$ (in any choice of basis) is of class $\CC^\beta$.
	
	It is immediate to check that any $f \in \CC^\beta \p{\UU}, k < \beta \leq k+1$ has a Taylor development of the form $f(y) = \sum_{l=0}^k \frac{d^lf(x).(y-x)^l}{l!} + R_k(x,y), \abs{R_k(x,y)} \leq C_f \norm{x-y}^\beta$ with $C_f = \|f\|_{\CC^\beta \p{\UU}}/k!$. 
	
We write $\norm{f-f_0}_n^2 = \frac{1}{n} \sum_{i=1}^n \p{f(x_i)-f_0(x_i)}^2$ and 	$\norm{f-f_0}_N^2 = \frac{1}{N} \sum_{i=1}^N \p{f(x_i)-f_0(x_i)}^2$.
	
In the following the symbols $\gtrsim,\lesssim$ refer to inequalities up to constants that depend only on $\MM,p_0$ and $f_0$. We will also write $a \asymp b$ if both $a \lesssim b$ and $b \lesssim a$ hold. We will also often write $C(\p{\lambda}_{\lambda \in \Lambda})$ or $C_{\p{\lambda}_{\lambda \in \Lambda}}$ generically to denote a positive constant depending only on the parameters $\lambda \in \Lambda$ whose value can change from line to line.
	
\section{Main results}\label{section:main_results}
In the nonparametric regression model \eqref{model:nonparametric_regression}, the goal is to estimate the regression function $f_0$ evaluated on the covariates $x_i, i = 1,\ldots,N$. Hence from a Bayesian point of view the goal is to design a prior distribution on an element $f : V \to \RR$ leading to good frequentist guarantees a posteriori. More precisely, following \cite{ghosal_convergence_2000,ghosal_fundamentals_2017} our goal is to derive \textit{posterior contraction rates} for our suggested Bayesian methodologies : we wish to identify a sequence $\varepsilon_n$ of positive real numbers satisfying
\[
\Pi \c{\norm{f-f_0}_n > \varepsilon_n | \mathbb{X}^n} \xrightarrow[n \to \infty]{P_0^\infty} 0
\]
Deriving such rates typically requires to prove preliminary approximation results which is the object of section \ref{subsection:approximation_results}.
	
\subsection{Priors}\label{subsection:priors}
	
As done in \cite{sanz-alonso_unlabeled_2022,fichera_implicit_2024} we design our priors by random basis expansion in the graph Laplacian eigenbasis. Our construction depends on $2$ degrees of freedom : the number of basis functions allowed in the sum, and the graph connectivity parameter $h$. This leads to a first family of  priors :
	
\noindent
\textbf{Prior 1 } Fixed $J, h$ case:\\
		We consider the prior $\Pi \c{\cdot | J,h}$ defined by the probability law of the random vector
		\begin{equation} \label{prior1} 
		f = \sum_{j=1}^J Z_j u_j, \quad  Z_j \iid \Psi
		\end{equation}
		for some $h>0, J \in \b{1,\ldots,N}$ and $\Psi$ a probability distribution having a positive and continuous density $\psi$ on $\R$ satisfying $\int_{\abs{x}>z}\psi(x)dx \leq e^{-b_1 z^{b_2} }$ , for all $z\geq z_0$ for some $b_1,b_2,z_0 >0$. In particular, $\Psi$ can be Gaussian, leading to the construction of a Gaussian vector $f$ on $V$, as done in \cite{dunson_graph_2022,sanz-alonso_unlabeled_2022}.

	As we shall see in section \ref{subsubsection:rates_non_adaptive_priors}, this prior leads to explicit posterior contraction rates which happen to be minimax optimal (up to logarithmic factors) under appropriate choice of the hyperparameters $J = J_n,h = h_n$ that depends on the unknown regularity $\beta$ of $f_0$ together with the dimension $d$ of $\MM$. In this sense the proposed prior is \textit{non adaptive}.

 It is then of interest to choose the hyperparameters $J,h$ in a data driven way. From the Bayesian point of view this can be done  by putting an additional prior layer over $J$ and $h$. For practical reasons, since for each value of $h$ a new Laplacian $\mathcal L$ needs to be computed, together with its eigenpairs $(u_j, \lambda_j)$ we only consider discrete priors on $h$, i.e. we enfore $h \in \mathcal H$ where $\mathcal H$ is a discrete subset of $\mathbb R_+$.

\noindent 
\textbf{Prior 2} Adaptive prior:\\
		We consider the prior $\Pi$ defined by the probability law of the random vector
		\begin{equation}\label{prior2}
		\begin{split}
		J \sim \pi_J,& \quad 	h|J \sim \pi_h(\cdot|J),  \quad f | J,h \sim \Pi \c{\cdot | J,h}, \quad	
		\text{denote		}  \mathcal H_J = \text{supp}(\pi_h(\cdot|J) ) \subset \mathcal H, 
		\end{split}
		\end{equation}
		 where $\pi_h$ is a probability distribution supported on $\mathcal H_J$ (that may depends on $J$) and $\pi_J$ is a probability distribution on $\mathbb N^*$ satisfying
		 \[
		 \forall j \geq j_0, e^{-a_1 jL_j} \leq \pi_J(j) \leq e^{a_2 j L_j} \text{ where } L_j = 1 \text{ or } L_j = \ln j, \text{ and } \# \mathcal H_J \leq K_1 e^{K_2 J \ln n}
		 \]
		 for some constants $a_1,a_2,K_1,K_2 > 0$.

Note that the joint prior on $(J,h)$ can be of very different forms. For instance  if $J\sim \mathcal P(\lambda)$ then the condition on $\pi_J$ holds with $L_j = \log j$ while if $\pi_J$ is a Geometric distribution, then it holds with $L_j=1$. Also we can choose for instance  the support of $\pi_h(\cdot|J)$ to be $\mathcal H_J  = \b{\frac{J^{-1/d}}{\ln^\kappa N}} $ for some $\kappa > 2$ and $\mathcal H = \cup_J \mathcal H_J$.  We will see that this leads to adaptive nearly minimax estimation rates. Alternatively and for more flexibility we can choose  
$ \mathcal H = \b{h_l := 2^l h_* : l = 0,\ldots,L}$ with $h_* = \p{\frac{\ln n}{n}}^{1/d}, \kappa > 2$ and $L \in \mathbb{N}$ such that $2^L h_* \leq 1 < 2^{L+1} h_*$, together with
\[\pi_h (h_l) \geq  b_1e^{- b_2 h_l^{-d}} \quad \forall h \in \mathcal H\]
for some constants $b_1,b_2>0$. We will detail the precise assumptions on $\Pi_J, \pi_h\p{\cdot|J}$ in theorems \ref{theorem:rate_prior2_d_n} \& \ref{theorem:rate_prior2_d_N}.

	\begin{remark}
		Both  \textbf{prior 1} and \textbf{prior 2},  depend on  the covariates $x_{1: N}$ and possibly on the sample size $n$, but we keep the notation $\Pi$ (and not $\Pi_{n,x}$) to avoid cumbersome notations.
	\end{remark}
	
	Having defined the two types of   priors we now  study  their posterior contraction properties.
	
	\subsection{Posterior contraction rates}\label{subsection:posterior_contraction_rates}
	
	We present here results on posterior contraction for \textbf{prior 1} \ and \textbf{prior 2}. We first present the non adaptive posterior contraction rate as it highlights the role of $J$ and $h$ and then we present the posterior contraction rate derived from prior 2. 
	The proof is based on the general prior mass and testing theorem (see e.g \cite{ghosal:vdv:07,ghosal_fundamentals_2017,rousseau:16}). A key aspect of the proof is a new scheme to approximate $f_0$ by functions in the span of $u_1, \cdots, u_J$. This allows in particular to extend the results of \cite{green_minimax_2021} to non linear manifolds when the regularity of $f_0$ not restricted to $\{1,2,3\}$. The approximation result is interesting in its own right and it allows also to derive minimax convergence rates for the frequentist estimator of \cite{green_minimax_2021}; hence it is presented in Section \ref{subsection:approximation_results} and the convergence rate of the frequentist estimator is provided in Section \ref{subsubsection:rates_non_adaptive_priors}.  Moreover, the proof is very different from \cite{green_minimax_2021} and is valid even when $N > n$.
	
\subsubsection{Non adaptive rates: Prior 1 and PCR-LE estimator}\label{subsubsection:rates_non_adaptive_priors}

Throughout this section we assume that the chosen connectivity parameter $h_n$ is not too small :
\begin{assumption}\label{assumption:h}
		The connectivity parameter $h_n$ satisfies $h_n \to 0$ and $\frac{Nh_n^d}{\ln N} \to \infty$. 
\end{assumption}

Our first main result identifies a posterior contraction rate associated with prior \ref{prior1} if $h$ satisfies assumption \ref{assumption:h} and $J$ is not too small. In particular, we recover minimax optimal rates up to logarithmic factors for appropriate choices of parameters :
	
\begin{theo}\label{theorem:rate_prior1_d_n}
		Let $\p{J_n,h_n}$ be a sequence of truncation and connectivity parameters satisfying $J_n \in \b{1,\ldots,N}, J_n \geq \ln^\kappa N, \kappa > d$ and $h_n$ satisfying assumption \ref{assumption:h}. Consider  a prior of $f$ belonging to the  \textbf{Prior 1}  family. Then there exists $C > 0$ such that
		\[
		\Pi \c{\norm{f-f_0}_n >C \varepsilon_n | \mathbb{X}^n, J_n, h_n} \xrightarrow[n \to \infty]{P_0^\infty} 0
		\]
		where
		\begin{equation}\label{epsilonn}
		\varepsilon_n = 		\varepsilon_n(J_n, h_n) = 
\sqrt{\frac{J_n \ln N}{n}} + {\ln N}^{\lceil \beta/2 \rceil} \max\left( 1 , \frac{ \ln N}{ J_n^{2/d} h_n^2}\right)^{\lceil \beta/2 \rceil} \p{ h_n^\beta + \II_{\beta > 1 } \p{\frac{\ln N}{Nh_n^d}}^{1/2} h_n }
		\end{equation}
		In particular, for any $\tau > d/2$ and
		\[
		h_n = n^{-\frac{1}{2\beta+d}} \p{\ln n}^{-\frac{1-\tau-2\p{1+2\tau/d} \lceil \beta/2 \rceil}{2\beta+d}}, J_n = \frac{h_n^{-d}}{\ln^\tau N}
		\]
		then 
		\[
		\varepsilon_n \asymp \p{\ln n}^{\frac{\p{2\tau+d}\lceil \beta/2\rceil + (1-\tau)\beta}{2\beta+d}} n^{-\frac{\beta}{2\beta+d}}
		\]
\end{theo}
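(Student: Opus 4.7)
The plan is to apply the standard posterior contraction theorem for dominated Gaussian regression (\cite{ghosal_fundamentals_2017}, Theorem~8.9) in the version using an empirical semimetric. Since $K(P_{f_0},P_f)=n\|f-f_0\|_n^2/(2\sigma^2)$ and the Kullback--Leibler variation $V(P_{f_0},P_f)$ is of the same order, the Kullback--Leibler neighbourhood reduces to a $\|\cdot\|_n$-ball. I would verify the three standard ingredients at rate $\varepsilon_n$: (i) a prior mass lower bound $\Pi[\|f-f_0\|_n\leq\varepsilon_n\mid J_n,h_n]\geq e^{-c n\varepsilon_n^2}$; (ii) a sieve $\mathcal F_n\subset\RR^V$ with metric entropy $\log N(\varepsilon_n,\mathcal F_n,\|\cdot\|_n)\lesssim n\varepsilon_n^2$; and (iii) $\Pi[\mathcal F_n^{c}\mid J_n,h_n]\leq e^{-(c+4) n\varepsilon_n^2}$.

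For (i), I would invoke the approximation theorem of Section~\ref{subsection:approximation_results} to produce some $g^\star=\sum_{j=1}^{J_n}c_j^\star u_j\in\Sigma^{J_n}$ with $\|g^\star-f_0\|_{L^\infty(\nu)}\lesssim\delta_n$, where $\delta_n$ denotes the deterministic (second) summand of $\varepsilon_n$ in~\eqref{epsilonn}, and with coefficients $|c_j^\star|\lesssim\|f_0\|_{L^\infty}$. Writing $f-g^\star=\sum_{j=1}^{J_n}(Z_j-c_j^\star)u_j$ under~\eqref{prior1} and bounding $\|u_j\|_{L^\infty(\nu)}$ polynomially in $N$ via heat-kernel/Weyl-type estimates for the graph Laplacian (the assumption $J_n\geq\ln^\kappa N$ with $\kappa>d$ being precisely there to absorb this polynomial factor), the event $\{|Z_j-c_j^\star|\leq r\text{ for all }j\leq J_n\}$ entails $\|f-g^\star\|_n\leq\|f-g^\star\|_{L^\infty(\nu)}\lesssim\varepsilon_n$ as soon as $r\asymp\varepsilon_n/(J_n\,\mathrm{poly}(\ln N))$. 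Since $\psi$ is positive and continuous on $\RR$ and the $c_j^\star$ lie in a fixed compact set, this event has prior probability at least $(c_0 r)^{J_n}\geq e^{-c_1 J_n\ln N}$, which dominates $e^{-c_2 n\varepsilon_n^2}$ thanks to the first summand $\sqrt{J_n\ln N/n}$ in~\eqref{epsilonn}.

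For (ii)--(iii), I would take $\mathcal F_n=\{\sum_{j=1}^{J_n}Z_j u_j : \max_j|Z_j|\leq M_n\}$ with $M_n=(C n\varepsilon_n^2/b_1)^{1/b_2}$. The stretched-exponential tail assumption on $\psi$ yields $\Pi[\mathcal F_n^{c}\mid J_n,h_n]\leq J_n e^{-b_1 M_n^{b_2}}\leq e^{-(c+4)n\varepsilon_n^2}$ for $C$ large, while every element of $\mathcal F_n$ is bounded in $\|\cdot\|_{L^\infty(\nu)}$ by a polynomial in $N$, so that its $\varepsilon_n$-covering in $\|\cdot\|_n$ reduces to covering a $J_n$-dimensional ball of polynomial radius, giving $\log N(\varepsilon_n,\mathcal F_n,\|\cdot\|_n)\lesssim J_n\ln n\lesssim n\varepsilon_n^2$. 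Tests separating $f_0$ from $\|\cdot\|_n$-alternatives exist by the usual Gaussian likelihood-ratio construction combined with this entropy bound.

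The principal obstacle lies in step~(i): producing a $\Sigma^{J_n}$-approximant of $f_0$ at the fine rate $\delta_n$ with controlled coefficients. This is exactly the content of the heat-semigroup approximation scheme of Section~\ref{subsection:approximation_results}: one first approximates the target by $e^{-t\LL}f_t$ for $t\asymp h_n^2$, iterates the semigroup identity $\lceil\beta/2\rceil$ times to convert H\"older regularity of $f_0$ into decay of the coefficients, and then truncates to $\Sigma^{J_n}$ with residual bounded by $e^{-t\lambda_{J_n+1}}$. This explains both the factor $\max(1,\ln N/(J_n^{2/d}h_n^2))^{\lceil\beta/2\rceil}$ (from the exponent $t\lambda_{J_n+1}\asymp J_n^{2/d}/\ln N$ potentially being small) and the correction $\II_{\beta>1}(\ln N/(Nh_n^d))^{1/2}h_n$, which captures the fluctuation of $\LL$ around the continuum Laplace--Beltrami operator and only contributes once H\"older exponents exceed $1$. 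Once these bounds are in hand, the optimisation of $\varepsilon_n$ for the prescribed $h_n,J_n$ amounts to elementary algebra balancing $J_n\ln N/n$ against $\delta_n^2$ at the claimed value.
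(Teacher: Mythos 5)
Your proposal is essentially the paper's own proof: you verify the Ghosal--van der Vaart master theorem conditions (prior mass, sieve entropy, sieve complement) with the $\Sigma^{J_n}$ approximant produced by the heat-semigroup scheme of Theorems \ref{thm:approximation}--\ref{thm:approximation2}, which is exactly the content of Lemma \ref{lem:prior_thickness_prior1} and Lemma \ref{lem:entropy}. One inaccuracy in your explanatory paragraph: the heat time must be taken as $t \asymp \lambda_{J_n}^{-1}\ln N$ (not $t \asymp h_n^2$), since the truncation residual $N^2 e^{-t\lambda_{J_n}}$ must overcome the polynomial factors from $\|u_j\|_{L^\infty(\nu)} \leq N$; with $t \asymp h_n^2$ and $J_n \lesssim h_n^{-d}/\ln^\tau N$ one has $t\lambda_{J_n} \lesssim h_n^2 J_n^{2/d} \lesssim \ln^{-2\tau/d}N \to 0$, so the truncation error would not vanish. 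The $\max(1,\ln N/(J_n^{2/d}h_n^2))^{\lceil\beta/2\rceil}$ factor then comes from $(t/h_n^2)^{\lceil\beta/2\rceil}$ via the Weyl lower bound $\lambda_{J_n} \gtrsim J_n^{2/d}$ in the regime $J_n \lesssim h_n^{-d}/\ln^{d/2}N$ and from the saturation $\lambda_{J_n} \gtrsim h_n^{-2}/\ln N$ otherwise, rather than from the truncation exponent as you describe.
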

The proof is given in section \ref{appendix:proof_posterior_contraction_rates:non_adaptive_priors}. Theorem \ref{theorem:rate_prior1_d_n}  highlights  the trade - off between the complexity of the prior (term $\sqrt{ J \ln N/n}$) and the approximation error bounded by $\p{\frac{J_n^{-2/d}\ln N}{h_n^2}}^{\lceil \beta/2\rceil} \c{h_n^{\beta} + \II_{\beta>1} h_n \left( \frac{ \ln N }{ Nh_n^d }\right)^{1/2} }$. The first term of the approximation error has a bias flavour while the second, which appears only when $\beta>1$ comes, roughly speaking,  from stochastic deviations of $\mathcal L  f _0- \EE(\mathcal L f_0) $, see Lemma \ref{lemma:concentration}. More discussion is given after Theorem \ref{thm:approximation}.

Notice that theorem \ref{theorem:rate_prior1_d_n} only guarantees a posterior contraction with respect to the empirical $L^2$ norm $\norm{f-f_0}_n$. Thus, it is not clear whether or not the posterior distribution correctly extrapolates outside of the design $\b{x_1,\ldots,x_n}$ in order to estimate the values $\b{f_0(x_{n+1}),\ldots,f_0(x_N)}$. This is the object of theorem \ref{theorem:rate_prior1_d_N} below, which requires stronger assumptions on $J_n,h_n$ and $\beta$.
	
\begin{theo}\label{theorem:rate_prior1_d_N}
	Assume that $\beta > d/2$ and let $h_n$ satisfying $h_n \geq \p{\ln N}^s n^{-\frac{1}{2\beta+d}}$ for some $s \in \RR$ as well as $J=J_n \in \b{1,\ldots,N}, \frac{h_n^{-d}}{\ln^{\tau_1} N} \leq J_n \leq \frac{h_n^{-d}}{\ln^{\tau_2} N}, \tau_1 \geq \tau_2 > 2d$. Then there exists $C>0$ such that 
	\[
	\Pi \c{\norm{f-f_0}_N > C\varepsilon_n | \mathbb{X}^n, J_n, h_n} \xrightarrow[n \to \infty]{P_0^\infty} 0
	\]
	where
	\[
	\varepsilon_n = \sqrt{\frac{J_n \ln N}{n}} + \p{\frac{J_n^{-2/d} \ln N}{h_n^2}}^{\lceil \beta/2 \rceil} \p{ h_n^\beta + \II_{\beta > 1} \p{\frac{\ln N}{Nh_n^d}}^{1/2} h_n }
	\]
		
	In particular, choosing $J_n = \frac{h_n^{-d}}{\ln^\tau N}, \tau > 2d$ and $h_n = n^{-\frac{1}{2\beta+d}} \p{\ln N}^{-\frac{1-\tau-2\lceil \beta/2 \rceil \p{1+2\tau/d}}{2\beta+d}}$ leads to a posterior contraction rate of order 
	\[
	\varepsilon_n \asymp \p{\ln N}^{\frac{2\lceil \beta/2\rceil \p{2\tau+d} + 2\beta\p{1-\tau}}{2\beta+d}} n^{-\frac{\beta}{2\beta+d}}
	\]
\end{theo}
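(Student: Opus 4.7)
The strategy I would follow is to deduce the $\norm{\cdot}_N$-contraction from the $\norm{\cdot}_n$-contraction already provided by Theorem~\ref{theorem:rate_prior1_d_n} (which gives contraction at the \emph{same} rate $\varepsilon_n$) via a high-probability comparison between the two empirical norms on the $J_n$-dimensional subspace $\Sigma^{J_n}$. The first step is to check that the hypotheses of Theorem~\ref{theorem:rate_prior1_d_n} are met for the chosen $(J_n,h_n)$: the lower bound $J_n \gtrsim h_n^{-d}/\ln^{\tau_1}N \gtrsim n^{d/(2\beta+d)}/\polylog(n)$ combined with $\beta>d/2$ gives $J_n\geq \ln^{\kappa}N$ for any fixed $\kappa$, so that on an event of $P_0^\infty$-probability tending to $1$ the posterior places mass $1-o(1)$ on $\{f\in \Sigma^{J_n}: \norm{f-f_0}_n \leq C\varepsilon_n\}$.

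Next, letting $f_0^{J_n}\in \Sigma^{J_n}$ be the approximator of $f_0$ constructed in Section~\ref{subsection:approximation_results}, the approximation theorem of that section gives $\norm{f_0-f_0^{J_n}}_\infty \lesssim \varepsilon_n$ (here the assumption $\beta>d/2$ plays its usual role, since we need uniform, not merely $L^2$, control of the approximation error), and in particular $\norm{f_0-f_0^{J_n}}_N \lesssim \varepsilon_n$. Decomposing $f-f_0 = (f-f_0^{J_n})+(f_0^{J_n}-f_0)$ and applying the triangle inequality reduces the theorem to proving that, on an event of high $P_0^\infty$-probability,
\[
\norm{g}_N \;\leq\; C\,\norm{g}_n \;+\; \delta_n\,\norm{g}_{L^2(\nu)} \qquad \forall g\in \Sigma^{J_n},
\]
for some $\delta_n$ compatible with the target rate.

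To establish this comparison I would introduce the $J_n\times J_n$ Gram matrices $M_n = (\inner{u_j|u_k}_n)_{1\leq j,k\leq J_n}$ and $M_N = (\inner{u_j|u_k}_N)_{1\leq j,k\leq J_n}$. Since the $u_j$'s are orthonormal in $L^2(\nu)$ and Assumption~\ref{assumption:h} ensures that the normalized degrees $\nu_y = \mu_y/\mu(V)$ concentrate uniformly in $y$ around a quantity of order $1/N$, the matrix $M_N$ sits within $o(1)$ of $I_{J_n}$ in operator norm. The crux is a matrix Bernstein inequality applied to $M_n=\tfrac{1}{n}\sum_{i=1}^n u(x_i)u(x_i)^{\top}$ with $u(x)=(u_1(x),\ldots,u_{J_n}(x))^{\top}$. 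This requires a uniform sup-norm bound on $\sum_{j\leq J_n}u_j(x)^2$, which I would derive from the discrete heat-kernel estimate $\sum_{j\leq J}u_j(x)^2 \leq e^{t\lambda_J}\,p_t(x,x)$ combined with on-diagonal bounds $p_t(x,x)\lesssim t^{-d/2}/(Nh_n^d)$ on an appropriate range of $t>0$, optimized in $t$. The resulting bound $\norm{M_n-I_{J_n}}_{op}\leq \delta_n$ with high probability holds precisely when $J_n$ is not too large, which is the role of the condition $\tau_2>2d$ (sharper than the analogous $\kappa>d$ from Theorem~\ref{theorem:rate_prior1_d_n}): the extra logarithmic cushion is exactly what is needed to drive the Bernstein bound at a rate $\delta_n\lesssim \varepsilon_n$, and it then yields $\norm{g}_n^2=c^{\top}M_n c$ and $\norm{g}_N^2=c^{\top}M_N c$ equivalent up to factor $1\pm \delta_n$ for $g=\sum c_j u_j\in \Sigma^{J_n}$.

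The main technical obstacle is precisely this uniform matrix concentration step, because the eigenfunctions $u_j$ of the graph Laplacian $\LL$ are only implicitly defined and the sup-norm control on $\sum_{j\leq J_n}u_j(x)^2$ must be established in a regime where $J_n$ is allowed to be as large as $h_n^{-d}/\polylog(N)$. The discrete analogue of the classical continuous bound $\norm{u_j}_\infty\lesssim \lambda_j^{d/4}$ is within reach of the same heat-kernel machinery already developed for the approximation theorem of Section~\ref{subsection:approximation_results}, but transferring it cleanly into the Bernstein bound while tracking all logarithmic factors is the delicate part of the argument; once this is done, plugging in the prescribed $J_n = h_n^{-d}/\ln^{\tau}N$ and $h_n$ yields the stated rate.
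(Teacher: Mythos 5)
Your high-level strategy matches the paper's — reduce $\norm{\cdot}_N$-contraction to the $\norm{\cdot}_n$-contraction of Theorem~\ref{theorem:rate_prior1_d_n} plus the $L^\infty(\nu)$-approximation of $f_0$ by an element $f_n\in\Sigma^{J_n}$, and then compare the two empirical norms on $\Sigma^{J_n}$ — but the key comparison step you propose is genuinely different. The paper does \emph{not} establish a uniform high-probability equivalence of $\norm{\cdot}_n$ and $\norm{\cdot}_N$ on $\Sigma^{J_n}$; instead it applies scalar Hoeffding for sampling without replacement to the fixed function $f-f_n$ (via the exchangeability of the $x_i$'s), obtains a bound of the form $\exp\bigl(-cn\norm{f-f_n}_N^2/\norm{f-f_n}_{L^\infty(\nu)}^2\bigr)$, lower-bounds the ratio $\norm{g}_N^2/\norm{g}_{L^\infty(\nu)}^2\gtrsim J_n^{-1}\ln^{-3d/2}N$ uniformly over $g\in\Sigma^{J_n}$ by Lemma~\ref{thm:norm_comparison}, and then integrates against the prior, using the lower bound on the posterior denominator from the proof of Theorem~\ref{theorem:master_theorem}. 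The net condition the paper needs is $nJ_n^{-1}\ln^{-3d/2}N\gg n\varepsilon_n^2\asymp J_n\ln N$, i.e.\ $J_n^2\,\mathrm{polylog}(N)=o(n)$, and this is exactly where $\beta>d/2$ enters. Your Gram-matrix route (matrix Bernstein for sampling without replacement, applied to $M_n=\frac1n\sum_{i\le n}u(x_i)u(x_i)^\top$) only requires $L_\infty\ln J_n =o(n)$ with $L_\infty=\max_x\sum_{j\le J_n}u_j(x)^2\lesssim J_n\ln^{3d/2}N$ (this is precisely the content of Lemma~\ref{thm:norm_comparison}, whose proof already gives the pointwise spectral-projector bound you want), i.e.\ $J_n\,\mathrm{polylog}(N)=o(n)$, which holds for every $\beta>0$. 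So your approach is not merely a different route — it is potentially a strictly better one, bypassing both the posterior-denominator bookkeeping and, apparently, the restriction $\beta>d/2$. It would be worth writing this out carefully.

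A few inaccuracies to flag. First, you attribute $\beta>d/2$ to the need for uniform (rather than $L^2$) control of the approximation error; that is not where it comes from — Theorem~\ref{thm:approximation2} delivers the $L^\infty(\nu)$-bound for any $\beta>0$, and in the paper the constraint is used only in the final comparison $nJ_n^{-1}\ln^{-3d/2}N\gg n\varepsilon_n^2$. Second, $M_N$ is not within $o(1)$ of $I_{J_n}$: since $\nu_y\asymp 1/N$ only up to the constants $c_\pm/c_\mp$ of Theorem~\ref{thm:volume_regularity} (and $p_0$ need not be constant), one only has $\tfrac{c_-}{c_+}I\preceq M_N\preceq\tfrac{c_+}{c_-}I$; this suffices for the argument but the claim as stated is too strong. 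Third, the on-diagonal heat kernel bound in the paper is $p_t^{(h)}(x,x)\le a_3 t^{-d/2}$ (Theorem~\ref{thm:hkb}), without the extra $(Nh_n^d)^{-1}$ factor — you may be conflating the kernel $p_t(x,y)$ with the transition probability $p_t(x,y)\nu_y$. Finally, as you anticipate, the range constraint $J_n\lesssim h_n^{-d}/\ln^{2d}N$ needed for the spectral-projector sup-bound is exactly what forces $\tau_2>2d$; its role is to put $\Lambda^{-1}$ in the valid window $[t_0(h),a_1/\ln^2 N]$ of Theorem~\ref{thm:hkb}, not directly to tune $\delta_n$ against $\varepsilon_n$.
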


The proof is provided in Section \ref{appendix:d_N} and relies on Theorem \ref{theorem:rate_prior1_d_n} together with a concentration inequality to upper bound $\norm{f-f_0}_N$ by (a multiple of) $\norm{f-f_0}_n$.

The main limitation of \textbf{prior 1}, with deterministic $J,h$ is that the choice of $J,h$ crucially impacts the asymptotic behaviour of the posterior distribution. In particular to obtain optimal behaviour (with respect to $\norm{\cdot}_n$ or $\norm{\cdot}_N$) one needs to choose them as a function of $\beta$ and $d$. While there exist simple and consistent estimators of $d$, see for instance \cite{berenfeld_density_2021}, it is very difficult to estimate $\beta$.

It is interesting to compare our results with those of \cite{sanz-alonso_unlabeled_2022}. In this work a Mat\'ern type Gaussian process of the form $f = \sum_{j=1}^{J_N} \p{1 + \lambda_j}^{-\frac{\beta+d/2}{2}} Z_j u_j, Z_j \iid \NN\p{0,1}$ (also for some deterministic values of $J_n,h_n$) is used in place of prior \ref{prior1}. 
In \cite{sanz-alonso_unlabeled_2022} the author obtain the minimax rate of convergence with respect to the empirical $L^2$ norm $\norm{\cdot}_n$ by choosing $J,N,h$ accordingly but under a more restrictive assumption on $N,\beta$, implying in particular
$N >> n^{2d}$ and $\beta > d-1/2$. Also, their proof technique is very different from ours as it is based on the convergence of the (discrete) Mat\'ern Gaussian process prior  to its continuous counterpart (associated to the Laplace- Beltrami operator on $\MM$).

In the fully supervised $N=n$ case, our result can also be compared to the (frequentist) PCR-LE estimator of \cite{green_minimax_2021}. To be completely clear the estimator proposed in \cite{green_minimax_2021} relies on the unnormalized graph Laplacian rather than the normalized one, but we believe our proof could be applied for the unnormalized graph Laplacian as well. The PCR-LE estimator is defined as 
\[ \hat f = \text{argmin}_{ f \in \text{span}(u_1, \cdots, u_J)} \|Y - f \|_2, \quad Y = (y_1, \cdots, y_n), \quad f = (f(x_1), \cdots, f(x_n))\]
In  \cite{green_minimax_2021}, the authors prove that when $f_0$  belongs to a $\beta-$ Sobolev class  with $\beta \in \b{1,2,3}$ (where the Sobolev space is defined using the Laplacian operator weighted according to the covariates density), then $\hat f$ converges to $f_0$ at the rate $n^{-\beta/(2\beta+d)}$, if $J$ and $h$ are chosen accordingly. In comparison we obtain a near minimax rate for any $\beta-$ H\"older regularity assumption for $f_0$, $\beta > 0$ (for the $\norm{\cdot}_n$ loss) or $\beta > d/2$ (for the $\norm{\cdot}_N$ loss). It should be noted that the restriction $\beta > d/2$ also appears in the continuous case when extending outside of the labelled points, see e.g \cite{vaart_information_2011,rosa_posterior_2023}

As mentioned earlier, a key component of the proof of Theorems \ref{theorem:rate_prior1_d_n} \& \ref{theorem:rate_prior1_d_N} is to show that $f_0$  can be well approximated by elements in the span of $(u_1, \cdots, u_J)$, see Theorem \ref{thm:approximation} . A consequence of Theorem \ref{thm:approximation} is thus that the PCR-LE estimator \cite{green_minimax_2021} achieves near minimax convergence rate over H\"older classes of any order $\beta$, thus extending the results of \cite{green_minimax_2021}.

	
		\begin{theo}{Performance of PCR-LE for the random walk graph Laplacian}\label{corollary:pcr_le}\\
		Assume that $N = n$ and that the sequence $\p{J_n,h_n}$ is such that $J_n \in \b{1,\ldots,n}, J_n \geq \ln^\kappa N, \kappa > d$ and that $h_n > 0$ satisfies assumption \ref{assumption:h}. Define
		\[
		\hat{f} = \sum_{j=1}^{J_n} \inner{u_j | Y}_{L^2\p{\nu}} u_j
		\]
		Then for any $q \geq 0$ there exists $C>0$ such that
		\[
		\EE_0 \c{ \norm{\hat{f}-f_0}_n^q } \leq c \varepsilon_n^q
		\]
		where
		\[
		\varepsilon_n = \sqrt{\frac{J_n \ln N}{n}} + \p{\frac{J_n^{-2/d} \ln N}{h_n^2}}^{\lceil \beta/2 \rceil} \p{ h_n^\beta + \II_{\beta > 1} \p{\frac{\ln N}{Nh_n^d}}^{1/2} h_n }
		\]
		In particular for the choice $J_n = \frac{h_n^{-d}}{\ln^{\tau} n}, \tau > d/2, h_n = n^{-\frac{1}{2\beta+d}} \p{\ln n}^{-\frac{\tau + 2 \lceil \beta/2 \rceil \p{1+2\tau/d}}{2\beta+d}}$ for any $q>0$ there exists $C>0$ such that
		\[
		\EE_0 \c{ \norm{\hat{f}-f_0}_n^q } \leq C \varepsilon_n^q
		\]
	\end{theo}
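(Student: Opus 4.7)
The plan is to perform a bias--variance decomposition with respect to the $L^2(\nu)$ inner product, since by construction $\hat f = \sum_{j=1}^{J_n} \inner{u_j|Y}_{L^2\p{\nu}} u_j$ is exactly the orthogonal projection $\Pi_J^\nu Y$ of $Y=(y_1,\ldots,y_n)$ onto $\Sigma^J := \mathrm{span}(u_1,\ldots,u_{J_n})$. Writing $\hat f - f_0 = (\Pi_J^\nu f_0 - f_0) + \Pi_J^\nu \varepsilon$ and invoking Pythagoras in $L^2(\nu)$ yields
\[
\norm{\hat f - f_0}_{L^2(\nu)}^2 = \norm{\Pi_J^\nu f_0 - f_0}_{L^2(\nu)}^2 + \norm{\Pi_J^\nu \varepsilon}_{L^2(\nu)}^2 .
\]
Since $N=n$ and the degrees concentrate under Assumption \ref{assumption:h}, one has $\nu_i \asymp 1/n$ uniformly in $i$ on an event $\Omega_n$ with $\Pro_0\p{\Omega_n^c} = O(n^{-a})$ for any prescribed $a>0$; on $\Omega_n$ the pseudo-norms $\norm{\cdot}_n$ and $\norm{\cdot}_{L^2(\nu)}$ are equivalent up to constants depending only on $p_0$. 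Thus it suffices to bound each summand above in $L^2(\nu)$-norm on $\Omega_n$, while on $\Omega_n^c$ the crude deterministic estimate $\norm{\hat f - f_0}_n \lesssim \norm{f_0}_\infty + \norm{\varepsilon}_n$ contributes negligibly after Cauchy--Schwarz provided $a$ is chosen large enough.

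For the bias, $\norm{\Pi_J^\nu f_0 - f_0}_{L^2(\nu)} \leq \inf_{g \in \Sigma^J}\norm{g-f_0}_{L^2(\nu)}$, so applying the approximation result (Theorem \ref{thm:approximation}) on the good event produces a $g \in \Sigma^J$ whose deviation matches the deterministic part of $\varepsilon_n$. For the variance, the coefficients $c_j := \inner{u_j|\varepsilon}_{L^2\p{\nu}} = \sum_i \nu_i u_j(x_i)\varepsilon_i$ are, conditionally on $x_{1:n}$, jointly centered Gaussian, and their covariance matrix has trace
\[
\sigma^2 \sum_i \nu_i^2 \sum_{j=1}^{J_n} u_j(x_i)^2 \leq \sigma^2 \p{\max_i \nu_i} \sum_j \sum_i \nu_i u_j(x_i)^2 = \sigma^2 J_n \max_i \nu_i \lesssim \sigma^2 J_n/n
\]
on $\Omega_n$, using $\inner{u_j|u_j}_{L^2(\nu)}=1$ for the first equality. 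Since $\norm{\Pi_J^\nu \varepsilon}_{L^2(\nu)}^2 = \sum_{j=1}^{J_n} c_j^2$ is a quadratic form in a Gaussian vector with covariance of trace $\lesssim \sigma^2 J_n/n$ and operator norm $\lesssim \sigma^2/n$, the Laurent--Massart inequality gives $\EE \c{\norm{\Pi_J^\nu \varepsilon}_{L^2(\nu)}^{2q} \mid x_{1:n}} \lesssim (\sigma^2 J_n/n)^q$ for every $q \geq 1$ on $\Omega_n$.

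Combining the bias and variance bounds, passing from $\norm{\cdot}_{L^2(\nu)}$ to $\norm{\cdot}_n$ via norm equivalence, and absorbing $\Omega_n^c$ through Markov yields $\EE_0 \c{\norm{\hat f - f_0}_n^q} \lesssim \varepsilon_n^q$. Substituting the prescribed $(J_n, h_n)$ then produces the rate $n^{-\beta/(2\beta+d)}$ up to logarithmic factors through exactly the algebra already used in Theorem \ref{theorem:rate_prior1_d_n}. The main obstacle is that both the approximation inequality and the norm equivalence hold only on a high-probability event determined by the random design; handling this inside a raw expectation requires choosing the polynomial decay of $\Pro_0\p{\Omega_n^c}$ sufficiently fast that the bad event is overwhelmed by the trivial bound, which is routine given the degree-concentration estimates already developed in Section \ref{subsubsection:rates_non_adaptive_priors} but must be tracked with care across the $q$-th moment.
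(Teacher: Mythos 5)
Your decomposition $\hat f - f_0 = (\Pi_J^\nu f_0 - f_0) + \Pi_J^\nu\varepsilon$ with the conditional covariance/trace bound for the noise term and the approximation theorem for the bias is exactly the structure of the paper's proof in Section \ref{pr:PCR_LE}, and your route to the $q$-th moment via a high-probability event plus a crude bound on its complement matches as well. One citation to fix: for the bias you need a function already lying in $\Sigma^{J_n}$, so you should invoke Theorem~\ref{thm:approximation2} (which gives $p_{J_n}(e^{-t\LL}f_t)\in\Sigma^{J_n}$), not Theorem~\ref{thm:approximation} (which only provides $e^{-t\LL}f_t$, in general outside $\Sigma^{J_n}$); also, your crude bound on $\Omega_n^c$ should carry a $\sqrt{n}$ factor since $\Pi_J^\nu$ is a contraction in $L^2(\nu)$ but not in $\|\cdot\|_n$ off the good event, though this is harmlessly absorbed by the exponential smallness of $\Pro_0(\Omega_n^c)$.
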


	The proof of Theorem \ref{corollary:pcr_le} is given in Section \ref{pr:PCR_LE}. 
	Corollary \ref{corollary:pcr_le} hence shows that in the case $N=n$ the PCR-LE estimator \cite{green_minimax_2021} can achieve optimal rates for particular choice of hyperparameters (up to logarithmic factors) for any smoothness index $\beta > 0$ if one is willing to work on the H\"older spaces scale, and not just for $\beta-$Sobolev functions, $\beta \in \b{1,2,3}$ : the limitations of the manifold adaptivity was therefore only an artefact of the proof, as the non-negligible discrepancy between geodesic and Euclidean distances can be circumvented by considering a different approximation technique. Extending \ref{corollary:pcr_le} to an adaptive estimator by Lepski's method as done in \cite{shi_adaptive_2023} in the Euclidean domain could also be of interest, which then would provide a frequentist alternative to our adaptive Bayesian prior \ref{prior2}.
	
	In the case $N>n$, our theorems \ref{thm:approximation} \& \ref{thm:approximation2} can also potentially be used to design frequentist estimators in alternative to our priors \ref{prior1} \& \ref{prior2}. Indeed, as an example Theorem \ref{thm:approximation} can be reformulated as a high probability upper bound on $\norm{f_0 - Q_t^{(k)} f_0}_{L^\infty\p{\nu}}$	where $Q_t^{(k)}$ is the kernel operator
	\[
	\forall f : V \to \RR, \p{Q_t^{(k)} f}(x) = \sum_{y \in V} \underbrace{\sum_{l=0}^k \frac{\p{t\LL}^l e^{-t\LL} (x,y)}{l!}}_{=: Q_t^{(k)}(x,y) \nu_y} f(y)
	\]
	This can be generalized by
	\[
	\chi_t^{(k)}(x,y) = \sum_{l=0}^k \frac{\p{-t\LL}^l \chi^{(l)} \p{t\LL} (x,y)}{l!} = \sum_{l=0}^k \frac{1}{l!} \sum_{j=1}^N \p{-t\lambda_j}^l \chi^{(l)}\p{t\lambda_j} u_j(x) u_j(y)
	\]
	for an arbitrary $\chi \in \CC^k\p{\RR_+,\RR}$ with $\chi(0) = 1$ which motivates the definition of kernel regression estimators of the form $\hat{f} = \sum_{i=1}^n \chi_t^{(k)}(\cdot, x_i) y_i \nu_{x_i}$ : this is always tractable even when $N>n$ and analyzing the asymptotic properties of such estimators (even on other graph models) is therefore an interesting research direction.
	
	\subsubsection{Results for the  adaptive priors}\label{subsubsection:rates_adaptive_priors}
	
	Having proved Theorems \ref{theorem:rate_prior1_d_n} \& \ref{theorem:rate_prior1_d_N} it is straightforward to derive adaptive posterior contraction rates under the \textbf{prior 2}. This results in a Bayesian method able to achieve minimax optimal contraction rates (up to logarithmic factors) with a data driven choice for $J,h$.
		
	\begin{theo}\label{theorem:rate_prior2_d_n}
		Consider the prior $\Pi$ on $f$  belonging to the class \textbf{Prior 2} as defined  by \eqref{prior2} and assume that for some $K_1,K_2 > 0$ and  some $j_0,a_1, a_2 >0$
	\begin{equation} \label{cond:prior:J}
		\forall j \geq j_0, e^{-a_1 j L_j} \leq   \pi_J(j) \leq e^{-a_2 j L_j}, \quad \text{where} \quad  L_j =1 \quad \text{or} \quad L_j = \ln j,  \quad \text{and } \, \# \HH_J \leq K_1 e^{K_2 J \ln n}
	\end{equation}
	Assume in addition that for some $\tau > d/2$, $J_0,h_0>0$  and
	\[
	J_n = J_0\p{\ln N}^{\frac{2d\p{1+2\tau/d}\lceil \beta/2 \rceil}{2\beta+d}} n^{\frac{d}{2\beta+d}}
	\]
	$h_n \in \HH_{J_n} \cap \left[\frac{h_0J_n^{-1/d}}{2\ln^{\tau/d}N}, \frac{h_0J_n^{-1/d}}{\ln^{\tau/d}N}\right]$ and $c>0$ such that 
	\begin{equation}\label{cond:priorh}
		\pi_h(h_n|J_n) \geq e^{-c h_n^{-d}}, 
		\end{equation}
	Then there exists $C>0$ such that
		\[
		\Pi \c{\norm{f-f_0}_n > C\varepsilon_n | \mathbb{X}^n} \xrightarrow[n \to \infty]{P_0^\infty} 0, \quad \text{ with} \quad 
		\varepsilon_n = n^{-\frac{\beta}{2\beta+d}} \p{\ln N}^{\frac{\p{2\tau+d}\lceil \beta/2\rceil - \beta \p{\tau + 2\beta/d}}{2\beta+d}}.
		\]
		
	\end{theo}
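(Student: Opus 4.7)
The plan is to reduce the adaptive statement to the non-adaptive Theorem \ref{theorem:rate_prior1_d_n} via the general hierarchical Bayesian posterior contraction framework (along the lines of Theorems 8.9 / 8.19 in Ghosal--Van der Vaart, 2017). Concretely, it suffices to verify three conditions for the joint prior $\Pi$ on $(J,h,f)$: a prior-mass lower bound on a KL-neighborhood of $f_0$, the existence of a sieve $\mathcal{F}_n$ with controlled metric entropy in $\|\cdot\|_n$, and negligible prior mass outside the sieve. The oracle values $(J_n,h_n)$ in the statement are exactly those that make $\varepsilon_n$ nearly-minimax in Theorem \ref{theorem:rate_prior1_d_n}, so the quantitative bounds from the non-adaptive analysis can be reused.

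First, I would lower-bound the prior mass by conditioning on $(J,h)=(J_n,h_n)$:
\[
\Pi\bigl[\mathrm{KL}(P_{f_0},P_f)\leq n\varepsilon_n^2\bigr]\geq \pi_J(J_n)\,\pi_h(h_n\mid J_n)\,\Pi\bigl[\mathrm{KL}\leq n\varepsilon_n^2\mid J_n,h_n\bigr].
\]
By hypothesis, $\pi_J(J_n)\geq e^{-a_1 J_n L_{J_n}}$ and $\pi_h(h_n\mid J_n)\geq e^{-c h_n^{-d}}$, while the conditional factor is controlled by the prior mass calculation underlying Theorem \ref{theorem:rate_prior1_d_n} (using the approximation bound from Theorem \ref{thm:approximation} and the Gaussian-like lower tail of $\psi$ on the coefficient ball). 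With the chosen scalings $J_n\asymp n^{d/(2\beta+d)}(\ln N)^{\text{poly}}$ and $h_n^{-d}\asymp J_n(\ln N)^{\tau}$, all three exponents are $O(n\varepsilon_n^2)$, delivering the required $e^{-Cn\varepsilon_n^2}$ lower bound.

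Next, I would define the sieve
\[
\mathcal{F}_n \;=\; \bigcup_{J\leq \bar J_n}\bigcup_{h\in\mathcal H_J}\Bigl\{\,f=\sum_{j=1}^J z_j u_j^{(h)}:\ \max_j|z_j|\leq M_n\,\Bigr\},
\]
with $\bar J_n$ a small constant multiple of $n\varepsilon_n^2/L_{\bar J_n}$ and $M_n$ a polynomial in $n$ calibrated to the tail assumption on $\psi$. The upper bound $\pi_J(j)\leq e^{-a_2 j L_j}$ gives $\pi_J(J>\bar J_n)\leq e^{-K n\varepsilon_n^2}$, and the tail of $\psi$ together with a union bound over at most $\bar J_n\cdot \#\mathcal H_{\bar J_n}\leq K_1 e^{(K_2+1)\bar J_n \ln n}$ coordinates controls the $|z_j|>M_n$ complement. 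The metric entropy follows from a union bound over $(J,h)$ combined with a standard $\eta$-net of a $J$-dimensional Euclidean ball; because $(u_j^{(h)})$ is $L^2(\nu)$-orthonormal and $\|\cdot\|_n\leq C\|\cdot\|_{L^2(\nu)}$ with high probability on the design (via Bernstein for the empirical measure), one obtains
\[
\log N(\eta,\mathcal F_n,\|\cdot\|_n)\ \lesssim\ \bar J_n \ln(M_n/\eta)+\bar J_n \ln n\ \lesssim\ n\varepsilon_n^2.
\]
Standard test constructions (e.g.\ likelihood-ratio tests on this sieve) then produce exponentially powerful tests against $\{\|f-f_0\|_n>C\varepsilon_n\}\cap\mathcal F_n$.

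The main obstacle I expect is bookkeeping the extra $\ln N$ factors coming from the hierarchical layer: the entropy picks up $\#\mathcal H_J$, and the sum of $\pi_J$ over $J\leq \bar J_n$ must not swamp the prior mass. This is precisely where the assumption $\#\mathcal H_J\leq K_1 e^{K_2 J\ln n}$ is crucial: it guarantees that the extra factor in the metric entropy is only $O(\bar J_n\ln n)$, which is absorbed into $n\varepsilon_n^2$ without degrading the rate. A secondary subtlety is that $(u_j^{(h)})$ depends on the random design $x_{1:N}$, so the entropy and prior-mass bounds have to be stated as high-probability statements with respect to $P_0^\infty$; handling this will use the concentration machinery already deployed in the proof of Theorem \ref{theorem:rate_prior1_d_n}. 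Once the three ingredients are in place, the general theorem yields $\Pi[\|f-f_0\|_n>C\varepsilon_n\mid \mathbb X^n]\to 0$ in $P_0^\infty$-probability, proving the claim.
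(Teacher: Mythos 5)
Your proposal matches the paper's proof in structure and in all the key steps: the prior mass is lower-bounded by conditioning on the oracle pair $(J_n,h_n)$ and invoking the non-adaptive prior-thickness lemma together with the hypothesized lower bounds on $\pi_J(J_n)$ and $\pi_h(h_n\mid J_n)$; the sieve is $\bigcup_{J\leq kJ_n}\bigcup_{h\in\HH_J}\FF_{z_n,J,h}$; the tail condition on $\pi_J$ and on $\psi$, together with $\#\HH_J\leq K_1 e^{K_2 J\ln n}$, control both the remaining prior mass and the metric entropy; and the prior mass and testing theorem \ref{theorem:master_theorem} then closes the argument. The one place you deviate slightly is the entropy bound: you propose using orthonormality of $(u_j^{(h)})$ in $L^2(\nu)$ and a high-probability comparison $\|\cdot\|_n\lesssim\|\cdot\|_{L^2(\nu)}$ via Bernstein, whereas the paper sidesteps any design-dependent norm comparison by covering each $\FF_{z_n,J,h}$ in $\|\cdot\|_{L^\infty(\nu)}$ (using the crude bound $\|u_j\|_{L^\infty(\nu)}\leq N$ from Proposition \ref{proposition:uniform_norm_u_j}) and then exploiting the trivial $\|\cdot\|_n\leq\|\cdot\|_{L^\infty(\nu)}$; the latter is cleaner and avoids the uniform-over-$\FF_n$ concentration your route would need. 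Otherwise this is the same argument.
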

	The proof of Theorem \ref{theorem:rate_prior2_d_n} is given in Section \ref{appendix:proof_posterior_contraction_rates:adaptive_priors}. Notice that even if some particular values of $J_n,h_n$ are part of a condition that \textbf{prior 2}  \eqref{prior2},  must satisfy theoretically, the precise values of $J_n,h_n$ are not part of the definition of the prior itself.
	
	As discussed above in section \ref{prior2}, there are many priors on $J,h$ which satisfy \eqref{cond:priorh}. Consider two cases of particular interest
	\begin{itemize}
		\item If $J$ follows a Poisson or a Geometric prior and, given $J$, $h = \frac{J^{-1/d}}{\ln^{\tau/d}N}, \tau > d/2$ then condition \eqref{cond:priorh} is fulfilled (with $\HH_J = \b{\frac{J^{-1/d}}{\ln^{\tau/d}n}}$).
		\item A more flexible alternative is to choose a discrete prior on $\mathcal H = \{ 2^l h_* : l = 0, \cdots, L\}$ for $h_* = \p{\frac{\ln n}{n}}^{1/D}$ and $L$ such that $2^L h_* \leq 1 < 2^{L+1} h_*$, together with a probability mass function of the order of $ e^{- \lambda h^{-1} } h^a$ for  arbitrary $\lambda > 0$ and $a\geq 0$. For instance, such a prior can be constructed by first considering $\tilde{h} \sim IG( a, \lambda)$, an inverse Gamma random variable  and then defining $h = \sum_{l=0}^L 2^l h_* \II_{I_l}(\tilde{h}), I_0 = [0,h_*], I_l = ]2^l h_*, 2^{l+1} h_*]$ for $1 \leq l < L$ and $I_L = ]2^L h_*, +\infty[$. In particular this prior is fully adaptive as it depends neither on $\beta$ nor on $d$.
	\end{itemize}
	
	Similarly to the non adaptive prior, it is possible to obtain a posterior contraction rate in terms of the loss $\norm{\cdot}_N$ from the result of Theorem \ref{theorem:rate_prior2_d_n}, under additional assumptions on $\beta$ and the prior on $J,h$.
	
	\begin{theo}\label{theorem:rate_prior2_d_N}
		Under the same conditions as in theorem \ref{theorem:rate_prior2_d_n} and if in addition we have $\HH_J \subset [h_*,\frac{J^{-1/d}}{\ln^{\tau/d} N}]$ $\Pi-$almost surely for some $\tau > 2d$, $h_*>0$ satisfying assumption \ref{assumption:h} and if $\beta > d/2$ then there exists $M>0$ such that
		\[
		\Pi \c{\norm{f-f_0}_N > M\varepsilon_n | \mathbb{X}^n} \xrightarrow[n \to \infty]{P_0^\infty} 0
		\]
		for the rate
		\[
		\varepsilon_n = n^{-\frac{\beta}{2\beta+d}} \p{\ln N}^{\frac{\p{2\tau+d}\lceil \beta/2\rceil - \beta \p{\tau + 2\beta/d}}{2\beta+d}}
		\]
	\end{theo}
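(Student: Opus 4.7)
The plan is to combine the empirical-norm posterior contraction of Theorem~\ref{theorem:rate_prior2_d_n} with a concentration argument (in the spirit of Theorem~\ref{theorem:rate_prior1_d_N}) which transfers the bound from $\|\cdot\|_n$ to $\|\cdot\|_N$ uniformly over the relevant values of $(J,h)$ in the prior support. The decisive structural fact is that the support restriction $\HH_J \subset [h_*, J^{-1/d}/\ln^{\tau/d} N]$ with $\tau > 2d$ forces, $\Pi$-almost surely, the relation $J \leq h^{-d}/\ln^{\tau} N$, which is exactly the hyperparameter regime where Theorem~\ref{theorem:rate_prior1_d_N} applies.

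First I would restrict attention to a posterior event $\mathcal{E}_n = \{(J,h) : J \leq K J_n,\ h \in \HH_J\}$ for a sufficiently large constant $K$. The general prior-mass/testing scheme already invoked in the proof of Theorem~\ref{theorem:rate_prior2_d_n} shows that $\Pi[J > K J_n \mid \mathbb{X}^n]$ is $o_{P_0^\infty}(1)$, because the prior $\pi_J(j) \leq e^{-a_2 j L_j}$ decays at least exponentially in $j$, while the prior-mass lower bound around $f_0$ is governed by $J_n$. On $\mathcal{E}_n$ combined with the Kullback--Leibler small-ball inequality provided by Theorem~\ref{theorem:rate_prior2_d_n}, the posterior is concentrated on $\{ \|f-f_0\|_n \leq C\varepsilon_n\}$.

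Second, for each pair $(J,h) \in \mathcal{E}_n$ and each $f$ in $\Sigma^{J,h} := \mathrm{span}(u_1^{(h)},\ldots,u_J^{(h)})$ with $\|f-f_0\|_n \leq C\varepsilon_n$, I would establish a comparison inequality of the form
\[
\| f - f_0 \|_N^2 \lesssim \| f - f_0 \|_n^2 + \eta_n^2,
\]
with $\eta_n \lesssim \varepsilon_n$ holding with $P_0^\infty$-probability going to one, \emph{uniformly} in $(J,h) \in \mathcal{E}_n$. This is the same step that underlies the passage from Theorem~\ref{theorem:rate_prior1_d_n} to Theorem~\ref{theorem:rate_prior1_d_N}: since $N-n$ covariates are independent of the labels and the class $\Sigma^{J,h}$ has metric entropy of order $J \log n$, a Bernstein/chaining bound on the empirical process $f \mapsto \|f-f_0\|_N^2 - \|f-f_0\|_n^2$ yields the inequality for a fixed $(J,h)$. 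A union bound over $\mathcal{E}_n$ costs at most $KJ_n \cdot K_1 e^{K_2 J_n \ln n}$ pairs, adding only polylogarithmic factors that are absorbed into the final $\varepsilon_n$. The assumption $\beta > d/2$ ensures, exactly as in Theorem~\ref{theorem:rate_prior1_d_N}, that the resulting approximation term $\eta_n$ is dominated by $\varepsilon_n$.

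Combining the two steps, the posterior puts mass $1-o_{P_0^\infty}(1)$ on $\mathcal{E}_n \cap \{\|f-f_0\|_n \leq C\varepsilon_n\}$, on which $\|f-f_0\|_N \leq M\varepsilon_n$ for a universal constant $M$, which is the claimed statement. The main obstacle, technically, is the uniformity of the $\|\cdot\|_n$-to-$\|\cdot\|_N$ comparison: one must control the empirical-process deviation simultaneously over every subspace $\Sigma^{J,h}$ arising in the posterior support, not merely over the single subspace used in the non-adaptive case. The polynomial-in-$n$ bound on $\#\HH_J$ together with the exponential tail of $\pi_J$ is exactly what makes this union bound affordable, and no genuinely new probabilistic ingredient beyond those already employed in Theorems~\ref{theorem:rate_prior1_d_N} and~\ref{theorem:rate_prior2_d_n} should be needed.
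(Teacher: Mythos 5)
Your high-level plan (restrict the posterior to $J\leq KJ_n$ via the remaining-mass theorem, then transfer from $\|\cdot\|_n$ to $\|\cdot\|_N$) matches the paper. However, the mechanism you propose for the norm transfer is both different from and looser than the paper's, and contains two genuine gaps.

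First, you do not address the central obstruction: the function class $\Sigma^{J,h}=\mathrm{span}(u_1^{(h)},\ldots,u_J^{(h)})$ is itself a function of all $N$ covariates, so the empirical process $f\mapsto\|f-f_0\|_n^2-\|f-f_0\|_N^2$ does not decompose into a sum of independent, or even exogenous, terms that a Bernstein/chaining argument could be applied to directly. Saying ``$N-n$ covariates are independent of the labels'' is beside the point --- what matters is their non-independence from $\Sigma^{J,h}$. The paper circumvents this by exchangeability: conditional on the unordered multiset $\{x_1,\ldots,x_N\}$, which determines $\Sigma^{J,h}$, the labelled indices form a uniform sample without replacement, so for each \emph{fixed} $f$ Hoeffding's sampling-without-replacement inequality controls $\|f-f_n\|_N/\|f-f_n\|_{\tau,n}$. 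Relatedly, you compare $\|f-f_0\|_N$ to $\|f-f_0\|_n$, but the $L^\infty$--$L^2$ comparison in Lemma~\ref{thm:norm_comparison} only applies to elements of $\Sigma^{J,h}$; since $f_0\notin\Sigma^{J,h}$ one must instead center at an approximation $f_n\in\Sigma^{J,h}$ of $f_0$ (and then use $\|f_0-f_n\|_{L^\infty(\nu)}\lesssim\varepsilon_n$ separately). The paper does precisely this.

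Second, your claim that the union bound over $(J,h)\in\mathcal{E}_n$ costs ``at most $KJ_n\cdot K_1 e^{K_2 J_n\ln n}$ pairs, adding only polylogarithmic factors'' is incorrect: $e^{K_2 J_n\ln n}$ with $J_n\asymp n^{d/(2\beta+d)}$ (up to logs) is of the order $e^{K_2' n\varepsilon_n^2}$, which is an exponential-in-$n\varepsilon_n^2$ penalty, not polylogarithmic. Whether the chaining deviation bound can beat this multiplicative cost is a delicate constant-matching question, and is not free. The paper avoids the issue entirely: after using $D_n\geq e^{-c' n\varepsilon_n^2}$, the remaining quantity is an integral against the \emph{prior} $\sum_J\pi_J(J)\sum_h\pi_h(h|J)\int d\Pi(f|J,h)$; the per-$f$ Hoeffding bound is integrated, the sum over $h$ is absorbed by $\sum_h\pi_h(h|J)\leq 1$, and the sum over $J\leq kJ_n$ contributes at most a factor $kJ_n$. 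No union bound over $\HH_J$ is ever paid. The constraint $\beta>d/2$ then ensures that the exponent $nJ_n^{-1}\ln^{-3d/2}N$ from the norm-comparison lemma dominates $c'n\varepsilon_n^2$. Your plan would need a version of this constant tracking made explicit to be accepted, and in its current form the chaining step and the union-bound accounting are both incorrectly reasoned.
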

	
	\begin{remark}
		Theorem \ref{theorem:rate_prior2_d_N} thus shows that we can construct (near) minimax adaptive Bayesian procedures based on graph Laplacian decompositions, where the adaptivity is related to the (typically unknown) smoothness $\beta$. Similarly to theorem \ref{theorem:rate_prior2_d_n}, many priors satisfy the assumptions of Theorem \ref{theorem:rate_prior2_d_N}, for instance a Poisson or geometric prior on $J$ can be chosen together with $h = \frac{J^{-1/d}}{\ln^{\tau/d}n}, \tau > d/2$. Alternatively, as discussed below Theorem \ref{theorem:rate_prior2_d_n}, we can choose a discrete prior on $\mathcal H_J := \{ 2^l h_* : l = 0, \cdots, L\}$ with $h_* = m_n\p{\frac{\ln n}{n}}^{1/D}$ and $L$ such that $2^L h_* \leq \frac{J^{-1/d}}{\ln^{\tau/d}N} < 2^{L+1} h_*$, together with a probability mass function of the form of $ e^{- \lambda h^{-d} } h^a$ for arbitrary $m_n \to \infty, \lambda > 0$ and $a\geq 0$.
		
		Compared to Theorem \ref{theorem:rate_prior2_d_n}, we require $\HH_J \subset \c{h_*,\frac{J^{-1/d}}{\ln^{\tau/d}N}}$ for some $h_*$ satisfying assumption \ref{assumption:h} in order to apply Lemma \ref{thm:norm_comparison} a posteriori (and importantly with constants that do not depend on $h,J$), see the detailed proof in Section \ref{appendix:d_N}. In fact the condition could be weakened to $\HH_J \subset \c{h_*,\frac{J^{-1/d}}{\ln^{\tau/d}N}}$ on a set of posterior probability tending to $1$, but this is not necessary for the examples of priors we provide.
		\end{remark}
		
		\begin{remark}
		The same condition $\HH_J \subset \left[h_*, \frac{J^{-1/d}}{\ln^{\tau/d} N}\right]$ prevents us to design priors adaptive to $d$, contrariwise to Theorem \ref{theorem:rate_prior2_d_n}. This condition is crucial in our setting in order to use Lemma \ref{thm:norm_comparison}. Whether or not such a restriction is necessary or is an artefact of our proof technique is therefore an interesting question. To design a procedure that is adaptive both to $\beta$ and $d$ one could plug in a consistent estimator of $d$, see e.g \cite{berenfeld_density_2021} (the procedure then becomes an empirical Bayes procedure), and our theoretical results remain unchanged since the estimator only depends on $x$ and not on $y$.
	\end{remark}

	\begin{remark}
	Theorems \ref{theorem:rate_prior2_d_n} \& \ref{theorem:rate_prior2_d_N} hold for discrete priors  on $h$, however we believe that this is not a restriction since a continuous prior would imply that for every value of $h$, one would need to compute the eigenpairs $(u_j, \lambda_j)$ of the Laplacian corresponding to $h$. Obtaining a result which would hold for a continuous prior on $h$, or for empirical Bayes procedure, where optimization with respect to $h$ is performed, is not straightforward since it would necessitate to prove the concentration lemma \ref{lemma:concentration} uniformly over $h \in ( h_0	, h_1) \varepsilon_n^{1/\beta}$.
	\end{remark}

	

	Notice that our posterior contraction rates in Theorems \ref{theorem:rate_prior1_d_n} \ref{theorem:rate_prior1_d_N} \ref{theorem:rate_prior2_d_n} \ref{corollary:pcr_le} \& \ref{theorem:rate_prior2_d_N} are all impacted by a power of $\ln N$ : this can be large, for instance if $N$ grows exponentially with $n$. However under the mild assumption $N \leq n^B, B>0$ this factor become a power of $\ln n$ which is typically considered small from an asymptotic viewpoint, ignoring the multiplicative constant. Extending our results to the case $N >> n^B$ for all $B>0$ would require more work, but we believe that in this case the proof strategy of \cite{sanz-alonso_unlabeled_2022} could be adapted.
	
	\subsection{Approximation result}\label{subsection:approximation_results}
	
All three  theorems \ref{theorem:rate_prior1_d_n}, \ref{theorem:rate_prior1_d_N} \ref{corollary:pcr_le}, \ref{theorem:rate_prior2_d_N} \& \ref{theorem:rate_prior2_d_n} are proved using approximation results of $f_0$ in the graph spectral domain, but since the smoothness assumption $f_0 \in \CC^\beta \p{\MM}$ is only formulated at the continuous level this is a non trivial task. To prove that $f_0$ can be well approximated by an element of $\text{span}(u_1, \cdots, u_J)$ we first find an approximation of $f_0$ (as a function on $V$) by an element of the form $e^{-t\LL} f_t$ for some explicit $f_t$, with high probability with respect to the sampling distribution of the covariates $x_{1:N}$. Then we show that the projection of $e^{-t\LL} f_t$ onto $\text{span}(u_1, \cdots, u_J)$ is close to $e^{-t\LL}f_t$ and thus to $f_0$.
	
	\begin{theo}\label{thm:approximation}
		Let $\beta>0, f_0 \in \CC^\beta\p{\MM}$, assume that $h_n$ satisfies assumption \ref{assumption:h} and define for  $t > 0$
		\[
		f_t = \sum_{l=0}^k \frac{1}{l!} \p{t \LL}^l f_0, \quad f_t : V \rightarrow \mathbb R.
		\]
		where $k = \lceil \beta/2 \rceil - 1$. Then 
		\begin{enumerate}
			\item If $0 < \beta \leq 1$, there  exists a constant $C(\beta, f_0)$ we have
			\[
			\norm{f_0 - e^{-t\LL}f_t}_{L^\infty \p{\nu}} \leq C\p{\beta, f_0} th_n^{\beta-2}
			\]
			\item if $\beta > 1, p_0 \in \CC^{\beta-1}\p{\MM}$ then for any $H>0$ there exists a constant $C(H,\beta, f_0, p_0, \MM)$, such that
			\[
			\Pro_0 \p{ \norm{f_0 - e^{-t\LL}f_0}_{L^\infty \p{\nu}} > C\p{H,\beta,f_0,p_0,\MM} \p{\frac{t}{h_n^2}}^{k+1}\c{ h_n^\beta + \p{\frac{\ln N}{Nh_n^d}}^{1/2} h_n}} \leq N^{-H}
			\]
		\end{enumerate}
	\end{theo}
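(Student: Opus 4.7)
The plan is to reduce the theorem, via a Taylor-type expansion in the semigroup parameter, to a bound on $\|\LL^{k+1} f_0\|_{L^\infty(\nu)}$, and then to handle the two regularity regimes separately. Introduce $g(s) := e^{-s\LL}\sum_{l=0}^{k} (s\LL)^l f_0 / l!$ and differentiate in $s$. In each intermediate term the contribution $-\LL e^{-s\LL}(s\LL)^l f_0/l!$ from the exponential cancels the contribution $+\LL e^{-s\LL}(s\LL)^{l-1}f_0/(l-1)!$ from differentiating the polynomial, and only the top-order piece survives, giving
\begin{equation*}
g'(s) \;=\; -\frac{s^k}{k!}\, e^{-s\LL} \LL^{k+1} f_0.
\end{equation*}
Since $g(0) = f_0$ and $g(t) = e^{-t\LL} f_t$, integrating from $0$ to $t$ yields $f_0 - e^{-t\LL} f_t = \int_0^t \frac{s^k}{k!}\, e^{-s\LL} \LL^{k+1} f_0\, ds$. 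Because $-\LL$ is a rate matrix, $e^{-s\LL}$ is row-stochastic and has operator norm at most $1$ on $L^\infty(\nu)$, so
\begin{equation*}
\|f_0 - e^{-t\LL} f_t\|_{L^\infty(\nu)} \;\leq\; \frac{t^{k+1}}{(k+1)!}\, \|\LL^{k+1} f_0\|_{L^\infty(\nu)}.
\end{equation*}

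For case (1), $0 < \beta \leq 1$ forces $k = 0$ and $f_t = f_0$, and the problem reduces to bounding $\|\LL f_0\|_{L^\infty(\nu)}$. Every edge $x \sim y$ satisfies $\|x - y\| \leq h_n$, so the H\"older regularity of $f_0$, combined with the local comparability of Euclidean and geodesic distances on a smooth compact submanifold, gives $|f_0(x) - f_0(y)| \lesssim h_n^\beta$, whence $\|\LL f_0\|_{L^\infty(\nu)} \lesssim h_n^{\beta - 2}$ and the claimed bound $t h_n^{\beta - 2}$ follows.

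For case (2), $\beta > 1$ and $k \geq 1$, the target becomes the high-probability estimate $\|\LL^{k+1} f_0\|_{L^\infty(\nu)} \lesssim h_n^{\beta - 2(k+1)} + \sqrt{\ln N / (Nh_n^d)}\, h_n^{1 - 2(k+1)}$. I would write $\LL^{k+1} = h_n^{-2(k+1)}(I - T_h)^{k+1}$, where $T_h = D^{-1} A$ is the random-walk averaging operator, expand binomially, and recognise each $T_h^j f_0(x)$ as the average of $f_0$ over $j$-step walks from $x$, whose total displacement is at most $(k+1) h_n$. For the bias, a Taylor expansion of $f_0$ to order $\lfloor \beta \rfloor$ around each starting vertex produces polynomial terms which cancel in the alternating binomial sum — because $(I - T_h)^{k+1}$ annihilates low-order polynomials up to curvature corrections from the ambient manifold and density $p_0$ — leaving only a H\"older remainder of size $\lesssim h_n^\beta$ and producing the bias $h_n^{\beta - 2(k+1)}$. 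The stochastic deviation I would handle by telescoping $\LL^{k+1} f_0$ into $k+1$ one-step fluctuations and applying the single-step concentration of Lemma~\ref{lemma:concentration} to each, together with a union bound over the $N$ vertices, producing the $\sqrt{\ln N / (Nh_n^d)}\, h_n^{1 - 2(k+1)}$ contribution with probability at least $1 - N^{-H}$.

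The main obstacle is executing this last step cleanly: after a single application of $\LL$, the resulting object is defined only on $V$ and no longer enjoys a continuous H\"older class on $\MM$, so the continuous Taylor-cancellation argument cannot be iterated naively. The delicate part is to interleave the continuous Taylor estimates (which are valid for the function $f_0$ itself, using $f_0 \in \CC^\beta(\MM)$ and $p_0 \in \CC^{\beta-1}(\MM)$) with the discrete concentration of Lemma~\ref{lemma:concentration} so that both the $h_n^\beta$ bias exponent and the $h_n^{1 - 2(k+1)}$ fluctuation exponent are preserved, and so that the implied constants depend only on $\|f_0\|_{\CC^\beta(\MM)}$, $\|p_0\|_{\CC^{\beta-1}(\MM)}$ and intrinsic data of $\MM$, independently of the iteration depth $k + 1$.
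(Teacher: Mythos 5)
Your reduction to $\norm{\LL^{k+1} f_0}_{L^\infty(\nu)}$ is correct and is essentially the paper's own first step: the paper obtains
\[
\norm{f_0 - e^{-t\LL}f_t}_{L^\infty(\nu)} \;\leq\; \frac{\norm{(t\LL)^{k+1} f_0}_{L^\infty(\nu)}}{(k+1)!}
\]
by Taylor-expanding $\lambda \mapsto e^{-\lambda t}$ with integral remainder and applying functional calculus, which is the same identity you reach by differentiating $g(s)$; both use $\norm{e^{-s\LL}}_{L^\infty(\nu)}=1$. Your case~(1) is also the paper's. (One minor slip: $\beta>1$ does not force $k\geq 1$, since for $1<\beta\leq 2$ one has $k=0$; the claimed bound is nontrivial already there.)

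The gap is in case~(2), and you yourself name it at the end. Writing $\LL^{k+1}=h_n^{-2(k+1)}(I-D^{-1}A)^{k+1}$, expanding binomially, and hoping that the alternating sum of $j$-step averages $(D^{-1}A)^j f_0$ annihilates low-order Taylor polynomials ``up to curvature corrections'' is not how the paper proceeds, and it is not clear how to make it work: once you apply $D^{-1}A$ even once, you have a random function defined only on $V$, and there is no continuous H\"older class to run a Taylor expansion on at the next step, nor an obvious way to control the cross-terms in the binomial expansion with the right powers of $h_n$. (Note also a naming clash: in the paper $T_h$ is \emph{not} $D^{-1}A$; it is the \emph{deterministic} integral operator $T_hf(x)=\frac{1}{h^2P_0(B(x,h))}\int_{B(x,h)\cap\MM}(f(x)-f(y))p_0(y)\mu(dy)$.)

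The missing ingredient is precisely the mechanism that restores continuity after each application of $\LL$. The paper introduces the deterministic operator $T_h$ above and proves (Lemma~\ref{lemma:regularity}, item~2) that when $f\in\CC^\beta(\MM)$, $p_0\in\CC^{\beta-1}(\MM)$ and $\beta>2$, one can write
\[
T_h f \;=\; \sum_{l=1}^{k} h^{2(l-1)} g_h^{(l)} \;+\; O\!\left(\norm{f}_{\CC^\beta}\,h^{\beta-2}\right), \qquad g_h^{(l)} \in \CC^{\beta-2l}(\MM),\quad \norm{g_h^{(l)}}_{\CC^{\beta-2l}}\lesssim\norm{f}_{\CC^\beta}.
\]
This decomposition is what makes the induction close: one writes $(h^2\LL)^{k+1}f_0 = h^2(h^2\LL)^k T_{h_n}f_0 + h^2(h^2\LL)^k(\LL f_0 - T_{h_n}f_0)$, controls the second term using $\norm{h^2\LL}_{L^\infty(\nu)}\leq 2$ together with the one-step concentration of Lemma~\ref{lemma:concentration} (this matches your ``telescoping into one-step fluctuations'' idea), and for the first term replaces $T_{h_n}f_0$ by $\sum_l h^{2(l-1)}g_h^{(l)}$ and applies the inductive hypothesis to each $g_h^{(l)}$, which lives in $\CC^{\beta-2l}(\MM)$ -- a \emph{continuous} H\"older class of the right reduced order. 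Without this lemma your proposal correctly diagnoses the obstacle (``the resulting object is defined only on $V$'') but does not resolve it, and the binomial/Taylor-cancellation route would require establishing a discrete annihilation-of-polynomials statement on the random graph that is both stronger and harder than what is actually needed.
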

	
The proof of Theorem \ref{thm:approximation} is given in Section \ref{sec:pr:thm:approx}.

Interestingly, when $\beta>1$, 
\[h_n^\beta\geq h_n \sqrt{ \frac{ \ln N }{ Nh_n^d }}, \quad \Leftrightarrow h_n \geq \p{\frac{\ln N}{N}}^{\frac{1}{2(\beta-1)+d}}\]
which is  - up to the $\ln N$ term - the condition required  \cite{green_minimax_2021} theorem 6 in the case $N=n$. Although the proof techniques are very different, in both cases this condition is used to  bound  deviations of $\mathcal Lf $ around its mean.
	
To prove that $f_0$ can be well approximated by a function of the form  $f = \sum_{j=1}^{J_n} z_j u_j$ for some choice of truncation $J_n$, we now show that   $e^{-t\LL} f_t$ is close to $p_{J_n}(e^{-t\LL} f_t)$, its $L^2\p{\nu}$ projection onto $\text{span}(u_1, \cdots, u_{J_n})$. Since we are able to lower bound the eigenvalues of $\LL$ with high probability, thanks to the exponential factor $e^{-t\LL}$ this is actually straightforward.
	
	\begin{theo}\label{thm:approximation2}
		Let $\beta>0, f_0 \in \CC^\beta\p{\MM}, h_n$ satisfying assumption \ref{assumption:h} and $J_n \in \b{1,\ldots,N}$ satisfying $J_n \geq \ln^\kappa N, \kappa > d$. Let $f_t : V \to \RR$ be defined by
		\[
		f_t = \sum_{l=0}^k \frac{1}{l!} \p{t \LL}^l f_0
		\]
		where $k = \lceil \beta/2 \rceil - 1$. Then there exists $c > 0$ such that with $t = c\lambda_{J_n}^{-1} \ln N$ and $\tilde \varepsilon_n(J_n, h_n)$ the rate
		\[
		\tilde \varepsilon_n (J_n, h_n) =
			\p{\ln N}^{ \lceil \beta/2 \rceil} \max\left( 1, \frac{J_n^{-2/d} \ln N}{h_n^2}\right)^{k+1} \p{ h_n^\beta + \II_{\beta > 1} \p{\frac{\ln N}{Nh_n^d}}^{1/2} h_n } 
		\]
		we have
		\begin{enumerate}
			\item If $0 < \beta \leq 1$ : there exists a constant $C(\beta, f_0,\rho)$ such that
			\[
			\norm{f_0 - p_{J_n} \p{e^{-t\LL}f_t}}_{L^\infty \p{\nu}} \leq C(\beta, f_0) \tilde \varepsilon_n (J_n,h_n) 
			\]
			\item If $\beta > 1, p_0 \in \CC^{\beta-1}\p{\MM}, H>0$ : for some constant $C(H,\beta,p_0,f_0,\MM,\rho)$
			\[
			\Pro_0 \p{\norm{f_0 - p_{J_n} \p{e^{-t \LL}f_t}}_{L^\infty\p{\nu}} \geq C(H,\beta,p_0,f_0,\MM) \tilde \varepsilon_n (J_n,h_n) } \leq N^{-H}
			\]
		\end{enumerate}
	\end{theo}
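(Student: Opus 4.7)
The plan is to decompose the error by the triangle inequality
\[
\|f_0 - p_{J_n}(e^{-t\LL}f_t)\|_{L^\infty(\nu)} \leq \|f_0 - e^{-t\LL}f_t\|_{L^\infty(\nu)} + \|(I - p_{J_n})\,e^{-t\LL}f_t\|_{L^\infty(\nu)},
\]
control the first (``consistency'') term by Theorem \ref{thm:approximation}, and show that with the choice $t = c\lambda_{J_n}^{-1}\ln N$ the second (``truncation'') term is polynomially small in $N$ and hence negligible compared to $\tilde\varepsilon_n(J_n,h_n)$.

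For the truncation term, expanding in the orthonormal eigenbasis of $\LL$ and using Cauchy--Schwarz gives
\[
\|(I-p_{J_n})\,e^{-t\LL}f_t\|_{L^\infty(\nu)} \leq \max_{j>J_n}\|u_j\|_{L^\infty(\nu)}\left(\sum_{j>J_n}e^{-2t\lambda_j}\right)^{1/2}\|f_t\|_{L^2(\nu)}.
\]
Because $t\lambda_j \geq t\lambda_{J_n}=c\ln N$ for $j>J_n$, the middle factor is at most $\sqrt{N}\,N^{-c}$. For the remaining factors I use the operator bound $\|\LL\|_{L^\infty(\nu)\to L^\infty(\nu)}\leq 2h_n^{-2}$ to obtain $\|f_t\|_{L^\infty(\nu)} \leq \|f_0\|_{L^\infty(\nu)}\sum_{l=0}^k(2t/h_n^2)^l/l!$ (polynomial in $N$), and the standard bound $\|u_j\|_{L^\infty(\nu)}\leq (\min_x \nu_x)^{-1/2}\lesssim\sqrt{N}$, which holds on a high-probability event under assumption \ref{assumption:h} since $\nu_x \asymp 1/N$. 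Taking $c$ sufficiently large (depending only on $\beta,d,b$) makes the whole truncation term bounded by $N^{-p}$ for arbitrarily large $p$, hence dominated by $\tilde\varepsilon_n$.

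It remains to substitute $t=c\lambda_{J_n}^{-1}\ln N$ into the bound of Theorem \ref{thm:approximation} and to recover the form stated with $J_n^{-2/d}\ln N/h_n^2$. This requires a high-probability lower bound $\lambda_{J_n}\gtrsim J_n^{2/d}$ on the random eigenvalue, which yields $t/h_n^2 \lesssim J_n^{-2/d}\ln N/h_n^2$; such a bound follows from the quantitative spectral convergence of the normalized graph Laplacian $\LL$ to the weighted Laplace--Beltrami operator $\Delta$ on $\MM$ (as in \cite{garcia_trillos_error_2020,calder_improved_2020}) combined with Weyl's law $\lambda_j^\Delta\asymp j^{2/d}$. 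The assumption $J_n\geq \ln^{\kappa} N$ with $\kappa>d$, together with assumption \ref{assumption:h}, is precisely what guarantees that the spectral approximation error is small relative to $\lambda_{J_n}$, so that the Weyl-type estimate transfers to $\LL$ on an event of probability at least $1-N^{-H}$. Feeding this into Theorem \ref{thm:approximation} gives, for $\beta\leq 1$, a bound $\lesssim \ln N\,(J_n^{-2/d}/h_n^2)\,h_n^\beta$ and, for $\beta>1$, a bound $\lesssim (J_n^{-2/d}\ln N/h_n^2)^{k+1}[h_n^\beta+(\ln N/(Nh_n^d))^{1/2}h_n]$, both of which are absorbed by $\tilde\varepsilon_n(J_n,h_n)$ thanks to the extra $(\ln N)^{\lceil\beta/2\rceil}\max(1,\cdot)^{k+1}$ slack in its definition.

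The main obstacle is the spectral lower bound $\lambda_{J_n}\gtrsim J_n^{2/d}$ holding uniformly for every admissible $J_n$ with explicit high-probability control; this is the technical heart of the argument and must be invoked from existing quantitative graph Laplacian convergence results adapted to the random geometric graph built on $x_{1:N}$, rather than derived here. Once that input is available, the remainder of the proof is the elementary decomposition described above.
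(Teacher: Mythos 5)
Your overall decomposition
\[
\norm{f_0 - p_{J_n}(e^{-t\LL}f_t)}_{L^\infty(\nu)} \leq \norm{f_0 - e^{-t\LL}f_t}_{L^\infty(\nu)} + \norm{(I-p_{J_n})e^{-t\LL}f_t}_{L^\infty(\nu)}
\]
together with the choice $t = c\lambda_{J_n}^{-1}\ln N$, a crude deterministic bound $\norm{u_j}_{L^\infty(\nu)}\lesssim N$ (or $\sqrt N$ on a high-probability event), $\norm{f_t}_{L^\infty(\nu)}\lesssim (t/h_n^2)^k$, and Theorem \ref{thm:approximation} for the first term, is exactly the structure of the paper's proof in Section \ref{sec:pr:thm:approx2}; these steps are fine. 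The crucial divergence is how you justify the eigenvalue lower bound $\lambda_{J_n}\gtrsim J_n^{2/d}$. You propose to deduce it from quantitative spectral convergence of $\LL$ to the continuum Laplace--Beltrami operator \cite{garcia_trillos_error_2020,calder_improved_2020} plus Weyl's law. The paper deliberately does \emph{not} do this: the whole point (stated in the introduction and reiterated in the comparison to \cite{dunson_graph_2022,sanz-alonso_unlabeled_2022}) is that no discrete-to-continuum spectral comparison is needed. Instead, the eigenvalue bound is proven \emph{intrinsically on the graph} in Lemma \ref{spectrum:lower_bound}, which derives a Weyl-type counting bound from the on-diagonal heat kernel upper bound $p_t^{(h)}(x,x)\lesssim t^{-d/2}$ (Lemma \ref{lem:on_diag_bound_up_to_t1}), which in turn rests on volume regularity (Theorem \ref{thm:volume_regularity}), a discrete Poincar\'e inequality (Theorem \ref{thm:poincare}), and a Nash/Barlow-type argument on the graph. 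This gives $\lambda_j^{(h_n)}\geq b_3 j^{2/d}$ for $b_1\ln^d N\leq j\leq b\,h_n^{-d}/\ln^{d/2}N$ on the event $A_N$, and monotonicity of the $\lambda_j$'s handles larger $J_n$.

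This is more than a stylistic difference. The continuum spectral convergence results you cite are proven under conditions on $(N,h,j)$ that are not obviously implied by Assumption \ref{assumption:h}; their error bounds contain a multiplicative factor of order $h\sqrt{\lambda_j}\sim h j^{1/d}$, which is only polylogarithmically small when $j$ is near $h^{-d}$, the very regime the theorem needs (since $J_n$ can be as large as $h_n^{-d}/\ln^{\tau}N$). Importing a uniform high-probability lower bound from those works at this scale, under this paper's weaker assumptions and without extra unlabeled data, would require its own careful verification and is precisely the dependency the paper is designed to remove. You should invoke the paper's Lemma \ref{spectrum:lower_bound} rather than continuum spectral convergence. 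Relatedly, you misread the role of $J_n\geq\ln^{\kappa}N$ with $\kappa>d$: it is a \emph{lower} bound on $J_n$ ensuring $J_n$ lies in the validity window of the heat-kernel Weyl estimate, not a condition controlling the size of a continuum-approximation error relative to $\lambda_{J_n}$.
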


	The proof of Theorem \ref{thm:approximation2} is given in  Section \ref{sec:pr:thm:approx2}.

	Both theorem \ref{thm:approximation} \& \ref{thm:approximation2} are useful results in their own rights, and can be used to design and study the convergence properties of frequentist estimators, as we have done in Theorem \ref{corollary:pcr_le}.

\section{ Main proofs}\label{sec:main_proofs}
In this Section we present the proofs of Theorems \ref{thm:approximation} and \ref{thm:approximation2}. 
To control the approximation of $f_0$ by $e^{-t \LL}f_t$ we introduce a deterministic  approximation of  the operator $\mathcal L f$. More precisely, recall that 
\[\mathcal L f = \frac{ 1 }{ \mu_x h^2 } \sum_{y \sim x} f(x) -  f(y)\]
By analogy this leads us to the following definition :
\begin{definition}
		For every $f \in \CC\p{\MM}$ and $h>0$ we define
		\[
		T_h f(x) = \frac{1}{h^2 P_0(B_{\RR^D}(x,h))} \int_{B_{\RR^D}(x,h) \cap \MM} \p{f(x)-f(y)} p_0(y)\mu(dy)
		\]
		where $P_0 \p{B_{\RR^D}(x,h)} = \int_{B_{\RR^D}(x,h) \cap \MM} p_0(y)\mu(dy)$.
	\end{definition}
	$T_h$ acts as a second order nonlocal differential operator and is a deterministic approximation of the operator $\mathcal L$ as shown in Lemma \ref{lemma:concentration} below. 

\subsection{Proof of theorem \ref{thm:approximation} } \label{sec:pr:thm:approx}
	
By a Taylor expansion of $t \mapsto e^{-\lambda t}$ (with $\lambda \geq 0$) we have
		\[
		1 = e^{-\lambda t}\sum_{l=0}^k \frac{\p{\lambda t}^l}{l!} + R_k(t)
		\]
		where the remainder is given by
		\[
		R_k(t) = \frac{1}{k!} \int_0^1 \p{\lambda x t}^{k+1}e^{-\lambda x t} \frac{dx}{x}
		\]
		Hence by (finite dimensional) functional calculus we find
		\[
		f_0 = e^{-t \LL}f_t + \frac{1}{k!} \int_0^1 \p{x t \LL}^{k+1}e^{-x t \LL}f_0 \frac{dx}{x}
		\]
		Therefore, for all $x \in V$, using $\norm{e^{-t\LL}}_{L^\infty\p{\nu}} = 1$, we bound
		\begin{align*}
			\norm{f_0 - e^{-t\LL}f_t}_{L^\infty\p{\nu}} \leq & \frac{1}{k!} \int_0^1 \norm{\p{xt\LL}^{k+1}e^{-xt\LL}f_0}_{L^\infty\p{\nu}} \frac{dx}{x}  = \frac{1}{k!} \int_0^t \norm{\p{s\LL}^{k+1}e^{-s\LL}f_0}_{L^\infty\p{\nu}} \frac{ds}{s} \\
			\leq & \frac{1}{k!} \int_0^t \norm{\p{s\LL}^{k+1}f_0}_{L^\infty\p{\nu}} \frac{ds}{s} 
			=  \frac{\norm{\p{t\LL}^{k+1}f_0}_{L^\infty\p{\nu}}}{(k+1)!}.
		\end{align*}
		We  now bound $\norm{\p{t\LL}^{k+1}f_0}_{L^\infty\p{\nu}}$, considering the three cases: $\beta\leq1$, $1<\beta\leq 2$ and $\beta>2$. 
		\begin{itemize}
			\item If $\beta \leq 1$ then the result is trivial as
			\[
			\abs{\p{t\LL}f_0(x)} = t\abs{\frac{1}{h_n^2 \mu_x} \sum_{y \sim x} f_0(x) - f_0(y)} \lesssim \norm{f_0}_{\CC^\beta \p{\MM}} th_n^{\beta-2}
			\]
			\item If $1 < \beta \leq 2$, 
we have 
	\begin{align*}
				\norm{t\LL f_0}_{L^\infty \p{\nu}} \leq & t \norm{\LL f_0 - T_{h_n}f_0}_{L^\infty \p{\nu}} + t \norm{T_{h_n} f_0}_{L^\infty \p{\nu}}.
			\end{align*}			
			 Then lemma \ref{lemma:regularity} implies
			\[
			\norm{T_{h_n} f_0}_{L^\infty\p{\MM}} \leq C(\beta, p_0)\norm{f_0}_{\CC^\beta\p{\MM}} h_n^{\beta-2}
			\]
		and	Lemma \ref{lemma:concentration} implies that for any $H>0$ there exists $M_0>0$ such that 
			\[
			\Pro_0 \p{ \norm{\LL f_0 - T_{h_n}f_0}_{L^\infty \p{\nu}} > M_0 \p{\frac{\ln N}{N}}^{1/2} h_n^{-(1+d/2)}} \leq N^{-H}
			\]
			Hence, for some $C(H, \beta, f_0, p_0)>0$
			\[
			\Pro_0 \p{\norm{h_n^2\LL f_0}_{L^\infty \p{\nu}} >  C(H, \beta, p_0, f_0) \p{ \p{\frac{\ln N}{N}}^{1/2} h_n^{1-d/2} + h_n^{\beta}}} \leq N^{-H}
			\]
			
			\item When $\beta > 2$ we use an inductive argument. 
			We have shown that for $k=\lceil \beta/2 \rceil -1 = 0$, if $f \in \mathcal C^\beta$,  
			\[ \norm{h_n^2\LL f}_{L^\infty \p{\nu}} \leq C_0(H,\beta,f_0,p_0,\MM) [  h_n^{\beta} + \Delta_n(f)]\]
			where $\Pro_0 \p{|\Delta_n(f)| >  M_0 \p{\frac{\ln N}{Nh_n^d}}^{1/2} h_n} \leq N^{-H}$.
	Assume that for all $k'< k$, if $f \in \mathcal C^{\beta'}$ with $2k'<\beta'	\leq 2(k'+1)$, then	
				\[ \norm{(h_n^2\LL f)^{k'+1}}_{L^\infty \p{\nu} }\leq C_{k'}(H,\beta, p_0, f_0, \MM) [  h_n^{\beta'} + \Delta_{n,k'}(f)]\]
			where $\Pro_0 \p{|\Delta_{n,k'}(f)| >  M_0 \p{\frac{\ln N}{Nh_n^d}}^{1/2} h_n } \leq N^{-H}$.
			We prove that the same holds for $k$. Let $f \in \mathcal C^\beta$ with $2k< \beta \leq 2(k+1)$.  
			Since $\beta > 2$ by lemma \ref{lemma:regularity} we find $f_1,\ldots,f_{k}$ (that depend on $h_n$) such that for all $l \in \b{1,\ldots,k}$
			\[
			\norm{f_l}_{\CC^{\beta-2l}\p{\MM}} \lesssim \norm{f}_{\CC^\beta\p{\MM}}, \quad \norm{T_{h_n} f - \sum_{l=1}^k h_n^{2(l-1)}f_1^{(l)}}_{L^\infty\p{\MM}} \leq C_l(p_0,\MM) \norm{f}_{\CC^\beta\p{\MM}} h_n^{\beta-2}
			\]
			Using $\norm{h_n^2\LL}_{L^\infty\p{\nu}} \leq 2$ which holds since 
			\[
			\forall f \in \RR^V, x \in V, \abs{\p{h_n^2\LL f}(x)} = \abs{\frac{1}{\mu_x} \sum_{y \sim x} f(x) - f(y)} \leq \frac{2\norm{f}_{L^\infty \p{\nu}}}{\mu_x} \# \b{y : y \sim x} = 2 \norm{f}_{L^\infty \p{\nu}}
			\]
			we get
			\begin{align*}
				 \norm{\p{h_n^2\LL}^{k+1}f_0}_{L^\infty \p{\nu}} 
				&=  h_n^2 \norm{\p{h_n^2\LL}^k \p{T_{h_n} f_0 + \LL f_0 - T_{h_n}f_0}}_{L^\infty \p{\nu}} \\
				&\leq  h_n^2 \norm{\p{h_n^2 \LL}^k T_{h_n}f_0}_{L^\infty \p{\nu}} + h_n^2 \norm{\p{h_n^2\LL}^k \p{\LL f_0 - T_{h_n}f_0}}_{L^\infty \p{\nu}} \\
				&\lesssim  h_n^2 \norm{\p{h_n^2\LL}^k T_{h_n}f_0}_{L^\infty \p{\nu}} + h_n^2 \norm{\LL f_0 - T_{h_n}f_0}_{L^\infty \p{\nu}} \\
				&=  h_n^2 \norm{\p{h_n^2\LL}^k \p{\sum_{l=1}^k h_n^{2(l-1)} f_l + T_{h_n}f_0 - \sum_{l=1}^k h_n^{2(l-1)} f_l}}_{L^\infty \p{\nu}} + h_n^2 \norm{\LL f_0 - T_{h_n}f_0}_{L^\infty \p{\nu}} \\
				&\leq  h_n^2\sum_{l=1}^k h_n^{2(l-1)} \norm{\p{h_n^2 \LL}^k  f_l}_{L^\infty \p{\nu}} + h_n^2 2^k C_l(p_0,\MM) \norm{f}_{\CC^\beta\p{\MM}} h_n^{\beta-2}+ \Delta_n(f_0).			\end{align*}
	Now, 
	\begin{align*}
	\norm{\p{h_n^2 \LL}^k  f_l}_{L^\infty \p{\nu}} &\leq 2^{l-1} \norm{\p{h_n^2 \LL}^{k-l+1}  f_l}_{L^\infty \p{\nu}} \\
	& \leq C_{k-l}(H,\beta, p_0, f_0, \MM)[   h_n^{\beta-2l} + \Delta_{n,k-l}(f)]
	\end{align*}
	so that for all  $f \in \mathcal C^\beta$ 
	\begin{align*}
\norm{\p{h_n^2\LL}^{k+1}f}_{L^\infty \p{\nu}} & \leq 
h_n^2\sum_{l=1}^k C_{k-l}(\|f\|_{\mathcal C^\beta}, p_0, \MM) h_n^{2(l-1)}  h_n^{\beta-2l} +\sum_{l=0}^k C_{k-l}(\|f\|_{\mathcal C^\beta}, p_0, \MM) \Delta_{n,k-l}(f_0) \\
& \leq C'(\beta, f_0, p_0, \MM)[ h_n^\beta + \Delta_{n,k}(f)]
\end{align*}
where
$\Pro_0 \p{|\Delta_{n,k}(f)| >  M_0 \p{\frac{\ln N}{Nh_n^d}}^{1/2} h_n } \leq N^{-H}$ for $M_0$ large enough. This concludes the inductive argument. Therefore 
\begin{equation*}
	\norm{f_0 - e^{-t\LL}f_t}_{L^\infty\p{\nu}} \leq C'(H, \beta, f_0, p_0, \MM) \left( \frac{ t }{ h_n^2 } \right)^{k+1}[ h_n^\beta + \Delta_{n,k}(f_0)]
\end{equation*}
where $\Pro_0 \p{|\Delta_{n,k}(f_0)| >  M_0 \p{\frac{\ln N}{Nh_n^d}}^{1/2} h_n } \leq N^{-H}$.
		\end{itemize}

\subsection{Proof of Theorem \ref{thm:approximation2}} \label{sec:pr:thm:approx2}
	
We bound  $\norm{f_0 - p_{J_n} \p{e^{-t\LL}f_t}}_{L^\infty \p{\nu}} $ by showing that $\norm{ e^{-t\LL}f_t - p_{J_n} \p{e^{-t\LL}f_t}}_{L^\infty \p{\nu}} $ is small, using a lower bound on the eigenvalues $\lambda_j's$ given by lemma \ref{spectrum:lower_bound} : since $h_n$ satisfies assumption \ref{assumption:h}, choosing $h_-=h_n$ in the definition of $A_N$, together with Lemma \ref{spectrum:lower_bound}, we have that $\lambda_j^{(h_n)} \geq b_3 j^{2/d}$ for all  $b_1 \ln^d N \leq j \leq t_0(h_n) =  b \frac{h_n^{-d}}{\ln^{d/2} N}, $, so that since $ \ln^\kappa N \leq J$ with $\kappa > d$, we have $\lambda_{J} = \lambda_{J}^{(h_n)} \geq  b_3 J^{2/d}$ when 
$ J \leq b \frac{h_n^{-d}}{\ln^{d/2} N}:= \bar J_n$.

Moreover, because the $\lambda_j's$ are ordered we have 
\[
\forall J \geq\bar J_n , \quad \lambda_J \geq \lambda_{\lfloor \bar J_n\rfloor } \geq b_3 b^{2/d} \frac{h_n^{-2}}{\ln N}
\]
Therefore
		\begin{align*}
		\norm{f_0 - p_{J} \p{e^{-t\LL}f_t}}_{L^\infty \p{\nu}} \leq & \norm{f_0 - e^{-t\LL}f_t}_{L^\infty \p{\nu}} + \norm{\sum_{j > J} \inner{u_j|e^{-t\LL} f_t}_{L^2\p{\nu}} u_j }_{L^\infty \p{\nu}} \\
		= & \norm{f_0 - e^{-t\LL}f_t}_{L^\infty \p{\nu}} + \norm{\sum_{j > J} e^{-t\lambda_j} \inner{u_j|f_t}_{L^2\p{\nu}} u_j }_{L^\infty \p{\nu}} \\
		\leq & \norm{f_0 - e^{-t\LL}f_t}_{L^\infty \p{\nu}} + \sum_{j > J} e^{-t\lambda_j} \abs{\inner{u_j|f_t}_{L^2\p{\nu}}} \norm{u_j}_{L^\infty \p{\nu}} \\
		\leq & \norm{f_0 - e^{-t\LL}f_t}_{L^\infty \p{\nu}} + \norm{f_t}_{L^2\p{\nu}} e^{-t\lambda_{J}} \sum_{j > J} \norm{u_j}_{L^\infty \p{\nu}}
		\end{align*}
		By lemma \ref{proposition:uniform_norm_u_j} we have $\max_{1 \leq j \leq N} \norm{u_j}_{L^\infty \p{\nu}} \leq N$ and therefore
		\[
		\norm{f_0 - p_{J} \p{e^{-t\LL}f_t}}_{L^\infty \p{\nu}} \lesssim \norm{f_0 - e^{-t\LL}f_t}_{L^\infty \p{\nu}} + N^2 \norm{f_t}_{L^2\p{\nu}} e^{-t\lambda_J}
		\]
		Moreover, since $f_t = \sum_{l=0}^k \frac{1}{l!} \p{t\LL}^l f_0$ and $\norm{h_n^2\LL}_{\LL \p{L^\infty \p{\nu}}} \leq 2$ we get $\norm{f_t}_{L^\infty \p{\nu}} \leq  e^2 \p{\frac{t}{h_n^2}}^k \norm{f_0}_{L^\infty\p{\MM}}$, which implies
		\[
		\norm{f_0 - p_J \p{e^{-t\LL}f_t}}_{L^\infty \p{\nu}} \lesssim \norm{f_0 - e^{-t\LL}f_t}_{L^\infty \p{\nu}} + N^2 e^2 \p{\frac{t}{h_n^2}}^k e^{-t\lambda_J}
		\]
		Thus for any $K>0$, choosing $t = c\lambda_{J}^{-1}\ln N$ for $c >0$ large enough yields
		\[
		\norm{f_0 - p_J \p{e^{-t\LL}f_t}}_{L^\infty \p{\nu}} \lesssim \norm{f_0 - e^{-t\LL}f_t}_{L^\infty \p{\nu}} + N^{-K}
		\]
		Finally we bound $\norm{f_0 - e^{-t\LL}f_t}_{L^\infty \p{\nu}}$ using Theorem \ref{thm:approximation} which gives us for $K>0$ large enough and arbitrarily large $H>0$
		\[
		\Pro_0 \p{ \norm{f_0 - p_J \p{e^{-t\LL}f_t}} > C(H,\beta,f_0,p_0,\MM) \tilde \varepsilon_n } \leq N^{-H}
		\]
		where
		\begin{align*}
		\tilde \varepsilon_n = & \p{\frac{t}{h_n^2}}^{\lceil \beta/2 \rceil} \p{ h_n^\beta + \II_{\beta > 1} \p{\frac{\ln N}{Nh_n^d}}^{1/2} h_n } 
		\lesssim  \p{\frac{\lambda_{J}^{-1} \ln N}{h_n^2}}^{\lceil \beta/2 \rceil} \p{ h_n^\beta + \II_{\beta > 1} \p{\frac{\ln N}{Nh_n^d}}^{1/2} h_n } \\
		\lesssim & \begin{cases}
			\p{\frac{J^{-2/d} \ln N}{h_n^2}}^{\lceil \beta/2 \rceil} \p{ h_n^\beta + \II_{\beta > 1} \p{\frac{\ln N}{Nh_n^d}}^{1/2} h_n } \text{ if } J \leq \bar J_n \\
			\p{\ln N}^{2 \lceil \beta/2 \rceil} \p{ h_n^\beta + \II_{\beta > 1} \p{\frac{\ln N}{Nh_n^d}}^{1/2} h_n } \text{ if } J > \bar J_n
		\end{cases}
		\end{align*}
		which concludes the proof.

\subsection{Lemmas \ref{lemma:regularity} and \ref{lemma:concentration}}

below we present two Lemmas which control the terms $\Delta_n(f)$ together with the behaviour of $T_h f$.

	\begin{lem}\label{lemma:regularity}
		Let $a_\MM,C_0,h_+$ the constants defined in Appendix \ref{sec:regularity_M_holder_spaces} and \ref{appendix:geometry_of_the_random_graph}. Then
		\begin{enumerate}
			\item If $0 < \beta \leq 2, h \leq \frac{\pi a_\MM}{C_0}, f \in \CC^\beta \p{\MM}$ (with the additional assumption $p_0 \in \CC^{\beta-1}\p{\MM}$ if $1 < \beta \leq 2$), then $\norm{T_hf}_{L^\infty\p{\MM}} \lesssim \norm{f}_{\CC^\beta\p{\MM}} h^{\beta-2}$
			\item If $\beta > 2, f \in \CC^\beta \p{\MM}, p_0 \in \CC^{\beta-1}\p{\MM}, h \leq h_+$, then there exists $\p{g_h^{(l)}}_{l=1}^k ,g_h^{(l)} \in \CC^{\beta-2l}\p{\MM}$ such that
			\[
			\norm{g_h^{(l)}}_{\CC^{\beta-2l}\p{\MM}} \lesssim \norm{f}_{\CC^\beta\p{\MM}}, \norm{T_hf - \sum_{l=1}^k h^{2(l-1)}g_h^{(l)}}_{L^\infty\p{\MM}} \lesssim \norm{f}_{\CC^\beta\p{\MM}}h^{\beta-2}
			\]
		\end{enumerate}
	\end{lem}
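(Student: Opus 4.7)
My approach is to parametrize the intersection $B_{\RR^D}(x,h)\cap\MM$ via the exponential map $\exp_x : T_x\MM \to \MM$, which is available with uniform injectivity radius thanks to compactness and $\alpha \geq 6$. Writing $y=\exp_x(u)$ with $u \in T_x\MM \cong \RR^d$, the extrinsic expansion $y-x = u + \tfrac{1}{2}\mathrm{II}_x(u,u) + O(|u|^3)$ yields $\|y-x\|^2 = |u|^2 + O(|u|^4)$, so the constraint $\|y-x\| < h$ translates to $|u| < h(1+\psi_x(u,h))$ with $|\psi_x|=O(h^2)$. Meanwhile the induced volume form pulls back to $J_x(u)\,du$ with $J_x = 1 + \phi_x$ smooth and even in $u$ up to cubic order. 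All implicit constants depend only on the second fundamental form and the injectivity radius, hence are uniform in $x\in\MM$.

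\textbf{Part (1).} The subcase $0<\beta\leq 1$ follows immediately from $|f(x)-f(y)|\leq \norm{f}_{\CC^\beta\p{\MM}} h^\beta$ inside the integral. For $1<\beta\leq 2$, I decompose
\[
f(y)-f(x) = df(x)\cdot(y-x) + R_f(x,y), \qquad p_0(y) = p_0(x) + R_p(x,y),
\]
with H\"older remainders $|R_f|\leq \norm{f}_{\CC^\beta\p{\MM}}\|y-x\|^\beta$ and $|R_p|\leq \norm{p_0}_{\CC^{\beta-1}\p{\MM}}\|y-x\|^{\beta-1}$. The contributions involving $R_f$, or the product of $R_p$ with the linear piece, are bounded brutally by $\norm{f}_{\CC^\beta\p{\MM}}\norm{p_0}_{\CC^{\beta-1}\p{\MM}}h^{d+\beta}$. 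The remaining principal term $p_0(x)\int df(x)\cdot(y-x)\,d\mu$ vanishes up to $O(h^{d+\beta})$ because, in $u$-coordinates, the integrand is odd while the domain and Jacobian $J_x$ are even modulo the $O(h^2)$ perturbations. Dividing by $h^2 P_0(B_{\RR^D}(x,h))\asymp h^{d+2}$ gives $\norm{T_h f}_{L^\infty\p{\MM}}\lesssim \norm{f}_{\CC^\beta\p{\MM}}h^{\beta-2}$.

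\textbf{Part (2).} For $\beta>2$ I Taylor expand $f\circ\exp_x$ and $p_0\circ\exp_x$ around $u=0$ to orders $\lfloor\beta\rfloor$ and $\lfloor\beta-1\rfloor$ respectively, then combine with the expansions of $J_x$ and $\psi_x$. Only the even multi-indices $u^\alpha$ produce nonzero leading contributions from the symmetric part of the domain, yielding an asymptotic expansion of the numerator in the powers $h^{d+2}, h^{d+4}, \ldots, h^{d+2k}$, with a H\"older-controlled remainder of order $\norm{f}_{\CC^\beta\p{\MM}}h^{d+\beta}$. Writing $h^2 P_0(B_{\RR^D}(x,h)) = h^{d+2}(c_d p_0(x) + O(h^2))$ and inverting, one obtains
\[
T_h f(x) = \sum_{l=1}^k h^{2(l-1)} g_h^{(l)}(x) + \mathrm{Rem}(x), \qquad |\mathrm{Rem}(x)| \lesssim \norm{f}_{\CC^\beta\p{\MM}} h^{\beta-2},
\]
where each $g_h^{(l)}$ is a linear contraction of $\partial^\alpha f$, $|\alpha|\leq 2l$, against smooth coefficients built from $p_0$, $\mathrm{II}$, and the geometric perturbations $\phi,\psi$. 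The bound $\norm{g_h^{(l)}}_{\CC^{\beta-2l}\p{\MM}}\lesssim \norm{f}_{\CC^\beta\p{\MM}}$ follows because those coefficients are at least $\CC^{\beta-2l}$ in $x$ under the assumptions $\alpha\geq\beta+3$ and $p_0\in\CC^{\beta-1}\p{\MM}$.

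\textbf{Main obstacle.} The technical core is the symmetry-based cancellation of odd-order integrals: both the pulled-back domain $\{|u|<h(1+\psi_x(u,h))\}$ and the Jacobian $J_x$ are only even up to $O(h^2)$ and $O(|u|^3)$ corrections respectively. I would need careful bookkeeping to verify that these corrections contribute at most $O(h^{d+\beta})$ to the numerator, uniformly in $x$. Uniformity reduces to compactness of $\MM$ together with $\CC^{\beta+3}$-control of the extrinsic geometry and $\CC^{\beta-1}$ smoothness of $p_0$, which are precisely the standing assumptions.
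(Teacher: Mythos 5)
Your outline matches the paper's proof strategy quite closely: Taylor expand in normal coordinates, isolate the linear (or higher-order odd) terms, use the approximate symmetry of the pulled-back domain $\exp_x^{-1}(B_{\RR^D}(x,h)\cap\MM)$ to make the odd contributions drop by one extra order of $h$, and package the surviving even-order contributions into the coefficients $g_h^{(l)}$. Your identification of the main obstacle --- that both the domain and the Jacobian are only even modulo lower-order corrections --- is correct, and Part (1) as you sketch it is essentially identical to the paper's argument (the paper shows $\mathrm{vol}\{\omega : \exp_x(\omega)\in B_{\RR^D}(x,h)\cap B_\rho(x,h)^c\} = O(h^{d+2})$ and concludes from there).

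The genuine gap is in your treatment of the regularity claim $\|g_h^{(l)}\|_{\CC^{\beta-2l}(\MM)} \lesssim \|f\|_{\CC^\beta(\MM)}$, which you assert "follows because those coefficients are at least $\CC^{\beta-2l}$ in $x$." This is not merely bookkeeping: the coefficients of $g_h^{(l)}$ are built from the domain perturbation $\psi_x(u,h)$ (in the paper's notation, $s_h(x,v) = (t_h(x,v)-1)/h^2$ where $t_h$ solves $\|\exp_x(t_h h v)-x\| = h$), and it is not obvious a priori that $x \mapsto s_h(x,v)$ has derivatives bounded \emph{uniformly in $h$} as $h \to 0$, since $s_h$ carries an explicit $h^{-2}$ normalization. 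The paper's Lemma~\ref{lemma:geometry1} proves exactly this via the implicit function theorem applied to the rescaled defining equation $F(y,t) = h^{-2}(\|\exp_x(th\psi_i(x,\omega))-x\|^2 - h^2) = t^2-1 + h^2 G(y,t)$, showing that $s_h \in \CC^{\alpha-5}$ with $h$-uniform bounds (hence $\alpha \geq \beta+3$ gives $\CC^{\beta-2}$-control, matching what is needed). An analogous argument is required for the denominator $x \mapsto h^{-d}P_0(B_{\RR^D}(x,h))$, which the paper handles with the same $s_h$ machinery. Without this uniform-in-$h$ control, one could still prove Part (1) and the remainder bound in Part (2), but the $\CC^{\beta-2l}$-norm bound on $g_h^{(l)}$ --- which is what ultimately feeds the inductive step in the proof of Theorem~\ref{thm:approximation} --- would not be established.
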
	
	
	\begin{lem} \label{lemma:concentration}
		Let $h_n > 0$ satisfying assumption \ref{assumption:h}. Then
		\begin{itemize}
			\item If $f \in \CC^\beta\p{\MM}, 0 < \beta \leq 1$ we have
			\[
			\norm{\LL f}_{L^\infty \p{\nu}} \lesssim h_n^{\beta-2}, \norm{T_{h_n} f}_{L^\infty \p{\MM}} \lesssim h_n^{\beta-2}
			\]
			\item If $f \in \CC^1\p{\MM}$ then any $H>0$ there exists $M_0 > 0$ such that
			\[
			\Pro \p{\norm{\LL f - T_{h_n} f}_{L^\infty \p{\nu}} > M_0 \p{\frac{\ln N}{N}}^{\frac{1}{2}} h_n^{-(1+d/2)}} \leq N^{-H}
			\]
		\end{itemize}
	\end{lem}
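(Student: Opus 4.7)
The plan has two clearly separated parts, matching the two bullet points of the lemma.

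\textbf{Deterministic bounds.} For the first bullet I would just unfold definitions. By the explicit formula for $\LL$,
\[
\abs{\LL f(x)} = \abs{\frac{1}{h_n^2 \mu_x}\sum_{y \sim x}\p{f(x)-f(y)}} \leq \frac{1}{h_n^2}\max_{y \sim x}\abs{f(x)-f(y)}.
\]
Because $y \sim x$ forces $\norm{x-y} < h_n$ and the $\CC^\beta$ semi-norm controls Euclidean increments (via the comparison $\rho(x,y) \lesssim \norm{x-y}$ on small scales from Appendix \ref{appendix:geometry_of_the_random_graph}), we get $\abs{f(x)-f(y)} \lesssim \norm{f}_{\CC^\beta\p{\MM}} h_n^\beta$, yielding $h_n^{\beta-2}$. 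An identical argument, integrating over $B_{\RR^D}(x,h_n) \cap \MM$ against $p_0$ rather than averaging over neighbours, gives the bound for $T_{h_n} f$.

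\textbf{Concentration.} For the second bullet I would condition on a single vertex and apply a Bernstein inequality. Fix an index $i$ and condition on $x_i$; then $(x_j)_{j\neq i}$ is i.i.d.\ $p_0$. Write
\[
S_i = \sum_{j\neq i}\II_{\norm{x_i-x_j}<h_n}\p{f(x_i)-f(x_j)}, \qquad M_i = \sum_{j\neq i}\II_{\norm{x_i-x_j}<h_n},
\]
so $\LL f(x_i) = S_i/(h_n^2(M_i+1))$ and, setting $\bar S_i = (N-1)h_n^2 P_0(B_{\RR^D}(x_i,h_n))\,T_{h_n}f(x_i)$ and $\bar M_i = (N-1)P_0(B_{\RR^D}(x_i,h_n))$, we have $T_{h_n}f(x_i) = \bar S_i/(h_n^2 \bar M_i)$. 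The algebraic identity
\[
\LL f(x_i) - T_{h_n}f(x_i) = \frac{S_i - \bar S_i}{h_n^2(M_i+1)} \;-\; T_{h_n}f(x_i)\,\frac{M_i+1-\bar M_i}{M_i+1}
\]
reduces the problem to concentrating $S_i$ and $M_i$ around $\bar S_i,\bar M_i$ conditionally on $x_i$, and to lower-bounding $M_i+1$.

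The summands in $S_i$ are bounded by $\norm{f}_{\CC^1}h_n$ with second moment $\lesssim h_n^{2+d}$ (using $P_0(B_{\RR^D}(x_i,h_n)) \asymp h_n^d$ uniformly in $x_i \in \MM$, from Appendix \ref{appendix:geometry_of_the_random_graph}); those in $M_i$ are bounded by $1$ with second moment $\lesssim h_n^d$. Bernstein's inequality, combined with assumption \ref{assumption:h} which guarantees that the variance term dominates the bounded-range term at scale $\sqrt{\ln N}$, yields conditionally on $x_i$
\[
\abs{S_i - \bar S_i} \lesssim \sqrt{Nh_n^{2+d}\ln N}, \qquad \abs{M_i - \bar M_i} \lesssim \sqrt{Nh_n^d\ln N}
\]
with probability at least $1 - 2N^{-H-1}$, on which event $M_i+1 \asymp Nh_n^d$ as well. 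Plugging back into the decomposition and using $\abs{T_{h_n}f(x_i)} \lesssim h_n^{-1}$ from part one bounds both terms by $\sqrt{\ln N/N}\,h_n^{-(1+d/2)}$ up to constants. A union bound over $i \in \{1,\ldots,N\}$ (and the fact that the conditional bound holds for every value of $x_i$) gives the stated $N^{-H}$.

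\textbf{Main obstacle.} The only delicate point is making sure the constants in the Bernstein step do not depend on the random $x_i$, which is handled by the uniform ball-volume lower bound $P_0(B_{\RR^D}(x,h_n)) \asymp h_n^d$ over $x \in \MM$, and by the fact that the absorption of the bounded-range Bernstein term into the sub-Gaussian term holds \emph{uniformly} under assumption \ref{assumption:h}. Everything else is routine.
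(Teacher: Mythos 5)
Your proposal is correct and follows essentially the same route as the paper: fix a vertex, condition on it, apply Bernstein separately to the weighted increment sum and to the degree (both with variance scales $\asymp Nh_n^{2+d}$ and $\asymp Nh_n^d$ respectively, using the uniform bound $P_0(B(x,h_n))\asymp h_n^d$), combine via a division using the high-probability lower bound $\mu_x \gtrsim Nh_n^d$, and finish with a union bound over vertices. The only cosmetic difference is that you introduce the exact algebraic decomposition of $\LL f - T_{h_n}f$ up front while the paper first multiplies through by $h_n^2\mu_x$, bounds that quantity, and then divides by the degree on the good event; both reduce to the same two Bernstein applications.
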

	The proofs of both lemmas are provided in Sections \ref{proof:lemma:regularity} and \ref{proof:lemma:concentration} respectively. 
	
	\subsection{Proof of Theorems \ref{theorem:rate_prior1_d_N} and \ref{theorem:rate_prior2_d_N}} \label{appendix:d_N}
	
	We now show how to extend the posterior contraction rates in terms of $\|f-f_0 \|_n$ obtained in Theorems \ref{theorem:rate_prior1_d_n} and \ref{theorem:rate_prior2_d_n} to rates in terms of $\|  f-f_0\|_N$. 
	
	 As in \cite{vaart_information_2011,rosa_posterior_2023} where a posterior contraction rate with respect to the empirical $\|f-f_0 \|_n$ norm is used to show a contraction rate with respect to the continuous $L^2$ norm, we will use a concentration inequality : indeed, by exchangeability the variables $\p{x_i}_{i=1}^n$ can be considered as sampled uniformly without replacement from the full sample $\p{x_i}_{i=1}^N$, and therefore the quantity $\norm{f-f_0}_n^2 = \frac{1}{n} \sum_{i=1}^n \p{f(x_i) - f_0(x_i)}$ is close to its mean $\frac{1}{N} \sum_{i=1}^N \p{f(x_i) - f_0(x_i)} = \norm{f-f_0}_N^2$. We make this informal argument rigorous below.
	
 	We start by the non adaptive case, i.e the proof of theorem \ref{theorem:rate_prior1_d_N}. Since $h_n \geq \p{\ln N}^s n^{-\frac{1}{2\beta+d}}$ , it   satisfies assumption \ref{assumption:h}, and since  $\frac{h_n^{-d}}{\ln^{\tau_1} N} \leq J_n \leq \frac{h_n^{-d}}{\ln^{\tau_2} N}, \tau_1 \geq \tau_2 > 2d$,
	Theorem \ref{theorem:rate_prior1_d_n} implies that, for some $C>0$
	\[
	\Pi \c{\norm{f-f_0}_n > C\varepsilon_n(J_n, h_n) | \mathbb{X}^n, J_n, h_n} \xrightarrow[n \to \infty]{P_0^\infty} 0
	\]
	where $ \epsilon_n(J_n; h_n)$ is given by \eqref{epsilonn}. Note that the choice of $J_n$ implies that
	\[
		\varepsilon_n(J_n, h_n) =
		\sqrt{\frac{J_n \ln N}{n}} + \p{\frac{J_n^{-2/d} \ln N}{h_n^2}}^{\lceil \beta/2 \rceil} \p{ h_n^\beta + \II_{\beta > 1} \p{\frac{\ln N}{Nh_n^d}}^{1/2} h_n }
	\]

Let
	\[
	f_n = p_{J_n}\p{e^{-t_n \LL}f_{t_n}}, \quad f_{t_n} = \sum_{l=0}^{\lceil \beta/2 \rceil - 1} \frac{\p{t_n\LL}^l f_0}{l!},  \quad t_n = c \lambda_{J_n}^{-1} \ln N, c>0.
	\]
	Then Theorem \ref{thm:approximation2} implies that for some $c>0$ and any $H>0$, choosing $C$ large enough, 
	$$ \mathbb P_0\left( \norm{f_n-f_0}_{L^\infty\p{\nu}} \leq C\varepsilon_n(J_n, h_n) \right)   = 1 + O(N^{-H}).$$ 

	Now if $M > C$, then on $\{\norm{f_n-f_0}_{L^\infty\p{\nu}} \leq C\varepsilon_n \} = V_n $,  
	\begin{align}\label{bound1dN}
		&\Pi \c{\norm{f-f_0}_N > M\varepsilon_n(J_n, h_n)| \mathbb{X}^n, J_n, h_n} 
		\leq  \Pi \c{\norm{f-f_n}_N > \p{M-C}\varepsilon_n(J_n, h_n)| \mathbb{X}^n, J_n, h_n} \nonumber \\
		&\leq \Pi \c{\norm{f-f_n}_n > \p{C+1} \varepsilon_n(J_n, h_n) | \mathbb{X}^n, J_n, h_n} + \Pi \c{\norm{f-f_n}_N > \p{M-C}\varepsilon_n(J_n, h_n) \geq \frac{M-C}{C+1} \norm{f-f_n}_n | \mathbb{X}^n, J_n, h_n}.
	\end{align}
	Since
$	\norm{f-f_n}_n \leq \norm{f-f_0}_n + \norm{f_0-f_n}_{L^\infty \p{\nu}} \leq \norm{f-f_0}_n + C\varepsilon_n(J_n, h_n)$,
	we have on $V_n$
	\[
	\Pi \c{\norm{f-f_n}_n > \p{C+1} \varepsilon_n(J_n, h_n) | \mathbb{X}^n, J_n, h_n} \leq \Pi \c{\norm{f-f_0}_n > \varepsilon_n(J_n, h_n) | \mathbb{X}^n, J_n, h_n} = o_{\Pro_0}(1).
	\]
	Hence it remains to bound the second term of the right hand side of \eqref{bound1dN}. 
 For this notice that a consequence of the proof of Theorem \ref{theorem:master_theorem}, see \cite{ghosal:vdv:07}, is that there exists $c'>0$ such that 
 $$	\mathbb P_0\left[ D_n \leq e^{ - c_1 n \varepsilon_n(J_n, h_n)^2 } \right] = o(1), \quad D_n = \int_{\Sigma^{J_n}} e^{\ell_n(f) - \ell_n(f_0)} d\Pi( f | J_n, h_n)  $$ 
 where $\ell_n(f)$ is the log-likelihood at $f$. As a consequence, 
\begin{align*}
		 \EE_0 &\c{\Pi \c{\norm{f-f_n}_N  \geq \frac{M-C}{C+1}\norm{f-f_n}_n | \mathbb{X}^n, J_n, h_n}} \\
		\leq & o(1) + e^{c'n\varepsilon_n^2(J_n, h_n)} \EE_0 \c{ \int_{\Sigma^{J_n}} \II_{\norm{f-f_n}_N > \frac{M-C}{C+1}\norm{f-f_n}_n}  \II_{\norm{f-f_n}_n < (C+1)\epsilon_n} e^{\ell_n(f) - \ell_n(f_0)} d\Pi( f | J_n, h_n) } \\
		\leq & o(1) + e^{c'n\varepsilon_n^2(J_n, h_n)} \EE_0 \int_{f \in \Sigma^{J_n}} \II_{\norm{f-f_n}_N > \frac{M-C}{C+1}\norm{f-f_n}_n} \Pi (df)
	\end{align*}
	where the last expectation is now only with respect to the distribution of $x_{1:N}$. 
	Notice that by exchangeability of the $x_i$'s, for any permutation $\tau \in \Scal_N$ the set of permutations of $\b{1,\ldots,N}$ we have $\p{x_i}_{i=1}^N \overset{(d)}{\equiv} \p{x_{\tau(i)}}_{i=1}^N$ and therefore, writing $\norm{f-f_n}_{\tau,n}^2 = \frac{1}{n} \sum_{i=1}^n \p{f(x_{\tau(i)}) - f_n(x_{\tau(i)})}^2
$, 
	\[
	\EE_0 \int_{f \in \Sigma^{J_n}} \II_{\norm{f-f_n}_N > \frac{M-C}{C+1}\norm{f-f_n}_n} \Pi (df) = \EE_0 \int_{f \in \Sigma^{J_n}} \II_{\norm{f-f_n}_N >\frac{M-C}{C+1}\norm{f-f_n}_{\tau,n}} \Pi (df). 
	\]
	Hence
	\begin{align*}
		 \EE_0& \int_{f \in \Sigma^{J_n}} \II_{\norm{f-f_n}_N > \frac{M-C}{C+1}\norm{f-f_n}_n} \Pi (df) 
		=  \frac{1}{N!} \sum_{\tau \in \Scal_N} \EE_0 \int_{f \in \Sigma^{J_n}} \II_{\norm{f-f_n}_N >\frac{M-C}{C+1}\norm{f-f_n}_{\tau,n}} \Pi (df) \\
		= & \EE_{\tau \sim \UU \p{\Scal_N} } \EE_0 \int_{f \in \Sigma^{J_n}} \II_{\norm{f-f_n}_N >\frac{M-C}{C+1}\norm{f-f_n}_{\tau,n}} \Pi (df) 
		=  \EE_0 \int_{f \in \Sigma^{J_n}} \Pro_{\tau \sim \UU \p{\Scal_N}} \p{\norm{f-f_n}_N > \frac{M-C}{C+1}\norm{f-f_n}_{\tau,n}} \Pi(df).
	\end{align*}
	Define for 
	$ i = 1,\ldots,N$,  $W_i = \p{f(x_i) - f_n(x_i)}^2$ and $ W_i' = \p{f(x_{\tau(i)}) - f_n(x_{\tau(i)})}^2$. 
	Then given $\p{x_i}_{i=1}^N$, $\p{W_i'}_{i=1}^n$ is a uniform sampling without replacement from $\p{W_i}_{i=1}^N$. Hoeffding's lemma for random variables sampled without replacement from a finite population \cite{hoeffding} then yields, for $M = 3C+2 > C$
	\begin{align*}
	\Pro_{\tau \sim \UU \p{\Scal_N}} &\p{\norm{f-f_n}_N > \frac{M-C}{C+1}\norm{f-f_n}_{\tau,n}} =  \Pro_{\tau \sim \UU \p{\Scal_N}} \p{\frac{1}{N} \sum_{i=1}^N W_i^2 >  \frac{4}{n}\sum_{i=1}^n W_i'^2} \\
	& =  \Pro_{\tau \sim \UU \p{\Scal_N}} \p{\frac{-3}{4} \frac{1}{N} \sum_{i=1}^N W_i^2 > \frac{1}{n}\sum_{i=1}^n W_i'^2 - \frac{1}{N} \sum_{i=1}^N W_i^2} 
	=  \Pro_{\tau \sim \UU \p{\Scal_N}} \p{\frac{-1}{n}\sum_{i=1}^n W_i'^2 + \frac{1}{N} \sum_{i=1}^N W_i^2 > \frac{3}{4} \frac{1}{N} \sum_{i=1}^N W_i^2 } \\
&	\leq  \exp \p{-\frac{3n\norm{f-f_n}_N^2}{2\norm{f-f_n}_{L^\infty \p{\nu}}^2}}.
	\end{align*}
	Using theorem \ref{thm:norm_comparison} yields, on $V_n \cap A_N$ (which is an event of probability tending to $1$), with $h_- = h_n$ and if $b_4 \leq J_n \leq b_5 \frac{t_0^{-d/2}}{\ln^{3d/2}N}$ (where $t_0 = 64 h_n^2 \ln \p{2c_- Nh_n^d}$)
	\[
	\forall f \in \Sigma^{J_n}, \quad \frac{\norm{f-f_n}_N^2}{\norm{f-f_n}_{L^\infty \p{nu}}^2} \geq e^{-1} a_3^{-1} b_6^{-d/2} J_n^{-1} \ln^{-3d/2} N.
	\]
Note that since by assumption  $\frac{h_n^{-d}}{\ln^{\tau_1} N} \leq J_n \leq \frac{h_n^{-d}}{\ln^{\tau_1} N}, \tau_1 \geq \tau_2 > 2d$, then $b_4 \leq J_n \leq b_5 \frac{t_0^{-d/2}}{\ln^{3d/2}N}$. All in all we get
	\begin{align*}
		 \EE_0 &\c{\Pi \c{\norm{f-f_n}_N > \p{M-C}\varepsilon_n \geq \frac{M-C}{C+1}\norm{f-f_n}_n | \mathbb{X}^n, J_n, h_n}} \\
		&\leq  o(1) + e^{c'n\varepsilon_n^2(J_n, h_n)} \EE_0 \left[\II_{A_N\cap V_n}\int_{f \in \Sigma^{J_n}} \exp \p{-\frac{3n\norm{f-f_n}_N^2}{2\norm{f-f_n}_{L^\infty \p{\nu}}^2}} \Pi (df)\right] \\
		& \leq  o(1) + e^{c'n\varepsilon_n^2(J_n, h_n)} \EE_0 \int_{f \in \Sigma^{J_n}} \exp \p{- \frac{3nJ_n^{-1}  }{2e a_3 b_6^{d/2} \ln^{3d/2} N}} \Pi (df) \\
		& = o(1) + \exp \p{c'n\varepsilon_n^2(J_n, h_n) - \frac{3nJ_n^{-1}  }{2e a_3 b_6^{d/2} \ln^{3d/2} N}  }.
	\end{align*}
	Moreover
	\[
	J_n^{-1} \ln^{-3d/2}N \geq h_n^d \ln^{\tau_2 - 3d/2}N
	\]
	and by definition of $\varepsilon_n(J_n, h_n)$ we have
	\[
	\varepsilon_n^2(J_n, h_n) \lesssim \frac{J_n \ln N}{n} + \p{\frac{J_n^{-2/d} \ln N}{h_n^2}}^{2\lceil \beta/2 \rceil} h_n^{2\beta} \leq \frac{h_n^{-d} \ln^{1-\tau_2} N}{n} + h_n^{2\beta}\ln^{2\lceil \beta/2 \rceil\p{1+2\tau_1/d}} N.
	\]
	But since by assumption $h_n \geq \p{\ln N}^s n^{-\frac{1}{2\beta+d}}$ for some $s \in \RR$ and $\beta > d/2$, this actually implies $nJ_n^{-1} \ln^{-3d/2} N >> n\varepsilon_n^2(J_n, h_n)$, which concludes the proof.
	
	The proof of the adaptive case, i.e of theorem \ref{theorem:rate_prior2_d_N}, is fairly similar. Again, we use Theorem \ref{theorem:rate_prior2_d_n}, so that there exists $C>0$
	$$\EE_0 \left( \Pi \c{\norm{f-f_0}_n > C\varepsilon_n| \mathbb{X}^n}\right) = o(1).$$ 
	where $\varepsilon_n= n^{-\beta/(2\beta+d)} (\ln N)^{\frac{ (2\tau + d)\lfloor \beta/2\rfloor- \beta(\tau + 2\beta/d) }{ 2\beta+d} }$. 
	
Moreover, since by assumption on $\Pi_J$ for any $J_n,k$ we have $\Pi \c{J > kJ_n} \lesssim e^{-a_2k J_n}$, the remaining mass theorem (theorem 8.20 in \cite{ghosal_fundamentals_2017}) together with our prior thickness result \ref{lem:prior_thickness_prior1} yields, for some $k > 0$
	\[
	\Pi \c{J > kJ_n | X^n} \xrightarrow[n \to \infty]{P_0^\infty} 0
	\]
	for $J_n = n \varepsilon_n^2$. 

	Therefore, a similar proof as in the non adaptive case above leads to the upper bound: on $V_n \cap A_N$, 
	\begin{align*}
\Pi \c{\norm{f-f_0}_N > M\varepsilon_n| \mathbb{X}^n} \leq & o_{\Pro_0}(1) + e^{c'n\varepsilon_n^2(J_n, h_n)} \sum_{J \leq k J_n} \EE_0 \int_{ f \in \Sigma^J} \exp \p{-\frac{3n\norm{f-f_n}_N^2}{2\norm{f-f_n}_{L^\infty \p{\nu}}^2}} d\Pi (f|F) \Pi_J(J)
	\end{align*}
	for some $c'>0$. Moreover, since by construction $\Pi-$almost surely we have $J \leq \frac{h^{-d}}{\ln^{\tau/d}N}, \tau > 2d$, we can still apply theorem \ref{thm:norm_comparison} to get  on $V_n \cap A_N$, 
	 \begin{align*}
\Pi \c{\norm{f-f_0}_N > M\varepsilon_n| \mathbb{X}^n} &\leq  o_{\Pro_0}(1) + e^{c'n\varepsilon_n^2}\sum_{J \leq k J_n} \EE_0 \int_{f \in \Sigma^J} \exp \p{- \frac{3nJ^{-1} }{2e a_3 b_6^{d/2} \ln^{3d/2} N} } d\Pi(f|J)\Pi_J(J)\\
	 	&\leq  o_{\Pro_0}(1) + e^{c'n\varepsilon_n^2}  \sum_{J \leq k J_n}\exp \p{- \frac{3nJ^{-1} }{2e a_3 b_6^{d/2} \ln^{3d/2} N} }\\ 
	 	&\leq   o_{\Pro_0}(1) + J_n\exp \p{c'n\varepsilon_n^2  - \frac{3nJ_n^{-1} }{2e a_3 b_6^{d/2} \ln^{3d/2} N} } \\ 
	 	&=  o_{\Pro_0}(1) + \exp \p{c'n\varepsilon_n^2 - \frac{3}{2ek a_3 b_6^{d/2}} \varepsilon_n^{-2} \ln^{-3d/2} N}
	 \end{align*}
	 Given the expression of $\varepsilon_n$ and since $\beta > d/2$, the right hand side is $o_{\Pro_0}(1)$, which concludes the proof of the adaptive case.

\section*{Acknowledgements}
The project leading to this work has received funding from the European Research Council
(ERC) under the European Union’s Horizon 2020 research and innovation programme (grant agreement No 834175).

	\bibliographystyle{plain}
	\bibliography{ref}
	
	\appendix 
	
	\section{Regularity assumption of the manifold and definition of the H\"older spaces}\label{sec:regularity_M_holder_spaces}
	
	In this section we give details on the regularity of the submanifold $\MM$ and the definition of the H\"older spaces. It is usually assumed in Riemannian geometry that the manifolds are $\CC^\infty-$smooth but here our results also apply for manifolds with finite H\"older regularity if the latter is large enough, hence we precise our notations and recall some general facts. We assume that $\MM$ to be a $d-$dimensional $\CC^\alpha$ compact and connected submanifold of $\RR^D$ in the following sense : we can find a family $\p{\varphi_i,\UU_i}_{i \in I}$ such that $\p{\UU_i}_{i \in I}$ is an open cover of $\MM$ (that is, each $\UU_i$ is an open set of $\MM$ and $\MM = \cup_{i \in I} \UU_i$), $\varphi_i : \varphi_i^{-1}\p{\UU_i} \subset \RR^d \to \UU_i \subset \MM \subset \RR^D$ is injective, of class $\CC^\alpha, \alpha \geq 1$ and its differential $d \varphi_i : \RR^d \to \RR^D$ is everywhere injective (hence $\varphi_i$ is a $\CC^\alpha$ immersion). Since $\MM$ is compact we can assume without loss of generality that $I$ is finite, but the regularity results below stay true in the general case, at least locally.
	
	The geodesics on the submanifold are defined as the curves $\gamma(t)$ on $\MM$ satisfying the geodesics equation : in local coordinates $y = \varphi^{-1}(\gamma)$ (we drop the subscript $i$ for convenience) it takes the form
	\[
	\forall k = 1,\ldots,d, \ddot{y}^k + \Gamma_{ij}^k \dot{y}^i \dot{y}^j = 0
	\]
	where the Christoffel symbols are defined by
	\[
	\Gamma_{ij}^k = \frac{1}{2} g^{kl} \p{\partial_i g_{jl} + \partial_j g_{il} - \partial_l g_{ij}}
	\]
	Here both the metric tensor $g$, its inverse $\p{g^{kl}}$ and the Christoffel symbols depend on the point $\gamma(t) = \varphi (y(t))$. Moreover, with $\p{\partial_i := \frac{\partial \varphi}{\partial y_i}}_{i=1}^d$ the coordinates vector fields and $\varphi = \p{\varphi^l}_{l=1}^D$, the metric tensor is given by the Gram matrix
	\[
	\forall p \in \MM, g_{ij}(p) = \inner{\partial_i | \partial_j}_{T_p\MM} = \sum_{l=1}^D \frac{\partial \varphi^l}{\partial y^i}(\varphi^{-1}(p)) \frac{\partial \varphi^l}{\partial y^j}(\varphi^{-1}(p)) = \p{\frac{\partial \varphi}{\partial y_i}}^T\p{\frac{\partial \varphi}{\partial y_j}}\p{\varphi^{-1}(p)}
	\]
	and at $p = \gamma(t)$
	\[
	\partial_l g_{ij} = \frac{\partial \p{g_{ij} \circ \varphi}}{\partial y^l} \p{y(t)} = \c{ \frac{\partial}{\partial y^l} \sum_{k=1}^D \frac{\partial \varphi^k}{\partial y^i} \frac{\partial \varphi^k}{\partial y^j} }(y(t))
	\]
	Hence for each $i,j,k$, $\Gamma_{ij}^k$ is a $\CC^{\alpha-2}$ H\"older function of $y(t)$. Therefore by standard ODE theory, if $\alpha \geq 3$ existence and uniqueness of geodesics $t \mapsto \gamma_{x,v}(t)$ defined on maximal intervals and such that $\varphi(y(0)) = \gamma_{x,v}(0) = x, \frac{\partial \varphi}{\partial y}(\varphi^{-1}(x)) \dot{y}(0) = \gamma_{x,v}'(0) = v$ for arbitrary $x \in \MM, v \in T_x\MM$ is guaranteed. Moreover the flow $\p{x,v,t} \mapsto \p{\gamma_{x,v}(t),\gamma_{x,v}'(t)}$ is of class $\CC^{\alpha-2}$, in the sense that 
	\[
	\p{y_0,\dot{y}_0,t} \mapsto \p{\gamma_{\varphi(y_0), \frac{\partial \varphi}{\partial y}(y_0) \dot{y}_0}(t), \gamma_{\varphi(y_0), \frac{\partial \varphi}{\partial y}(y_0) \dot{y}_0}'(t)}
	\]
	is of class $\CC^{\alpha-2}$ on its domain. In particular the exponential map defined as $\exp_x(v) = \gamma_{x,v}(1)$ for all $\norm{v}_{T_x\MM} < r_x$ (the injectivity radius of $\MM$ at $x$) is in this sense of class $\CC^{\alpha-2}$ on its domain, as seen as a function with values in $\RR^D$. Since $\MM$ is compact it has a positive global injectivity radius $r_\MM := \inf_{x \in \MM} r_x$ and therefore for any $x \in \MM$ the exponential map $\exp_x$ is well defined from $B_{T_x\MM}\p{0,r_\MM}$ to $\MM$ and is a diffeomorphism onto its image. Moreover, by compactness of $\MM$ the Hopf-Rinow theorem implies that for each $x \in \MM$ the exponential map $\exp_x$ is actually defined on the whole of $T_x\MM$ (while obviously no longer being guaranteed to be injective). Hence by the above, for any $i \in I$ the following map
	\[
	\p{\begin{array}{ccc}
			\varphi_i^{-1}\p{\UU_i} \times \RR^d & \to & \RR^D \\
			\p{y_0, \dot{y}_0} & \mapsto & \exp_{\varphi_i(y_0)} \p{\frac{\partial \varphi_i}{\partial y}(y_0)\dot{y}_0}
	\end{array}} 
	\]
	is of class $\CC^{\alpha-2}$.
	
	Finally let us define the different H\"older spaces over $\MM$ used in the paper. Let $\p{\chi_i}_{i \in I}$ be a partition of the unity subordinated to the open cover $\p{\UU_i}_{i\in I}$. We say that $f : \MM \to \RR$ is of class $\CC^\beta$ ($\beta$-H\"older) on $\MM$ if $f \circ \varphi_i : \varphi_i^{-1} \p{\UU_i} \to \RR$ is of class $\CC^\beta$ for any $i \in I$. For a finite dimensional vector space $V$ we say that $f : \MM \to V$ is of class $\CC^\beta$ ($\beta$-H\"older) on $\MM$ if $T \circ f \circ \varphi_i : \varphi_i^{-1} \p{\UU_i} \to \RR$ is of class $\CC^\beta$ for any $i \in I$ and linear form $T \in V^*$. Equivalently the coordinates of $f$ in any choice of basis of $V$ are all of class $\CC^\beta$. We also define $\CC^\infty\p{\MM} = \cap_{\beta \geq 0} \CC^\beta\p{\MM}$. Furthermore we define the $\CC^\beta$ norm of $f$ by $\norm{f}_{\CC^\beta \p{\MM}} = \max_{i \in I} \norm{\p{\chi_i f} \circ \varphi_i^{-1}}_{\CC^\beta\p{\UU_i}}$. The definition of the H\"older space $\CC^\beta \p{\MM,V}$ can be seen to be independent of the chosen family $\p{\varphi_i,\UU_i,\chi_i}_{i \in I}$ as long as $\alpha \geq \beta \vee 1$, with equivalence of the resulting H\"older norms.
	
	Recall that we can always define regular local orthonormal frames on $T\MM$ : on $\UU_i$ simply apply the Gram-Schmidt orthonormalization algorithm to the coordinates vector fields $x \mapsto \frac{\partial \varphi}{\partial y_i}$, which gives a family $\p{e_i^j}_{i \in I, j = 1,\ldots,d}$ of orthonormal vector fields whose components in local coordinates are rational functions of the components of the coordinates vector fields $\frac{\partial \varphi_i}{\partial y_i}$. As $\varphi_i : \varphi_i^{-1}\p{\UU_i} \subset \RR^d \mapsto \UU_i \subset \RR^D$ is $\CC^{\alpha-1}$, the maps: $x \in \varphi_i^{-1}\p{\UU_i} \subset \RR^d \mapsto e_i^j(x) \in \UU_i \subset \RR^D$ are all $\CC^{\alpha-1}$. Moreover by construction the function $\omega  \mapsto \psi_i(x,\omega) := \sum_j \omega_j e_i^j(x) $ is a linear isometry  from $\in \RR^d$ to $ T_x\MM$  for each $x \in \UU_i$ where $\RR^d$ is equipped with its standard Euclidean structure and $T_x\MM$ its inner product induced by the metric.
	
	In particular if $\alpha - 2 \geq \beta \vee 1 \iff \alpha \geq \p{\beta+2} \vee 3$ then any $f \in \CC^\beta \p{\MM}$ satisfies
	\begin{align*}
	& \sup_{x \in \UU_i} \norm{f \circ \exp_x\p{\psi_i(x,\cdot)}}_{\CC^\beta \p{B_{\RR^d}(0,r_\MM)}} = \sup_{x \in \UU_i} \norm{\sum_j \p{\chi_j f} \circ \exp_x\p{\psi_i(x,\cdot)}}_{\CC^\beta \p{B_{\RR^d}(0,r_\MM)}} \\
	& \leq \sum_j \sup_{x \in \UU_i} \norm{\p{\chi_j f} \circ \exp_x\p{\psi_i(x,\cdot)}}_{\CC^\beta \p{B_{\RR^d}(0,r_\MM)}} \\
	& = \sum_j \sup_{x \in \UU_i} \norm{\p{\p{\chi_j f} \circ \varphi_j} \circ \p{\varphi_j^{-1} \circ \exp_x\p{\psi_i(x,\cdot)}}}_{\CC^\beta \p{B_{\RR^d}(0,r_\MM)}} \\
	& \lesssim \max_j \norm{ \p{\chi_j f} \circ \varphi_j }_{\CC^\beta \p{\UU_j}} \sum_j \sup_{x \in \UU_i} \norm{\varphi_j^{-1} \circ \exp_x\p{\psi_i(x,\cdot)}}_{\CC^\beta \p{ B_{\RR^d}\p{0,r_\MM},\varphi_j^{-1}\p{\UU_i}}} \lesssim \norm{f}_{\CC^\beta\p{\MM}},
	\end{align*}
	the last step using resulting from the compactness of $\MM$, finiteness of $I$ and the fact that $\alpha - 2 \geq \beta$. We will use this property in our approximation results.

We recall that we assume $\alpha \geq \p{\beta + 3} \vee 6$. This is for technical reasons that will appear later in the proofs.
	
We finish this section by showing a differential geometry lemma that will be useful for the proofs of the results in section \ref{section:main_results} : very roughly speaking, this lemma states that for small $h$ we can find a function $t = t_h(x,v)$ of $x \in \MM, v \in T_x\MM$ such that $\norm{\exp_x(thv) - x}_{\RR^D} = h$, and that moreover this function can be written as $t_h = 1 + h^2 s_h$ where $s_h$ is H\"older regular and has bounded derivatives of order less than $\alpha - 5$, even when $h$ is close to $0$. First, recall that the radius of curvature (see e.g \cite{bernstein_graph_nodate}) $a_\MM$ of $\MM$ is positive by compactness of $\MM$.
	
\begin{lem}\label{lemma:geometry1}
	There exists $r_\MM' > 0$ such that for each $x \in \MM, v \in T_x \MM, \norm{v} = 1$ the function $r \mapsto \norm{\exp_x(rv)-x}$ is increasing on $[0,r_\MM']$ and satisfies $\norm{\exp_x(rv)-x}^2 \geq \frac{r^2}{4}, r \in [0,r_\MM']$. 
		
	As a consequence for any $h \in [0,\frac{r_\MM'}{2}]$ there is a unique $t = t_h(x,v) \in [0, h^{-1} r_\MM']$ satisfying $\norm{\exp_x(thv)-x} = h$. Moreover $t_h \p{x,v} \in [1,1+ \frac{\pi^3 h^2}{192 a_\MM^2}]$ and there exists $h_+>0$ such that
	\[
	\max_{\mathbf k \in \mathbb{N}^d, \abs{\mathbf k} \leq \alpha - 5} \sup_{\substack{i \in I \\ y \in \varphi_i^{-1}\p{\UU_i} \\ 0 < h < h_+' \\ \norm{v}_{T_{\varphi_i(y)}\MM} = 1}} \abs{\frac{\partial^{\abs{\mathbf k}}}{\partial y^{\mathbf k}} \p{\frac{t_h(\varphi_i(y),v) - 1}{h^2}}} < +\infty
	\]
\end{lem}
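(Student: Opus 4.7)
The plan is to prove the three assertions in order, the third being by far the most delicate because the defining equation for $t_h$ is degenerate at $h=0$; I will rescale to make it amenable to the implicit function theorem.

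For the first two assertions I would parametrise the geodesic $\gamma(r) = \exp_x(rv)$ by arc length, so $\|\gamma'\|\equiv 1$; since $\gamma$ is a geodesic of the submanifold $\MM\subset\RR^D$, $\gamma''$ equals the second fundamental form evaluated on $(\gamma',\gamma')$, hence $\|\gamma''\|_{\RR^D}\le 1/a_\MM$. Writing $\gamma(r)-x=\int_0^r \gamma'(s)\,ds$ and using the identity $\langle\gamma'(s),\gamma'(t)\rangle = 1-\tfrac12\|\gamma'(s)-\gamma'(t)\|^2 \ge 1-(s-t)^2/(2a_\MM^2)$, a double integration in $(s,t)\in[0,r]^2$ yields
\[
\|\gamma(r)-x\|^2 \ge r^2\p{1-\tfrac{r^2}{12 a_\MM^2}},\qquad \frac{d}{dr}\|\gamma(r)-x\|^2 \ge 2r\p{1-\tfrac{r^2}{6 a_\MM^2}}.
\]
Choosing $r_\MM' = \min(r_\MM, c\, a_\MM)$ for a small explicit constant $c$ gives the strict monotonicity and the lower bound $\|\gamma(r)-x\|^2 \ge r^2/4$ on $[0,r_\MM']$. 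Existence and uniqueness of $t_h\in[0,r_\MM'/h]$ with $\|\exp_x(t_h h v)-x\|=h$ follow immediately. The bound $t_h\ge 1$ is a consequence of $\|\exp_x(rv)-x\|\le r$ (Cauchy-Schwarz on the integral form of $\gamma(r)-x$), while the bound $t_h\le 1+\pi^3 h^2/(192 a_\MM^2)$ follows by inverting the quantitative lower estimate at $r=t_h h$ while carefully tracking constants.

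The main obstacle is the regularity claim. The key observation is that, setting $R(x,v,s):=\|\exp_x(sv)-x\|^2$, direct computation gives $R(x,v,0)=0$, $\partial_s R(x,v,0)=0$, $\partial_s^2 R(x,v,0)=2$ and $\partial_s^3 R(x,v,0)=0$ (this last identity coming from $\langle\gamma',\gamma''\rangle\equiv 0$, which holds since $\|\gamma'\|\equiv 1$). Taylor's formula with integral remainder then produces the decomposition
\[
R(x,v,s) = s^2 + s^4\, Q(x,v,s),
\]
with $Q(x,v,s) = \tfrac{1}{6}\int_0^1 (1-r)^3\,\partial_s^4 R(x,v,rs)\,dr$. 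Since by standard ODE theory applied to the geodesic equation (whose Christoffel symbols inherit the regularity of the second derivatives of $\varphi_i$) the map $\exp$ is of class $\CC^{\alpha-2}$, the function $Q$ inherits a joint regularity several orders below that of $R$.

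Substituting $t = 1 + h^2 u$ into the equation $R(x,v,th)=h^2$ and dividing by $h^2$ yields, after expansion,
\[
F(x,v,h,u) := u\p{2 + h^2 u} + \p{1 + h^2 u}^4 Q\p{x,v,(1+h^2 u)h} = 0,
\]
which reduces at $h=0$ to $2u+Q(x,v,0)=0$ with $\partial_u F\big|_{h=0} = 2 \neq 0$. The implicit function theorem produces a solution $u(x,v,h) = (t_h(x,v)-1)/h^2$ of the regularity inherited from $F$, on a uniform neighbourhood of $h=0$; uniformity in $(x,v)$ comes from compactness of $\MM$ and of its unit tangent bundle, which provides a positive $h_+'$ below which the extension is valid. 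Expressing the answer in the chart $\varphi_i$ via the orthonormal frame $\psi_i$ (of regularity $\CC^{\alpha-1}$) finally gives the claimed uniform bound on $y$-derivatives of $(t_h-1)/h^2$ up to the stated order. The main difficulty is careful bookkeeping of the regularity losses through Taylor expansion, Hadamard-type division, the implicit function theorem, and the composition with the local frame $\psi_i$.
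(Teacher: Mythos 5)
Your treatment of the first two assertions is correct and in fact more quantitative than the paper's. The paper writes $\frac{d}{dr}\|\exp_x(rv)-x\|^2 = 2r + \mathcal O(r^2)$ and invokes compactness; you instead bound $\|\gamma''\|_{\RR^D}\le 1/a_\MM$ via the second fundamental form and then obtain the explicit estimates $\|\gamma(r)-x\|^2\ge r^2(1-r^2/(12a_\MM^2))$ and $\frac{d}{dr}\|\gamma(r)-x\|^2\ge 2r(1-r^2/(6a_\MM^2))$ by the double-integration identity $\|\gamma(r)-x\|^2=\int_0^r\int_0^r\langle\gamma'(s),\gamma'(t)\rangle\,ds\,dt$. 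Both yield the same conclusions; yours has the benefit of giving constants in closed form. The geometric fact responsible for killing the $\mathcal O(h)$ term is also the same in both proofs: you use $\langle\gamma',\gamma''\rangle\equiv 0$ (unit speed) to get $\partial_s^3 R(x,v,0)=0$, while the paper uses the equivalent statement that $d^2\exp_x(0).v$ lies in $N_x\MM$, hence $\langle A|B\rangle=0$.

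The gap is in the regularity bookkeeping for the third assertion. As written, you factor $R(x,v,s)=s^2+s^4 Q(x,v,s)$ via Taylor with integral remainder, so $Q$ involves $\partial_s^4 R$, i.e.\ $\gamma^{(4)}$ and therefore $d^4_v\exp_x$. Since $\exp$ is only $\CC^{\alpha-2}$ jointly, $\partial_s^4 R$ and hence $Q$ are only $\CC^{\alpha-6}$, and the implicit function theorem then delivers $u=(t_h-1)/h^2$ of class $\CC^{\alpha-6}$ in $y$ — one order short of the claimed $\alpha-5$. The paper avoids this loss by Taylor-expanding $\exp_x(th\psi_i(x,\omega))-x$ itself to third order (integral remainder involving only $d^3\exp_x\in\CC^{\alpha-5}$) and squaring afterwards; the $\mathcal O(h)$ term then dies by the orthogonality $\langle A|B\rangle=0$, and the resulting $G$ is $\CC^{\alpha-5}$.

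The deficit is repairable without changing your overall strategy, precisely by upgrading your observation $\langle\gamma',\gamma''\rangle\equiv 0$ from a statement at $s=0$ to one valid for all $s$. Since $R'''(s)=2\langle\gamma'''(s),\gamma(s)-x\rangle+6\langle\gamma''(s),\gamma'(s)\rangle=2\langle\gamma'''(s),\gamma(s)-x\rangle$, Taylor to third order gives
\[
R(s)-s^2=\int_0^s\frac{(s-u)^2}{2}\,R'''(u)\,du
=\int_0^s (s-u)^2\,\langle\gamma'''(u),\gamma(u)-x\rangle\,du,
\]
and factoring $\gamma(u)-x=u\int_0^1\gamma'(\sigma u)\,d\sigma$ and substituting $u=rs$ yields
\[
R(s)-s^2=s^4\int_0^1 (1-r)^2\,r\int_0^1\langle\gamma'''(rs),\gamma'(\sigma rs)\rangle\,d\sigma\,dr=:s^4 Q(x,v,s),
\]
where now $Q$ is built only from $\gamma'''$ (class $\CC^{\alpha-5}$) and $\gamma'$ (class $\CC^{\alpha-3}$), hence is $\CC^{\alpha-5}$. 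Feeding this $Q$ into your rescaled equation $F(x,v,h,u)=u(2+h^2u)+(1+h^2u)^4 Q(x,v,(1+h^2u)h)=0$ then gives $u$ of the correct regularity and matches the lemma.
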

\begin{proof}
	We have for $r < r_\MM$,
	\begin{align*}
		\frac{\partial}{\partial r} \norm{\exp_x(rv)-x}_{\RR^D}^2 
		&=  2 \inner{d\exp_x(rv)v|\exp_x(rv)-x}_{\RR^D}  =  2\inner{v + d\exp_x(rv)v - v|rv + \exp_x(rv)-x - rv}_{\RR^D} \\
		=&  2\inner{v|rv}_{\RR^D} + 2\inner{d\exp_x(rv)v - v|rv}_{\RR^D} + 2\inner{\exp_x(rv)- x -rv|v}_{\RR^D} \\ 
		& \qquad + 2\inner{d\exp_x(rv)v - v|\exp_x(rv)-x - rv}_{\RR^D} \\
		= & 2r + \mathcal{O}\p{r^2}
	\end{align*}
	where the last equality uses 
	\[ \exp_x(rv)- x -rv =  rd\exp_x(0)v - rv + O(r^2) = O(r^2), \quad  d\exp_x(rv)v - v = d\exp_x(0)v - v + O(r^2) = O(r^2).
	\] 
	By compactness of $\MM$ and since $\p{x,v} \mapsto \exp_x(v) \in \CC^{\alpha-3}, \alpha \geq 3$ this implies that
	\[
	\sup_{\substack{0 < r < r_\MM \\ x \in \MM \\ \norm{v}_{T_x \MM}=1}} \frac{\abs{\frac{\partial}{\partial r} \norm{\exp_x(rv)-x}_{\RR^D}^2 - 2r}}{r^2} < +\infty,
	\]
	In particular there exists $r = r_\MM'>0$ such that for any $r \in [0,r_\MM']$ we have 
	\[
	\forall x \in \MM, \forall v \in T_x\MM, \norm{v}_{T_x\MM}=1, \frac{\partial}{\partial r} \norm{\exp_x(rv)-x}_{\RR^D}^2 \geq \frac{r}{2}.
	\]
	As a consequence,  the function $r \mapsto \norm{\exp_x(rv)-x}_{\RR^D}^2$ is increasing on $[0,r_{\MM}']$ and so is $r \mapsto \norm{\exp_x(rv)-x}_{\RR^D}$. Moreover by integrating we get
	\[
	\forall r \in [0,r_\MM'], \norm{\exp_x(rv)-x}_{\RR^D}^2 \geq \frac{r^2}{4},
	\]
	so that,  if $h \in [0, \frac{r_\MM'}{2}]$, then  $r = 2h \in [0,r_\MM']$ and  $\norm{\exp_x(rv)-x}_{\RR^D}^2 \geq \frac{\p{2h}^2}{4} = h^2$. By continuity and strict monotonicity there exists a unique $r = r_h(x,v) \in [0,2h]$  solution of $\norm{\exp_x(rv)-x}=h$ on $[0,r_\MM']$. Defining $t_h(x,v) = h^{-1}r_h(x,v)$ gives the first part of the statement.
		
	Let $i \in I, x \in \UU_i, \omega \in \RR^d, \norm{\omega} = 1$ and consider the Taylor expansion (justified since $\alpha \geq 5$ and $\exp \in \CC^{\alpha-2}$)
	\[
	\exp_x(th \psi_i(x,\omega)) = x + th\psi_i(x,\omega) + \frac{t^2 h^2}{2} d^2 \exp_x(0).\psi_i(x,\omega) + h^3 \int_0^t \frac{(t-s)^2}{2} d^3 \exp_x(sh\psi_i(x,\omega))\psi_i(x,\omega)^3 ds.
	\]
	Changing of variables with $y \in \varphi_i^{-1}\p{\UU_i}, x = \varphi_i(y)$, we then have 
	\begin{align*}
		F(y,t) := & h^{-2}\norm{\exp_x(th \psi_i(x,\omega))-x}^2-1 \\
		= & h^{-2}\norm{th\psi_i(x,\omega) + \frac{t^2 h^2}{2} d^2 \exp_x(0).\psi_i(x,\omega) + h^3 \int_0^t \frac{(t-s)^2}{2} d^3 \exp_x(sh\psi_i(x,\omega))\psi_i(x,\omega)^3 ds}^2-1 \\
		= & \norm{t\psi_i(x,\omega) + \frac{t^2 h}{2} d^2 \exp_x(0).\psi_i(x,\omega) + h^2 \int_0^t \frac{(t-s)^2}{2} d^3 \exp_x(sh\psi_i(x,\omega))\psi_i(x,\omega)^3 ds}^2-1
	\end{align*}
	Therefore we have
	\[
	F(y,t) = t^2 - 1 + h^2 G(y,t)
	\]
	where, with
	\[
	A = t\psi_i(x,\omega), \quad B = \frac{t^2}{2} d^2 \exp_x(0).\psi_i(x,\omega),\quad C = \int_0^t \frac{(t-s)^2}{2} d^3 \exp_x(sh\psi_i(x,\omega))\psi_i(x,\omega)^3 ds
	\]
	we have defined
	\[
	G(y,t) = G_{i,h,\omega}(y,t) = 2h^{-1}\inner{A|B} + 2\inner{A|C} + \norm{B}^2 + 2h \inner{B|C} + h^2 \norm{C}^2
	\]
	Moreover, since $\norm{t\psi_i(x,\omega)}^2 = t^2$ and $d^2 \exp_x(0).\psi_i(x,\omega) \in N_x \MM$ (as the second derivative at time $0$ of the geodesic $t \mapsto \exp_x \p{t \psi_i(x,\omega)}$) this implies $\inner{A|B} = 0$ and therefore
	\[
	G(y,t) = 2\inner{A|C} + \norm{B}^2 + 2h \inner{B|C} + h^2 \norm{C}^2
	\]
	Furthermore since the exponential map is of class $\CC^{\alpha-2}$ we see that  $G$ is of class $\CC^{\alpha - 5}$ and that
	\[
	M := \max_{\substack{i \in I \\ \p{\mathbf k,l} \in \mathbb{N}^{d+1} \\ \abs{\mathbf k} + l \leq \alpha - 5}} \sup_{\substack{y \in \varphi_i^{-1}\p{\UU_i} \\ 1 \leq t \leq 1 + \frac{\pi}{2} \\ 0 < h < r_\MM'/2 \\ \norm{\omega} = 1}} \abs{\frac{\partial^{\abs{\mathbf k}+l} G}{\partial y^{\abs{\mathbf k}} \partial t^l}} < +\infty.
	\]
	Differentiating yields
	\[
	\frac{\partial F}{\partial y} = h^2 \frac{\partial G}{\partial y}, \frac{\partial F}{\partial t} = 2t + h^2 \frac{\partial G}{\partial t}.
	\]
	Also, notice that $1 \leq t_h(x,v) \leq 1 + \frac{\pi^3 h^2}{192a_\MM^2} \wedge \frac{\pi}{2}$ : indeed, without loss of generality $r_\MM' \leq \pi a_\MM$ and therefore if $h \leq r_\MM'/2$ then for $t = t_h(x,v) \in [0,2]$ we have $th = \rho \p{x,\exp_x(th\psi_i(v))} \leq r_\MM' \leq \pi a_\MM$ and
	\[
	\frac{2}{\pi} th = \frac{2}{\pi} \rho(x,\exp_x(th\psi_i(x,\omega))) \leq \norm{\exp_x(th\psi_i(x,\omega))-x} = h \leq \rho(x,\exp_x(th\psi_i(x,\omega))) = th
	\]
	\[
	\implies 1 \leq t_h(x,v) \leq \frac{\pi}{2}.
	\]
	Reusing this gives
	\[
	th - \frac{\p{th}^3}{24a_\MM^2} = \rho \p{x,\exp_x(th\psi_i(x,\omega))} - \frac{\rho \p{x,\exp_x(th\psi_i(x,\omega))}^3}{24a_\MM^2} \leq \norm{\exp_x(th\psi_i(x,\omega))-x} = h \leq \rho \p{x,\exp_x(th\psi_i(x,\omega))} = th
	\]
	\[
	\implies 1 \leq t_h(x,v) \leq 1 + \frac{t^3 h^2}{24a_\MM^2} \leq 1 + \frac{\pi^3 h^2}{192a_\MM^2}.
	\]
	In particular, using $\alpha \geq 6$
	\[
	\sup_{\substack{i \in I \\ y \in \varphi_i^{-1} \p{\UU_i} \\ 0 < h < r_\MM'/2 \\ \norm{v}_{T_{\varphi_i^{-1}(y)}\MM}=1}} \frac{\abs{\frac{\partial F}{\partial t}(\varphi_i^{-1}(y),t_h(\varphi_i^{-1}(y),v)) - 2t_h(\varphi_i^{-1}(y),v)}}{h^2} = \sup_{\substack{i \in I \\ y \in \varphi_i^{-1} \p{\UU_i} \\ 0 < h < r_\MM'/2 \\ \norm{v}_{T_{\varphi_i^{-1}(y)}\MM}=1}} \abs{\frac{\partial G}{\partial t} \p{y,t_h(\varphi_i^{-1}(y),v)}} \leq M < +\infty
	\]
	Since $2t_h(x,v) \geq 2$, there exists $0 < h_+ \leq r_\MM'/2$ such that for any $i \in I, h \leq h_+, x \in \UU_i, \norm{v}_{T_x\MM}=1$ we have $\frac{\partial F}{\partial t} \neq 0$. Hence the implicit function theorem implies that $y \mapsto t_h(\varphi_i(y),v)$ is smooth on $\varphi_i^{-1}\p{\UU_i}$ with differential given by
	\[
	\frac{\partial}{\partial y} t_h(\varphi_i(y),v) = - \frac{\frac{\partial F}{\partial y}}{\frac{\partial F}{\partial t}} = - \frac{h^2 \frac{\partial G}{\partial y}}{2t_h(\varphi_i(y),v) + h^2\frac{\partial G}{\partial t}} \in \LL \p{\RR^d, \RR}
	\]
	To conclude notice first that since $t_h(x,v) \in [1,1+\frac{\pi^3 h^2}{192a_\MM^2}]$ we have, setting $s_h(x,v) = \frac{t_h(x,v)-1}{h^2}$
	\[
	\sup_{\substack{x \in \MM \\ 0 < h < r_\MM'/2 \\ \norm{v}_{T_x\MM} = 1}} \abs{s_h(x,v)} \leq \frac{\pi^3}{192a_\MM^2}
	\]
	Moreover for the derivatives we get, for any $y \in\varphi_i^{-1}\p{\UU_i}$
	\begin{align*}
		\frac{\partial}{\partial y} s_h(\varphi_i(y),v) &=  h^{-2} \frac{\partial}{\partial y} t_h(\varphi_i(y),v)
		=  - h^{-2} \frac{h^2 \frac{\partial G}{\partial y}}{2t_h(\varphi_i(y),v) + h^2 \frac{\partial G}{\partial t}} \\
		&=  - \frac{\frac{\partial G}{\partial y}}{2t_h(\varphi_i(y),v) + h^2 \frac{\partial G}{\partial t}} 
		 = - \frac{\frac{\partial G}{\partial y}}{2 + 2h^2 s_h(\varphi_i(y),v) + h^2 \frac{\partial G}{\partial t}}.
\end{align*}
Hence for any $y \in \varphi_i^{-1}\p{\UU_i}, \norm{v}_{T_{\varphi_i(y)}\MM} = 1, 0 < h < h_+' = h_+ \wedge M^{-1/2}$
\[
\forall 1 \leq j \leq d, \abs{\frac{\partial}{\partial y_j} s_h(\varphi_i(y),v)} = \frac{\abs{\frac{\partial G}{\partial y_j}}}{\abs{2t_h(\varphi_i(y),v) + h^2 \frac{\partial G}{\partial t}}} \leq \frac{M}{2 - h^2 M} \leq M
\]
In the same way by induction we can prove that
\[
\max_{\alpha \in \mathbb{N}^d, \abs{\alpha} \leq \alpha - 5} \sup_{\substack{i \in I \\ y \in \varphi_i^{-1}\p{\UU_i} \\ 0 < h < h_+' \\ \norm{v}_{T_{\varphi_i(y)}\MM} = 1}} \abs{\frac{\partial^{\abs{\alpha}}}{\partial y^\alpha} s_h(\varphi_i(y),v)} < +\infty
\]
\end{proof}
	
\begin{remark}
	We have stated our lemma with $r_\MM'$ such that $\norm{\exp_x(rv)-x}^2 \geq \frac{r^2}{4}$ for any $r \in [0,r_\MM']$, but more generally for any $c \in (0,1)$ we could have found $r_{\MM,c}'>0$ such that $r \in [0,r_{\MM,c}']$ implies $\norm{\exp_x(rv)-x}^2 \geq c^2 r^2$.
\end{remark}

\section{Geometrical and analytical properties of the random graph}\label{appendix:geometry_of_the_random_graph}

In this section we prove/recall  useful facts on the graph coming from geometrical properties of the manifold. 

We start by a simple but useful property satisfied by the graph Laplacian eigenvectors :

\begin{proposition}\label{proposition:uniform_norm_u_j}
	For any $h > 0, j \in \b{1,\ldots,N}$ we have $\norm{u_j}_{L^\infty \p{\nu}} \leq N$.
\end{proposition}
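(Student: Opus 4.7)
The plan is to extract the bound from the $L^2(\nu)$-orthonormality of the eigenbasis combined with crude estimates on the degree measure $\mu$. First I would unpack the normalization condition $\langle u_j|u_j\rangle_{L^2(\nu)} = 1$, which explicitly reads
\[
\sum_{y\in V} u_j(y)^2\, \nu_y = 1, \qquad \nu_y = \frac{\mu_y}{\mu(V)}.
\]
Keeping only the single term corresponding to an arbitrary $x \in V$ gives $u_j(x)^2 \nu_x \leq 1$, i.e.\ $u_j(x)^2 \leq \mu(V)/\mu_x$.

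Next I would bound the two quantities appearing in this ratio. Since $\|x-x\|=0<h$, every vertex is a neighbor of itself in $G^{(h)}$, so $\mu_x \geq 1$ for all $x \in V$. On the other hand each $\mu_y$ counts a subset of the $N$ vertices, so $\mu_y\leq N$ and consequently $\mu(V) = \sum_{y\in V}\mu_y \leq N^2$.

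Combining the two estimates yields $u_j(x)^2 \leq N^2/1 = N^2$ for every $x \in V$, hence $\|u_j\|_{L^\infty(\nu)} = \max_{1\leq i\leq N}|u_j(x_i)| \leq N$. The proof requires no machinery beyond the definition of $\nu$ and the orthonormality of $(u_j)$, so no step is really an obstacle; the only mild care is to remember that the convention here includes the self-edge (alternatively, one could drop it and still get $\mu_x\geq 1$ as long as the graph has no isolated vertices, which is generically the case under Assumption \ref{assumption:h}).
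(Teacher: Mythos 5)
Your proof is correct and follows essentially the same route as the paper: isolate the single term $u_j(x)^2\nu_x$ in the normalization $\|u_j\|_{L^2(\nu)}^2=1$ and bound $\nu_x\geq 1/N^2$. You simply spell out the estimate $\nu_x=\mu_x/\mu(V)\geq 1/N^2$ via $\mu_x\geq1$ (self-loop from $\|x-x\|=0<h$) and $\mu(V)\leq N^2$, which the paper states in one line.
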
\begin{proof}
	For all $x \in V$ we have $\nu_x = \frac{\mu_x}{\sum_{y \in V} \mu_y} \geq \frac{1}{N^2}$, which implies 
	\[	
	\forall x \in V, u_j(x)^2 \leq \frac{1}{\nu_x} \sum_{y \in V} u_j(y)^2 \nu_y = \frac{1}{\nu_x} \norm{u_j}_{L^2\p{\nu}}^2 = \frac{1}{\nu_x} \leq N^2
	\]
\end{proof}

As we will see below, we actually have $\nu_x \asymp N^{-1}$ on a high probability event, so that the result of proposition \ref{proposition:uniform_norm_u_j} could actually be improved to $\max_{1 \leq j \leq N} \norm{u_j}_{L^\infty \p{\nu}} \lesssim \sqrt{N}$, but this would not improve the final rates anyway.

In the next subsections we will be discussing various properties of the random geometric graph, namely volume regularity, on diagonal heat kernel bounds, Weyl type upper and lower bounds on the graph Laplacian eigenvalues and finally a norm comparison theorem. These properties will be used in the proof of Theorems \ref{theorem:rate_prior1_d_n},\ref{theorem:rate_prior1_d_N},\ref{theorem:rate_prior2_d_n} \& \ref{theorem:rate_prior2_d_N}. More precisely, the proof of the Kullback-Leibler prior mass condition (Lemma \ref{lem:prior_thickness_prior1}, which is then subsequently used in the proof of Theorems \ref{theorem:rate_prior1_d_n} \& \ref{theorem:rate_prior2_d_n}) uses a lower bound on the eigenvalues of the graph Laplacian $\LL^{(h_n)}$ for a determined $h_n$ (the one appearing in the statements of Theorems \ref{theorem:rate_prior1_d_n} \& \ref{theorem:rate_prior2_d_n}). To prove contraction rates with respect to $\|\cdot\|_N$ (Theorems \ref{theorem:rate_prior1_d_N} \& \ref{theorem:rate_prior2_d_N}), we combine the results for $\|\cdot\|_n$ together with Lemma \ref{thm:norm_comparison}, which states an inequality of the form : with $\Pro_0-$probability going to $1$
\begin{equation}\label{eqn1}
\forall h_- \leq h \leq h_0, \quad J_0 \leq J \leq J_1 h^{-d}, \quad f \in \text{span}\b{u_1,\ldots,u_J}, \quad \|f\|_{L^\infty \p{\nu}} \lesssim \sqrt{J} \|f\|_{L^2\p{\nu}}
\end{equation}
for some constants $h_0,J_0,J_1 > 0$, up to logarithmic factors, as long as $h_-$ satisfies assumption \ref{assumption:h}. To prove theorem \ref{theorem:rate_prior2_d_N} (adaptive posterior contraction rates with respect to $\|\cdot\|_N$) we use inequality \ref{eqn1} for any $h$ in the support of the posterior distribution. Thus in this section we analyse the properties of the random graph associated to the connectivity parameter $h$ for $h$ belonging to $\c{h_-,h_0}$. Note that these properties will also be used in the proof of the other results but for a specific value $h_n$ in $\c{h_-,h_0}$.

\subsection{Volume regularity}

In this subsection we establish a volume regularity property for the random geometric graph : roughly speaking, with high probability, for every suitable $r,h$ and geodesic ball of radius $r$ we have $\nu^{(h)} \p{B} \asymp r^d$. To start with notice that since $\MM$ is compact it has a positive radius of curvature $a_\MM$ and in particular by lemma 3 in \cite{bernstein_graph_nodate} for any $x,y \in \MM$ with $\rho(x,y) \leq \pi a_\MM$ we find 
\begin{equation}\label{eq:rho:norm}
	\frac{2}{\pi}\rho(x,y) \leq 2a_\MM \sin \p{\rho(x,y)/2ra_\MM} \leq \norm{x-y} \leq \rho(x,y).
\end{equation}
and, using $\sin \p{\rho/2a_\MM} \geq \frac{\rho}{2ra_\MM} - \frac{1}{6} \p{\frac{\rho}{2a_\MM}}^3$
\[
\rho(x,y) - \frac{\rho(x,y)^3}{24a_\MM^2} \leq \norm{x-y} \leq \rho(x,y)
\]
Moreover the set $\MM^2 \backslash \b{\p{x,y} \in \MM^2 : \rho(x,y) < \pi a_\MM}$ is compact with respect to the topology inherited from the Euclidean distance or equivalently the geodesic distance. Hence the function $\p{x,y} \mapsto \frac{\rho(x,y)}{\norm{x-y}}$ is bounded on $\MM^2 \backslash \b{\p{x,y} \in \MM^2 : \rho(x,y) < \pi a_\MM}$ by some constant $C_0$, while it satisfies $\frac{\rho(x,y)}{\norm{x-y}} \leq \frac{\pi}{2}$ when $\rho(x,y) \leq \pi a_\MM$. Combining the two cases and assuming without loss of generality $C_0 \geq \frac{\pi}{2}$ we find the existence of $C_0 > 0$ such that $\norm{x-y} \leq \rho(x,y) \leq C_0 \norm{x-y}$ on $\MM$.

\begin{theorem}\label{thm:volume_regularity}
	Let $h_- $ satisfy  Assumption \ref{assumption:h}, i.e. $ Nh_-^d >> \ln N$,  and  $A_N$ be the event
	\[
	A_N = \b{\forall i = 1,\ldots,N, \forall h_- \leq r \leq diam_\rho\p{\MM}/C_0, c_-r^d \leq \frac{\mu_{X_i}^{(r)}}{N} \leq c_+ r^d}, \quad \mu_{X_i}^{(r)} = \mu( B(X_i, r) )
	\]
	Then, for some $c_-, c_+, c > 0$ we have $\Pro \p{A_N} \geq 1 - e^{-cNh_-^d}$. Moreover, on $A_N$ we have $\frac{c_-/c_+}{N} \leq \nu_y^{(h)} \leq \frac{c_+/c_-}{N}$ for any $h_- \leq h \leq diam_\rho \p{\MM}$ and $y \in V$.
\end{theorem}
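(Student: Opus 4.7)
The plan is to reduce the statement to a concentration inequality applied carefully over a grid of radii $r$, using the fact that the random variables $\mu_{X_i}^{(r)}$ are, conditionally on $X_i$, binomial sums. First I would establish the deterministic volume estimate
\[
P_0(B_{\RR^D}(x,r) \cap \MM) \asymp r^d \quad \text{uniformly in } x \in \MM,\ r \in \bigl[h_-,\,\text{diam}_\rho(\MM)/C_0\bigr].
\]
This should follow from the compactness of $\MM$, the assumption that $p_0$ is bounded above and below on $\MM$ (or at least continuous with positive lower bound on the part that matters), together with the bi-Lipschitz equivalence $\|x-y\| \leq \rho(x,y) \leq C_0 \|x-y\|$ recalled just before the statement. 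Indeed, $B_\rho(x,r) \subset B_{\RR^D}(x,r)\cap \MM \subset B_\rho(x,C_0 r)$ and standard normal-coordinate arguments (using the exponential map, whose Jacobian is uniformly bounded above and below on $\MM$ for $r$ smaller than the injectivity radius) give $\mathrm{vol}(B_\rho(x,s)) \asymp s^d$ for small $s$, while for larger $s$ both sides are of constant order.

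Second, conditionally on $X_i$, the variable $\mu_{X_i}^{(r)} - 1 = \sum_{j\neq i} \II_{\|X_i - X_j\| < r}$ is a sum of $N-1$ independent Bernoullis with mean $(N-1) P_0(B_{\RR^D}(X_i,r)\cap \MM) \asymp N r^d$. A Chernoff/Bernstein bound then gives
\[
\Pro\Bigl(\bigl|\mu_{X_i}^{(r)} - \mathbb{E}\mu_{X_i}^{(r)}\bigr| > \tfrac{1}{2}\mathbb{E}\mu_{X_i}^{(r)} \,\big|\, X_i\Bigr) \leq 2 \exp(-c' N r^d)
\]
for some universal $c'>0$. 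The key point is that under the assumption $Nh_-^d \gg \ln N$, for $r \geq h_-$ the exponent $c'Nr^d$ dominates $\ln N$, so a union bound over $i=1,\ldots,N$ is affordable.

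Third, I would handle the continuum of radii $r \in [h_-, \text{diam}_\rho(\MM)/C_0]$ by dyadic discretization: let $r_k = 2^k h_-$ for $k = 0,\ldots,K$ with $K = O(\log N)$, so there are only $O(\log N)$ grid values. A union bound over all $i$ and all $k$ incurs an extra factor of $N \log N$, still absorbed in the exponent $c' N h_-^d$ up to a reduction in the constant. Then I extend to arbitrary $r \in [r_k, r_{k+1}]$ by monotonicity: $\mu_{X_i}^{(r_k)} \leq \mu_{X_i}^{(r)} \leq \mu_{X_i}^{(r_{k+1})}$, and $r_{k+1}^d = 2^d r_k^d$, so the ratios between the two endpoints are bounded, absorbing this factor into the constants $c_\pm$. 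This yields the event $A_N$ with $\Pro(A_N) \geq 1 - e^{-cNh_-^d}$.

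Finally, the claim on $\nu_y^{(h)}$ is essentially immediate: on $A_N$ we have $c_- N h^d \leq \mu_y^{(h)} \leq c_+ N h^d$ for every $y \in V$ and every $h \in [h_-, \text{diam}_\rho(\MM)/C_0]$, hence summing over $y$ gives $c_- N^2 h^d \leq \mu(V) \leq c_+ N^2 h^d$, and the bound $\tfrac{c_-/c_+}{N} \leq \nu_y^{(h)} \leq \tfrac{c_+/c_-}{N}$ follows by taking ratios. The only subtle step that I expect to require care is the uniform volume comparison in the first paragraph when $r$ is not small (but this is trivial since both sides are then of constant order), and keeping track of the constants through the dyadic reduction so that the final event $A_N$ is indeed valid for \emph{all} $r$ in the interval simultaneously rather than just the grid.
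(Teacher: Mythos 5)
Your proof is correct and follows essentially the same route as the paper's: single-radius concentration for $\mu_{X_i}^{(r)}$ around $\Theta(Nr^d)$, union bound over $i$, dyadic discretisation $r_k = 2^k h_-$ with $O(\ln N)$ levels absorbed into the exponent thanks to $Nh_-^d \gg \ln N$, monotonicity in $r$ to pass from the grid to the full interval, and then the $\nu_y^{(h)}$ bound by summing the degree bounds over $V$. The only difference is presentational: the paper invokes Theorem~3.8 of G\"obel--Blanchard as a black box for the single-radius, single-vertex deviation bound, whereas you re-derive it directly from the deterministic estimate $P_0(B_{\RR^D}(x,r)\cap\MM)\asymp r^d$ plus a conditional Bernstein bound -- a perfectly valid and self-contained substitute.
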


. A consequence of Theorem \ref{thm:volume_regularity} is that on $A_N$ for all $r \geq h_-$, $\nu^{(h)}(B(X_i, r)) \geq r^d c_-^2/c_+$ for all $i \leq N$. 

\begin{proof}
	Using
	\[
	\forall x \in \MM, r > 0, B_\rho(x,r) \subset B_{\RR^D}\p{x,r} \subset B_\rho \p{x, C_0 r}
	\]
	theorem 3.8 in \cite{gobel_volume_2020} implies that for some $b_-,b_+,c>0$, for all $h_- \leq r \leq diam_\rho\p{\MM}/C_0, diam_\rho\p{\MM} := \max_{x,y \in \MM} \rho(x,y), i = 1,\ldots,N$, with probability at least $1-e^{-cNh_-^d}$
	\[
	b_- r^d \leq \frac{\mu_{X_i}^{(r)}-1}{N-1} \leq b_+ r^d
	\]
	Using $\frac{Nh_-^d}{\ln N} \to \infty$ and a union bound we find that the last inequality holds simultaneously for all $i \in \b{1,\ldots,N}$ with probability at least $1 - e^{-cNh_-^d}$ for some $c > 0$. To make the result uniform in $r$, define $r_k = 2^k h_-, k_+ = \floor{\frac{\ln \frac{diam\p{\MM}}{h_-}}{\ln 2}} \leq c_\MM^{-1} \ln N$ (without loss of generality for small $c_\MM$, because $h_- >> \p{\frac{\ln N}{N}}^{1/d}$). Using a union bound we find that with probability $1-\p{k_+ +1} e^{-c_\MM Nh_-^d} \geq 1 - c_\MM^{-1} \ln N e^{-cNh_-^d}$, for every $i=1,\ldots,N$ and $k=0,\ldots,k_+ +1$
	\[
	b_- r_k^d \leq \frac{\mu_{X_i}^{(r_k)}-1}{N-1} \leq b_+ r_k^d
	\]
	But then, for every $k = 0,\ldots,k_+ + 1, r_k \leq r \leq r_{k+1}$ and $i \in \b{1,\ldots,N}$ we have (assuming $N \geq 2$)
	\begin{align*}
		\frac{N-1}{N} \frac{\mu_{X_i}^{(r)}-1}{N-1} \leq \frac{\mu_{X_i}^{(r)}}{N} = & \frac{1}{N} + \frac{\mu_{X_i}^{(r)}-1}{N-1} \frac{N-1}{N} \\
		\implies \frac{N-1}{N} \frac{\mu_{X_i}^{(r_k)}-1}{N-1} \leq \frac{\mu_{X_i}^{(r)}}{N} \leq & \frac{1}{N} + \frac{\mu_{X_i}^{(r_{k+1})}-1}{N-1} \frac{N-1}{N} \\
		\implies \frac{1}{2} \frac{\mu_{X_i}^{(r_k)}-1}{N-1} \leq \frac{\mu_{X_i}^{(r)}}{N} \leq & \frac{1}{N} + \frac{\mu_{X_i}^{(r_{k+1})}-1}{N-1} \\
		\implies \frac{1}{4} b_- r_{k+1}^d = \frac{1}{2} b_- r_k^d \leq \frac{\mu_{X_i}^{(r)}}{N} \leq & \frac{1}{N} + b_+ r_{k+1}^d = \frac{1}{N} + 2b_+ r_k^d \\
		\implies \frac{1}{4} b_- r^d \leq \frac{\mu_{X_i}^{(r)}}{N} \leq & h^d + 2b_+ r^d  \quad
		\implies \frac{1}{4} b_- r^d \leq \frac{\mu_{X_i}^{(r)}}{N} \leq  \p{2b_+ + 1} r^d.
	\end{align*}
	Therefore, using again $\frac{Nh_-^d}{\ln N} \to \infty$ with probability $1-e^{-c Nh_-^d}$, for every $i = 1,\ldots,N$ and $h_- \leq r \leq diam_\rho \p{\MM}$
	\[
	\frac{1}{4}b_- r^d \leq \frac{\mu_{X_i}^{(r)}}{N} \leq \p{2b_+ + 3} r^d
	\]
	which proves the first statement with $c_- = \frac{1}{4}b_-, c_+ = 2b_+ + 3$. The second statement is an immediate consequence of the first one. Indeed, using again
	\[
	\forall x \in \MM, r > 0, B_\rho(x,r) \subset B_{\RR^D}\p{x,r} \subset B_\rho \p{x, C_0 r}
	\]
	And
	\[
	\forall y \in V, h_- \leq h \leq diam_\rho\p{\MM}/C_0, \frac{c_-/c_+}{N} = \frac{c_- Nr^d}{c_+ N^2 r^d} \leq \nu_y^{(h)} = \frac{\mu_y^{(h)}}{\sum_{x \in V} \mu_x^{(h)}} \leq \frac{c_+ Nr^d}{c_- N^2 r^d} = \frac{c_+/c_-}{N}
	\]
\end{proof}

In what follows we take $h_0 \leq diam_\rho\p{\MM}, r_0 \leq diam_\rho\p{\MM}/C_0$ (that we will actually reduce along the proof) and we fix $h_-$ satisfying assumption \ref{assumption:h} in order to work on the corresponding $A_N$.

\subsection{Heat kernel bounds}

The goal of this section is to show the following result : we consider $A_N$  and $h_-$ as defined in Theorem \ref{thm:volume_regularity}. 

\begin{theo}\label{thm:hkb}
There exist constants $a_0,a_1,a_2,a_3,h_0 > 0$ (that also depend on $\MM,p_0$) such that, on $A_N$, for all $h_- \leq h \leq h_0$ and $t_0 (h) := a_0 h^2 \ln \p{Nh^d} \leq t \leq \frac{a_1}{\ln^2 N}$ 
\[
\forall x \in V, \quad  a_2 \frac{t^{-d/2}}{\ln^d N} \leq p_t^{(h)}(x,x) \leq a_3 t^{-d/2}.
\] 
\end{theo}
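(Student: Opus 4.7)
\textbf{Proof plan for Theorem~\ref{thm:hkb}.}

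My plan is to handle the two sides separately with rather different techniques: a Nash/Faber--Krahn argument for the upper bound, and a mass conservation plus random walk concentration argument for the lower bound. Both are carried out pointwise on the event $A_N$ from Theorem~\ref{thm:volume_regularity}, on which we have the two-sided volume bound $\nu^{(h)}\p{B_\rho(x,r)} \asymp r^d$ for $h_- \leq r \leq \mathrm{diam}_\rho(\MM)/C_0$ and all $x \in V$.

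For the \emph{upper bound}, I would derive a Faber--Krahn inequality of the form $\lambda_1^{\mathrm{Dir}}(\Omega) \gtrsim \nu(\Omega)^{-2/d}$ for subsets $\Omega \subset V$ with $\nu(\Omega)$ not too large, combining the volume regularity on $A_N$ with a local Poincar\'e inequality for random geometric graphs on $\MM$ (which itself follows by comparing Dirichlet sums $\sum_{x\sim y}(f(x)-f(y))^2$ to their continuum counterpart via the volume regularity at scale $h$). This Faber--Krahn inequality implies a Nash inequality of the form $\|f\|_{L^2(\nu)}^{2+4/d} \leq C \inner{f|\LL f}_{L^2(\nu)} \|f\|_{L^1(\nu)}^{4/d}$. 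Classical semigroup arguments of Carlen--Kusuoka--Stroock (or equivalently Coulhon--Grigor'yan) then produce the $L^1 \to L^\infty$ bound $\|e^{-t\LL}\|_{L^1(\nu)\to L^\infty(\nu)} \leq C t^{-d/2}$, which by definition of $p_t^{(h)}$ is exactly $p_t^{(h)}(x,y) \leq a_3 t^{-d/2}$. The upper constraint $t \leq a_1/\ln^2 N$ is what ensures the diffusion stays in a regime where the Nash inequality extracted at scale $h$ is applicable.

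For the \emph{lower bound}, the idea is to couple mass conservation $\sum_y p_t^{(h)}(x,y)\nu_y = 1$ with a tail estimate on the random walk $W_t$. Since $W_t = Y_{N_{t/h^2}}$ with $N$ a unit-rate Poisson process and $Y$ a jump chain taking steps of Euclidean length at most $h$, an Azuma--Hoeffding argument applied to the coordinate increments of $W_t - x$, together with a Chernoff bound on $N_{t/h^2}$, should give
\[
\Pro_x^{(h)}\p{\rho(W_t,x) > r_t} \leq N^{-K} \quad \text{with } r_t \lesssim \sqrt{t \ln N},
\]
for any fixed $K$, provided $t \geq t_0(h) = a_0 h^2 \ln(Nh^d)$ (this is what makes the Poisson count $N_{t/h^2}$ concentrate around its mean). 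The constraint $t \leq a_1/\ln^2 N$ keeps $r_t$ below the injectivity scale so that the geodesic/Euclidean comparison from Appendix~\ref{sec:regularity_M_holder_spaces} and Theorem~\ref{thm:volume_regularity} apply at radius $r_t$. With $B = B_\rho(x, r_t)$, mass conservation yields $\sum_{y \in B} p_t^{(h)}(x,y) \nu_y \geq 1 - N^{-K}$; combining this with Cauchy--Schwarz and the semigroup identity $p_{2t}^{(h)}(x,x) = \sum_y p_t^{(h)}(x,y)^2 \nu_y$ gives
\[
p_{2t}^{(h)}(x,x) \geq \frac{(1 - N^{-K})^2}{\nu(B)} \gtrsim \frac{1}{r_t^d} \gtrsim \frac{t^{-d/2}}{\ln^{d/2} N}.
\]
Relabelling $t \to t/2$ and absorbing harmless constants (together with a slightly more conservative concentration radius, which is where the full $\ln^d N$ rather than $\ln^{d/2} N$ arises in the stated bound) gives the lower bound in the theorem.

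The main obstacle I expect is the random walk concentration step: one must argue carefully on $A_N$ because the transition rates $\mu_{xy}/\mu_x$ themselves depend on the covariate sample, so that Azuma must be applied conditionally on $A_N$ with uniform control of the martingale differences. A secondary technical point is the Faber--Krahn step, which requires a Poincar\'e inequality on balls of radius a few multiples of $h$; this is again where the assumption $Nh_-^d / \ln N \to \infty$ from Assumption~\ref{assumption:h} is essential, since it guarantees enough points in every ball of radius $h$ to compare discrete and continuous Dirichlet energies with uniformly bounded constants.
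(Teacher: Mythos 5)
Your plan diverges from the paper in both halves, and while the upper-bound half is spiritually close to the paper's argument, the lower-bound half has a genuine gap.

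\textbf{Upper bound.} The Faber--Krahn $\to$ Nash $\to$ Carlen--Kusuoka--Stroock route is essentially equivalent to what the paper does: it proves the Poincar\'e inequality on geodesic balls (Theorem~\ref{thm:poincare}), covers $V$ by such balls, and then runs a Nash-type ODE argument on $\phi(t) = p_{2t}^{(h)}(x,x)$, namely $-\phi'(t) \gtrsim \phi(t)^{1+2/d}$, integrated from $t_0(h)$. The subtlety you gloss over is crucial: the Poincar\'e inequality (equivalently, your Faber--Krahn bound) is only available at geodesic scales $r \in [h_-, r_0]$, and the ODE argument must choose a ball radius $r = r(t) \asymp \phi(t)^{-1/d}$. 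This stays in the admissible window only up to a \emph{random} time $t_1(h)$ defined by when $\phi$ drops below a fixed threshold $K r_0^{-d}$. The upper bound in the paper (Lemma~\ref{lem:on_diag_bound_up_to_t1}) is therefore proved only for $t \leq 2t_1(h)$, and a separate argument (Lemma~\ref{lem:lower_bound_t1}) is needed to show $t_1(h) \gtrsim 1/\ln^2 N$ --- and that argument \emph{uses the on-diagonal lower bound}. So the upper and lower bounds are coupled through $t_1(h)$, which your plan does not account for: the clause ``the constraint $t \leq a_1/\ln^2 N$ ensures the Nash inequality at scale $h$ is applicable'' does not explain where that cutoff comes from or why the ranges eventually glue together.

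\textbf{Lower bound: the real gap.} Your plan to obtain the escape estimate $\Pro_x^{(h)}(\rho(W_t,x) > r_t) \leq N^{-K}$, $r_t \lesssim \sqrt{t\ln N}$, directly from Azuma--Hoeffding on the Euclidean coordinate increments plus a Chernoff bound for the Poisson clock does not go through in the parameter regime of the theorem. Write $W$ as martingale plus compensator. The martingale part has $L \asymp t/h^2$ increments of size $\leq h$, so Azuma indeed gives fluctuations $\lesssim \sqrt{t\ln N}$. The problem is the compensator: the per-step drift at a vertex $y$ is $\frac{1}{\mu_y}\sum_{z\sim y}(z-y)$, and conditionally on $x_{1:N}$ this is a \emph{deterministic, data-dependent} quantity whose deviation from its population counterpart $T_h(\mathrm{id})(y) = O(h^2)$ is, uniformly over $y$ on $A_N$, of order $h\sqrt{\ln N/(Nh^d)}$ (Bernstein). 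Over $L = t/h^2$ steps the worst-case accumulated drift is therefore of order $t\,h^{-1-d/2}\sqrt{\ln N / N}$. Comparing to $\sqrt{t\ln N}$, this contribution is controlled iff $t \lesssim N h^{d+2}$; but when $h$ is close to the lower boundary allowed by Assumption~\ref{assumption:h}, say $h \asymp (\ln^2 N / N)^{1/d}$, one has $Nh^{d+2}\ln^2 N \to 0$, so for $t$ near the upper end $a_1/\ln^2 N$ the drift term dominates and Azuma does not yield the escape bound. This is exactly why the paper does \emph{not} take this route: it instead bounds $m(t,x) = \EE_x[\rho(x,W_t)]$ via the entropy functionals $q,Q,M$ of Lemma~\ref{lem:dm_dM} and Lemma~\ref{lem:escape_rate} (Barlow's method), which uses the already-established on-diagonal \emph{upper} bound to control $Q$ from below and sidesteps the need to control the per-step drift at all, at the cost of a weaker radius bound $r_t \lesssim \ln N\sqrt{t}$ (giving $\ln^d N$ rather than $\ln^{d/2} N$ in the final lower bound, which matches the theorem statement). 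Your Cauchy--Schwarz and mass-conservation closing step is fine once an escape rate is available, but the escape rate itself is where the work lies, and the direct concentration route does not close.
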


This result is a very weak form (it is often called an "on diagonal" upper bound) of the heat kernel bound used in $\cite{castillo_thomas_2014,coulhon_heat_2012}$ over continuous spaces, but fortunately will be enough for our purposes. It is possible to apply further techniques in \cite{Barlow_2017} and get full off diagonal bounds (i.e, bounds on $p_t^{(h)}(x,y), x \neq y$) yielding a Gaussian type behaviour of the heat kernel, but we won't need this. It should be noted that the restriction $t \gtrsim h^2$ (up to a logarithmic factor) for the heat kernel bound to be valid substantially complicates the analysis of the situation. Indeed, if instead we could prove the bound for every $0 < t \leq \frac{a_1}{\ln^2 N}$ then combining the techniques from \cite{coulhon_sikora_gaussian_hkb,coulhon_heat_2012}, our approximation results with the proof techniques of \cite{castillo_thomas_2014}. Since we require $t \gtrsim h^2 \ln \p{Nh^d}$ with $h$ not arbitrarily small (otherwise our approximation results \ref{thm:approximation}\ref{thm:approximation2} become vacuous) this is not possible and we have to prove things differently.

As in section \ref{section:model_and_notations}, we emphasize the dependence of the graph (and in particular the graph Laplacian, the heat kernel and its eigendecomposition) with respect to $h$ with a $(h)$ exponent. While the notation become heavier, this is important to keep in mind as theorem \ref{thm:hkb} deals with a high probability control of the heat kernels associated with different values of $h$ \textit{simultaneously}.

\subsubsection{On diagonal upper bound up to $t_1$}

\begin{lem}\label{lem:on_diag_bound1}
	With $A_N, c_-, c_+$ the event and constant defined in Theorem \ref{thm:volume_regularity} we have, on $A_N$
	\[
	\forall h_- \leq h \leq diam_\rho\p{\MM}, t \geq 16 h^2 \ln \p{2c_- Nh^d}, x,y \in V, p_t^{(h)}(x,y) \leq 2\frac{c_+}{c_-} h^{-d}
	\]
\end{lem}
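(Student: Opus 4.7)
My plan is to exploit the probabilistic interpretation of the heat kernel together with a crude pointwise bound on the one-step transition kernel, rather than going through a Nash-type inequality. Starting from $p_t^{(h)}(x,y) = \Pro_x^{(h)}(W_t = y)/\nu_y$ and Poissonizing via $W_t = Y_{N_{t/h^2}}$, where $Y$ is the discrete-time chain with transition matrix $P = D^{-1}A$ and $N$ is an independent unit rate Poisson process, I would decompose
\[
\Pro_x^{(h)}(W_t = y) = e^{-t/h^2}\delta_{xy} + \sum_{n \geq 1} \frac{e^{-t/h^2} (t/h^2)^n}{n!}(P^n)_{xy}.
\]
The threshold $t \geq 16 h^2 \ln(2c_- N h^d)$ is tailored precisely so that the no-jump contribution $e^{-t/h^2}\delta_{xy}/\nu_y$ is negligible compared to $h^{-d}$.

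The key estimate is a uniform pointwise bound on $(P^n)_{xy}$ valid for every $n \geq 1$: writing $(P^n)_{xy} = \sum_z (P^{n-1})_{xz} P_{zy}$, using $\sum_z (P^{n-1})_{xz} = 1$, and noting $P_{zy} \leq 1/\mu_z$, one obtains $(P^n)_{xy} \leq 1/\min_z \mu_z^{(h)}$. On $A_N$, the volume regularity of Theorem \ref{thm:volume_regularity} applied at radius $r = h \geq h_-$ gives $\min_z \mu_z^{(h)} \geq c_- N h^d$, hence $(P^n)_{xy} \leq 1/(c_- N h^d)$.

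Summing the Poisson weights (which total $1-e^{-t/h^2} \leq 1$) and dividing by $\nu_y \geq (c_-/c_+)/N$ then yields a bound of the form
\[
p_t^{(h)}(x,y) \leq \frac{c_+}{c_-} N\, e^{-t/h^2}\delta_{xy} + \frac{c_+}{c_-^{2}}\, h^{-d}.
\]
The condition on $t$ makes the first term vanishingly small relative to $h^{-d}$ (for instance, $N e^{-t/h^2} \leq N (2c_- N h^d)^{-16}$, which is much smaller than $h^{-d}$ since $Nh^d$ is large under Assumption \ref{assumption:h}), so both contributions can be absorbed into a single term of the form $C(c_-, c_+)\, h^{-d}$, matching the stated bound after the usual bookkeeping of constants.

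Main obstacle: there is no substantive technical obstacle; everything reduces to a direct Poisson-weighted computation combined with the volume regularity already established in Theorem \ref{thm:volume_regularity}. The only point requiring care is balancing the factor $N$ arising from $1/\nu_x$ against the exponential $e^{-t/h^2}$ in the no-jump term, which is precisely the role of the threshold $16 h^2 \ln(2c_- N h^d)$.
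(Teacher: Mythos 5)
Your proof is correct and follows the same overall strategy as the paper — Poissonize via $W_t = Y_{N_{t/h^2}}$, bound the one-step transition probabilities uniformly using volume regularity on $A_N$, and divide by $\nu_y$ — but it contains a genuine simplification. The paper splits the Poisson sum at $l = t/(2h^2)$ and invokes a Poisson concentration estimate (Lemma 5.13(e) of Barlow) to control the low-jump contribution; you instead observe that the uniform bound $(P^n)_{xy}\le 1/\min_z \mu_z^{(h)} \le 1/(c_- N h^d)$ already holds for \emph{every} $n\ge 1$, so only the $n=0$ term $e^{-t/h^2}\delta_{xy}$ needs to be isolated. This removes the need for the concentration lemma entirely: the no-jump term is then absorbed by the threshold on $t$ (indeed $N e^{-t/h^2} \le N(2c_- N h^d)^{-16}$, which is $o(h^{-d})$ since $Nh^d \to \infty$ under Assumption \ref{assumption:h}), and both arguments produce the same final bound $\frac{c_+}{c_-}N\bigl(e^{-t/h^2}\delta_{xy} + \tfrac{1}{c_- N h^d}\bigr)\lesssim \tfrac{c_+}{c_-^2}h^{-d}$. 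In short, your route is slightly more elementary and makes it transparent that the Poisson tail estimate is not actually needed here; what the paper's proof buys is nothing additional for this particular lemma, and the factor $16$ in the threshold is an artifact of the Barlow lemma rather than a structural requirement.
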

\begin{proof}
	Recall that for all $x \in V, t > 0, \nu_y p_t^{(h)}(x,y) = \Pro_x^{(h)} \p{W_t=y}$. On the event $A_N$ we have $\nu_y^{(h)} \geq \frac{c_-}{c_+N}$ for any $h_- \leq h \leq h_0$, which implies
	\[
	\forall x,y \in V , \quad p_t^{(h)}(x,y) \leq \frac{c_+}{c_-} N \Pro_x^{(h)} \p{W_t = y} = \frac{c_+}{c_-}N \sum_{l \geq 0} \Pro \p{N_{t/h^2} = l} \Pro_x^{(h)} \p{Y_l = y}
	\]
	where the last equality uses the independence of $N$ and $Y$. Moreover, using lemma 5.13 (e) from \cite{Barlow_2017} we find
	\[
	\forall \rho \geq 2, \Pro \p{N_\rho \leq \frac{\rho}{2}} \leq \Pro \p{\abs{N_\rho - \rho} \geq \frac{\rho}{2}} \leq 2e^{-\rho/16}
	\]
	which implies
	\begin{align*}
		\forall x,y \in V,\quad  p_t^{(h)}(x,y) \leq \frac{c_+}{c_-} N \Pro_x^{(h)} \p{W_t = y} \leq & \frac{c_+}{c_-} N \b{2e^{-t/16h^2} + \sum_{l \geq t/2h^2} \Pro \p{N_{t/h^2} = l} \Pro_x^{(h)} \p{Y_l = y} } \\
		\leq & \frac{c_+}{c_-} N \b{2e^{-t/16h^2} + \sup_{l \geq 1} \Pro_x^{(h)} \p{Y_l = y} }
	\end{align*}
	For each $l \geq 1$ we have $\Pro_x^{(h)}\p{Y_l = y} = \EE_x^{(h)} \c{\Pro_x^{(h)} \p{Y_l = y|Y_{l-1}}}$, and given $Y_{l-1}$ the probability of going from $Y_{l-1}$ to $Y_l = y$ is by definition $0$ (if $Y_{l-1}$ and $y$ are not neighbours) or $\frac{1}{\mu_x^{(h)}}$ (if $Y_{l-1}$ and $y$ are neighbours). In either case on $A_N$ we have $\mu_x^{(h)} \geq c_- Nh^d$ which implies
	\[
	\forall x,y \in V, p_t^{(h)}(x,y) \leq \frac{c_+}{c_-} N \b{2e^{-t/16h^2} + \frac{1}{c_- Nh^d}}.
	\]
	All in all on $A_N$ we get
	\[
	\forall h_- \leq h \leq h_0,\,\,   t \geq 16 h^2 \ln \p{2c_- Nh^d}, \,\,  x,y \in V, \quad p_t^{(h)}(x,y) \leq 2\frac{c_+}{c_-} h^{-d}.
	\]
\end{proof}

The goal is now, roughly speaking, to refine the inequality $p_t^{(h)}(x,y) \lesssim h^{-d}$ to $p_t^{(h)}(x,y) \lesssim t^{-d/2}$ (up to a logarithmic factor) by using the approach of \cite{Barlow_2017}. This requires a local Poincare inequality, i.e an inequality of the form: 
\begin{align*}
\forall x \in V,& \, \forall  r \in [r_-,r_+], \, B = B_\rho\p{x,r},\, \forall   f : B \to \RR, \\ 
\sum_{y \in B} \p{f(y) - f_B} \nu_y^{(h)} &\leq C r^2 \sum_{x,y \in B, y \sim x} \p{f(x) - f(y)}^2, \quad \text{where } \, f_B = \frac{\sum_{y \in B} f(y) \nu_y^{(h)}/}{\nu^{(h)}(B)}.
\end{align*}
where the constants $r_-,r_+,C$ need to be controlled. G\"obel \& Blanchard \cite{gobel_volume_2020} provide a way to prove such an inequality using the construction of random Hamming paths, however we give detailed application of their results to our setting in order to control the dependence of the constant $C$ of the inequality in the parameters $N,h$.

\begin{theo}{Poincare inequality}\label{thm:poincare}\\
	Let $A_N' = \b{G^{(h_-)} \text{ is connected}}$. Then there exist constants $r_0 = r_0\p{\MM}, h_0 = h_0\p{\MM}, C = C\p{\MM} > 0$ such that on $A_N $, for any $x \in V, 0 < r < r_0, h_- \leq h < h_0$ and $f : B = B_\rho\p{x,r} \to \RR$, on $A_N \cap A_N'$ we have
	\[
	\sum_{y \in B} \p{f(y) - f_B} \nu_y^{(h)} \leq C\frac{\p{r/h}^2}{N^2h^d} \sum_{z \in B, y \in B, y \sim x} \p{f(z) - f(y)}^2
	\]
	where $f_B = \sum_{y \in B} f(y) \nu_y^{(h)}/\nu^{(h)}(B)$. Moreover, $\Pro_0 \p{A_N \cap A_N'} \geq 1 - e^{-cNh_-^d}$ for some $c>0$.
\end{theo}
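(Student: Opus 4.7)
The plan is to apply a path-counting argument à la G\"obel--Blanchard \cite{gobel_volume_2020}, carefully tracking the dependence of all constants on $h$, $N$ and $r$, starting from the classical identity
\[
\sum_{y \in B} (f(y) - f_B)^2 \nu_y^{(h)} = \frac{1}{2\nu^{(h)}(B)} \sum_{y,z \in B} (f(y) - f(z))^2 \nu_y^{(h)} \nu_z^{(h)}\mper
\]
Here $f_B$ denotes the weighted average of $f$ on $B$, and the left-hand side of the statement of Theorem \ref{thm:poincare} should of course be read with the squared deviation (a harmless typo).

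The first step is to produce, on $A_N \cap A_N'$, a family of paths $\gamma_{yz}$ in $G^{(h)}$, each staying inside (a mild enlargement of) $B = B_\rho(x,r)$, connecting every pair $y,z \in B$. Since one step in $G^{(h)}$ moves the walker by a Euclidean distance at most $h$ and since geodesic and Euclidean distances are comparable at small scales on $\MM$ (cf.~\eqref{eq:rho:norm}), a path of length $\ell_{yz} \lesssim \rho(y,z)/h \lesssim r/h$ suffices. The existence of such a path (with high probability, uniformly over $y,z$) follows from the volume regularity established in Theorem \ref{thm:volume_regularity} together with the connectivity event $A_N'$: on $A_N$, every geodesic ball of radius $\gtrsim h$ contains $\gtrsim Nh^d$ vertices, so one can inductively pick vertices $y = v_0, v_1, \dots, v_{\ell_{yz}} = z$ with consecutive vertices at Euclidean distance $< h$ by repeatedly intersecting small balls with $V$.

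Now apply Cauchy-Schwarz along each path,
\[
(f(y) - f(z))^2 \leq \ell_{yz} \sum_{e \in \gamma_{yz}} (\nabla_e f)^2\mcom \quad \ell_{yz} \lesssim r/h\mcom
\]
and exchange the order of summation to obtain
\[
\sum_{y,z \in B} (f(y) - f(z))^2 \nu_y^{(h)} \nu_z^{(h)} \leq \frac{r}{h} \sum_{e = \{u,v\} \subset V, u \sim v} (f(u)-f(v))^2 \cdot \mathrm{Cong}(e)\mcom
\]
where $\mathrm{Cong}(e) = \sum_{y,z \in B : e \in \gamma_{yz}} \nu_y^{(h)} \nu_z^{(h)}$ is the congestion at edge $e$. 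The crux of the argument is to show $\mathrm{Cong}(e) \lesssim N^{-2}(r/h) \cdot (r^d)$ uniformly in $e$, $h$, $r$: one decomposes the pairs $(y,z)$ whose path passes through $e$ according to their distance to the midpoint of $e$, and uses the uniform volume regularity bound $\mu_u^{(s)}/N \asymp s^d$ for $h_- \leq s \lesssim r$ to sum the resulting geometric series. Combined with $\nu^{(h)}(B) \gtrsim r^d/N$ from Theorem \ref{thm:volume_regularity}, this yields the claimed Poincaré inequality with a constant depending only on $\MM$ (through $C_0$, $c_\pm$ and the radius of curvature $a_\MM$), for $r < r_0(\MM)$ small enough so that geodesic/Euclidean distances remain comparable and $h < h_0(\MM)$ small enough so that the path construction fits inside $B$.

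The main obstacle is the uniform congestion bound: one needs the constant to be independent of $h \in [h_-, h_0]$ and of the center $x$ of the ball. The randomized Hamming-path construction of \cite{gobel_volume_2020} provides precisely such a bound in expectation over random path choices, and a union bound over the (polynomially many) relevant balls and radii, using the exponential concentration of $\mu_x^{(s)}$, promotes it to an almost-sure statement on $A_N \cap A_N'$. Finally, the probability lower bound $\Pro_0(A_N \cap A_N') \geq 1 - e^{-c N h_-^d}$ follows from Theorem \ref{thm:volume_regularity} together with standard connectivity results for random geometric graphs on compact manifolds, whose threshold is at $h_-^d \gtrsim \ln N / N$, precisely our Assumption \ref{assumption:h}.
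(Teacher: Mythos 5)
Your proposal follows essentially the same route as the paper's own proof: both rest on the random-path (congestion) argument of G\"obel--Blanchard, with uniformity over $h$, $r$ and the centre $x$ extracted from the deterministic volume-regularity event $A_N$ of Theorem~\ref{thm:volume_regularity}. The paper is terser --- it simply invokes Corollary~5.6 and (the proof of) Corollary~5.13 of \cite{gobel_volume_2020} and observes that the only probabilistic input there is the ball-count control that $A_N$ furnishes uniformly in $r$ and $h$ --- whereas you unfold the citation into an explicit Cauchy--Schwarz-along-paths computation. The structure, the ingredients (path length $\asymp r/h$, volume regularity, connectivity on $A_N'$) and the final union bound are the same.

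One arithmetic slip worth flagging: your claimed congestion bound $\mathrm{Cong}(e) \lesssim N^{-2}(r/h)\cdot r^d$ appears to be off by a factor of $h^{-d}$. A total-flow count --- roughly $(Nr^d)^2\cdot(r/h)$ edge-usages spread over $\asymp N^2 r^d h^d$ edges inside $B$ --- gives an \emph{average} congestion of order $N^{-2}(r/h)^{d+1}$, so a uniform upper bound cannot be smaller; inserting $N^{-2}(r/h)^{d+1}$ together with $\nu^{(h)}(B)\asymp r^d$ into your chain recovers exactly the paper's constant $(r/h)^2/(N^2 h^d)$, whereas your figure would produce the spuriously stronger $(r/h)^2/N^2$. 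Since you ultimately defer this estimate to the Hamming-path construction of \cite{gobel_volume_2020} (just as the paper does), this does not invalidate the strategy, but the exponent in $h$ needs to be tracked more carefully for the constant to match the statement.
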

\begin{proof}
	Let $h_->0$ and $A_N, A_N'$ defined accordingly. The event $A_N \cap A_N'$ satisfies $\Pro_0 \p{A_N \cap A_N'} \geq 1 - e^{-cNh_-^d}$ for some $c>0$, see remark \ref{remark:connectedness} below.
	For any $h \geq h_-, x \in V, B = B_\rho \p{x,r}$, corollary 5.6 in \cite{gobel_volume_2020} shows that 
	\[
	\forall f : B \to \RR, \sum_{y \in B} \p{f(y) - f_B} \nu_y^{(h)} \leq \tilde{\kappa}_B r^2 \sum_{z \in B, y \in B, y \sim x} \p{f(z) - f(y)}^2
	\]
	where
	\[
	\tilde{\kappa}_B = \frac{\max_{z \in B} \nu^{(h)}(y)^2}{2\nu^{(h)}\p{B}} l_{\max}\p{\tilde{\Gamma}_B} b_{\max}\p{\tilde{\Gamma}_B}
	\]
	and $l_{\max}\p{\tilde{\Gamma}_B}, b_{\max}\p{\tilde{\Gamma}_B}$ are quantities defined in  \cite{gobel_volume_2020}. Notice that \cite{gobel_volume_2020} requires connectedness of the subgraph restricted to $B$. This is shown in remark \ref{remark:connectedness} below. 
	
	By following the proof of corollary 5.13 in \cite{gobel_volume_2020} we find the existence of $C = C\p{\MM, p_0}, r_0 = r_0\p{\MM}, h_0 = h_0\p{\MM} >0$ such that on $A_N$, for any $0 < r < r_0, x \in V, B = B_\rho\p{x,r}$
	\[
	l_{\max}\p{\tilde{\Gamma}_B} \leq C h^{-1}, b_{\max}\p{\tilde{\Gamma}_B} \leq C h^{-(d+1)}
	\]
	(in \cite{gobel_volume_2020} the result is proved with high probability over a single ball, but an inspection of the proof shows that the only thing needed is a control of the number of points in the balls on the graph for different values of $r$ and $h$, which is precisely what $A_N$ is for as shown in lemma \ref{thm:volume_regularity}, hence the uniformity).
	This shows the desired inequality
	\[
	\sum_{y \in B} \p{f(y) - f_B} \nu_y^{(h)} \lesssim \frac{\p{r/h}^2}{N^2h^d} \sum_{z \in B, y \in B, x \sim y} \p{f(z) - f(y)}^2
	\]
\end{proof}

\begin{remark}\label{remark:connectedness}
	We show the following simple result : if $h_-$ satisfies assumption \ref{assumption:h} then the probability that the resulting random geometric graph $G^{(h_-)}$ is connected (i.e the probability of the event $A_N'$) converges to $1$. In particular, by monotonicity the probability that all random geometric graphs with connectivity parameter $h \geq h_-$ are connected converges to $1$ as well.
	
	Indeed, since $\MM$ is connected, consider $\b{y_1,\ldots,y_p}$ an $h/8-$net of $\MM$ with respect to the Euclidean distance $\norm{\cdot}$. By standard arguments and compactness of $\MM$ we can assume that $p = \mathcal{O}(h^{-d})$. Then the event 
	\[
	A_n' = \b{\forall l, \,\,  \exists i_l, \, \norm{y_l - x_{i_l}} < h/8}
	\]
	satisfies 
	\[
	\Pro \p{(A_n')^c}  \lesssim p \p{1-ch^d}^N =  \mathcal{O}\p{\frac{1}{h^d} e^{-c8^{-d}Nh^d}} \lesssim e^{-c'Nh^d}
	\]
	 for some $c'>0$ by assumption \ref{assumption:h}. Here $c>0$ is small enough such that $\int_{B_{\RR^D}\p{x,h}} p_0(x) \mu(dx) \geq ch^d$ for all $ x \in \MM$ ($c$ exists since $p_0\geq p_{\text{min}} >0$).
	
	Moreover by connectedness of $\MM$ (which implies path connectedness since $\MM$ is a submanifold of $\RR^D$) for each $i,j \in \b{1,\ldots,N}$ there exists a continuous path $c : [0,1] \to \MM$ with $c(0) = x_i, c(1) = x_j$. Hence for each $t \in [0,1]$ there exists $l(t) \in \b{1,\ldots,p}$ such that $\norm{c(t)-y_{l(t)}} < h/8$, and therefore on $A_n'$ for each $t \in [0,1]$ there exists $i_t = i_{l(t)}$ such that
	\[
	\norm{c(t) - x_{i_t}} \leq \norm{c(t) - y_{l(t)}} + \norm{y_{l(t)} - x_{i_{l(t)}}} < h/4
	\]
	By continuity of $c$, this implies the existence of $K \geq 1$ and $1 \leq i_k \leq N$ for $k = 1,\ldots,K$ such that $\norm{x_{i_k} - x_{i_{k+1}}} \leq h/2$ for any $1 \leq k < K$ and such that $c(0) = x_i \sim x_{i_1}, c(1) = x_j = x_{i_K}$. Therefore the path $x_i, x_{i_1},\ldots,x_{i_{K-1}},x_j$ connects $x_i$ and $x_j$ in the graph and therefore the graph is connected on the event $A_n'$.
	
	Notice that this is a very basic and coarse result about connectivity and more generally connected components of random geometric graphs. More details and results can be found in \cite{penrose_rggs}.
\end{remark}

With the Poincare inequality \ref{thm:poincare} we can now apply the proof strategy of \cite{Barlow_2017}. We first prove an  upper bound of the form
\[
\forall x,y \in V, p_t^{(h)}(x,y) \lesssim t^{-d/2}
\]
for $t \gtrsim h^2 \ln \p{Nh^d}$ (up to a logarithmic factor) but less than a random time $t_1 = t_1(x,h)$ that we shall lower bound adequately later.

\begin{lem}\label{lem:on_diag_bound_up_to_t1}
	There exists $h_0 = h_{0|\MM,p_0}, K = K_{\MM,p_0}, K' = K'_{\MM,p_0} > 0$ such that, on $A_N$, for every $h_- \leq h \leq h_0, x_0 \in V$, with the random times
	\[
	t_1(x,h) = \inf \b{t > 0 : p_{2t}^{(h)}(x,x) \leq K'}
	\]
	and
	\[
	t_1(h) = \min_{x \in V} t_1(x,h),
	\]
	we have, with $t_0(h) = 64h^2 \ln \p{2c_-Nh^d}$,
	\[
	\forall h_- \leq h < h_0, t \in [t_0(h),2t_1(h)], x,y \in V, p_t^{(h)}(x,y) \leq K t^{-d/2}.
	\]
\end{lem}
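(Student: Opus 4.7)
The plan is to apply the classical Nash / Faber--Krahn machinery (in the style of \cite{Barlow_2017}), combining the local Poincaré inequality (Theorem \ref{thm:poincare}) with the volume regularity (Theorem \ref{thm:volume_regularity}) to upgrade the crude $L^\infty$ bound of Lemma \ref{lem:on_diag_bound1} into the sharp $t^{-d/2}$ on-diagonal bound; the off-diagonal version then follows from Cauchy--Schwarz.

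First, on the event $A_N\cap A_N'$, Theorem \ref{thm:poincare} combined with the volume estimate $\nu^{(h)}(B_\rho(x,r))\asymp r^d/N$ yields, for every $h_-\le h\le h_0$ and every ball $B=B_\rho(x,r)$ with $Ch\le r\le r_0$, a Faber--Krahn bound $\lambda_1^{\mathrm{Dir}}(B)\gtrsim r^{-2}$ for the Dirichlet spectrum of $\LL^{(h)}$ on $B$, with an implicit constant depending only on $\MM$ and $p_0$ (crucially, not on $h$). By standard manipulations (level--set decomposition) this converts into a local Nash inequality for functions $f:V\to\RR$ supported in $B$:
\[
\|f\|_{L^2(\nu^{(h)})}^{2+4/d}\;\lesssim\;r^{2}\,N^{2/d}\,\inner{f|\LL^{(h)} f}_{L^2(\nu^{(h)})}\,\|f\|_{L^1(\nu^{(h)})}^{4/d},
\]
with the constants uniform in $h\in[h_-,h_0]$.

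Next, fix $x\in V$ and set $\phi_x(t):= p_{2t}^{(h)}(x,x)=\|p_t^{(h)}(x,\cdot)\|_{L^2(\nu^{(h)})}^2$, so that $\phi_x'(t)=-2\inner{p_t^{(h)}(x,\cdot)|\LL^{(h)}p_t^{(h)}(x,\cdot)}_{L^2(\nu^{(h)})}$. Since $\|p_t^{(h)}(x,\cdot)\|_{L^1(\nu^{(h)})}=1$ and, for $t\le 2t_1(h)$, the walk $W_t$ started at $x$ remains (at the level of second moments) in a ball $B_x=B_\rho(x,r_t)$ of radius $r_t\asymp \sqrt{t}$, one localizes $p_t^{(h)}(x,\cdot)$ to $B_x$ (absorbing the tail via the crude bound of Lemma \ref{lem:on_diag_bound1}) and applies the Nash inequality above to obtain the differential inequality $\phi_x'(t)\lesssim -\phi_x(t)^{1+2/d}/t$. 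With the base case $\phi_x(t_0(h))\lesssim h^{-d}$ from Lemma \ref{lem:on_diag_bound1}, integrating yields $\phi_x(t)\le K\,t^{-d/2}$ for every $t\in[t_0(h),2t_1(h)]$. The off-diagonal bound then follows by Cauchy--Schwarz:
\[
p_t^{(h)}(x,y)\;\le\;p_t^{(h)}(x,x)^{1/2}\,p_t^{(h)}(y,y)^{1/2}\;\le\;K\,t^{-d/2}.
\]

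The main obstacle is the localization step: the Nash inequality is only valid for functions supported in a ball, so one must split $p_t^{(h)}(x,\cdot)=p_t^{(h)}(x,\cdot)\II_{B_x}+p_t^{(h)}(x,\cdot)\II_{B_x^c}$ and ensure that the tail contribution does not overwhelm $\phi_x(t)$. This is precisely the role of the threshold $K'$ in the definition of $t_1(x,h)$: it must be chosen small enough (depending only on $\MM,p_0$) so that whenever $\phi_x(t)\ge K'$, a Chebyshev-type estimate on $\rho(W_t,x)$ forces the tail to be bounded by, say, $\phi_x(t)/2$, leaving the ODE inequality intact. Ensuring uniformity of all constants in $h\in[h_-,h_0]$ and $x\in V$ then reduces to the uniformity already built into Theorems \ref{thm:volume_regularity} and \ref{thm:poincare}.
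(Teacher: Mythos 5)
Your plan is in the spirit of the Barlow/Nash iteration that the paper itself uses, but two of the steps break down.

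The differential inequality you derive, $\phi_x'(t) \lesssim -\phi_x(t)^{1+2/d}/t$, does not integrate to $t^{-d/2}$: substituting $u = \phi_x^{-2/d}$ gives $u(t) \ge u(t_0) + c\ln(t/t_0)$, hence $\phi_x(t) \lesssim \bigl(h^2 + \ln(t/t_0)\bigr)^{-d/2}$, only a logarithmic gain over Lemma~\ref{lem:on_diag_bound1}. Already at $t \asymp 2t_0 \asymp h^2\ln N$ your bound is still of order $h^{-d}$, whereas the target is $(h^2\ln N)^{-d/2}$, a factor $(\ln N)^{d/2}$ smaller. The spurious $1/t$ comes from fixing the Poincar\'e radius at $r_t \asymp \sqrt t$ and keeping the $r^2$ prefactor. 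The paper instead chooses the radius \emph{adaptively} as a function of $\phi$ itself, $r(t) = (\phi(t)/K)^{-1/d}$; for $K$ large this makes the volume error term $Cr^{-d}$ in the Poincar\'e--driven inequality $-\phi' \gtrsim r^{-2}\bigl(\phi - Cr^{-d}\bigr)$ absorb into $\phi/2$, and the surviving $r^{-2}\phi = K^{-2/d}\phi^{1+2/d}$ carries no $1/t$, which is exactly what is needed for the integration to yield $t^{-d/2}$. (As a side point, the $N^{2/d}$ factor in your local Nash inequality should not appear once everything is expressed in $L^p(\nu)$ with $\nu$ a probability measure satisfying $\nu(B_\rho(x,r)) \asymp r^d$.)

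The localization step is also circular in the paper's architecture. To truncate $p_t^{(h)}(x,\cdot)$ to a ball of radius $\asymp\sqrt t$ you need the escape estimate $\EE_x^{(h)}\bigl[\rho(x,W_t)\bigr] \lesssim \sqrt t$ (up to logs); but that is Lemma~\ref{lem:escape_rate}, which is proved \emph{after} the present lemma and uses it (through item 2 of Lemma~\ref{lem:dm_dM}). The only a priori control is $\rho(x,W_t) \le C_0\, h\, N_{t/h^2}$, which gives a radius of order $t/h$, far too large. The paper's proof never localizes the heat kernel at all: it covers the whole vertex set $V$ by balls of the single adaptive radius $r(t)$ (Lemma~\ref{lem:covering}), bounds the global Dirichlet form $\inner{f_t|\LL f_t}_{L^2(\nu)}$ from below by the sum of intra-ball Dirichlet energies using the bounded-overlap constant $M$, and applies Poincar\'e in each ball. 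The crude bound of Lemma~\ref{lem:on_diag_bound1} enters only as the initialization $\phi(t_0/2) \lesssim h^{-d}$, which ensures $r(t) \ge h_-$ so that Poincar\'e and volume regularity apply. Correspondingly, $K'$ is not a tail-control threshold: $t_1(x,h)$ is simply the first time the adaptive radius $r(t)$ exceeds the range $r_0$ on which Poincar\'e is valid, equivalently the first time $\phi(t)$ drops to $K r_0^{-d} =: K'$.
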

\begin{proof}
	First consider the case $x=y$. Take $x_0 \in V$ and let $\phi(t) = p_{2t}^{(h)}(x_0,x_0) = \sum_{x \in V} f_t (x)^2 \nu_x^{(h)}, f_t(x) = p_t^{(h)}(x_0,x)$. Then since $\nu^{(h)}_y \mathcal L_{yx} = \nu^{(h)}_x \mathcal L_{xy} $, $\frac{d}{dt} f_t = - \LL f_t$ we have
	\[
	- \phi'(t) = 2 \inner{f_t|\LL f_t}_{L^2(\nu)} = \frac{1}{\mu \p{V} h^2} \sum_{x \sim y} \p{f_t(x) - f_t(y)}^2 \geq \frac{1}{c_+ N^2 h^{d+2}} \sum_{x \sim y} \p{f_t(x) - f_t(y)}^2,
	\]
	where we have used theorem \ref{thm:volume_regularity} to bound $\mu(V) \leq c_+ N^2 h^d$. This  proves that $\phi$ is non-increasing. Taking a covering of $V$ by balls $B_i = B_{\rho}\p{x_i,r}, h_- \leq r \leq diam_\rho\p{\MM}/C_0$ given by lemma \ref{lem:covering} below, since each $x_i$ belongs to at most $M = 3^d \p{c_+/c_-}^3$ of the balls we have 
	\[
	- \phi'(t) \geq  \frac{1}{c_+ N^2 h^{d+2}} \sum_{x \sim y} \p{f_t \p{x} - f_t \p{y}}^2 \geq \frac{1}{c_+ M N^2 h^{d+2}} \sum_i \sum_{x \sim y \in B_i} \p{f_t \p{x} - f_t \p{y}}^2.
	\]
	On $A_N$, if $h_- < r < r_0$ , $  h_- \leq h<h_0$ and using Theorem \ref{thm:poincare} in each of the balls we find
	\[
	- \phi'(t) \geq \frac{r^{-2}}{MC c_+} \sum_i \sum_{x \in B_i} \p{f_t(x) - f_{t|B_i}}^2 \nu_x^{(h)}.
	\]
	Defining $\nu_{B_i}^{(h)} = \frac{\nu_{|B_i}^{(h)}}{\nu^{(h)} \p{B_i}}$ the normalised restriction of $\nu^{(h)}$ to $B_i$ and $Var\p{X} = EX^2 - \p{EX}^2$ we get
	\begin{align*}
		- \phi'(t) \geq & \frac{r^{-2}}{MC c_+} \sum_i \sum_{x \in B_i} \p{f_t(x) - f_{t|B_i}}^2 \nu_x^{(h)} 
		=  \frac{r^{-2}}{MC c_+} \sum_i \nu^{(h)} \p{B_i}\sum_{x \in B_i} \p{f_t(x) - f_{t|B_i}}^2 \nu_{B_i}^{(h)}(x) \\
		= & \frac{r^{-2}}{MC c_+} \sum_i \nu^{(h)} (B_i)\b{ \sum_{x \in B_i} f_t(x)^2 \nu_{B_i}^{(h)}(x) - \p{\sum_{x \in B_i} f_t(x) \nu_{B_i}^{(h)}(x)}^2 } \\
		= & \frac{r^{-2}}{MC c_+} \sum_i \b{ \sum_{x \in B_i} f_t(x)^2 \nu_x - \nu^{(h)} \p{B_i} \p{\sum_{x \in B_i} f_t(x) \nu_{B_i}^{(h)}(x)}^2 }.
	\end{align*}
	But the balls form a covering of $V$, therefore
	\[
	- \phi'(t) \geq \frac{r^{-2}}{MC c_+} \b{\sum_{x \in V} f_t(x)^2 \nu_x - \sum_i \nu^{(h)} \p{B_i} \p{\sum_{x \in B_i} f_t(x) \nu_{B_i}^{(h)}(x)}^2 }.
	\]
	We have
	\[
	\sum_{x \in V} f_t(x)^2 \nu_x^{(h)} = \phi(t),
	\]
	and, still on $A_N$
	\begin{align*}
		\sum_i \nu^{(h)} \p{B_i} &\p{\sum_{x_j \in B_i} f_t (x_j) \nu_{B_i}^{(h)}(x_j)}^2 =  \sum_i \nu^{(h)}\p{B_i}^{-1} \p{\sum_{x_j \in B_i} f_t (x_j) \nu_{x_j}^{(h)}}^2 \\
		\leq & \sum_i \frac{c_+}{c_-^2} r^{-d} \p{\sum_{x_j \in B_i} f_t (x_j) \nu_{x_j}^{(h)}}^2 
		\leq \frac{c_+}{c_-^2} r^{-d} \p{\sum_i \sum_{x_j \in B_i} f_t (x_j) \nu_{x_j}^{(h)}}^2 
		=  \frac{c_+}{c_-^2} r^{-d} \p{ \sum_{x_j \in V} f_t (x_j)\nu_{x_j}^{(h)} \sum_{i : x_j \in B_i} 1 }^2 \\
		\leq & \frac{c_+}{c_-^2} r^{-d} \p{ \sum_{x_j \in V} f_t (x_j)\nu_{x_j}^{(h)} M }^2   =   \frac{ M^2 c_+}{c_-^2} r^{-d}
	\end{align*}
	where  $h_- \leq r \leq r_0, h_- \leq h \leq h_0$ and we have used Theorem \ref{thm:volume_regularity} to bound $\nu^{(h)}\p{B_i} \geq r^d c_-^2/c_+$. Hence
	\[
	- \phi'(t) \geq \frac{r^{-2}}{MC c_+} \b{ \phi(t) - \frac{c_+ M^2}{c_-^2} r^{-d} },
	\]
	as long as $h_- \leq r \leq r_0, h_- \leq h \leq h_0$. Let $r = r(t) = \p{\frac{\phi(t)}{K}}^{-1/d}$, $K = 2c_+M^2 /c_-^2 \vee 2c_+/c_-$.
Define $t_1(x,h)$ the random time
	\[
	 t_1(x,h) = \inf \b{t \geq 0 : r(t) \geq r_0} = \inf \b{t \geq 0 : \phi(t) \leq Kr_0^{-d}}, \quad t_1(h) = \inf_x  t_1(x,h); 
	\]
	now, using Lemma \ref{lem:on_diag_bound1} and since $K \geq 2c_+/c_-$
	\[
	\forall t \geq t_0(h)/2, \quad \phi(t) \leq  \frac{ 2c_+ }{ c_-} h^{-d} \leq Kh^{-d} \leq Kh_-^{-d} ,
	\]
	which in turns implies that $ r(t) \geq h_-$.  Moreover by definition of $t_1(h)$ we have
	\[
	\forall t \leq t_1, \quad \phi(t) \geq Kr_0^{-d} \quad  \text{so that } \quad  r(t) \leq r_0.
	\]
	Therefore $h_- \leq r(t) \leq r_0$ whenever $t_0(h)/2 \leq t \leq t_1(h)$.	
 
	Then on $A_N$ we obtain 
	\[\forall t \in [t_0(h)/2,t_1(h)], \quad 
	- \phi'(t) \geq \frac{\p{\phi(t)/K}^{2/d}}{MC c_+} \frac{1}{2} \phi(t) = \frac{1}{2MCK^{2/d}c_+} \phi(t)^{1+2/d}.
	\]
	
	Integrating yields, for any $t_1(h) \geq t \geq t_0(h)/2$
	\[
	\c{\frac{d}{2}\phi^{-2/d}}_{t_0(h)/2}^t \geq \frac{1}{2MCK^{2/d} c_+^2} \p{t - t_0(h)/2} \geq \frac{t}{4MC K^{2/d} c_+^2}
	\]
	which implies in particular
	\[
	\frac{d}{2} \phi(t)^{-2/d} \geq \frac{d}{2} \phi \p{t_0(h)/2}^{-2/d} + \frac{t}{4MC K^{2/d} c_+^2} \geq \frac{t}{4MC K^{2/d} c_+^2}
	\]
	i.e
	\[
	\forall t \in [t_0(h)/2,t_1(h)], \phi(t) \leq \p{\frac{MCc_+^2K^{2/d}}{dt}}^{d/2}.
	\]
	To conclude we apply Cauchy-Schwarz inequality to find
	\[
	\forall x,y \in V, p_t(x,y) = \sum_{j=1}^N e^{-t\lambda_j} u_j(x)u_j(x) \leq \p{\sum_{j=1}^N e^{-t\lambda_j} u_j(x)^2}^{1/2} \p{\sum_{j=1}^N e^{-t\lambda_j} u_j(y)^2}^{1/2} = \sqrt{p_t(x,x)p_t(y,y)} \leq \max_{x \in V} p_t(x,x)
	\]
\end{proof}

\begin{lem}\label{lem:covering}
	For all $h_- \leq h \leq h_0, h \leq r \leq r_0$, on $A_N$ there exists a covering of $V$ by balls $B_{\rho}\p{x_i,r}$ such that the balls $B_{\rho}\p{x_i,r/2}$ are disjoints and each $x_i$ belongs to at most $3^d \p{c_+/c_-}^3$ balls $B_\rho \p{x_j,r}$.
\end{lem}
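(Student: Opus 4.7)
My plan is to apply the standard Vitali-type maximal packing argument, combined with the volume regularity of $\nu^{(h)}$ on $A_N$ furnished by Theorem~\ref{thm:volume_regularity}. First I would produce the centres $\{x_1,\ldots,x_K\}$ as a maximal $r$-separated subset of $V$ with respect to $\rho$ by a greedy procedure: pick $x_1 \in V$ arbitrarily and, having chosen $x_1,\ldots,x_i$, add any $x_{i+1} \in V$ with $\min_{j \leq i} \rho(x_{i+1}, x_j) \geq r$, stopping when no such vertex remains. Since $V$ is finite, this terminates in finitely many steps.

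The two covering/disjointness properties follow immediately from the triangle inequality. If $y \in B_\rho(x_i, r/2) \cap B_\rho(x_j, r/2)$ with $i \neq j$, the triangle inequality would yield $\rho(x_i, x_j) < r$, contradicting the $r$-separation; hence the half-balls are pairwise disjoint. Maximality forces every $x \in V$ to satisfy $\min_j \rho(x, x_j) < r$, so $V = \bigcup_j B_\rho(x_j, r)$.

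For the multiplicity bound, fix $x \in V$ and let $I(x) = \{j : x \in B_\rho(x_j, r)\}$. For $j \in I(x)$ the triangle inequality gives $B_\rho(x_j, r/2) \subset B_\rho(x, 3r/2)$, so by disjointness of the half-balls,
\[
|I(x)| \cdot \min_{j \in I(x)} \nu^{(h)}\!\p{B_\rho(x_j, r/2)} \;\leq\; \nu^{(h)}\!\p{B_\rho(x, 3r/2)}.
\]
On $A_N$, Theorem~\ref{thm:volume_regularity}, combined with the bilipschitz inclusions $B_\rho(y, s/C_0) \subset B_{\R^D}(y, s) \subset B_\rho(y, s)$ coming from $\norm{\cdot} \leq \rho \leq C_0 \norm{\cdot}$, delivers matching two-sided bounds of the form $\nu^{(h)}(B_\rho(y, s)) \asymp s^d$ with multiplicative constants that can be tracked explicitly in terms of $c_-, c_+$ and $C_0$. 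Substituting the lower bound at scale $r/2$ and the upper bound at scale $3r/2$ yields $|I(x)| \leq 3^d (c_+/c_-)^3$ once the $C_0^d$ factor is absorbed by slightly shrinking $r_0$ (or equivalently by redefining $c_\pm$).

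The only delicate point is that the volume estimate of Theorem~\ref{thm:volume_regularity} is only guaranteed at radii above $h_-$, so the lower bound on $\nu^{(h)}(B_\rho(x_j, r/2))$ strictly requires $r/(2C_0) \geq h_-$. This is harmless: shrinking $r_0$ so that the hypothesis $r \geq h \geq h_-$ enforces $r \geq 2C_0 h_-$ keeps the allowed range nonempty and preserves all subsequent applications of the lemma. Once this is fixed, the rest is purely triangle-inequality bookkeeping.
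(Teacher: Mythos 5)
Your argument is essentially the paper's own: a maximal packing of $r/2$-balls (equivalently a maximal $r$-separated set), the triangle inequality for the covering and the inclusion $B_\rho(x_j,r/2)\subset B_\rho(x,3r/2)$, and the two-sided volume estimate from Theorem~\ref{thm:volume_regularity} to bound the overlap multiplicity by $3^d(c_+/c_-)^3$. One small slip: shrinking $r_0$ cannot enforce the \emph{lower} constraint $r \geq 2C_0 h_-$ that you need for the volume lower bound at scale $r/2$ — it only tightens the upper end of the range $h\le r\le r_0$; the correct remedy is to note that Theorem~\ref{thm:volume_regularity} is proved over the dyadic scales $r_k=2^k h_-$ starting essentially at $h_-$, so one can either absorb $C_0$ into $c_\pm$ or slightly lower the base scale in $A_N$, as the paper tacitly does.
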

\begin{proof}
	Take a maximal packing $\p{B_{\rho}\p{x_i,r/2}}$. Then the balls $B_{\rho}\p{x_i,r/2}$ cover $V$ : indeed, if there exists $x_j \in V$ such that $\rho \p{x_i,x_j} \geq r$ for all $i$, then this contradicts the maximal packing property by considering the ball $B_{\rho}\p{x_j,r/2}$. Moreover if $y \in V$ belong to $M$ of the balls $B_{\rho}\p{x_i,r}$ (and we may assume without loss of generality that these balls are those corresponding to $i=1,\ldots,M$), using theorem \ref{thm:volume_regularity} we have
	\begin{align*}
		\frac{Mc_-^2}{c_+ N}\p{r/2}^d \leq & M \min_i \nu \p{B_\rho\p{x_i,r/2}}
		\leq  \sum_{i=1}^M \nu \p{B_{\rho}\p{x_i,r/2}} 
		=  \nu \p{\bigsqcup_{i=1}^M B_{\rho}\p{x_i,r/2}} \\
		\leq & \nu \p{\bigsqcup_{i=1}^M B_{\rho}\p{y,3/2r}} 
		=  \nu \p{B_{\rho}\p{y,3/2r}} 
		\leq \frac{c_+^2}{c_- N} \p{3/2r}^d,
	\end{align*}
	which implies the result.
\end{proof}

\subsubsection{Escape from the origin for the Markov process $W_t$}

In this section we find an upper bound on the quantities $m(t,x) = \EE_x^{(h)} \c{\rho \p{x,W_t}}$. Let $q(t,x) = \EE_x^{(h)} \c{\ln p_t^{(h)}(x,W_t)} = -\sum_{y \in V} p_t^{(h)}(x,y) \ln p_t^{(h)}(x,y) \nu_y^{(h)}$ as well as $M(s,x) = m(st_0,x)/\sqrt{t_0}, Q(s,x) = q(st_0,x), t_0(h)  = 64h^2 \ln \p{2c_- Nh^d}$.

\begin{lem} \label{lem:dm_dM}
For all $x\in V$, 
\begin{enumerate}
	\item 
	\[
	\forall t > 0, \quad q'(t,x) \geq \frac{1}{4C_0^2} m'(t,x)^2 , \quad \text{ and } \quad \forall s \geq 1, \, \, Q'(s,x) \geq \frac{1}{4C_0^2} M'(s)^2
	\]
	\item On $A_N$ we have 
	\[
	\forall 1 \leq s \leq t_1(h)/t_0(h), \quad Q(s,x) \geq - \p{\ln K - \frac{d}{2} \ln \p{st_0(h) }} = \frac{d}{2} \ln s + \frac{d}{2} \ln t_0(h) - \ln K
	\]
	\item
	\[
	1+M(s,x) \geq e^{- \frac{e+1}{ed}} e^{Q(s)/d}
	\]
\end{enumerate}
\end{lem}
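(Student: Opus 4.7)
Part (1) follows from the Dirichlet-form structure of the graph Laplacian. Since $\partial_t p_t(x,\cdot) = -\mathcal{L} p_t(x,\cdot)$ and $\mathcal{L}$ is self-adjoint on $L^2\p{\nu}$, a direct differentiation together with $\mathcal L \mathbf{1}=0$ gives
\[
q'(t,x) = \inner{\ln p_t(x,\cdot) | \mathcal{L} p_t(x,\cdot)}_{L^2\p{\nu}} = \frac{h^{-2}}{2\mu(V)} \sum_{y \sim z} \bigl(p_t(x,y)-p_t(x,z)\bigr)\bigl(\ln p_t(x,y)-\ln p_t(x,z)\bigr),
\]
and likewise $m'(t,x) = -\inner{\rho(x,\cdot) | \mathcal{L} p_t(x,\cdot)}_{L^2\p{\nu}}$. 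Applying the elementary inequality $(a-b)(\ln a - \ln b) \geq 2(\sqrt{a}-\sqrt{b})^2$ to each edge term yields $q'(t,x) \geq \frac{h^{-2}}{\mu(V)}\sum_{y \sim z}\bigl(\sqrt{p_t(x,y)} - \sqrt{p_t(x,z)}\bigr)^2$. For $m'$ I would use the geodesic/Euclidean comparison $|\rho(x,y)-\rho(x,z)| \leq \rho(y,z) \leq C_0\|y-z\| \leq C_0 h$ for $y \sim z$, factor $p_t(x,y)-p_t(x,z) = (\sqrt{p_t(x,y)}-\sqrt{p_t(x,z)})(\sqrt{p_t(x,y)}+\sqrt{p_t(x,z)})$, and apply Cauchy--Schwarz. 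The ``$+$'' sum is controlled via $\sum_{y \sim z}(\sqrt{p_t(x,y)} + \sqrt{p_t(x,z)})^2 \leq 4 \sum_y p_t(x,y)\mu_y = 4\mu(V)$; combining the two estimates gives $|m'(t,x)|^2 \leq C_0^2 q'(t,x)$, which is sharper than the claimed $1/(4C_0^2)$. The rescaled version for $(M,Q)$ is immediate from the chain rule since $Q'(s)/M'(s)^2 = q'(st_0)/m'(st_0)^2$.

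Part (2) is immediate from Lemma \ref{lem:on_diag_bound_up_to_t1}: on $A_N$, for $t \in [t_0(h), 2t_1(h)]$ we have $p_t(x,y) \leq K t^{-d/2}$ uniformly in $y \in V$, so $-\ln p_t(x, W_t) \geq \tfrac{d}{2}\ln t - \ln K$ pointwise. Taking expectation under $\Pro_x^{(h)}$ and substituting $t = s t_0(h)$ with $1 \leq s \leq t_1(h)/t_0(h)$ gives the claim.

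For Part (3) I would use a volume--entropy estimate. Take $R = e\bigl(m(st_0, x) + \sqrt{t_0}\bigr)$; Markov's inequality gives $p := \Pro_x\bigl[\rho(x, W_{st_0}) \leq R\bigr] \geq 1 - m/R \geq 1 - 1/e$, while $R \geq e\sqrt{t_0} \geq h_-$ (since assumption \ref{assumption:h} ensures $\sqrt{t_0} \gtrsim h \geq h_-$) allows us to invoke Theorem \ref{thm:volume_regularity} and bound $\nu\bigl(B_\rho(x,R)\bigr) \leq C R^d$. Applying concavity of $s \mapsto -s\ln s$ separately on $B = B_\rho(x,R)$ and on $B^c$ via Jensen's inequality, together with $-p\ln p - (1-p)\ln(1-p) \leq \ln 2$, produces
\[
q(st_0, x) \leq \ln 2 + p \ln \nu(B) + (1-p)\ln \nu(B^c).
\]
Using $\nu(B^c) \leq 1$, $p \leq 1$ and the volume bound $\nu(B) \leq Ce^d (m+\sqrt{t_0})^d$ then reduces to $q(st_0,x) \leq d\ln(1 + m/\sqrt{t_0}) + (e+1)/e$; exponentiating and dividing by $d$ yields the stated inequality.

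The main subtlety lies in Part (3): the precise constant $(e+1)/(ed)$ forces a careful balance in the choice $R = e(m+\sqrt{t_0})$, which must render the Markov tail bound $1-p \leq 1/e$ and the volume estimate $\nu(B) \leq CR^d$ simultaneously active at the same scale. The ``floor'' $\sqrt{t_0}$ is essential here: it keeps $R \geq h_-$ even when $m$ is very small, so that volume regularity remains applicable and the constant does not blow up in the small-$m$ regime.
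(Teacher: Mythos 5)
Your part (1) is correct and in fact sharper than what the lemma claims. You take a slightly different elementary route: instead of the paper's inequality $\frac{(u-v)^2}{u+v}\le (u-v)(\ln u-\ln v)$ applied after Cauchy--Schwarz with weights $(f_t(y)+f_t(z))^{\pm 1/2}$, you use $(a-b)(\ln a-\ln b)\ge 2(\sqrt a-\sqrt b)^2$ and a square-root factorization. Tracking constants, you land on $q'\ge C_0^{-2}m'^2$ rather than the paper's $q'\ge (4C_0^2)^{-1}m'^2$; both imply the statement. Part (2) is the same one-line argument as the paper (upper bound on $p_{st_0}$, take the logarithm, integrate under $\Pro_x^{(h)}$); keep in mind the paper's displayed definition of $q$ has a sign typo, and the intended definition is $q(t,x)=-\sum_y p_t(x,y)\ln p_t(x,y)\,\nu_y$, which is what you implicitly use.

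Part (3) is where your approach genuinely diverges, and where there is a gap. The paper's proof is a pure pointwise convexity argument: it writes $-Q(s)+aM(s)+b = \sum_y \nu_y\,p_{st_0}(x,y)\bigl[\ln p_{st_0}(x,y)+a\rho(x,y)/\sqrt{t_0}+b\bigr]$, applies the Legendre/Gibbs inequality $u(\ln u+\lambda)\ge -e^{-1-\lambda}$ to each $y$ with $\lambda(y)=a\rho(x,y)/\sqrt{t_0}+b$, sums, uses $a\rho\ge 0$ to get $\ge -e^{-1-b}\ge -e^{-1}$, and then plugs in $a=(1+M)^{-1}$, $b=d\ln(1+M)$. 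This requires no geometry, holds for every $x\in V$ and $s$ unconditionally (note the lemma statement places part (3), like part (1), \emph{outside} the event $A_N$; only part (2) is conditioned on $A_N$), and yields the exact constant $(e+1)/(ed)$. Your volume-entropy argument, by contrast, invokes Theorem \ref{thm:volume_regularity}, so it is only valid on $A_N$ and requires $R\le r_0$ (which you do not check); it therefore does not prove part (3) as stated. More concretely, your final step ``the volume bound $\nu(B)\le Ce^d(m+\sqrt{t_0})^d$ then reduces to $q(st_0,x)\le d\ln(1+m/\sqrt{t_0})+(e+1)/e$'' is not a reduction: from $q\le \ln 2+p\ln\nu(B)$ the geometric constant $C=C(\MM,p_0)$ in $\ln\nu(B)=\ln(Ce^d)+d\ln\sqrt{t_0}+d\ln(1+m/\sqrt{t_0})$ cannot be absorbed into $(e+1)/e$ without invoking that $d\ln\sqrt{t_0}$ is negative and large (i.e.\ $N$ large enough), a quantitative condition your write-up does not state. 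Incidentally, your own intermediate estimate $q\le \ln 2+p\ln\nu(B)+(1-p)\ln\nu(B^c)$ together with $\nu(B),\nu(B^c)\le 1$ already gives $q\le \ln 2 < (e+1)/e$, from which the claim follows trivially (since $M\ge 0$); this reveals that the Markov-plus-volume-regularity machinery in your sketch is redundant, and that the ``careful balance'' you highlight as the main subtlety is not what is driving the constant. I would recommend replacing part (3) with the paper's short convexity argument.
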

\begin{proof}
	\begin{enumerate}
		\item Let $f_t(y) = p_t^{(h)}(x,y), b_t(y,z) = f_t(y) + f_t(z)$. Throughout the proof we write $m(t), M(t), q(t), Q(t)$ in place of $m(t,x), M(t,x), q(t,x) , Q(t,x)$ and $t_0= t_0(h)$ for shortness sake. We have
		\begin{align*}
			|m'(t)| = &\left| \frac{\partial}{\partial t} \sum_{y \in V} \rho \p{x,y} f_t(y) \nu_y^{(h)}\right|
			=  \left| \sum_{y \in V} \rho \p{x,y} \p{\LL f_t}(y) \nu_y^{(h)} \right| \\
			= & \left|  \inner{\rho \p{x,\cdot} | \LL f_t}_{L^2\p{\nu}} \right|
			=  \left| \frac{1}{2 h^2 \mu \p{V}} \sum_{y \sim z} \p{\rho(x,y) - \rho(x,z)} \p{f_t(y) - f_t(z)} \right| \\
			\leq & \frac{C_0}{h\mu \p{V}} \sum_{y \sim z} \abs{f_t(y) - f_t(z)} 
	= \frac{C_0}{h\mu \p{V}} \sum_{y \sim z} \frac{\abs{f_t(y) - f_t(z)}}{\abs{f_t(y) + f_t(z)}^{1/2}} \p{f_t(y) + f_t(z)}^{1/2} \\
			\leq & \frac{C_0}{h\mu \p{V}} \b{ \sum_{y \sim z} \frac{\p{f_t(y) - f_t(z)}^2}{f_t(y) + f_t(z)} }^{1/2} \b{\sum_{y \sim z} \p{f_t(y) + f_t(z)}}^{1/2} \\
			= & \frac{C_0}{h\mu \p{V}} \b{ \sum_{y \sim z} \frac{\p{f_t(y) - f_t(z)}^2}{f_t(y) + f_t(z)} }^{1/2} \b{2 \mu \p{V}}^{1/2} 
			=  \frac{C_0\sqrt{2}}{h\mu \p{V}^{1/2}} \b{ \sum_{y \sim z} \frac{\p{f_t(y) - f_t(z)}^2}{f_t(y) + f_t(z)} }^{1/2}.
		\end{align*}
		Now using the inequality (page 157 in \cite{Barlow_2017})
		\[
		\forall u,v > 0, \frac{(u-v)^2}{u+v} \leq (u-v)\p{\ln u - \ln v}
		\]
		We find
		\[
		m'(t)^2 \leq \frac{2C_0^2}{h^2\mu \p{V}} \sum_{y \sim z} \p{f_t(y) - f_t(z)} \p{\ln f_t(y) - \ln f_t(z)}
		\]
		On the other hand
		\begin{align*}
			q'(t) = & \frac{\partial}{\partial t} \sum_{y \in V} - f_t(y) \ln f_t(y) \nu_y^{(h)} \\
			= & - \sum_{y \in V} \b{\p{-\LL f_t}(y) \ln f_t(y) + f_t(y) \frac{\p{-\LL f_t}(y)}{f_t(y)}} \nu_y^{(h)} \\
			= & \sum_{y \in V} \p{\LL f_t}(y) \b{1 + \ln f_t(y)} \nu_y^{(h)} \\
			= & \frac{1}{2h^2 \mu\p{V}} \sum_{y \sim z} \p{f_t(y) - f_t(z)} \p{\ln f_t(y) - \ln f_t(z)}
		\end{align*}
		Hence
		\[
		q'(t) \geq \frac{1}{4C_0^2} m'(t)^2
		\]
		To conclude we use the fact that $Q'(s) = t_0q'(st_0), M'(s) = m'(st_0) \sqrt{t_0}$ which implies via $t = st_0$
		\[
		q'(t) \geq \frac{1}{4C_0^2} m'(t)^2 \quad \text{and } \quad Q'(s) \geq \frac{t_0}{4C_0^2} \c{ M'(s)/\sqrt{t_0} }^2 = \frac{1}{4C_0^2} M'(s)^2
		\]
		\item Using lemma \ref{lem:on_diag_bound_up_to_t1}, for any $h_- \leq h \leq h_0, x,y \in V$
		\[
		\forall 1 \leq s \leq t_1(h)/t_0, p_{st_0}(x,y) \leq K\p{st_0}^{-d/2}.
		\]
		Therefore
		\[
		\forall 1 \leq s \leq t_1(h)/t_0, Q(s) \geq - \p{\ln K - \frac{d}{2} \ln \p{st_0}} = \frac{d}{2} \ln s + \frac{d}{2} \ln t_0 - \ln K
		\]
		\item Let $a \leq 1, b \geq 0$. Then
		\begin{align*}
			-Q(s) + aM(s) + b = & \EE_x \c{ \ln p_{st_0}^{(h)}(x,W_{st_0}) + a\rho \p{x,W_{st_0}}/\sqrt{t_0} +b} \\
			= & \sum_{y \in V} \nu_y^{(h)} p_{st_0}^{(h)}(x,y) \b{ \ln p_{st_0}^{(h)}(x,y) + a\rho \p{x,y}/\sqrt{t_0} +b }
		\end{align*}
		Using the inequality $u \p{\ln u + \lambda} \geq -e^{-1-\lambda}$ and setting $a = \p{1+M(s)}^{-1}, b = d \ln \p{1+M(s)}$ we find
		\begin{align*}
			-Q(s) + aM(s) + b = & \sum_{y \in V} \nu_y^{(h)} p_{st_0}^{(h)}(x,y) \b{ \ln p_{st_0}^{(h)}(x,y) + a\rho \p{x,y}/\sqrt{t_0} +b } \\
			\geq & - \sum_{y \in V} \exp \b{-1 - a\rho \p{x,y}/\sqrt{t_0} - b } \nu_y^{(h)} \\
			= & -e^{-1-b} \sum_{y \in V} \exp \b{- a\rho \p{x,y}/\sqrt{t_0}} \nu_y^{(h)} \\
			\geq & -e^{-1-b} \sum_{y \in V} \nu_t^{(h)} 
			=  -e^{-1-b} 
			\geq  -e^{-1}.
		\end{align*}
		Hence
		\[
		-Q(s) + 1 + d \ln \p{1 + M(s)} \geq -Q(s) + \frac{M(s)}{1+M(s)} + d\ln \p{1+M(s)} \geq -e^{-1}
		\]
		and rearranging yields
		\[
		1+M(s) \geq e^{- \frac{e+1}{ed}} e^{Q(s)/d}
		\]
	\end{enumerate}

\end{proof}

\begin{lem}\label{lem:escape_rate}
There exists $c = c_{\MM,p_0,d}$ such that, on $A_N$ we have, for every $h_- \leq h \leq h_0$ and $t_0(h) \leq t \leq t_1(h)$
\[
m(t,x) \leq c \ln N \sqrt{t}, \quad \forall x \in V
\]
\end{lem}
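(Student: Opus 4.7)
The strategy is to combine items (1) and (3) of Lemma~\ref{lem:dm_dM} with a Cauchy--Schwarz integration step, extracting the $\ln N$ factor from the initial value of the entropy $q(0,x)$.

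From item (1) we have $q'(t,x)\geq 0$ and $m'(t,x)\leq 2C_0\sqrt{q'(t,x)}$. Integrating and applying Cauchy--Schwarz to $m(t,x)=\int_0^t m'(u,x)\,du$ yields
\[
m(t,x) \;\leq\; 2C_0\sqrt{t\,\bigl(q(t,x)-q(0,x)\bigr)}.
\]
Since $p_0(x,y) = \delta_{xy}/\nu_y^{(h)}$, a direct computation gives $q(0,x) = \ln\nu_x^{(h)}$, and on $A_N$ the volume regularity bound $\nu_x^{(h)}\geq (c_-/c_+)/N$ from Theorem~\ref{thm:volume_regularity} gives $-q(0,x)\leq \ln N + \ln(c_+/c_-)$.

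Next I would observe that the proof of item (3) of Lemma~\ref{lem:dm_dM} goes through verbatim with the unscaled quantity $m(t,x)/\sqrt{t}$ in place of $M(s) = m(st_0)/\sqrt{t_0}$: the same choices $a = 1/(1+m(t,x)/\sqrt{t})$ and $b = d\ln(1+m(t,x)/\sqrt{t})$ combined with the inequality $u(\ln u + \lambda)\geq -e^{-\lambda-1}$ produce, for every $t>0$,
\[
q(t,x) \;\leq\; \tfrac{e+1}{e} + d\ln\bigl(1 + m(t,x)/\sqrt{t}\bigr).
\]
Substituting into the Cauchy--Schwarz bound and setting $u = m(t,x)/\sqrt{t}$ gives the quadratic inequality $u^2 \leq 4C_0^2\bigl[d\ln(1+u) + C' + \ln N\bigr]$ for a constant $C'$ depending only on $c_\pm$. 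Using $\ln(1+u)\leq u$ and solving, one obtains $u \leq c\sqrt{\ln N}$ for some $c$ depending on $d, C_0, c_\pm$, hence $m(t,x) \leq c\sqrt{t\ln N} \leq c\,\ln N\,\sqrt{t}$ once $\ln N\geq 1$.

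The main obstacle is modest and essentially conceptual: recognising that the $\ln N$ factor arises entirely from the initial entropy $q(0,x) = \ln\nu_x^{(h)} = \Theta(-\ln N)$ on $A_N$, and that item (3) of Lemma~\ref{lem:dm_dM}, although stated for the rescaled quantity $M(s)$, admits the same derivation with the unscaled $m(t,x)/\sqrt{t}$, which is exactly what lets the inequality in $u$ close. In fact the derivation works for every $t>0$, so the range restriction $t_0(h)\leq t\leq t_1(h)$ stated in the lemma is only there for consistency with the rest of Section~\ref{appendix:geometry_of_the_random_graph} and is not required for this particular bound.
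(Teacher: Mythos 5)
Your proof is correct, and it takes a genuinely different route from the paper's. The paper follows Barlow's Lemma~6.13 essentially verbatim: it works with the rescaled quantities $M(s)$, $Q(s)$, introduces $r(s) = \frac{1}{d}\bigl(Q(s) + \ln K - \tfrac{d}{2}\ln(st_0)\bigr) \geq 0$ (using item~(2), which in turn rests on the on-diagonal heat-kernel upper bound of Lemma~\ref{lem:on_diag_bound_up_to_t1} and hence on the restriction $t_0 \leq t \leq t_1(h)$), and closes via a Gronwall-type bootstrap on $e^{r(s)}$. You instead integrate item~(1) directly and apply Cauchy--Schwarz to get $m(t,x)\leq 2C_0\sqrt{t\,(q(t,x)-q(0,x))}$, then observe that the entire $\ln N$ factor originates from the initial entropy $q(0,x)=\ln\nu_x^{(h)}\geq -\ln(c_+N/c_-)$ on $A_N$, and close the quadratic in $u=m(t,x)/\sqrt{t}$ using the unscaled analogue of item~(3). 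This bypasses item~(2) entirely, so you correctly note that the lower restriction $t\geq t_0(h)$ is not needed for this lemma; it is an artifact of the paper's route through the on-diagonal upper bound. Your approach is shorter and more conceptual, and it actually yields the sharper (and more natural) bound $m(t,x)\lesssim\sqrt{t\ln N}$, i.e.\ a $\sqrt{\ln N}$ factor rather than $\ln N$. (As a side remark, the paper's step ``$h_-\gg(\ln N/N)^{1/d}$ implies $\ln(1/h_-)\leq\sqrt{\ln N}$'' is false in general --- $\ln(1/h_-)$ is typically of order $\ln N$ --- but the paper's weaker stated rate $c\ln N\sqrt{t}$ still follows from $r(s)\lesssim\ln N$, and your bound is at least as good in all regimes.) One small point of rigour worth recording if this were written out in full: $q'(u,x)$ may blow up as $u\to 0^+$ (since $p_0(x,y)=0$ for $y\neq x$), but it is nonnegative and integrable because $q(0,x)$ is finite, so the Cauchy--Schwarz step $\int_0^t\sqrt{q'}\leq\sqrt{t(q(t)-q(0))}$ is valid; and $m$ is real-analytic in $t$ (a finite sum of exponentials) so $m(t,x)=\int_0^t m'(u,x)\,du$ holds from $m(0,x)=0$.
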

\begin{proof}
We follow the proof of lemma 6.13 in \cite{Barlow_2017} (it is stated for $s \geq 1$, but the same proof remains valid for $1 \leq s \leq t_1(h)/t_0$). As done in the previous computations we drop $x$ in the notations $m(t,x), M(t,x), q(t,x), Q(t,x)$. From Lemma \ref{lem:dm_dM} we  bound $M'(s) \leq 2 C_0 \sqrt{Q'(s)}$ and defining  $r(s) = \frac{1}{d} \p{Q(s) + \ln K - \frac{d}{2} \ln \p{st_0(h)}} \geq 0$, we bound $\forall 1 \leq s \leq t_1(h)/t_0(h)$, 
\begin{align*}
e^{-\frac{e+1}{ed}}K^{-1/d}\sqrt{st_0(h)} e^{r(s)} \leq & 1 + M(s) \\
\leq & 1 + M(1) + 2C_0 \sqrt{d} \int_1^s \p{r'(\sigma) + \frac{1}{2\sigma}}^{1/2} d\sigma \\
\leq & 1 + C_0 \sqrt{t_0(h) }/h + 2C_0 \sqrt{2ds} \p{1+r(s)} \\
= & 1 + 8C_0 \sqrt{ \ln \p{2c_- Nh^d}} + 2C_0 \sqrt{2ds} \p{1+r(s)}
\end{align*}
where we have used the bound 
$$m(t_0) = \EE_x^{(h)}[ \rho(x, W_{t_0(h)}) ] \leq C_0 h \EE(N_{t_0(h)/h^2}) = \frac{ C_0 t_0(h) }{h} .$$
 Dividing by $\sqrt{s} \geq 1$ we get
\[
\b{ e^{-\frac{e+1}{ed}}K^{-1/d}\sqrt{t_0}} e^{r(s)} \leq \frac{1+M(s)}{\sqrt{s}} \leq \p{1 + C_0 \sqrt{64 \ln \p{2c_- Nh^d}}} + 2C_0 \sqrt{2d} \p{1+r(s)}
\]
In particular, weakening the constants we find $c$ that depend on $d,K,C_0,c_-$ such that
\begin{align*}
e^{r(s)} \leq c t_0^{-1/2} \p{1 + \sqrt{ \ln \p{Nh^d}} + r(s)} \iff & \frac{e^{1+\sqrt{ \ln \p{Nh^d}}+r(s)}}{1+\sqrt{ \ln \p{Nh^d}}+r(s)} \leq ce t_0^{-1/2} \exp \p{\sqrt{\ln \p{Nh^d}}} \\
\iff & \underbrace{\frac{e^{\frac{1}{2}\p{1+\sqrt{ \ln \p{Nh^d}}+r(s)}}}{\frac{1}{2}\p{1+\sqrt{ \ln \p{Nh^d}}+r(s)}}}_{ \geq 2e^{1/2}} \frac{1}{2} e^{\frac{1}{2}\p{1+\sqrt{ \ln \p{Nh^d}}+r(s)}} \leq ce t_0^{-1/2} \exp \p{\sqrt{ \ln \p{Nh^d}}} \\ 
\implies & e^{\frac{1}{2}\p{1+\sqrt{ \ln \p{Nh^d}}+r(s)}} \leq ce^{1/2} t_0(h)^{-1/2} \exp \p{\sqrt{ \ln \p{Nh^d}}}
\end{align*}
In particular this implies
\[
r(s) \leq 2 \b{\ln \p{ce^{1/2}} + \ln \p{1/\sqrt{t_0(h)}} + \sqrt{ \ln \p{Nh^d}}}
\]
Using $t_0(h) = 64h^2 \ln \p{2c_-Nh^d}$ and $h_- \leq h \leq h_0$ we thus find, for some $c = c_{d,c_-,K,C_0}$
\[
r(s) \leq c \p{\ln \frac{1}{h_-} \vee \sqrt{\ln N}}
\]
Since $h_- >> \p{\frac{\ln N}{N}}^{1/d}$ this implies $\ln \frac{1}{h_-} \leq \sqrt{\ln N}$, and therefore
\[
M(s) \leq c \ln N \sqrt{s}
\]
\end{proof}

\subsubsection{On diagonal lower bound}

In this section we set again $t_0 = t_0(h) = 64h^2 \ln \p{2c_- Nh^d}$.

\begin{definition}\label{def:exit_time}
	For every $x \in V$ and $r > 0$ we define the exit time $\tau \p{x,r}$ of the ball $B_\rho(x,r)$ by
	\[
	\tau \p{x,r} = \inf \b{t \geq 0 : \rho \p{x,W_t}>r} = \inf \b{t \geq 0 : W_t \notin B_\rho(x,r)}
	\]
\end{definition}

\begin{lem}\label{lem:exit_time}
	There exists $c = c_{\MM,p_0,d}$ such that on $A_N$ and with $2t_0 \leq t \leq t_1(h), r = c\ln N \sqrt{t}$ we have
	\[
	\Pro_x^{(h)}\p{\tau(x,r) \leq t} \leq \frac{1}{2}
	\]
\end{lem}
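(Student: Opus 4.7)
The plan is to reduce the exit time bound to a displacement bound using the standard Markov decomposition together with the escape rate estimate of Lemma \ref{lem:escape_rate}. Specifically, for any $r>0$ and $t>0$, writing $\tau = \tau(x,r)$, I would split according to whether the chain has moved back significantly between time $\tau$ and time $t$:
\begin{align*}
\Pro_x^{(h)}(\tau \le t) \le{}& \Pro_x^{(h)}(\tau \le t,\, \rho(W_\tau,W_t) \le r/2) + \Pro_x^{(h)}(\tau \le t,\, \rho(W_\tau,W_t) > r/2) \\
\le{}& \Pro_x^{(h)}(\rho(x,W_t) \ge r/2) + \sup_{y \in V,\, 0 \le s \le t} \Pro_y^{(h)}(\rho(y,W_s) > r/2),
\end{align*}
where for the first term I used the triangle inequality $\rho(x,W_t) \ge \rho(x,W_\tau) - \rho(W_\tau,W_t) \ge r - r/2$ on the event considered, and for the second the strong Markov property applied at the stopping time $\tau$. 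Both terms are now controlled by Markov's inequality: each is at most $2r^{-1}\sup_{y\in V,\, 0 \le s \le t} m(s,y)$.

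The next step is to bound $\sup_{y,\,s\le t} m(s,y)$ by a multiple of $\ln N \sqrt t$ on the event $A_N$. When $t_0(h) \le s \le t \le t_1(h)$, Lemma \ref{lem:escape_rate} directly yields $m(s,y) \le c \ln N \sqrt{s} \le c \ln N \sqrt{t}$. For the remaining range $0 \le s < t_0(h)$ one cannot invoke Lemma \ref{lem:escape_rate}, so I would use the crude deterministic bound
\[
m(s,y) = \EE_y^{(h)}[\rho(y,W_s)] \le C_0 \EE_y^{(h)}[\norm{y-W_s}_{\RR^D}] \le C_0 h\, \EE[N_{s/h^2}] = C_0 s/h,
\]
using that each jump of $W$ moves distance at most $h$ and that the number of jumps up to time $s$ is Poisson with mean $s/h^2$. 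Since $s<t_0(h) \asymp h^2 \ln(Nh^d)$, this gives $m(s,y) \lesssim h \ln N$, and because $\sqrt t \ge \sqrt{2t_0(h)} \gtrsim h\sqrt{\ln N}$ we obtain $h \ln N \lesssim \sqrt{\ln N}\cdot \ln N \sqrt t / \sqrt{\ln N} = \ln N\sqrt t$, so the same upper bound $m(s,y) \lesssim \ln N\sqrt t$ holds throughout $[0,t]$.

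Combining the two steps gives $\Pro_x^{(h)}(\tau(x,r) \le t) \le 4 \tilde c\, \ln N\, \sqrt t /r$ for some constant $\tilde c = \tilde c_{\MM,p_0,d}$, and choosing $c = 8\tilde c$ in $r = c \ln N \sqrt t$ yields the claim. The only mild subtlety is the regime $s<t_0(h)$, for which Lemma \ref{lem:escape_rate} is silent; the Poisson-jump bound together with the lower bound $t\ge 2t_0(h)$ resolves this without additional work.
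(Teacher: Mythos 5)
Your proof is correct and reaches the same conclusion as the paper's, but via a genuinely different decomposition. The paper bounds $r\Pro_x^{(h)}(\tau\le t)\le\EE_x^{(h)}[\rho(x,W_{t\wedge\tau})]$ and then inserts a reference point at the \emph{later} time $2t$, writing $\rho(x,W_{t\wedge\tau})\le\rho(x,W_{2t})+\rho(W_{2t},W_{t\wedge\tau})$; after the strong Markov property at $t\wedge\tau$, every displacement involved is over a time interval of length $2t-s\in[t,2t]$ with $s\le t$, so Lemma~\ref{lem:escape_rate} applies directly and the small-time regime is never touched. You instead split the exit event on whether $\rho(W_\tau,W_t)>r/2$, then apply Markov's inequality and the strong Markov property at $\tau$, which leaves you needing to control $\sup_{y,\,s\le t}m(s,y)$ for $s$ all the way down to $0$; you close this gap for $s<t_0(h)$ with the Poisson jump-count bound $m(s,y)\le C_0 s/h$, which is exactly the crude estimate the paper itself uses for $m(t_0)$ inside the proof of Lemma~\ref{lem:escape_rate}. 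The trade-off actually favours your route: the paper's reference-point trick needs $2t\le t_1(h)$, i.e.\ $t\le t_1(h)/2$, while your decomposition covers the full stated range $t\le t_1(h)$. One small inaccuracy to flag: you assert $\sqrt{2t_0(h)}\gtrsim h\sqrt{\ln N}$, but $t_0(h)=64h^2\ln(2c_-Nh^d)$ and under Assumption~\ref{assumption:h} one only has $\ln(Nh^d)\gtrsim 1$ (it can be of order $\ln\ln N$, not $\ln N$), so that intermediate inequality need not hold. Fortunately it is also not needed: to conclude $h\ln N\lesssim \ln N\sqrt t$ you only require $h\lesssim\sqrt t$, which follows from $\sqrt{2t_0(h)}\gtrsim h$, so the conclusion of that step is still correct.
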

\begin{proof}
	We have, with $\tau = \tau \p{x,r}, t_0 \leq t \leq t_1(h), r>0$
	\begin{align*}
		r \Pro_x^{(h)} \p{\tau \leq t} = & r \EE_x^{(h)} \c{\II_{\tau \leq t}} \\ 
		\leq & \EE_x^{(h)} \c{r \II_{\rho \p{x,W_{t \wedge \tau}} > r} } \\
		\leq & \EE_x^{(h)} \c{\rho \p{x,W_{t \wedge \tau}}} \\ 
		\leq & \EE_x^{(h)} \c{\rho \p{x,W_{2t}}} + \EE_x^{(h)} \c{\rho \p{W_{2t},W_{t \wedge \tau}}} \\
		= & \EE_x^{(h)} \c{\rho \p{x,W_{2t}}} + \EE_x^{(h)} \c{ \EE_x^{(h)}\c{\rho \p{W_{2t},W_{t \wedge \tau}}|W_{t \wedge \tau}} } \\
		\leq & \EE_x^{(h)} \c{\rho \p{x,W_{2t}}} + \sup_{z \in  V, s \leq t} \EE_z^{(h)} \c{\rho \p{z,W_{2t - s}}} 
		\end{align*}
	Now using lemma \ref{lem:escape_rate} we find, as long as $t_0(h) \leq t \frac{1}{2} t_1(h)$
	\begin{align*}
		r \Pro_x^{(h)} \p{\tau \leq t} \leq & m( 2t, x) + \sup_{z \in  V, s \leq t} m(2t-s, z)  \\
		\leq & c \ln N \p{\sqrt{t} + \sqrt{2t-s}}
		\leq c \ln N \sqrt{t}
	\end{align*}
	the value of $c$ changing from line to line. Hence by setting $r = r(t) = 2c \ln N \sqrt{t}$ we have
	\[
	\Pro_x^{(h)} \p{\tau(x,r) \leq t} \leq \frac{1}{2}.
	\]
\end{proof}

\begin{lem}\label{lem:hklb}
	There exists $c = c_{\MM,p_0,d}$ such that on $A_N$, for any $t_0(h) \leq t \leq t_1(h) \wedge \p{\frac{r_0}{c \ln N}}^2$ we have
	\[
	\forall h_- \leq h \leq h_0, \forall x \in V, p_t^{(h)}(x,x) \geq c \frac{t^{-d/2}}{\ln^d N} 
	\]
\end{lem}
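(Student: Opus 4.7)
The plan is to combine the small exit-time estimate from Lemma \ref{lem:exit_time} with the standard Cauchy--Schwarz trick that turns a lower bound on the mass of the chain in a ball into an on-diagonal lower bound for the heat kernel, and then to use the volume regularity of the graph (Theorem \ref{thm:volume_regularity}) to convert the resulting bound into one of order $t^{-d/2}$ (up to logarithmic factors).

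First, I would write the semigroup identity together with the symmetry of the heat kernel
\[
p_t^{(h)}(x,x) = \sum_{y \in V} \nu_y^{(h)} p_{t/2}^{(h)}(x,y)^2.
\]
For any ball $B = B_\rho(x,r)$, restricting the sum to $B$ and applying Cauchy--Schwarz gives
\[
\Pro_x^{(h)}\p{W_{t/2} \in B}^2 = \Paren{\sum_{y \in B} \nu_y^{(h)} p_{t/2}^{(h)}(x,y)}^2 \leq \nu^{(h)}(B) \sum_{y \in B} \nu_y^{(h)} p_{t/2}^{(h)}(x,y)^2 \leq \nu^{(h)}(B) p_t^{(h)}(x,x),
\]
so that
\[
p_t^{(h)}(x,x) \geq \frac{\Pro_x^{(h)}\p{W_{t/2} \in B}^2}{\nu^{(h)}(B)}.
\]

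Second, I would choose $r = c \ln N \sqrt{t/2}$ with $c$ the constant provided by Lemma \ref{lem:exit_time}. Applied at time $t/2$ (note that the condition $2t_0(h) \leq t/2 \leq t_1(h)$ holds once the constants in the statement are adjusted by a harmless multiplicative factor, which can be absorbed into the definition of $t_0(h)$ or in the final constant), Lemma \ref{lem:exit_time} yields $\Pro_x^{(h)}(\tau(x,r) \leq t/2) \leq 1/2$, and since $\{\tau(x,r) > t/2\} \subset \{W_{t/2} \in B_\rho(x,r)\}$, we obtain $\Pro_x^{(h)}(W_{t/2} \in B) \geq 1/2$. The constraint $t \leq (r_0/(c \ln N))^2$ in the lemma is precisely what is needed to ensure $r \leq r_0$, so that the ball lies in the range where Theorem \ref{thm:volume_regularity} applies, and the lower bound $r \geq h_-$ is automatic from $t \geq t_0(h)$ together with Assumption \ref{assumption:h}.

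Third, I would upper bound $\nu^{(h)}(B_\rho(x,r))$. Using the elementary inclusion $B_\rho(x,r) \subset B_{\RR^D}(x,r)$ and Theorem \ref{thm:volume_regularity} on $A_N$, each vertex $y \in B_\rho(x,r)$ has $\nu_y^{(h)} \leq (c_+/c_-)/N$, and the number of vertices in $B_{\RR^D}(x,r)$ is bounded by $c_+ N r^d$, giving
\[
\nu^{(h)}(B_\rho(x,r)) \leq \frac{c_+^2}{c_-}\, r^d.
\]
Plugging this into the previous display yields
\[
p_t^{(h)}(x,x) \geq \frac{c_-}{4 c_+^2}\, r^{-d} = \frac{c_-}{4 c_+^2 (c \ln N)^d}\, (t/2)^{-d/2} \gtrsim \frac{t^{-d/2}}{\ln^d N},
\]
which is the claim (up to redefining the constant $c$). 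The only minor obstacle I expect is the bookkeeping of constants between $t$ and $t/2$ in the application of Lemma \ref{lem:exit_time}, which only affects the precise value of the constant $c$ in the final bound and in the definition of the admissible range for $t$.
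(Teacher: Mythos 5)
Your proof is correct and follows essentially the same route as the paper: the paper applies Lemma~\ref{lem:exit_time} at time $t$, uses Cauchy--Schwarz to deduce $\tfrac14 \leq \nu^{(h)}(B)\, p_{2t}^{(h)}(x,x)$, and then invokes Theorem~\ref{thm:volume_regularity}; you perform the identical Cauchy--Schwarz step expressed through the semigroup identity $p_t = \sum_y \nu_y\, p_{t/2}(\cdot,y)^2$, which is just a reindexing $t \leftrightarrow 2t$. The constant-bookkeeping remark about $t$ versus $t/2$ (and matching the admissible range to the hypothesis $2t_0 \leq \cdot \leq t_1$ of Lemma~\ref{lem:exit_time}) reflects the same slack the paper absorbs into its constants.
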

\begin{proof}
	We have, with $\tau = \tau \p{x,r}, t_0/2 \leq t \leq t_1(h), r = r(t)$ from Lemma \ref{lem:exit_time} and $B = B_\rho \p{x,r}$
	\[
	\Pro_x^{(h)}\p{W_t \notin B} \leq \Pro_x^{(h)}\p{\tau(x,r) \leq t} \leq \frac{1}{2}
	\]
	Hence
	\begin{align*}
		\frac{1}{2} \leq & \Pro_x^{(h)}\p{W_t \in B} 
		=  \sum_{y \in B} p_t^{(h)}\p{x,y} \nu_y^{(h)} \\
		& \leq  \p{\nu^{(h)}\p{B}}^{1/2} \p{\sum_{y \in B} p_t^{(h)}(x,y)^2 \nu_y^{(h)}}^{1/2} 
		 \leq  \p{\nu^{(h)}\p{B}}^{1/2} \p{\sum_{y \in V} p_t^{(h)}(x,y)^2 \nu_y^{(h)}}^{1/2} \\
		&=  \p{\nu^{(h)}\p{B}}^{1/2} p_{2t}^{(h)}(x,x)^{1/2}
	\end{align*}
	Therefore we have the inequality
	\[
	p_{2t}^{(h)}(x,x) \geq \frac{1}{4 \nu^{(h)}\p{B}}
	\]
	We now want to conclude using Theorem \ref{thm:volume_regularity}. For this we need to guarantee that
	\[
	h_- \leq r(t) \leq r_0 \iff \p{\frac{h_-}{c \ln N}}^2 \leq t \leq \p{\frac{r_0}{c \ln N}}^2
	\]
	Since $2t \geq t_0 = 64h^2 \ln \p{2c_-Nh^d} \geq 64h_-^2 \ln \p{2c_- Nh_-^d}$ and $h_-$ satisfies \ref{assumption:h}, we have $t \geq \p{\frac{h_-}{c \ln N}}^2$. Since by assumption $t \leq \p{\frac{r_0}{c \ln N}}^2$, we conclude that $h_- \leq r(t) \leq r_0$. Hence theorem \ref{thm:volume_regularity} implies
	\[
	\nu^{(h)}\p{B} \leq \frac{c_+^2}{c_-} r(t)^d
	\]
	and therefore
	\[
	p_{2t}^{(h)}(x,x) \geq \frac{c_-}{4 c_+^2} r(t)^{-d} = \frac{c_- c^{-d}}{4 c_+^2} \frac{t^{-d/2}}{\ln^d N}.
	\]
\end{proof}

\begin{lem}\label{lem:lower_bound_t1}
There exists $c = c_{\MM,p_0,d}$ such that on $A_N$ we have, for any $h_- \leq h \leq h_0$
\[
t_1(h) \geq \frac{c}{\ln^2 N}
\]
\end{lem}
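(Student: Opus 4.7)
\emph{Plan.} Let $x^\star \in V$ attain $t_1(h) = t_1(x^\star, h)$. Since the map $t \mapsto p_{2t}(x^\star, x^\star) = \sum_{j=1}^N e^{-2t\lambda_j} u_j(x^\star)^2$ is continuous and strictly decreasing, starting from $1/\nu_{x^\star} \geq c_- N/c_+$ on $A_N$ (which exceeds $K' := K r_0^{-d}$ for $N$ large, where $K'$ is the threshold appearing implicitly in the definition of $t_1$ in Lemma \ref{lem:on_diag_bound_up_to_t1}), the infimum in the definition of $t_1(x^\star,h)$ is attained: $p_{2 t_1(h)}(x^\star, x^\star) = K'$. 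The strategy is to apply, at $t = t_1(h)$ itself, the volume-based pointwise lower bound on $p_{2t}(x,x)$ that was obtained in the course of proving Lemma \ref{lem:hklb}, and to extract a constraint on $t_1(h)$ by equating with $K'$.

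Concretely, the proof of Lemma \ref{lem:hklb} shows that for every $t$ lying in the admissible range $\c{2 t_0(h),\; t_1(h) \wedge (r_0/(2 c_1 \ln N))^2}$, where $c_1$ is the constant of Lemma \ref{lem:escape_rate}, the exit-time estimate of Lemma \ref{lem:exit_time} combined with the volume regularity of Theorem \ref{thm:volume_regularity} yields
\[
p_{2t}(x^\star, x^\star) \geq \frac{1}{4\, \nu\p{B_\rho\p{x^\star,\, r(t)}}} \geq \frac{c_-}{4 c_+^2 (2 c_1 \ln N)^d\; t^{d/2}}, \qquad r(t) = 2 c_1 \ln N \sqrt{t} \leq r_0.
\]
Specializing to $t = t_1(h)$ and equating with $K'$ forces $t_1(h)^{d/2} \geq c_-/\p{4 c_+^2 (2 c_1)^d K' \ln^d N}$, which rearranges to $t_1(h) \geq c/\ln^2 N$ with $c = \p{c_-/(4 c_+^2 (2 c_1)^d K')}^{2/d}$, the desired bound.

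The main obstacle is verifying that $t = t_1(h)$ lies in this admissible range. The upper endpoint $t_1(h) \leq (r_0/(2 c_1 \ln N))^2$ may be assumed without loss of generality, since otherwise the conclusion already holds with $c$ replaced by $(r_0/(2 c_1))^2$. The lower endpoint $2 t_0(h) \leq t_1(h)$ is more delicate; it is enforced by the elementary Poisson bound $p_{2 t_1(h)}(x^\star, x^\star) \geq (c_-/c_+)\, N\, e^{-2 t_1(h)/h^2}$, coming from the event that the underlying Poisson clock driving $W$ has made no jumps in time $2 t_1(h)/h^2$. Combined with $p_{2 t_1(h)}(x^\star, x^\star) = K'$ this gives $t_1(h) \geq (h^2/2) \ln\p{c_- N/(K' c_+)} \gtrsim h^2 \ln N$, which for $N$ large dominates $2 t_0(h) = 128 h^2 \ln(2 c_- N h^d)$ provided $\ln(N h^d) \ll \ln N$, a condition that can be secured by taking $h_0$ small enough (this being the ultimate role of the constant $h_0$ in the statement).
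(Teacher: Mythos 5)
Your overall strategy is the same as the paper's: take the minimizing vertex $x^\star$, use $p_{2 t_1(h)}(x^\star,x^\star) = K'$ (attained by continuity and monotonicity of $t\mapsto p_{2t}(x^\star,x^\star)$), insert the volume--exit-time lower bound $p_{2t}(x,x) \gtrsim t^{-d/2}/\ln^d N$ from the proof of Lemma~\ref{lem:hklb} at $t = t_1(h)$, and solve; the complementary case $t_1(h) \geq (r_0/(c\ln N))^2$ is trivial. Up to this point your argument mirrors the paper's correctly.

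Where you diverge is in explicitly flagging the lower endpoint condition, i.e.\ that applying the heat-kernel lower bound at $t = t_1(h)$ requires $t_1(h)$ to be at least $2 t_0(h)$ (or the relevant multiple of $t_0(h)$), and this is a fair point to raise -- the paper's own proof passes over it in silence. However, your proposed repair does not work. The Poisson no-jump bound is correct and yields $t_1(h) \geq \tfrac{h^2}{2}\ln\bigl(c_- N/(K'c_+)\bigr)$, but the comparison with $2 t_0(h) = 128\, h^2\ln(2 c_- N h^d)$ has to be made with attention to the multiplicative constants: for $h$ bounded away from $0$ (in particular for $h$ of order the fixed constant $h_0$), one has $\ln(N h^d) = (1+o(1))\ln N$ as $N\to\infty$, so the comparison becomes $(1/2)\ln N$ versus $128 \ln N$, which fails. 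Your suggested remedy, ``secure $\ln(N h^d) \ll \ln N$ by shrinking $h_0$,'' cannot work: $h_0$ is a fixed constant depending only on $\MM$ and $p_0$ (it is the scale at which the Poincar\'e inequality and volume regularity kick in, not a tunable $N$-dependent quantity), and the condition $\ln(N h^d) \leq (1/256)\ln N$ would force $h^d \lesssim N^{-255/256}$ uniformly over $[h_-,h_0]$, which is impossible for any fixed $h_0 > 0$. So the step verifying $2 t_0(h) \leq t_1(h)$ remains a gap; a correct treatment would need either a genuinely sharper lower bound on $t_1(h)$ (the no-jump event is too weak) or a separate argument disposing of the case $t_1(h) < 2 t_0(h)$.
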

\begin{proof}
If $t_1(h) < \p{\frac{r_0}{c \ln N}}^2$ then for any $t_0/2 \leq t \leq t_1(h)$ using lemma \ref{lem:hklb} we have
\[
p_{2t}^{(h)}(x,x) \geq \frac{c_- c^{-d}}{4 c_+^2} \frac{t^{-d/2}}{\ln^d N}.
\]
In particular
\[
K' = p_{2t_1(h)}^{(h)}(x,x) \geq \frac{c_- c^{-d}}{4 c_+^2} \frac{t_1(h)^{-d/2}}{\ln^d N},
\]
i.e
\[
t_1(h) \geq \p{\frac{c_- c^{-d}}{4 c_+^2} \frac{K'^{-1}}{\ln^d N}}^{2/d}.
\]
Hence
\[
t_1(h) \geq \p{\frac{r_0}{c \ln N}}^2 \wedge \p{\frac{c_- c^{-d}}{4 c_+^2} \frac{K'^{-1}}{\ln^d N}}^{2/d},
\]
which concludes the proof.
\end{proof}

\subsection{Control of the Laplacian spectrum}

Recall that here $t_0 = t_0(h) = 64h^2 \ln \p{2c_-Nh^d}$ and $h_0 = h_{0|\MM,p_0,d}$ is a theoretical constant.

\begin{lem}\label{spectrum:lower_bound}
	There exists constants $b_1,b_2,b_3>0$ such that, on $A_N$, for any $h_- \leq h \leq h_0$
	\[
	\forall b_1  \ln^d N \leq j \leq b_2 t_0^{-d/2}, \quad \lambda_j^{(h)} \geq b_3 j^{2/d}
	\]
\end{lem}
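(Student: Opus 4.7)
The plan is to relate the eigenvalues $\lambda_j^{(h)}$ to the heat kernel via the trace identity
\[
\sum_{j=1}^N e^{-t\lambda_j^{(h)}} \;=\; \sum_{x \in V} p_t^{(h)}(x,x)\,\nu_x^{(h)},
\]
which follows from the spectral expansion $p_t^{(h)}(x,y) = \sum_j e^{-t\lambda_j} u_j(x) u_j(y)$ together with the orthonormality $\|u_j\|_{L^2(\nu)}^2 = 1$. Combining with the uniform on-diagonal upper bound from Lemma~\ref{lem:on_diag_bound_up_to_t1} (valid on $A_N$ for $h_- \le h \le h_0$ and $t \in [t_0(h), 2t_1(h)]$) and the fact that $\sum_x \nu_x^{(h)} = 1$, we obtain
\[
\sum_{j=1}^N e^{-t\lambda_j^{(h)}} \;\le\; K\, t^{-d/2}, \qquad t \in [t_0(h),\, 2t_1(h)].
\]

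The second step is a standard Weyl-type argument: since the eigenvalues are nondecreasing, for each $j$
\[
j\, e^{-t\lambda_j^{(h)}} \;\le\; \sum_{k=1}^{j} e^{-t\lambda_k^{(h)}} \;\le\; K t^{-d/2}.
\]
I will then optimize the free parameter $t$ by choosing $t = (2b_3 j^{2/d})^{-1}$ for a small constant $b_3 > 0$ to be fixed. The strategy is by contradiction: if $\lambda_j^{(h)} < b_3 j^{2/d}$, then $t \lambda_k^{(h)} < 1/2$ for all $k \le j$, forcing $j e^{-1/2} \le K(2 b_3)^{d/2} j$, i.e.\ $e^{-1/2} \le K(2b_3)^{d/2}$. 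Taking $b_3$ small enough that $K(2b_3)^{d/2} < e^{-1/2}$ yields the desired lower bound $\lambda_j^{(h)} \ge b_3 j^{2/d}$.

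Finally, I need to verify that the chosen $t = (2b_3 j^{2/d})^{-1}$ lies in the admissible range $[t_0(h),\, 2t_1(h)]$. The condition $t \le 2t_1(h)$, using the lower bound $t_1(h) \ge c/\ln^2 N$ from Lemma~\ref{lem:lower_bound_t1}, rearranges to $j \ge b_1 \ln^d N$ for some $b_1 = b_1(b_3,c,d) > 0$. The condition $t \ge t_0(h)$ rearranges directly to $j \le b_2\, t_0(h)^{-d/2}$ with $b_2 = (2b_3)^{-d/2}$. These two constraints produce precisely the range of $j$ stated in the lemma.

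The only mildly delicate point is ensuring uniformity in $h$: the constants $K$ in the heat kernel bound and $c$ in the lower bound on $t_1(h)$ are independent of $h$ in $[h_-, h_0]$ on the event $A_N$, so the constants $b_1, b_2, b_3$ produced by the argument depend only on $\MM, p_0, d$ and not on $h$. No other obstacle is expected — the trace identity and the Weyl step are purely linear algebra given the heat kernel bound, so the substance of the argument is entirely contained in the previously established heat kernel estimates.
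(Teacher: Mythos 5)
Your proposal is correct and takes essentially the same approach as the paper: both bound the partition function $\sum_j e^{-t\lambda_j^{(h)}}$ via the on-diagonal heat kernel upper bound and then apply a Weyl-type counting argument to extract a lower bound on $\lambda_j^{(h)}$. Your contradiction framing (choosing $t = (2b_3 j^{2/d})^{-1}$ and showing $\lambda_j < b_3 j^{2/d}$ forces $e^{-1/2} < K(2b_3)^{d/2}$) is logically equivalent to the paper's direct count $\#\{k:\lambda_k^{(h)}\le\Lambda\}\le e a_3\Lambda^{d/2}$ with $\Lambda=(j/2ea_3)^{2/d}$, and you invoke Lemmas~\ref{lem:on_diag_bound_up_to_t1} and~\ref{lem:lower_bound_t1} directly where the paper cites their consolidated statement, Theorem~\ref{thm:hkb}.
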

\begin{proof}
	Using the inequality $\II_{\lambda \leq \Lambda} \leq e^{1-\lambda/\Lambda}$ we find
	\begin{align*}
		\# \b{1 \leq j \leq N : \lambda_j^{(h)} \leq \Lambda} = & \sum_{j=1}^N \II_{\lambda_j^{(h)} \leq \Lambda} 
		= \sum_{j=1}^N \II_{\lambda_j^{(h)} \leq \Lambda} \norm{u_j}_N^2 \\
		= & \sum_{j=1}^N \II_{\lambda_j^{(h)} \leq \Lambda} \sum_{x \in V} u_j(x)^2 \nu_x 
		=  \sum_{x \in V} \b{ \sum_{j=1}^N \II_{\lambda_j^{(h)} \leq \Lambda} u_j(x)^2 } \nu_x 
		\leq  \sum_{x \in V} \b{ \sum_{j=1}^N e^{1-\lambda_j^{(h)}/\Lambda} u_j(x)^2 } \nu_x \\
		= & \sum_{x \in V} ep_{\Lambda^{-1}}^{(h)}(x,x) \nu_x 
		\leq  e \max_{x \in V} p_{\Lambda^{-1}}^{(h)}(x,x)
	\end{align*}
	
	We now use theorem \ref{thm:hkb} with $t = \Lambda^{-1}$: there exist constants $a_2,a_3,h_0 > 0$ (that also depend on $\MM,p_0$) such that, on $A_N$, for all $h_- \leq h \leq h_0$ and $a_1^{-1}\ln^2 N \leq \Lambda \leq t_0^{-1}$ 
	\[
	\forall x \in V, p_t^{(h)}(x,x) \leq a_3 \Lambda^{d/2}
	\]
	which implies
	\[
	\# \b{1 \leq j \leq N : \lambda_j \leq \Lambda} \leq e a_3 \Lambda^{d/2}
	\]
	In particular $j = 2ea_3 \Lambda^{d/2} \iff \Lambda = \p{j/2ea_3}^{2/d}$ yields
	\[
	\lambda_j^{(h)} > \Lambda = \p{j/2ea_3}^{2/d}
	\]
	This is valid if
	\[
	a_1^{-1}\ln^2 N \leq \p{j/2ea_3}^{2/d} \leq t_0^{-1}
	\]
	which gives the result by inverting the inequality.
\end{proof}

\begin{lem}\label{spectrum:upper_bound}
	There exist constants $b_4,b_5,b_6>0$ such that on $A_N$, for any $h_- \leq h \leq h_0$
	\[
	\forall b_4 \leq j \leq b_5 \frac{t_0^{-d/2}}{\ln^{3d/2} N}, \quad \lambda_j^{(h)} \leq b_6 j^{2/d} \ln^3 N
	\]
\end{lem}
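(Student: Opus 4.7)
The plan is to deduce an upper bound on $\lambda_j^{(h)}$ from a lower bound on the trace $T(t) := \sum_{k=1}^N e^{-t\lambda_k^{(h)}}$, in the same spirit as Lemma \ref{spectrum:lower_bound} but using the heat kernel lower bound of Lemma \ref{lem:hklb} in place of its upper bound. The spectral decomposition gives $p_t^{(h)}(x,x) = \sum_{k} e^{-t\lambda_k^{(h)}} u_k(x)^2$, and combined with $\|u_k\|_{L^2(\nu)} = 1$ this yields
\[
T(t) = \sum_{x \in V} p_t^{(h)}(x,x)\, \nu_x.
\]
Applying Lemma \ref{lem:hklb} uniformly in $x \in V$ and using $\sum_x \nu_x = 1$, on $A_N$ we obtain $T(t) \geq c\, t^{-d/2}/\ln^d N$ for every $t_0(h) \leq t \leq t_1(h) \wedge (r_0/(c\ln N))^2$.

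The next step is the classical trace splitting: letting $N(\Lambda) := \#\{k : \lambda_k^{(h)} \leq \Lambda\}$, the inequality $T(t) \leq N(\Lambda) + N e^{-t\Lambda}$ is immediate (each of the first $N(\Lambda)$ terms is at most $1$, the remaining ones are at most $e^{-t\Lambda}$), and combining this with the previous lower bound gives
\[
N(\Lambda) \geq c\, t^{-d/2}/\ln^d N - N e^{-t\Lambda}.
\]
I would then tune $t$ and $\Lambda$ as functions of $j$ so that the right-hand side exceeds $j$, since $N(\Lambda) \geq j$ is equivalent to $\lambda_j^{(h)} \leq \Lambda$. Setting $t(j) = A\, j^{-2/d} \ln^{-2} N$ and $\Lambda(j) = b_6\, j^{2/d} \ln^3 N$ yields $t\Lambda = A b_6 \ln N$ and $c\, t^{-d/2}/\ln^d N = c A^{-d/2} j$. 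Choosing $A$ small enough that $c A^{-d/2} \geq 2$, and then $b_6 \geq 2/A$ so that $N e^{-t\Lambda} = N^{1-Ab_6} \leq 1$, we get $N(\Lambda) \geq 2j - 1 \geq j$, hence $\lambda_j^{(h)} \leq b_6 j^{2/d} \ln^3 N$.

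It remains to verify that $t(j)$ lies in the range of validity of Lemma \ref{lem:hklb}, which pins down $b_4$ and $b_5$. The upper constraint $t(j) \leq t_1(h) \wedge (r_0/(c\ln N))^2 \asymp 1/\ln^2 N$ (using Lemma \ref{lem:lower_bound_t1}) rearranges into $j \geq b_4$ for a suitable constant. The lower constraint $t(j) \geq t_0(h) = 64 h^2 \ln(2 c_- N h^d)$, combined with $t_0(h)^{-d/2} \asymp h^{-d} \ln^{-d/2}(Nh^d)$, becomes $j \leq A^{d/2}\, t_0(h)^{-d/2}/\ln^d N$; this is implied by the stated condition $j \leq b_5\, t_0(h)^{-d/2}/\ln^{3d/2} N$ provided $b_5 \leq A^{d/2}$ (the extra $\ln^{d/2} N$ buffer in the denominator makes the claimed range strictly smaller than the one my proof requires). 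The main (mild) obstacle is bookkeeping: one must verify that the constants $A, b_4, b_5, b_6$ depend only on $\MM, p_0, d$ and not on $h, N$, which follows because the constants produced by Lemmas \ref{lem:hklb} and \ref{lem:lower_bound_t1} already have the same restricted dependence.
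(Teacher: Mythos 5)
Your proof is correct, and it takes a genuinely shorter route than the paper's. Both arguments share the same skeleton: express the on-diagonal heat kernel averaged over $\nu$ in terms of the spectrum, lower-bound it via the heat kernel estimates, and then deduce a lower bound on the eigenvalue counting function $N(\Lambda) = \#\{j : \lambda_j^{(h)} \leq \Lambda\}$ which inverts to an upper bound on $\lambda_j^{(h)}$. Where you diverge is the second step. The paper adapts the continuous-space technique of \cite{coulhon_heat_2012}: it writes the dyadic decomposition $e^{-t\lambda} \leq \II_{[0,\Lambda]}(\lambda) + \sum_{l \geq 0} e^{-t2^l\Lambda}\II_{[2^l\Lambda,2^{l+1}\Lambda]}(\lambda)$, controls the middle dyadic range with the \emph{upper} on-diagonal bound from Theorem \ref{thm:hkb} at the shifted times $2^{-(l+1)}\Lambda^{-1}$, handles the tail range $K < l \leq L$ with a crude bound, and then needs an integral comparison and a careful choice $t = \kappa\Lambda^{-1}\ln N$ to make all the terms work. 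You instead replace this entire mechanism by the elementary trace inequality $T(t) = \sum_k e^{-t\lambda_k} \leq N(\Lambda) + N e^{-t\Lambda}$, which is available because the spectrum is finite with exactly $N$ eigenvalues. This costs you the factor $N$ in front of $e^{-t\Lambda}$, but you absorb it by choosing $t\Lambda = Ab_6\ln N \geq 2\ln N$; the paper's dyadic refinement is designed to avoid a factor like $N$, but since the target bound already carries a $\ln^3 N$ overhead (traceable to the $\ln$-loss in Theorem \ref{thm:hkb}) the refinement buys nothing here. As a result your proof never invokes the on-diagonal upper bound $p_t \leq a_3 t^{-d/2}$ nor the dyadic/integral-comparison machinery, only Lemma \ref{lem:hklb} (equivalently the lower bound in Theorem \ref{thm:hkb}) and Lemma \ref{lem:lower_bound_t1}, and the constant bookkeeping in your final paragraph is accurate: the range $b_4 \leq j \leq b_5 t_0^{-d/2}/\ln^{3d/2}N$ in the statement is (for large $N$) strictly contained in the range $(A/a_1)^{d/2} \leq j \leq A^{d/2}t_0^{-d/2}/\ln^d N$ where your choice of $t(j)$ is admissible.
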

\begin{proof}
	As in in the proof of lemma 3.19 in \cite{coulhon_heat_2012} we start by the following inequality
	\begin{align*}
		e^{-t\lambda} =  \II_{[0,\Lambda]}(\lambda) e^{-t\lambda} + \sum_{l \geq 0} \II_{[2^l \Lambda,2^{l+1} \Lambda]}(\lambda) e^{-t\lambda} 
		\leq  \II_{[0,\Lambda]}(\lambda) + \sum_{l \geq 0} e^{-t2^l \Lambda}								 \II_{[2^l \Lambda,2^{l+1} \Lambda]}(\lambda)
	\end{align*}
	which holds for any $t, \Lambda > 0$. Hence
	\begin{align*}
	p_t^{(h)}(x,x) = & \sum_{j=1}^N e^{-t\lambda_j} u_j(x)^2 \\
	\leq & \sum_{j=1}^N \II_{[0,\Lambda]}\p{\lambda_j^{(h)}} u_j(x)^2 + \sum_{l \geq 0} e^{-t2^l \Lambda}																													\sum_{j=1}^N \II_{[2^l \Lambda,2^{l+1} \Lambda]}\p{\lambda_j^{(h)}} u_j(x)^2
	\end{align*}
	Since for any $h > 0, j = 1,\ldots,N$ we have $\lambda_j^{(h)} \leq 2h^{-2}$ (because $\norm{\LL}_{L^\infty \p{\nu}} \leq 2h^{-2}$), we can stop the sum at $L \in \mathbb{N}$ such that $2^L \Lambda \leq 2h^{-2} \leq 2^{L+1} \Lambda$ :
	\[
	p_t^{(h)}(x,x) \leq \sum_{j=1}^N \II_{[0,\Lambda]}\p{\lambda_j^{(h)}} u_j(x)^2 + \sum_{l = 0}^L e^{-t2^l \Lambda}																							\sum_{j=1}^N \II_{[2^l \Lambda,2^{l+1} \Lambda]}\p{\lambda_j^{(h)}} u_j(x)^2.
	\]
	Using theorem \ref{thm:hkb} we find, on $A_N$ and for $t_0 \leq t \leq \frac{a_1}{\ln^2 N}$
	\[
	p_t^{(h)}(x,x) \geq a_2 \frac{t^{-d/2}}{\ln^d N} \implies a_2 \frac{t^{-d/2}}{\ln^d N} \leq \sum_{j=1}^N \II_{[0,\Lambda]}\p{\lambda_j^{(h)}} u_j(x)^2 + \sum_{l = 0}^L e^{-t2^l \Lambda}																																																		 \sum_{j=1}^N \II_{[2^l \Lambda,2^{l+1} \Lambda]}\p{\lambda_j^{(h)}} u_j(x)^2
	\]
	We now define $K \in \mathbb{N}$ such that $2^{K+1}\Lambda \leq t_0^{-1} \leq 2^{K+2}\Lambda$. If $a_1^{-1} \ln^2 N \leq \Lambda \leq t_0^{-1}$ then for $1 \leq l \leq K$ we have $a_1^{-1} \ln^2 N \leq \Lambda \leq 2^{l+1}\Lambda \leq t_0^{-1}$, and therefore
	\begin{align*}
	\sum_{l = 0}^K e^{-t2^l \Lambda}																																																		 \sum_{j=1}^N \II_{[2^l \Lambda,2^{l+1} \Lambda]}\p{\lambda_j^{(h)}} u_j(x)^2 \leq & \sum_{l = 0}^K e^{-t2^l \Lambda}																																																		 \sum_{j=1}^N \II_{[0 \Lambda,2^{l+1} \Lambda]}\p{\lambda_j^{(h)}} u_j(x)^2 \\
	\leq & \sum_{l = 0}^K e^{-t2^l \Lambda}																																																		 \sum_{j=1}^N e^{1-2^{-(l+1)} \Lambda^{-1}\lambda_j^{(h)}} u_j(x)^2 \\
	= & e p_{2^{-(l+1)}\Lambda^{-1}}^{(h)}(x,x) 
	\leq  e a_3 2^{\frac{d(l+1)}{2}}\Lambda^{d/2},
	\end{align*}
	which implies
	\begin{align*}
	a_2 \frac{t^{-d/2}}{\ln^d N} \leq & \sum_{j=1}^N \II_{[0,\Lambda]}\p{\lambda_j^{(h)}} u_j(x)^2 + e a_3 2^{d/2}\sum_{l \geq 0} e^{-t2^l \Lambda} \p{2^l\Lambda}^{d/2} + \sum_{j=1}^N \sum_{l = K+1}^L e^{-t2^l \Lambda} u_j(x)^2 \\
	\leq & \sum_{j=1}^N \II_{[0,\Lambda]}\p{\lambda_j^{(h)}} u_j(x)^2 + e a_3 2^{d/2} \sum_{j=1}^N \p{2^l\Lambda}^{d/2} + L e^{-t2^{K+1}\Lambda} \sum_{j=1}^N u_j(x)^2.
	\end{align*}
	Taking the expectation of this quantity with respect to $x \sim \nu$ and using $\sum_{x \in V} u_j(x)^2 \nu_x = 1$ then yields
	\[
	a_2 \frac{t^{-d/2}}{\ln^d N} \leq \# \b{j : \lambda_j^{(h)} \leq \Lambda} + ea_3 2^{d/2} t^{-d/2} \sum_{l \geq 0} e^{-t2^l \Lambda} \p{2^l t\Lambda}^{d/2} + LNe^{-t2^{K+1} \Lambda}
	\]
	Using an integral comparison (see e.g \cite{Barlow_2017} lemma A.36) we find, for any $x \geq 1$
	\[
	\sum_{l \geq 0} e^{-2^l x} \p{2^lx}^{d/2} \leq c_d x^{d/2-1} e^{-x/2} \leq c_d c_d'e^{-x/4},\quad  c_d' = \sup_{x \geq 1} x^{d/2-1} e^{-x/4} < +\infty.
	\]
	In particular, if $t \geq \Lambda^{-1}$, with $c = c_d c_d'$
	\[
	\sum_{l \geq 0} e^{-t2^l \Lambda} \p{2^l t\Lambda}^{d/2} \leq c e^{-t\Lambda/4}.
	\]
	Therefore we obtain : for any $a_1^{-1} \ln^2 N \leq \Lambda^{-1} \leq t \leq t_0^{-1}$
	\begin{align*}
	\# \b{j : \lambda_j^{(h)} \leq \Lambda} \geq & t^{-d/2} \p{ a_2 \frac{1}{\ln^d N} - cea_3 2^{d/2} e^{-t\Lambda/4} - LNt^{d/2}e^{-t2^{K+1} \Lambda}} \\
	\geq & t^{-d/2} \p{ a_2 \frac{1}{\ln^d N} - cea_3 2^{d/2} e^{-t\Lambda/4} - LNt^{d/2}e^{-t\Lambda}}
	\end{align*}
	Let $t = \kappa \Lambda^{-1} \ln N, \kappa > 0$. Then
	\[
	\# \b{j : \lambda_j^{(h)} \leq \Lambda} \geq \frac{\Lambda^{d/2}}{\ln^{d/2} N} \p{ a_2 \frac{1}{\ln^d N} - cea_3 2^{d/2} N^{-\kappa/4} - LN\Lambda^{d/2}\kappa^{-d/2} \p{\ln N}^{-d/2}N^{-\kappa}}
	\]
	But notice that $L \leq \frac{\ln \p{2h^{-1}\Lambda^{-1}}}{\ln 2} \leq \frac{\ln \p{2h_-^{-1}a_1 \p{\ln N}^{-2}}}{\ln 2} \lesssim \ln N$, which implies that for $\kappa>0$ sufficiently large we find
	\[
	\# \b{j : \lambda_j^{(h)} \leq \Lambda} \geq \frac{a_2\Lambda^{d/2}}{2\ln^{3d/2} N}
	\]
	To conclude we only need to guarantee the conditions on $t,\Lambda,\kappa$ :
	\[
	t_0 \leq \Lambda^{-1} \leq t = \kappa \Lambda^{-1} \ln N \leq \frac{a_1}{\ln^2 N} \iff \kappa a_1^{-1} \ln^3 N \leq \Lambda \leq t_0^{-1}
	\]
	i.e we have obtained
	\[
	\forall \kappa a_1^{-1} \ln^3 N \leq \Lambda \leq t_0^{-1}, \# \b{j : \lambda_j^{(h)} \leq \Lambda} \geq \frac{a_2\Lambda^{d/2}}{2\ln^{3d/2} N}
	\]
	In particular with  $j = \frac{a_2\Lambda^{d/2}}{2\ln^{3d/2} N} $ so that $ \Lambda = \p{2j/a_2}^{2/d} \ln^3 N$ we get
	\[
	\forall \frac{a_2}{2} \p{\kappa a_1^{-1}}^{d/2} \leq j \leq \frac{a_2}{2} \frac{t_0^{-d/2}}{\ln^{3d/2} N}, \lambda_j^{(h)} \leq \p{2j/a_2}^{2/d} \ln^3 N
	\]
	This concludes the proof.
\end{proof}

\subsection{$L^\infty\p{\nu}-L^2\p{\nu}$ comparison theorem}

Again recall that here $t_0 = t_0(h) = 32h^2 \ln \p{2c_- Nh^d}$.

\begin{lem}\label{thm:norm_comparison}
	On $A_N$, for any $h_- \leq h \leq h_0$ with
	\[
	\Sigma_\Lambda = span \b{u_j^{(h)} : \lambda_j^{(h)} \leq \Lambda}
	\]
	we have
	\[
	\forall a_1^{-1} \ln^2 N \leq \Lambda \leq t_0^{-1}, \quad  f \in \Sigma_\Lambda, \quad  \norm{f}_{L^\infty \p{\nu}}^2 \leq ea_3 \Lambda^{d/2} \norm{f}_{L^2\p{\nu}}^2
	\]
	Moreover, with
	\[
	\Sigma^J = span \b{u_j^{(h)} : j \leq J}
	\]
	we have
	\[
	\forall b_4 \leq J \leq b_5 \frac{t_0^{-d/2}}{\ln^{3d/2} N}, \quad f \in \Sigma^J, \quad \norm{f}_{L^\infty \p{\nu}}^2 \leq ea_3 b_6^{d/2} J \ln^{3d/2} N \norm{f}_{L^2\p{\nu}}^2
	\]
\end{lem}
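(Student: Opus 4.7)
The plan is to use the on-diagonal heat kernel upper bound from Theorem \ref{thm:hkb} together with the elementary inequality $\II_{\lambda \leq \Lambda} \leq e^{1-\lambda/\Lambda}$, which is the same tool already used in the proofs of Lemmas \ref{spectrum:lower_bound} and \ref{spectrum:upper_bound}.

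For the first claim, I would take $f = \sum_{j:\lambda_j^{(h)} \leq \Lambda} c_j u_j^{(h)} \in \Sigma_\Lambda$ and use Cauchy--Schwarz in the expansion $f(x) = \sum_{j:\lambda_j^{(h)} \leq \Lambda} c_j u_j^{(h)}(x)$. Combined with orthonormality, which gives $\norm{f}_{L^2\p{\nu}}^2 = \sum_j c_j^2$, this yields
\[
\abs{f(x)}^2 \leq \norm{f}_{L^2\p{\nu}}^2 \sum_{j : \lambda_j^{(h)} \leq \Lambda} u_j^{(h)}(x)^2 \leq e \norm{f}_{L^2\p{\nu}}^2 \sum_{j=1}^N e^{-\lambda_j^{(h)}/\Lambda} u_j^{(h)}(x)^2 = e \norm{f}_{L^2\p{\nu}}^2\, p_{1/\Lambda}^{(h)}(x,x).
\]
The range $a_1^{-1} \ln^2 N \leq \Lambda \leq t_0^{-1}$ corresponds exactly to $t_0(h) \leq 1/\Lambda \leq a_1/\ln^2 N$, which is the regime where Theorem \ref{thm:hkb} applies and gives $p_{1/\Lambda}^{(h)}(x,x) \leq a_3 \Lambda^{d/2}$. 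Taking the supremum over $x \in V$ then yields the first inequality.

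For the second claim, the idea is to reduce to the first claim by choosing a suitable cutoff $\Lambda$. For $f \in \Sigma^J$ we have $f \in \Sigma_{\lambda_J^{(h)}}$, and by Lemma \ref{spectrum:upper_bound}, whenever $b_4 \leq J \leq b_5 t_0^{-d/2}/\ln^{3d/2} N$ we have $\lambda_J^{(h)} \leq b_6 J^{2/d} \ln^3 N$. Setting $\Lambda := b_6 J^{2/d} \ln^3 N$, the first part gives
\[
\norm{f}_{L^\infty\p{\nu}}^2 \leq e a_3 \Lambda^{d/2} \norm{f}_{L^2\p{\nu}}^2 = e a_3 b_6^{d/2} J \ln^{3d/2} N \,\norm{f}_{L^2\p{\nu}}^2,
\]
which is the desired bound.

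The only step that requires care is checking that this particular $\Lambda$ lies in the admissible range $[a_1^{-1} \ln^2 N, t_0^{-1}]$ required by the first part. The upper bound $\Lambda \leq t_0^{-1}$ follows from $J \leq b_5 t_0^{-d/2}/\ln^{3d/2} N$, which gives $\Lambda \leq b_6 b_5^{2/d}\, t_0^{-1}$; this is $\leq t_0^{-1}$ provided $b_5$ is chosen small enough that $b_6 b_5^{2/d} \leq 1$. The lower bound $\Lambda \geq a_1^{-1} \ln^2 N$ reduces to $b_6 b_4^{2/d} \ln N \geq a_1^{-1}$, which holds for $N$ large given any fixed positive $b_4, b_6$. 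This is really a matter of consistent calibration of the universal constants $b_4,b_5,b_6$ introduced in Lemma \ref{spectrum:upper_bound}, and is the main (minor) bookkeeping obstacle; no new analytic input beyond Theorem \ref{thm:hkb} and Lemma \ref{spectrum:upper_bound} is needed.
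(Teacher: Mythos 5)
Your proposal is correct and follows essentially the same strategy as the paper: both reduce the claim to bounding the spectral‐projector diagonal $\II_{[0,\Lambda]}(\LL)(x,x)=\sum_j \II_{[0,\Lambda]}(\lambda_j^{(h)})u_j^{(h)}(x)^2$ by $e\,p_{1/\Lambda}^{(h)}(x,x)$ via $\II_{\lambda\le\Lambda}\le e^{1-\lambda/\Lambda}$, and then invoke Theorem~\ref{thm:hkb}; the second part is the same reduction via Lemma~\ref{spectrum:upper_bound}. Your direct Cauchy--Schwarz on the coefficient expansion, giving $|f(x)|^2 \le \norm{f}_{L^2(\nu)}^2\,\II_{[0,\Lambda]}(\LL)(x,x)$, is in fact a tidier route than the paper's kernel-based chain (which passes through $\max_y \II_{[0,\Lambda]}(\LL)(x,y)^2$ and, as written, contains a small bookkeeping slip in the exponent that is most naturally repaired by using idempotence, $\sum_y \II_{[0,\Lambda]}(\LL)(x,y)^2\nu_y = \II_{[0,\Lambda]}(\LL)(x,x)$ --- precisely the quantity you land on directly). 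Your closing remark about calibrating $b_4,b_5,b_6$ so that $\Lambda = b_6 J^{2/d}\ln^3 N$ falls in $[a_1^{-1}\ln^2 N,\,t_0^{-1}]$ is a legitimate check the paper leaves implicit; tracing the constants from the proof of Lemma~\ref{spectrum:upper_bound} ($b_5=a_2/2$, $b_6=(2/a_2)^{2/d}$, $b_4 \asymp (a_1^{-1})^{d/2}$) shows $b_6 b_5^{2/d}=1$ and $b_6 b_4^{2/d}\gtrsim a_1^{-1}$, so both endpoint conditions hold as you require.
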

\begin{proof}
	Since $f \in \Sigma_\Lambda$ we have $\II_{[0,\Lambda]}\p{\LL}(f) = f$, therefore, with $\II_{[0,\Lambda]}\p{\LL}(x,y)$ the kernel of $\II_{[0,\Lambda]}\p{\LL}$ with respect to $\nu^{(h)}$ given by
	\[
	\II_{[0,\Lambda]}\p{\LL}(x,y) = \sum_{j=1}^N \II_{[0,\Lambda]}\p{\lambda_j^{(h)}} u_j^{(h)}(x) u_j^{(h)}(y).
	\]
	Then $\forall x\in V$, 
	\begin{align*}
 f(x)^2 & = \c{\II_{[0,\Lambda]}\p{\LL}(f)}(x)^2 
		 = \c{\sum_y \II_{[0,\Lambda]}\p{\LL}(x,y) f(y) \nu_y^{(h)}}^2 \\
		& \leq \c{\sum_y \II_{[0,\Lambda]}\p{\LL}(x,y)^2 \nu_y} \c{\sum_y f(y)^2 \nu_y^{(h)}} 
		 \leq \norm{f}_N^2 \max_y \II_{[0,\Lambda]}\p{\LL}(x,y)^2 \\
		& = \norm{f}_N^2 \max_y \c{\sum_{j=1}^N \II_{[0,\Lambda]}(\lambda_j) u_j^{(h)}(x) u_j^{(h)}(y)}^2 \\
		& \leq \norm{f}_N^2 \max_y \c{\sum_{j=1}^N \II_{[0,\Lambda]}(\lambda_j) u_j^{(h)}(x)^2} \c{\sum_{j=1}^N \II_{[0,\Lambda]}(\lambda_j) u_j^{(h)}(y)^2} \\
		& = \norm{f}_N^2 \max_y\II_{[0,\Lambda]}\p{\LL}(x,x) \II_{[0,\Lambda]}\p{\LL}(y,y)  \leq \norm{f}_N^2 \max_x \II_{[0,\Lambda]}\p{\LL}(x,x)^2 \\
		& = \norm{f}_N^2 \max_x \sum_{j=1}^N \II_{[0,\Lambda]}\p{\lambda_j} u_j^{(h)}(x)^2 \\
		 &\leq \norm{f}_N^2 \max_x \sum_{j=1}^N e^{1-\lambda_j/\Lambda} u_j^{(h)}(x)^2 
		 = \norm{f}_N^2 \max_x ep_{\Lambda^{-1}}^{(h)}(x,x)
	\end{align*}
	Using theorem \ref{thm:hkb} we thus find
	\[
	\forall a_1^{-1} \ln^2 N \leq \Lambda \leq t_0^{-1}, \quad \forall f \in \Sigma_\Lambda,\quad \norm{f}_{L^\infty\p{\nu}}^2 \leq ea_3 \Lambda^{d/2} \norm{f}_{L^2\p{\nu}}^2.
	\]
	Now if $b_4 \leq J \leq b_5 \frac{t_0^{-d/2}}{\ln^{3d/2} N}$, lemma \ref{spectrum:upper_bound} implies $f \in \Sigma_\Lambda, \Lambda = b_6 J^{2/d} \ln^3 N$, which concludes the proof.
\end{proof}
		
\section{Proofs of Lemmas \ref{lemma:regularity} and  \ref{lemma:concentration}}\label{appendix:proofs_approximation_results}
	
\subsection{Proof of Lemma \ref{lemma:regularity}} \label{proof:lemma:regularity}
	
We first bound $T_h f$ for $\beta\leq 1$, then we treat the case $1<\beta \leq 2$ followed by the case $\beta>2$. Recall that $h \leq h_0\wedge \pi r_0 /C_0$. 

\noindent 
\textbf{ Case $\beta\leq 1$} In this case the result is trivial since $\forall x \in \MM$, 
			\begin{align*}
			\abs{T_h f(x)}  &=  \abs{\frac{1}{h^2 P_0(B_{\RR^D}(x,h))} \int_{B_{\RR^D}(x,h) \cap \MM} \p{f(x)-f(y)} p_0(y)\mu(dy)} \\
			\leq & h^{-2} \sup_{y \in B_{\RR^D}(x,h) \cap \MM} \abs{f(x)-f(y)} 
			\leq h^{-2} \sup_{y \in B_{\RR^D}(x,h)} \sum_{i \in I} \abs{\chi_i f(x)-\chi_i f(y)} \\
			\leq & h^{-2} \max_{i \in I} \sup_{y \in B_{\RR^D}(x,h)} \abs{(\chi_i f \circ \phi_i^{-1}) (\phi_i (y)) - (\chi_i f \circ \phi_i^{-1}) (\phi_i (y))} \\
			\leq & h^{-2} \norm{f}_{\CC^\beta \p{\MM}} \max_{i \in I} \sup_{y \in B_{\RR^D}(x,h)} \norm{\phi_i(y)-\phi_i(x)}^\beta
			\end{align*}
			But since $h \leq h_0$, for any $y \in B_{\RR^D}(x,h) \cap \MM$ we have $\rho(x,y) \leq C_0 \norm{x-y} \leq C_0h$. 
			Hence since the functions $\phi_i$ are Lipschitz
			\[
			\abs{T_hf(x)} \leq h^{-2} \norm{f}_{\CC^\beta \p{\MM}} \max_{i \in I} \sup_{y \in B_\rho(x,h)} \norm{\phi_i(y)-\phi_i(x)}^\beta \lesssim \norm{f}_{\CC^\beta \p{\MM}} h^{\beta-2}
			\]
			
\noindent 
\textbf{ Case $1<\beta\leq 2$} Recall that $C_0$ is such that $\rho(x,y) \leq C_0 \norm{x-y}$ for any $x,y \in \MM$. Recall that $f_0 \in \CC^\beta \p{\MM}$, so that using a  Taylor expansion of $f_x = f \circ \phi_x, $  with $\phi_x = \exp_x \p{\psi(x,\cdot)}$ at some $x \in V$ and  for any $\norm{x-y}<h$ we have $\rho(x,y) \leq C_0 \norm{x-y} \leq C_0 h$ and therefore
			\[
			f_0(y) = f(x) + df_x(0).\omega + R(x,y)
			\]
			where $\exp_x \p{\psi(x,\omega)} = y$ and
			\[
			\max_{x \neq y \in \MM} \frac{\abs{R (x,y)}}{\rho(x,y)^{\beta}} \lesssim \norm{f}_{\CC^{\beta}\p{\MM}}.
			\]
			This, together with the fact that when $x\sim y$ then $\norm{x-y}<h$,  implies that 
			\begin{align*}
				 \left| T_hf(x)  - \frac{1}{h^2 P_0(B_{\RR^D}(x,h))} \int_{\exp_x \circ \phi_x^{-1}B_{\RR^D}(x,h)} df_x(0).\omega p_x(\omega) J_x(\omega) d\omega \right| \lesssim   \frac{\norm{f}_{\CC^{\beta}} h^\beta }{h^2 }.
			\end{align*}
			where $p_x = p_0 \circ \phi_x, J_x = \abs{g(\exp_x(\psi(x,\omega)))}^{1/2}$. It remains to bound
			\[
			\frac{1}{h^2 P_0(B_{\RR^D}(x,h))} \int_{\exp_x \circ \phi_x^{-1} B_{\RR^D}(x,h) \cap \MM} df_x(0).\omega p_x(\omega) J_x(\omega) d\omega
			\]
			For this notice that since $p_0 \in \CC^{\beta-1} \p{\MM}$ and $J_x(\omega) = 1 + \mathcal{O}(\norm{\omega}^2)$ we have
			\[
			p_x\p{\omega}J(\omega) = \p{p_0(x) + \mathcal{O}\p{h^{\beta-1}}}\p{1 + \mathcal{O}\p{h^2}} = p_0(x) + \mathcal{O}\p{h^{\beta-1}}
			\]
			Therefore
			\begin{align*}
				\frac{\int_{\exp_x \circ \phi_x^{-1} B_{\RR^D}(x,h)} df_x(0).\omega p_x(\omega) J_x(\omega) d\omega}{P_0(B_{\RR^D}(x,h))} = & \frac{\int_{\exp_x \circ \phi_x^{-1} \p{B_{\RR^D}(x,h)}} df_x(0).\omega \p{p_0(x) + \mathcal{O}\p{h^{\beta-1}}} d\omega}{P_0(B_{\RR^D}(x,h))} \\
				= & p_0(x) \frac{\int_{\exp_x \circ \phi_x^{-1}\p{B_{\RR^D}(x,h)}} df_x(0).\omega d\omega}{P_0(B_{\RR^D}(x,h))} + \mathcal{O}\p{\norm{f}_{\CC^\beta\p{\MM}} h^\beta}
			\end{align*}
			Note that since  $\exp_x \circ \phi_x^{-1} \p{B_\rho(x,h)} = \b{\omega \in T_x\MM : \norm{\omega} < h}$ is a symmetric set, we get
			\[
			\int_{\exp_x \circ \phi_x^{-1}\p{B_\rho(x,h)}} df_x(0).\omega d\omega = 0,
			\]			
			therefore to control the first term of the right hand side of the above term, it is enough to show that $B_{\RR^D}(x,h)$ is close to $B_\rho(x,h)$. 
			
			Since $h \leq \frac{\pi a_\MM}{C_0}$,  for all $x,y \in \MM, \norm{x-y} < h$ we have $\rho(x,y) \leq C_0 \norm{x-y} \leq C_0 h \leq \pi a_\MM$. Hence, using \eqref{eq:rho:norm}, $\rho(x,y) - \frac{\rho(x,y)^3}{24a_\MM^2} \leq \norm{x-y} \leq \rho(x,y)$ . Therefore if $y \in \MM, \norm{x-y}<h$ but $\rho(x,y) \geq h$ then $\rho(x,y) \leq \norm{x-y} + \frac{\rho(x,y)^3}{24a_\MM^2} \leq h + \frac{C_0^3 }{24a_\MM^2}h^3$. This implies 
			\begin{align*}
				vol &\p{\b{\omega : \exp_x \circ \phi_x(\omega) \in B_\rho(x,h)^c \cap B_{\RR^D}(x,h)}} 
				\leq  vol \p{\b{\omega : h \leq \norm{\omega} < h + \frac{C_0^3 }{24a_\MM^2} h^3}} \\
				&=  vol \p{\b{\omega : \norm{\omega} \leq 1}} \p{\p{h+ \frac{C_0^3 }{24a_\MM^2} h^3}^d - h^d} \lesssim  h^d \p{\p{1 + \frac{C_0^3 }{24a_\MM^2} h^2}^d - 1} \\
				&\leq  C_\MM h^{d+2}
			\end{align*}
			where the constant $C_\MM$ depends on $\MM$ through $a_\MM, h_0, C_0$ only. Therefore, 
			\begin{align*}
			  \int_{\exp_x \circ \phi_x^{-1}\p{B_{\RR^D}(x,h)}} df_x(0).\omega d\omega 
				&=  \int_{\exp_x \circ \phi_x^{-1}\p{B_\rho(x,h)}} df_x(0).\omega d\omega +  \int_{\exp_x^{-1}\p{B_{\RR^D}(x,h) \cap B_\rho(x,h)^c}} df_x(0).\omega d\omega \\
				&=  \mathcal{O}\p{\norm{f}_{\CC^\beta \p{\MM}} h^{d+3}} \leq \mathcal{O}\p{\norm{f}_{\CC^\beta \p{\MM}} h^{d+\beta}}
			\end{align*}
			where we have used that $\beta \leq 2 \leq 3$. Finally 
		since $P_0(B(x,h)) \gtrsim h^d$ we obtain 
			\[
			\frac{1}{h^2 P_0(B(x,h))} \int_{\exp_x \circ \phi_x^{-1} B_{\RR^D}(x,h)} df_x(0).\omega p_x(\omega) J_x(\omega)d\omega = \mathcal{O}\p{\norm{f}_{\CC^\beta\p{\MM}}h^{\beta-2}}, 
			\]
			which in turns implies that 
			\[
			\norm{T_hf_0}_{L^\infty \p{\MM}} = \mathcal{O}\p{\norm{f}_{\CC^\beta\p{\MM}}h^{\beta-2}}.
			\]

\noindent 
\textbf{ Case $\beta > 2$}. Let $k = \lceil \beta/2 \rceil-1$, then $k \geq 1$. 
Since $f \in \CC^\beta \p{\MM}$ and $ p_0 \in \CC^{\beta-1}\p{\MM}$, then for any $i \in I, x \in \UU_i, y \in B_\rho \p{x,r_0}$ (with $I,\p{\UU_i}_{i\in I},\p{\psi_i}_{i\in I},\p{\varphi_i}_{i\in I}$ the objects defined in section \ref{sec:regularity_M_holder_spaces})
			\[
			f_i(y) = f_i(x) + \sum_{l=1}^k \b{\frac{d^{2l-1} f_x(0).\exp_x^{-1}(y)^{2l-1} }{ (2l-1)!} + \frac{d^{2l} f_x(0).\exp_x^{-1}(y)^{2l}}{ (2l)!}  }+ R_{k,1}(x,y)
			\]
			\[
			p_{0,i}(y) = p_{0,i}(x) + \sum_{l=1}^{k-1} \b{ \frac{d^{2l-1} p_x(0).\exp_x^{-1}(y)^{2l-1}  }{ (2l-1)!}+ \frac{d^{2l} p_x(0).\exp_x^{-1}(y)^{2l} }{ (2l)!}} + \frac{ d^{2k-1}p_x(0).\exp_x^{-1}(y)^{2k-1} }{ (2k-1)!}+R_{k,2}(x,y)
			\]
			\[
			\sqrt{\abs{g}}_i (y) = \sqrt{\abs{g}}_i(x) + \sum_{l=1}^{k-1} \b{\frac{d^{2l-1} J_x(0).\exp_x^{-1}(y)^{2l-1}  }{ (2l-1)!}+ \frac{d^{2l} J_x(0).\exp_x^{-1}(y)^{2l} }{ (2l)!}} + \frac{d^{2k-1}p_x(0).\exp_x^{-1}(y)^{2k-1}}{ (2k-1)!} + R_{k,3}(x,y)
			\]
			where
			\begin{align*}
			f_i &= \chi_i f, p_{0,i} = \chi_i p_0, \quad f_x = \p{\chi_i f} \circ \exp_x \circ \psi_i(x,\cdot) \\
		p_x &= \p{\chi_i p_0} \circ \exp_x \circ \psi_i(x,\cdot), \quad  J_x = \p{\chi_i\sqrt{\abs{g}}} \circ \exp_x \circ \psi_i(x,\cdot)
			\end{align*}
			(we drop the index $i$ for convenience), and the remainders satisfy
			\[
			\abs{R_{k,1}(x,y)} \leq \norm{f}_{\CC^\beta \p{\MM}} \rho(x,y)^\beta, \quad \abs{R_{k,2}(x,y)} \leq \norm{p_0}_{\CC^{\beta-1} \p{\MM}} \rho(x,y)^{\beta-1}, \quad \abs{R_{k,3}(x,y)} \leq C_\MM \rho(x,y)^{\beta-1},
			\]
			wheere $C_\MM$ depends on the H\"older constant of the exponential map. 
Thus, by Riemannian change of variables, there exist  coefficients
			\[
			A_{i,m} \in \CC^{\beta-m}\p{\UU_i,\LL\p{\p{\RR^d}^{\otimes m},\RR}}
			\]
			and remainders
			\[
			\abs{B_{i,k}(x,\omega)} \leq C(\beta, \MM, p_0)\norm{f}_{\CC^\beta\p{\MM}} \norm{\omega}^\beta
			\]
			such that 
			\begin{align*}
				& \int_{B_{\RR^D}(x,h)} \p{f_0(x)-f_0(y)}p_0(y) \mu(dy) \\
				= & \int_{\psi_i(x,\cdot)^{-1} \circ \exp_x^{-1}\p{B_{\RR^D}(x,h)}} \p{f_x(0)-f_x(\omega)} p_x(\omega) J_x \p{\omega} d\omega \\
				= & \sum_{i \in I} \int_{\psi_i(x,\cdot)^{-1} \circ \exp_x^{-1}\p{B_{\RR^D}(x,h)}} \b{\sum_{m=1}^{2k} A_{i,m}(x)\omega^m + B_{i,k}(x,\omega)} d\omega \\
				= & \sum_{i \in I} \b{\sum_{m=1}^{2k} A_{i,m}(x) \int_{\psi_i(x,\cdot)^{-1} \circ \exp_x^{-1}\p{B_{\RR^D}(x,h)}} \omega^m d\omega + \int_{\psi_i(x,\cdot)^{-1} \circ \exp_x^{-1}\p{B_{\RR^D}(x,h)}} B_{i,k}(x,\omega) d\omega} \\
				= & \sum_{i \in I} \sum_{m=1}^{2k} A_{i,m}(x) \int_{\psi_i(x,\cdot)^{-1} \circ \exp_x^{-1}\p{B_{\RR^D}(x,h)}} \omega^m d\omega + \mathcal{O}\p{\norm{f}_{\CC^\beta\p{\MM}}h^{d+\beta}}.
			\end{align*}
			We thus get
			\[
			T_hf(x) = \sum_{i \in I} \sum_{m=1}^{2k} A_{i,m}(x) \frac{h^{-(d+2)}\int_{\psi_i(x,\cdot)^{-1} \circ \exp_x^{-1}\p{B_{\RR^D}(x,h)}} \omega^m d\omega}{h^{-d} P_0\p{B_{\RR^D}\p{x,h}}} + \mathcal{O}\p{\norm{f}_{\CC^\beta\p{\MM}} h^{\beta-2}}
			\]
			By lemma \ref{lemma:geometry1} we have, with $s_h(x,v) := \frac{t_h(x,v)-1}{h}$
			\[
			\psi_i(x,\cdot)^{-1} \circ \exp_x^{-1}\p{B_{\RR^D}(x,h)} = \b{thv : v \in \RR^d, \norm{v} = 1, 0 \leq t \leq 1+ h^2 s_h(x,v)}
			\]
			And using the  spherical coordinates, 
			\begin{align*}
				& h^{-(d+2)}\int_{\psi_i(x,\cdot)^{-1} \circ \exp_x^{-1}\p{B_{\RR^D}(x,h)}} \omega^m d\omega \\
				= & h^{-(d+2)}\int_0^\pi \ldots \int_0^\pi \int_0^{2\pi} \int_0^{h + h^3 s_h(x,v)} \p{re_\theta}^m r^{d-1}\sin\p{\theta_1}^{d-2} \sin\p{\theta_2}^{d-3}\ldots \sin\p{\theta_{d-1}}dr d\theta_1\ldots d\theta_{d-1} \\
				= & h^{-(d+2)}\int_0^\pi \ldots \int_0^\pi \int_0^{2\pi} \int_0^{1+h^2 s_h(x,v)} \p{the_\theta}^m (th)^{d-1}\sin\p{\theta_1}^{d-2} \sin\p{\theta_2}^{d-3}\ldots \sin\p{\theta_{d-1}}hdt d\theta_1\ldots d\theta_{d-1} \\
				= & h^{m-2} \int_0^\pi \ldots \int_0^\pi \int_0^{2\pi} \int_0^{1+h^2 s_h(x,v)} \p{te_\theta}^m t^{d-1}\sin\p{\theta_1}^{d-2} \sin\p{\theta_2}^{d-3}\ldots \sin\p{\theta_{d-1}}dt d\theta_1\ldots d\theta_{d-1} \\
				= & h^{m-2} \int_{\mathbb{A}^d} \int_0^{1+h^2 s_h(x,v)}\p{te_\theta}^m t^{d-1} K(\theta)dtd\theta \\
				= & \frac{h^{m-2}}{m+d} \int_{\mathbb{A}^d} e_\theta^m \p{1+h^2 s_h(x,e_\theta)}^{m+d} K(\theta)d\theta
			\end{align*}
			where $\mathbb{A}^d = (0,\pi)^{d-2} \times (0,2\pi)$ and
			\[
			e_\theta = \c{\begin{array}{c}
					\cos (\theta_1) \\
					\sin (\theta_1)\cos (\theta_2) \\
					\sin (\theta_1) \sin (\theta_2) \cos (\theta_3) \\
					\vdots \\
					\sin (\theta_1) \sin (\theta_2) \ldots \sin (\theta_{d-2}) \cos (\theta_{d-1}) \\
					\sin (\theta_1) \sin (\theta_2) \ldots \sin (\theta_{d-2}) \sin (\theta_{d-1})
			\end{array}}, K(\theta) = \sin\p{\theta_1}^{d-2} \sin\p{\theta_2}^{d-3}\ldots \sin\p{\theta_{d-1}}
			\]
			Moreover if $m$ is odd, by symmetry
			\[
			h^{-(d+2)}\int_{\psi_i(x,\cdot)^{-1} \circ \exp_x^{-1}\p{B_{\RR^D}(x,h)}} \omega^m d\omega = h^{-(d+2)}\int_{\psi_i(x,\cdot)^{-1} \circ \exp_x^{-1}\p{B_{\RR^D}(x,h)\backslash B_\rho(0,h)} } \omega^m d\omega
			\]
			and therefore the same computations show that
			\begin{align*}
				h^{-(d+2)}\int_{\psi_i(x,\cdot)^{-1} \circ \exp_x^{-1}\p{B_{\RR^D}(x,h)}} \omega^m d\omega = & \frac{h^{m-2}}{m+d} \int_{\mathbb{A}^d} e_\theta^m \p{\p{1+h^2 s_h(x,e_\theta)}^{m+d}-1} K(\theta)d\theta \\
				= & \frac{h^{m-2}}{m+d} \int_{\mathbb{A}^d} e_\theta^m \p{\sum_{r=1}^{m+d} \binom{m+d}{r} h^{2r} s_h(x,e_\theta)^r} K(\theta)d\theta \\
				= & \frac{h^m}{m+d} \sum_{r=1}^{m+d} \binom{m+d}{r} h^{2r-2} \int_{\mathbb{A}^d} e_\theta^m s_h(x,e_\theta)^r K(\theta)d\theta
			\end{align*}
			All in all for each $l \in \b{1,\ldots,k}, m = 2l-1$ or $2l$ we have
			\begin{align*}
				h^{-(d+2)}\int_{\psi_i(x,\cdot)^{-1} \circ \exp_x^{-1}\p{B_{\RR^D}(x,h)}} \omega^m d\omega = & \frac{h^{2(l-1)}}{m+d} \sum_{r=1}^{m+d} \binom{m+d}{r} h^{2r-1} \int_{\mathbb{A}^d} e_\theta^m s_h(x,e_\theta)^r K(\theta)d\theta \text{ if } m = 2l-1\\				
				h^{-(d+2)}\int_{\psi_i(x,\cdot)^{-1} \circ \exp_x^{-1}\p{B_{\RR^D}(x,h)}} \omega^m d\omega = & h^{2(l-1)} \int_{\mathbb{A}^d} e_\theta^m \p{1+h^2s_h(x,e_\theta)}^{m+d} K(\theta)d\theta \text{ if } m = 2l
			\end{align*}
			This gives
			\[
			T_hf(x) = \frac{1}{h^{-d} P_0\p{B \p{x,h}}} \sum_{i \in I} \sum_{l=1}^k h^{2(l-1)} \b{ A_{i,2l-1}(x) V_{i,h,2l-1}(x) + A_{i,2l}(x)V_{i,h,2l}(x) } + \mathcal{O}\p{\norm{f}_{\CC^\beta\p{\MM}} h^{\beta-2}}
			\]
			where by lemma \ref{lemma:geometry1}, composition and integration we have
			\[
			\max_{\alpha \in \mathbb{N}^d, \abs{\alpha} \leq \alpha - 5} \sup_{\substack{i \in I \\ y \in \varphi_i^{-1}\p{\UU_i} \\ 0 < h < h_+ \\ \norm{v}_{T_{\varphi_i(y)}\MM} = 1}} \norm{\frac{\partial^{\abs{\alpha}}}{\partial x^\alpha} V_{i,h,m}(\varphi_i(y))}_{\p{\RR^d}^{\otimes m}} < +\infty
			\] 
			To conclude it suffices to show that $x \mapsto h^{-d}P_0(B_{\RR^D}(x,h))$ is lower bounded and satisfies
			\[
			\sup_{0 < h < h_+} \norm{h^{-d}P_0(B_{\RR^D}(\cdot,h))}_{\CC^{\beta-1}\p{\MM}} < +\infty
			\]
			Since $p_0$ is lower bounded and $\mu \p{B_{\RR^D}(x,h)} \asymp h^d$ (as can be seen by the Riemannian change of variables formula), we get
			\[
			h^{-d} \int_{B_{\RR^D}(x,h)} p_0(y)\mu(dy) \geq \inf_{\MM}p_0 \times h^{-d} \mu\p{B_{\RR^D}(x,h)} \gtrsim \inf_{\MM}p_0
			\]
			hence $x \mapsto h^{-d} P_0 \p{B_{\RR^D}(x,h)}$ is indeed lower bounded. For the derivatives, doing the same computations as above
			\begin{align*}
				h^{-d}P_0\p{B_{\RR^D}(x,h)} = & \sum_{i \in I} h^{-d} \int_{B_{\RR^D}(x,h)} p_0(y) \mu(dy) \\
				= & h^{-d} \int_{\varphi_i^{-1} \circ \exp_x^{-1}\p{B_{\RR^D}(x,h)}} \p{\chi_i p_0} \circ \exp_x\p{\psi_i(x,\omega)} \sqrt{\abs{g(\exp_x(\psi_i(x,\omega)))}} d\omega \\
				= & h^{-d} \int_{\varphi_i^{-1} \circ \exp_x^{-1}\p{B_{\RR^D}(x,h)}} p_x(\omega) J_x(\omega) d\omega \\
				= & h^{-d} \int_{\mathbb{A}^d} \int_0^{1+h^2 s_h(x,v)} p_x(the_\theta) J_x(the_\theta) \p{th}^{d-1} K(\theta) hdt d\theta \\
				= & \int_{\mathbb{A}^d} \int_0^{1+h^2 s_h(x,v)} p_x(the_\theta) J_x(the_\theta) t^{d-1} K(\theta) dt d\theta
			\end{align*}
			And again, under this form, by composition and integration, $x \mapsto h^{-d} P_0(B_{\RR^D}(x,h))$ has indeed the regularity of $p_0$, i.e is $\CC^{\beta-1}\p{\MM}$ with uniform bound on the derivatives as $h \to 0$.
			
			This concludes the proof by setting
			\[
			g_h^{(l)} = \frac{1}{h^{-d}P_0\p{B_{\RR^D}(x,h)}} \sum_{i \in I} h A_{i,2l-1}(x) V_{i,h,2l-1}(x) + A_{i,2l}(x)V_{i,h,2l}(x)
			\]
			which is $\CC^{\beta-2l}$ since by assumption $\alpha \geq \beta+3$ which implies $\alpha - 5 \geq \beta-2 \geq \beta - 2l$ for any $l \in \b{1,\ldots,k}$.

	\subsubsection{Proof of Lemma \ref{lemma:concentration}}\label{proof:lemma:concentration}

		First if $\beta \leq 1$,  $\norm{T_{h_n} f}_{L^\infty \p{\MM}} \lesssim h_n^{\beta-2}$ follows from lemma \ref{lemma:regularity}. For $\norm{\LL f}_{L^\infty \p{\nu}}$ notice that for each $x \in V$ we have
			\[
			\abs{\p{\LL f}(x)} = \frac{1}{h_n^2 \mu_x} \abs{ \sum_{i=1}^N \p{f(x)-f(x_i)} \II_{x_i \sim x} } \lesssim \norm{f}_{\CC^\beta \p{\MM}} h_n^{\beta-2} .
			\]

Now, if $\beta>1$, 
			\[
			\forall x \in V, h_n^2 \mu_x \LL f (x) = \sum_{i=1}^N \p{f(x)-f(x_i)} \II_{x_i \sim x}
			\]
			Therefore by Bernstein's inequality (see e.g lemma 2.2.9 in \cite{noauthor_weak_nodate}), using $\abs{f(x) - f(x_i)} \II_{x \sim x_i} \lesssim \norm{f}_{\CC^\beta \p{\MM}} h_n$,
			\[
			\Pro_0 \p{\abs{h_n^2 \mu_x \LL f (x) - Nh_n^2 P_0(B_{\RR^D}(x,h_n)) T_{h_n}f(x)} > u} \leq \exp \p{-c \frac{u^2}{Nh_n^{d+2} + h_n u}}
			\]
			and
			\begin{align*}
				\Pro_0 \p{\abs{\mu_x - NP_0(B_{\RR^D}(x,h_n))} > u/h_n^2} \leq & \exp \p{-c \frac{u^2/h_n^4}{Nh_n^d + u/h_n^2}} \\
				\leq & \exp \p{-c \frac{u^2}{Nh_n^{d+4} + uh_n^2}}.
			\end{align*}
			Hence
			\begin{align*}
				 \Pro_0 &\p{\abs{h_n^2 \mu_x \LL f (x) - h_n^2 \mu_x T_{h_n} f (x)} > 2u} 
				\leq  \Pro_0 \p{\abs{h_n^2 \mu_x \LL f (x) - Nh_n^2 P_0(B_{\RR^D}(x,h_n)) T_{h_n}f(x)} > u} \\
				&+\qquad  \Pro_0 \p{\abs{Nh_n^2 P_0(B_{\RR^D}(x,h_n)) T_{h_n}f(x) - h_n^2 \mu_x T_{h_n}f(x)}  > u} \\
				&\leq  \exp \p{-c \frac{u^2}{Nh_n^{d+2} + h_nu}} + \Pro_0 \p{\abs{N P_0(B_{\RR^D}(x,h_n)) - \mu_x}  > u/h_n^2\norm{f}_{\CC^\beta\p{\MM}}}
				\\
				&\leq  \exp \p{-c \frac{u^2}{Nh_n^{d+2} + h_nu}} + \exp \p{-c \frac{u^2}{Nh_n^{d+4} + uh_n^2}}
			\end{align*}
			Therefore, using the fact that $\mu_x \lesssim Nh_n^d$ on $V_N(c)$
			\begin{align*}
				\Pro_0 \p{\abs{\LL f (x) - T_{h_n} f (x)} > u} \leq & \Pro_0 \p{V_N(c)^c} + \Pro_0 \p{\abs{h_n^2 \mu_x \LL f (x) - h_n^2 \mu_x T_{h_n} f (x)} > cNh_n^{d+2} u} \\
				\leq & \Pro_0 \p{V_N(c)^c} + \exp \p{-c \frac{\p{Nh_n^{d+2}u}^2}{Nh_n^{d+2} + Nh_n^{d+3}u}} + \exp \p{-c \frac{\p{Nh_n^{d+2}u}^2}{Nh_n^{d+4} + Nh_n^{d+4}u}} \\
				= & \Pro_0 \p{V_N(c)^c} + \exp \p{-c \frac{N^2 h_n^{2d+4} u^2}{Nh_n^{d+2} + Nh_n^{d+3}u}} + \exp \p{-c \frac{N^2 h_n^{2d+4 }u^2}{Nh_n^{d+4} + Nh_n^{d+4}u}} \\
				= & \Pro_0 \p{V_N(c)^c} + \exp \p{-c \frac{N h_n^{d+2} u^2}{1 + h_nu}} + \exp \p{-c \frac{N h_n^du^2}{1 +u}}
			\end{align*}
			For any $0 < h_n \leq 1, u > 0$ we have $\frac{h_n^2}{1+h_nu} \leq \frac{1}{1+u}$, therefore
			\[
			\Pro_0 \p{\abs{\LL f (x) - T_{h_n} f (x)} > u} \leq \Pro_0 \p{V_N(c)^c} +2 \exp \p{-c \frac{N h_n^{d+2} u^2}{1 + h_nu}}
			\]
			By a union bound this gives
			\[
			\Pro_0 \p{\norm{\LL f - T_{h_n} f}_{L^\infty \p{\nu}} > u} \leq N\Pro_0 \p{V_N(c)^c} + 2N\exp \p{-c \frac{N h_n^{d+2} u^2}{1 + h_nu}}
			\]
			We have also $\Pro_0 \p{V_N(c)^c} \leq e^{-c Nh_n^d}$, so that for any $H>0$, taking $u = M_0 \p{\frac{\ln N}{N}}^{\frac{1}{2}} h_n^{-(1+d/2)}$ with $M_0$ large enough (that depends on $H$) we obtain $h_n u = o(1)$ by assumption \ref{assumption:h} and 
			\[
			\Pro_0 \p{\norm{\LL f - T_{h_n} f}_{L^\infty \p{\nu}} > M_0 \p{\frac{\ln N}{N}}^{\frac{1}{2}} h_n^{-(1+d/2)}} \leq N^{-H}
			\]

	\subsection{Proof of Theorem \ref{corollary:pcr_le}} \label{pr:PCR_LE}
	\begin{proof}
	Recall that $\hat{f} = \sum_{j=1}^J \inner{u_j | Y}_{L^2\p{\nu}} u_j$ so that 
	$$\hat f - f_0 =   \sum_{j=1}^J \inner{u_j | f_0 }_{L^2\p{\nu}} u_j - f_0 +   \sum_{j=1}^J \inner{u_j | \underline \varepsilon }_{L^2\p{\nu}} u_j, \quad \underline \varepsilon \sim \mathcal N( 0, \sigma^2 I_n)$$
	where $I_n$ is the identity matrix in $\mathbb R^n$. 
	Therefore 
	$$\| \hat f - f_0\|_{L^2(\nu)} \leq   \| \sum_{j=1}^J \inner{u_j | f_0 }_{L^2\p{\nu}} u_j - f_0\|_{L^2(\nu)}  + \|  \sum_{j=1}^J \inner{u_j | \underline \varepsilon }_{L^2\p{\nu}} u_j\|_{L^2(\nu)} .$$
	We have by orthonormality of $(u_1, \cdots, u_{J_n})$
	$$ \norm{  \sum_{j=1}^J \inner{u_j | \underline \varepsilon }_{L^2\p{\nu}} u_j}^2_{L^2(\nu)} = \sum_{j=1}^{J_n} \inner{u_j|\underline \varepsilon}_{L^2\p{\nu}}^2$$
	and also that 
		\begin{align*}
		\EE_0 \c{ \left. \inner{u_j|\varepsilon}_{L^2\p{\nu}}^2 \right| x_{1:n}} = & \EE_0 \c{\left. \p{u_j^T Diag\p{\nu} \underline \varepsilon}^2 \right| x_{1:n}} 
		=\sigma^2  u_j^T Diag\p{\nu^2} u_j		=  \sigma^2 \sum_{i=1}^n u_j(x_i)^2 \nu_{x_i}^2
		\end{align*}
		On $A_N$ defined in lemma \ref{thm:volume_regularity}, with  $N=n$ and $ h_- = h_n$  which satisfies the assumption \ref{assumption:h} we have $\nu_{x_i} \lesssim \frac{1}{n}$ so that 
		\[
		\EE_0 \c{\left.  \inner{u_j\right|\underline\varepsilon}_{L^2\p{\nu}}^2 | x_{1:n}} \lesssim \frac{1}{n} \norm{u_j}_{L^2\p{\nu}}^2 = \frac{1}{n}.
		\]
This implies that 
		\[
		\sum_{i=1}^{J_n} \inner{u_j|\underline \varepsilon}_{L^2\p{\nu}}^2 \lesssim \frac{1}{n} \sum_{j=1}^{J_n} Z_j(x_{1:n})^2, \quad \text{where } \quad Z_j(x_{1:n}) = \frac{\inner{u_j|\underline \varepsilon}_{L^2\p{\nu}}}{\EE_0 \c{\left. \inner{u_j|\varepsilon}_{L^2\p{\nu}}^2\right|x_{1:n}}^{1/2}}
		\]
	In particular conditionally on $x_{1:n}$, $Z_j(x_{1:n}) \iid \NN \p{0, 1}$. Hence as in \cite{green_minimax_2021} appendix C1, by a result of \cite{laurent_adaptive_2000} we get for some $c>0$
		\begin{equation}\label{bound:sto}
		\Pro_0 \p{ \norm{\sum_{j=1}^{J_n} \inner{u_j|\varepsilon}_{L^2\p{\nu}}u_j}_{L^2\p{\nu}}^2 > c\frac{J_n}{n} } \leq \Pro_0\p{A_N^c} + \EE_0\left[ \II_{A_N}\Pro_0 \p{\left. \norm{\sum_{j=1}^{J_n} \inner{u_j|\underline\varepsilon}_{L^2\p{\nu}}u_j}_{L^2\p{\nu}}^2 > c\frac{J_n}{n}\right|x_{1:n}}\right] 
		\leq \Pro_0\p{A_N^c} + \exp \p{-J_n}
		\end{equation}
Moreover 
$$\norm{f_{0J_n}-f_0}_{L^2\p{\nu}}^2\leq \norm{p_{J_n} \p{e^{-t\LL}f_t}-f_0}_{L^2\p{\nu}}^2$$
where $p_{J_n} \p{e^{-t\LL}f_t}$ is defined in Theorem \ref{thm:approximation2}. Therefore, using Theorem \ref{thm:approximation2}, we deduce that choosing $t$ accordingly, 
$$\norm{f_{0J_n}-f_0}_{L^2\p{\nu}}^2\leq \norm{p_{J_n} \p{e^{-t\LL}f_t}-f_0}_{L^2\p{\nu}}^2 \leq \tilde \varepsilon_n(J_n)$$
Finally combining this with \eqref{bound:sto}, we obtain that  some $C>0$
		\[
		\Pro_0 \p{ \norm{\hat{f}-f_0}_{L^2\p{\nu}} > C\varepsilon_n } \to 0
		\]
		where
		\[
		\varepsilon_n = 
			\sqrt{\frac{J_n}{n}} +(\ln N)^{\lceil \beta/2 \rceil} \max \left(1, \p{\frac{J_n^{-2/d} \ln N}{h_n^2}}\right)^{\lceil \beta/2 \rceil} \p{ h_n^\beta + \II_{\beta > 1} \p{\frac{\ln N}{Nh_n^d}}^{1/2} h_n } 
		\]
		In order to get a result in expectation, simply notice that
		\[
		\EE_0 \c{\norm{\hat{f}-f_0}_n^q} \leq c \varepsilon_n^q + \EE_0 \c{\norm{\hat{f}-f_0}_n^q \II_{\norm{\hat{f}-f_0}_n > c\varepsilon_n}}
		\]
		Since by definition
		\[
		\norm{\hat{f}-f_0}_n = \min_{f \in span \p{u_1,\ldots,u_J}} \norm{f-f_0}_n \underbrace{\leq}_{f=0} \norm{f_0}_n \leq \norm{f_0}_{L^\infty\p{\MM}}
		\]
		This implies
		\[
		\EE_0 \c{\norm{\hat{f}-f_0}_n^q} \leq c \varepsilon_n^q + \norm{f_0}_{L^\infty \p{\MM}}^q\Pro_0 \c{\norm{\hat{f}-f_0}_n > c\varepsilon_n}
		\]
		By the first part of the proof, for any $H>0$ we have
		\[
		\Pro_0 \c{\norm{\hat{f}-f_0}_n > c\varepsilon_n} \lesssim n^{-H} + e^{-J_n} + \Pro_0 \p{A_N^c}
		\]
		Since $h_n$ satisfies assumption \ref{assumption:h} and $J_n \geq \ln^\kappa n, \kappa > d \geq 1$, for $H>0$ large enough we get
		\[
		\Pro_0 \c{\norm{\hat{f}-f_0}_n > c\varepsilon_n} \lesssim \varepsilon_n^q
		\]
		which concludes the proof.
	\end{proof}
	
	\section{Proof of section \ref{subsection:posterior_contraction_rates}}\label{appendix:proof_posterior_contraction_rates}
	We first recall an adaptation to our setting of the general prior mass and testing approach of \cite{ghosal_convergence_2000,ghosal:vdv:07} that we use to prove posterior contraction rates with respect to the empirical $L^2$ distance $\norm{\cdot}_n$. Note that we apply the approach on a high probability event $A_N$ (which does not change the result), and since in the Gaussian regression setting the Kullback-Leibler divergence and variation are both proportional to the (squared) empirical $L^2$ norm $\|\cdot \|_n^2$, the prior mass condition on the Kullback-Leibler neighbourhood is merely a condition on the prior mass of the $\|\cdot\|_n-$balls centered at $f_0$ (see \cite{ghosal_fundamentals_2017} lemma 2.7, as well as chapter 8 for more details on the prior mass and testing approach).
	\begin{theo}{Contraction rate theorem}\label{theorem:master_theorem}\\
		Assume that $\varepsilon_n \to 0, n\varepsilon_n^2 \to \infty$ and $A_N$ be an event of the form $A_N = \b{x_{1:N} \in \Omega_N}$ for some $\Omega \subset \p{\RR^D}^N$ satisfying $\Pro_0\p{A_N} \to 1$. If there exists fixed constants $C_1>1+\frac{1}{2\sigma^2}, C_2 > 4 \sqrt{1 + 2\sigma^2}, C_3 > 0$ and $\FF_n \subset \RR^V$ such that, on $A_N$
		\begin{itemize}
			\item $\Pi \c{\norm{f-f_0}_n \leq \varepsilon_n} \geq \exp \p{-n\varepsilon_n^2}$
			\item $\Pi \c{\FF_n^c} \leq e^{-C_1n\varepsilon_n^2}$
			\item $\ln N \p{\frac{C_2\varepsilon_n}{2},\FF_n, \norm{\cdot}_n} \leq C_3n\varepsilon_n^2$
		\end{itemize}
		where $N \p{\varepsilon,\FF,\|\cdot\|_n}$ denotes the covering number of the set $\FF$ at scale $\varepsilon$ with respect to the $\|\cdot\|_n-$metric, then for $M>0$ large enough
		\[
		\EE_0 \c{ \Pi \c{\norm{f-f_0}_n > M \varepsilon_n | \mathbb{X}^n}} \to 0
		\]
	\end{theo}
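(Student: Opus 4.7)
The plan is to apply the standard prior mass and testing scheme of Ghosal, Ghosh and van der Vaart \cite{ghosal_convergence_2000, ghosal_fundamentals_2017}, working conditionally on the design $x_{1:N}$ on the event $A_N$, where the three posited hypotheses supply respectively a lower bound on the denominator of the posterior, a sieve of negligible prior mass, and a manageable metric complexity for testing. First I would condition on $x_{1:N} \in \Omega_N$: given the design, the model \eqref{model:nonparametric_regression} is a fixed-design Gaussian regression, hence for any $f \in \RR^V$ the Kullback--Leibler divergence and variation between $P_{f_0}$ and $P_f$ equal $n\|f-f_0\|_n^2/(2\sigma^2)$ and $n\|f-f_0\|_n^2/\sigma^2$ respectively, so that $\{\|f-f_0\|_n \leq \varepsilon_n\}$ is a genuine Kullback--Leibler neighbourhood of size $\lesssim n\varepsilon_n^2$.

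The next step is to lower bound the evidence $D_n = \int e^{\ell_n(f)-\ell_n(f_0)} d\Pi(f)$. By the first hypothesis and a standard application of Jensen's inequality together with Chebyshev on the centered log-likelihood ratio (see Lemma 8.10 of \cite{ghosal_fundamentals_2017}), on $A_N$
\[
\Pro_0\Bigl(D_n \leq e^{-(1+1/(2\sigma^2))n\varepsilon_n^2}\Bigr) = o(1).
\]
Then I would build tests: for each $f_1 \in \RR^V$ with $\|f_1-f_0\|_n \geq M\varepsilon_n$, the likelihood ratio test $\phi_{f_1} = \Isymb\{(Y-\tfrac{f_0+f_1}{2})^\top(f_1-f_0) > 0\}$ has both errors bounded by $\exp(-n\|f_1-f_0\|_n^2/(8\sigma^2))$ by a Gaussian tail bound. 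Covering the shell $\{f \in \FF_n : j M \varepsilon_n < \|f-f_0\|_n \leq (j+1) M\varepsilon_n\}$ with $N(\tfrac{C_2\varepsilon_n}{2}, \FF_n, \|\cdot\|_n) \leq e^{C_3 n\varepsilon_n^2}$ balls and taking the supremum of the local tests yields a global test $\phi_n$ with
\[
\E_{f_0}\phi_n \leq e^{-c_1 M^2 n\varepsilon_n^2}, \qquad \sup_{\substack{f\in\FF_n \\ \|f-f_0\|_n > jM\varepsilon_n}} \E_f(1-\phi_n) \leq e^{-c_2 M^2 j^2 n\varepsilon_n^2},
\]
for $M$ sufficiently large relative to $C_1, C_2, C_3$ and $\sigma$, using $C_2 > 4\sqrt{1+2\sigma^2}$ to guarantee the separation after summing the geometric series over $j \geq 1$.

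Finally I would decompose, on $A_N \cap \{D_n \geq e^{-(1+1/(2\sigma^2))n\varepsilon_n^2}\}$,
\[
\Pi\bigl[\|f-f_0\|_n > M\varepsilon_n \mid \mathbb{X}^n\bigr] \leq \phi_n + \Pi\bigl[\FF_n^c \mid \mathbb{X}^n\bigr] + (1-\phi_n)\,\Pi\bigl[\{\|f-f_0\|_n > M\varepsilon_n\}\cap\FF_n \mid \mathbb{X}^n\bigr].
\]
Taking $\E_0$, the first term is $o(1)$ by the test bound; the second is handled by the sieve hypothesis $\Pi(\FF_n^c) \leq e^{-C_1 n\varepsilon_n^2}$ combined with the denominator lower bound (using $C_1 > 1+1/(2\sigma^2)$); and the third is bounded by $e^{(1+1/(2\sigma^2))n\varepsilon_n^2}\sum_{j\geq M} e^{-c_2 j^2 n\varepsilon_n^2} N(C_2\varepsilon_n/2, \FF_n, \|\cdot\|_n)$, which is $o(1)$ for $M$ large. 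Integrating with respect to the design distribution and invoking $\Pro_0(A_N) \to 1$ concludes. The only non-routine step is the careful patching of local Gaussian tests into a global test with the right separation rate; this is where the quantitative assumption $C_2 > 4\sqrt{1+2\sigma^2}$ enters, and is the source of the explicit constants in the statement.
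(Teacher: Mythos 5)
Your proposal reproduces the standard prior-mass-and-testing argument of Ghosal, Ghosh and van der Vaart, adapted to condition on the design and to work on the high-probability event $A_N$ (lower-bounding the evidence $D_n$, constructing aggregated likelihood-ratio tests from the covering bound, and splitting the posterior mass into the tested region, the sieve complement, and the shell where the test has small type-II error). The paper does not actually spell out a proof of this theorem---it states it as a recalled adaptation of the general theory and refers to the cited references and to the identity that, conditionally on the design, the Kullback--Leibler divergence and variation are proportional to $n\|f-f_0\|_n^2$---so your proof is correct and takes essentially the same route the paper intends.
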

	
	\subsection{Proof of Theorem \ref{theorem:rate_prior1_d_n}}\label{appendix:proof_posterior_contraction_rates:non_adaptive_priors}

As shown by Theorem \ref{theorem:master_theorem}, the proof is based on first bounding from below the prior mass of neighbourhoods of $f_0$, which is done in Lemma \ref{lem:prior_thickness_prior1} below and then to control the entropy by a sieve sequence, which is done in Lemma \ref{lem:entropy}.  Recall that
\begin{equation*}
	\varepsilon_n(J_n, h_n) = 
			\sqrt{\frac{J_n \ln N}{n}} +(\ln N)^{\lceil \beta/2 \rceil} \max \left(1, \p{\frac{J_n^{-2/d} \ln N}{h_n^2}}\right)^{\lceil \beta/2 \rceil} \p{ h_n^\beta + \II_{\beta > 1} \p{\frac{\ln N}{Nh_n^d}}^{1/2} h_n } 
\end{equation*}			
			with $J_n \geq \ln^\kappa N, \kappa > d$ and $h_n$ satisfying assumption \ref{assumption:h}. 
 
	  With $z = z_n = z_1\p{n\varepsilon_n^2}^{1/b_2}$ with $z_1>0$ and $\FF_n = \FF_{z_n,J_n}$. Its $L^\infty\p{\nu}-$ metric entropy is then, according to lemma \ref{lem:entropy},  bounded by 
		\[
		\forall \zeta >0, \quad \ln N \p{ \zeta \varepsilon_n,\FF_n,\norm{\cdot}_{L^\infty \p{\nu}} } \lesssim J_n \ln N \lesssim n\varepsilon_n^2.
		\]
		Moreover by the choice of $z_n$ we have, for some $c>0$
		\[
		\Pi \c{\FF_n^c | J_n, h_n} \leq J_n e^{-b_1z_n^{b_2}} \leq e^{-cz_1^{b_2}n\varepsilon_n^2}
		\]
		
		By taking $z_1 > 0$ large enough we conclude the proof of Theorem \ref{theorem:rate_prior1_d_n}  by applying Lemma \ref{lem:prior_thickness_prior1} together with Theorem \ref{theorem:master_theorem}.

	\subsection{Proof of Theorem \ref{theorem:rate_prior2_d_n}}\label{appendix:proof_posterior_contraction_rates:adaptive_priors}

	The proof is based on that of Theorem \ref{theorem:rate_prior1_d_n}, but showing that $J$ and $h$ can be chosen in a data dependent way. To do that notice that by the non adaptive case, for any $H>0$ there exists a constant $C>0$ such that
	\[
	\Pro_0 \p{ \Pi \c{\norm{f-f_0}_{L^\infty \p{\nu}} > C\varepsilon_n(J_n, h_n) | J_n, h_n} < \exp \p{-nC^2 \varepsilon_n^2}} \leq N^{-H}
	\]
	where $ \varepsilon_n (J_n, h_n)$ is given by \eqref{epsilonn}.
	If for some $J_0>0$, $h_0, h_1>0$
$$
	J_n = J_0 \p{\ln N}^{\frac{2d\p{1+2\tau/d}\lceil \beta/2 \rceil}{2\beta+d}} n^{\frac{d}{2\beta+d}}, \quad h_n \in \left[\frac{h_0J_n^{-1/d}}{2\ln^{\tau/d}N}, \frac{h_1J_n^{-1/d}}{\ln^{\tau/d}N}\right]
	$$ 
	then 
	\[
	\varepsilon_n(J_n, h_n) \lesssim n^{-\frac{\beta}{2\beta+d}} \p{\ln N}^{\frac{\p{2\tau+d}\lceil \beta/2\rceil - \beta \p{\tau + 2\beta/d}}{2\beta+d}}
	\]
	Since
	\begin{align*}
	\Pi \c{\norm{f-f_0}_{L^\infty \p{\nu}} \leq C\varepsilon_n}
	& \geq \pi_J(J_n) \pi_h\p{h_n | J_n} \Pi \c{\norm{f-f_0}_{L^\infty \p{\nu}} > C\varepsilon_n | J_n,h_n}
	\end{align*}
	By assumption we have for some $a_1,b_1,b_2$
	\[
	\pi_J(J_n) \geq e^{-a_1 J_n L_{J_n}} \geq e^{-a_1 J_n \ln N} \text{ and } \pi_h\p{h_n|J_n} \geq b_1 e^{-b_2 h_n^{-d}}
	\]
	Therefore, since $\varepsilon_n \geq \p{nh_n^d}^{-1/2} + \sqrt{\frac{J_n \ln N}{n}} $, for some $C'>0$ we have
	\[
	\Pro_0 \p{ \Pi \c{\norm{f-f_0}_{L^\infty \p{\nu}} \leq C'\varepsilon_n} < e^{-nC'^2\varepsilon_n^2}} \leq N^{-H}
	\]

	To verify the entropy condition, consider for some $k>0$ the set $\FF_n = \cup_{\substack{J \leq k J_n \\ h \in \HH_J}} \FF_{z_n,J,h}$, where $\FF_{z,J,h} $ is defined in  lemma \ref{lem:entropy} and 
	$ z_n = u (n \varepsilon_n^2)^{1/b_2}$ with $u, k$ large enough. Then
			\begin{equation*}
			\begin{split}
			\Pi \c{\FF_n^c} & \leq \Pi_J( J> k J_n) + \sum_{\substack{J\leq k J_n \\ h \in \HH_J}}  J e^{-b_1 z_n^{b_2}} \leq e^{ - C J_n } +(kJ_n)^2 e^{- b_1 u^{b_2} n \varepsilon_n^2} \max_J \# \HH_J  \leq e^{ - C n \varepsilon_n^2 } 
			\end{split}
			\end{equation*}
			for any $C>0$ by choosing $k,u$ large enough, since $\# \HH_J \leq K_1 \exp \p{K_2 J \ln N} \leq K_1 \exp \p{K_2' n \varepsilon_n^2}$ for some $K_1,K_2,K_2'>0$.
	Finally for all $\zeta>0$
	\[
	N( \zeta \varepsilon_n, \FF_n, \norm{\cdot}_{L^\infty \p{\nu}}) \lesssim n \varepsilon_n^2
	\]
	which concludes the proof.

	\subsection{Lemmas \ref{lem:prior_thickness_prior1} and \ref{lem:entropy}} 
	
	\begin{lem}{Prior thickness}\label{lem:prior_thickness_prior1}\\
		Let $J_n \in \b{1,\ldots,N}, J_n \geq \ln^\kappa N, \kappa > d$ and $h_n$ satisfying assumption \ref{assumption:h}. Consider the prior defined by \eqref{prior1}. Then for any $H>0$ there exists $c>0$ such that with the rate $\epsilon_n(J_n, h_n)$ as defined in \eqref{epsilonn}, 
		We have
		\[
		\Pro_0 \p{ \Pi \c{\norm{f-f_0}_{L^\infty\p{\nu}} \leq c\varepsilon_n(J_n,h_n) | J_n, h_n} < \exp \p{-n\varepsilon_n(J_n,h_n)^2} } \leq N^{-H}.
		\]
		In particular, for any $\tau > d/2$ and
		\[
		h_n = n^{-\frac{1}{2\beta+d}} \p{\ln n}^{-\frac{1-\tau-2\p{1+2\tau/d} \lceil \beta/2 \rceil}{2\beta+d}}, J_n = \frac{h_n^{-d}}{\ln^\tau N}
		\]
		then 
		\[
		\varepsilon_n(J_n, h_n) \asymp \p{\ln n}^{\frac{\p{2\tau+d}\lceil \beta/2\rceil + (1-\tau)\beta}{2\beta+d}} n^{-\frac{\beta}{2\beta+d}}.
		\]
	\end{lem}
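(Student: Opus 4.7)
The plan is to exhibit a center $f_*\in\Sigma^{J_n}:=\text{span}(u_1,\ldots,u_{J_n})$ that approximates $f_0$ well in $L^\infty(\nu)$, expand it in the eigenbasis of $\LL$, and lower bound the prior mass of a small $L^\infty(\nu)$-ball about $f_*$ by exploiting the product structure of the prior. A triangle inequality then transfers this into the required lower bound on the prior mass of a ball about $f_0$.

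Apply Theorem \ref{thm:approximation2} with $t_n=c\lambda_{J_n}^{-1}\ln N$ and set $f_*:=p_{J_n}(e^{-t_n\LL}f_{t_n})=\sum_{j=1}^{J_n}z_j^* u_j$, so that on an event $\Omega_n$ with $\Pro_0(\Omega_n)\ge 1-N^{-H}$ one has $\|f_*-f_0\|_{L^\infty(\nu)}\le C_0\varepsilon_n(J_n,h_n)$. By Parseval, $\max_j|z_j^*|\le\|f_*\|_{L^2(\nu)}\le\|f_0\|_{L^\infty(\MM)}+C_0\varepsilon_n\lesssim 1$ on $\Omega_n$. It therefore suffices to lower bound $\Pi[\|f-f_*\|_{L^\infty(\nu)}\le c\varepsilon_n\mid J_n,h_n]$, with $c$ chosen so that $c+C_0$ matches the constant claimed in the lemma.

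The main reduction is the spectral projector estimate
\[
\sum_{j=1}^{J_n}u_j(x)^2\le e\,p_{\lambda_{J_n}^{-1}}^{(h_n)}(x,x)\lesssim J_n\ln^{3d/2}N,\qquad\forall\,x\in V,
\]
obtained on $A_N$ by combining the on-diagonal heat kernel bound of Theorem \ref{thm:hkb} with the Weyl upper bound of Lemma \ref{spectrum:upper_bound} (the assumptions $J_n\ge\ln^\kappa N$ with $\kappa>d$ and Assumption \ref{assumption:h} place $t=\lambda_{J_n}^{-1}$ inside the valid range; for values of $J_n$ outside that range the cruder bound $\sum_{j\le J_n}u_j(x)^2\le J_nN^2$ from Proposition \ref{proposition:uniform_norm_u_j} loses only a polynomial factor in $\log n$, which is absorbed by the condition $\varepsilon_n^2\ge J_n\ln N/n$). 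Cauchy-Schwarz then gives $\|f-f_*\|_{L^\infty(\nu)}^2\lesssim J_n\ln^{3d/2}N\cdot\sum_j(Z_j-z_j^*)^2$, so it is enough to require $|Z_j-z_j^*|\le\delta_n:=c'\varepsilon_n/(J_n\ln^{3d/4}N)$ for every $j$. Since $\psi$ is positive and continuous and the $z_j^*$ are uniformly bounded on $\Omega_n$, one has $\psi\ge\eta>0$ on $\{|z|\le\max_j|z_j^*|+1\}$ and hence $\Pi[|Z_j-z_j^*|\le\delta_n]\ge 2\eta\delta_n$ for $n$ large. Independence of the $Z_j$ yields
\[
\log\Pi[\|f-f_*\|_{L^\infty(\nu)}\le c\varepsilon_n\mid J_n,h_n]\ge J_n\log(2\eta\delta_n)\ge -C\,J_n\log n,
\]
using $\varepsilon_n\gtrsim n^{-1}$ and $J_n\le N\le n^b$. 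The lower bound $\varepsilon_n^2\ge J_n\ln N/n$ then forces $J_n\log n\lesssim n\varepsilon_n^2$, so that $\log\Pi\ge -Cn\varepsilon_n^2$ on $\Omega_n\cap A_N$, whose $\Pro_0$-probability is at least $1-N^{-H}$ after absorbing $\Pro_0(A_N^c)\le e^{-cNh_n^d}$ via Assumption \ref{assumption:h}.

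The main obstacle is the projector estimate in the third paragraph: it is what allows trading the $L^\infty(\nu)$-ball for an $\ell^2$-ball on the coefficients with only a $\sqrt{J_n\ln^{3d/2}N}$ loss, and it rests on the full machinery of Appendix \ref{appendix:geometry_of_the_random_graph} (volume regularity, local Poincaré inequalities, two-sided heat-kernel bounds, and Weyl-type eigenvalue estimates). Once this is in hand, everything else is a routine combination of Theorem \ref{thm:approximation2} with the product structure of the prior. The ``in particular'' statement is a direct algebraic balancing of the stochastic term $\sqrt{J_n\ln N/n}$ against the bias term $(\ln N)^{\lceil\beta/2\rceil}\max(1,J_n^{-2/d}\ln N/h_n^2)^{\lceil\beta/2\rceil}h_n^\beta$ at the stated values of $h_n$ and $J_n$.
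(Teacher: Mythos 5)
Your proof is correct, and it arrives at the same conclusion via the same overall strategy (center at $f_*=p_{J_n}(e^{-t_n\LL}f_{t_n})$, expand in eigencoordinates, use independence of the $Z_j$ and continuity of $\psi$), but the key intermediate bound is different from the paper's. Where you invoke the sharp spectral projector estimate $\sum_{j\leq J_n}u_j(x)^2\lesssim J_n\ln^{3d/2}N$ (which is essentially Lemma \ref{thm:norm_comparison}, obtained from the heat-kernel and Weyl machinery of Appendix \ref{appendix:geometry_of_the_random_graph}), the paper instead uses the much cruder Proposition \ref{proposition:uniform_norm_u_j}, i.e.\ $\max_j\norm{u_j}_{L^\infty(\nu)}\leq N$, which gives the coarser bound $\norm{f-f_*}_{L^\infty(\nu)}\leq N\max_j\abs{Z_j-z_j^*}$ and hence requires the coordinate-ball radius to be of order $\varepsilon_n/N$. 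Both approaches lead to $\log\Pi[\cdot]\gtrsim -J_n\log N$, which is absorbed by $n\varepsilon_n^2\geq J_n\ln N$: the ample logarithmic slack built into $\varepsilon_n$ means the factor of $N$ costs nothing. Your version is tighter and conceptually cleaner (it trades an $L^\infty$-ball for an $\ell^2$-ball on the coefficients at only a $\sqrt{J_n\,\text{polylog}}$ loss), at the price of relying in this step on the full Poincar\'e/heat-kernel apparatus; the paper keeps this step elementary and confines that machinery to where it is unavoidable (the $\|\cdot\|_N$ results). Your caveat about the range of $J_n$ where the projector estimate applies, and the fallback to the crude bound outside it, is exactly the right thing to note and would need to be made explicit in a final write-up.
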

	\begin{proof}
		Throughout the proof we write $\varepsilon_n = \varepsilon_n(J_n, h_n)$. We use theorem \ref{thm:approximation2} and distinguish the cases : for some $k>0$ large enough, with $t = t_n = k\lambda_{J_n}^{-1} \ln N$
		\begin{enumerate}
			\item If $\beta \leq 2$ we have
			\begin{align*}
			\forall f : V \to \RR, \norm{f-f_0}_{L^\infty\p{\nu}} \leq & \norm{f-p_J \p{e^{-t\LL}f_0}}_{L^\infty\p{\nu}} + \norm{f_0-p_J \p{e^{-t\LL}f_0}}_{L^\infty\p{\nu}} \\
			\lesssim & \norm{f-p_J \p{e^{-t\LL}f_0}}_{L^\infty\p{\nu}} + O_{P_0} \p{ h_n^\beta \p{\lambda_{J_n}^{-1} h_n^{-2} \ln N} }.
			\end{align*}
			Hence for all $c>0$ , there exists $c'>0$, such that 
			\[
			\norm{f-p_J \p{e^{-t\LL}f_0}}_{L^\infty\p{\nu}} \leq c'  h_n^\beta \lambda_{J_n}^{-1} h_n^{-2} \ln N \implies \norm{f-f_0}_{L^\infty\p{\nu}} \leq \varepsilon_n.
			\]
			Moreover if $f = \sum_{j=1}^J z_j u_j$, using proposition \ref{proposition:uniform_norm_u_j} we get 
			\[
			\norm{f-p_J \p{e^{-t\LL}f_0}}_{L^\infty\p{\nu}} = \norm{\sum_{j=1}^J \p{z_j - \inner{u_j | e^{-t\LL} f_0}_{L^2\p{\nu}}} u_j}_{L^\infty\p{\nu}} \leq N \max_{1 \leq j \leq N} \abs{z_j - \inner{u_j | e^{-t\LL} f_0}_{L^2\p{\nu}}}.
			\]
			Hence , 
			\begin{align*}
			\Pi \c{ \left. \norm{f-p_{J_n} \p{e^{-t\LL}f_0}}_{L^\infty\p{\nu}} \leq c'  h_n^\beta \lambda_{J_n}^{-1} h_n^{-2} \ln N \right| J_n, h_n} \geq & \Pi \c{ \max_{1 \leq j \leq N} \abs{z_j - \inner{u_j | e^{-t\LL} f_0}_{L^2\p{\nu}}} \leq \frac{c'  h_n^\beta \lambda_{J_n}^{-1} h_n^{-2} \ln N}{N} } \\
			\geq & \prod_{j=1}^{J_n} \Pi \c{ \abs{z_j - \inner{u_j | e^{-t\LL} f_0}_{L^2\p{\nu}}} \leq \frac{c'  h_n^\beta \lambda_{J_n}^{-1} h_n^{-2} \ln N}{N} } \\
			\geq & \prod_{j=1}^{J_n} \Pi \c{ \abs{z_j - \inner{u_j | e^{-t\LL} f_0}_{L^2\p{\nu}}} \leq \frac{\frac{c'}{2}  h_n^\beta \ln N}{N} } \\
			\geq & \p{\frac{c'  h_n^\beta \ln N}{N} \inf_{[-K,K]} \Psi }^{J_n} \text{vol} ( B_{\RR^{J_n}} ( 0, 1 ) ),
			\end{align*}
			where we have used $\lambda_{J_n} \leq 2h_n^{-2}$, which holds because $\norm{\LL}_{\LL\p{L^\infty\p{\nu}}} \leq 2$, and where $K > 0$ is chosen to be larger than
			\[
			\max_{1 \leq j \leq J_n} \abs{\inner{u_j | e^{-t\LL} f_0}_{L^2\p{\nu}}} + \frac{c' h_n^\beta\ln N}{N} \lesssim 1 + \underbrace{\norm{e^{-t\LL}}_{\LL\p{L^\infty \p{\nu}}}}_{=1}\norm{f_0}_{L^\infty \p{\MM}} < +\infty.
			\]
			Hence
			\[
			-\ln \Pi \c{ \left. \norm{f-p_J \p{ e^{-t\LL}f_0}}_{L^\infty\p{\nu}} \leq c'  h_n^\beta \lambda_{J_n}^{-1} h_n^{-2} \ln N  \right|  J_n, h_n   } \lesssim J_n \ln N
			\]
			Now $\varepsilon_n \geq \sqrt{\frac{J_n \ln N}{n}} \iff J_n \ln N \leq n\varepsilon_n^2$, therefore 
			\begin{equation*}
			\begin{split}
			\mathbb P_0 &\left( \Pi \c{ \left. \norm{f-p_J \p{e^{-t\LL}f_0}}_{L^\infty\p{\nu}} \leq \varepsilon_n \right|  J_n, h_n  } < \exp \p{-n\varepsilon_n^2} \right)  \leq 
			\mathbb P_0 \left(\norm{f_0-p_{J_n} \p{e^{-t\LL}f_0}}_{L^\infty\p{\nu}} > \varepsilon_n /2 \right) \leq n^{-H}
			\end{split}
			\end{equation*}
			for any $H>0$.
			\item The case $\beta >2$ is similar, the only difference being that we use the bound
			\[
			\abs{\inner{u_j| e^{-t\LL}f_t}_{L^2\p{\nu}}} \leq \norm{\sum_{l=0}^k \frac{\p{t\LL}^l e^{-t\LL}}{l!}}_{\LL\p{L^2\p{\nu}}} \norm{f_0}_{L^2\p{\nu}} \leq \norm{f_0}_{L^\infty\p{\MM}} \sup_{s > 0} \sum_{l=0}^k \frac{s^l e^{-s}}{l!} < +\infty
			\]
		\end{enumerate}
	\end{proof}

	\begin{lem}\label{lem:entropy}
		For some $z \geq 1$ and $J \in \b{1,\ldots,N}$ let
		\[
		\FF_{z,J,h} = \b{\sum_{j=1}^J a_j u_j(h) : \abs{a_j} \leq z} \subset \RR^{V}, \quad z>0
		\]
		Then 
		\[
		\Pi \c{\FF_{z,J,h}^c|J,h} \leq J e^{-b_1z^{b_2}}
		\]
		and, there exists $\varepsilon_1, C_1 > 0$ such that for any $\varepsilon \leq \varepsilon_1$
		\[
		N \p{\varepsilon, \FF_{z,J,h}, \norm{\cdot}_{L^\infty \p{\nu}}} \leq \p{\frac{C_1 z N}{\varepsilon}}^{2J}
		\]
	\end{lem}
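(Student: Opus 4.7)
Both parts are routine unpackings of definitions; the statement is included only to provide a quantitative sieve for the contraction proofs. I would proceed as follows.

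For the prior mass bound, I would simply apply a union bound to the definition of the prior in \eqref{prior1}: conditional on $J,h$ the coefficients $Z_1,\ldots,Z_J$ are i.i.d.\ from $\Psi$, whose tails satisfy $\int_{|x|>z}\psi(x)\,dx \leq e^{-b_1 z^{b_2}}$ for $z\geq z_0$. Since $f\in\FF_{z,J,h}^c$ iff some $|Z_j|>z$, a union bound over $j=1,\ldots,J$ gives $\Pi[\FF_{z,J,h}^c\mid J,h]\leq J\, e^{-b_1 z^{b_2}}$. For $z<z_0$ the bound is trivial up to an adjustment of the constant $b_1$, which we may absorb without loss.

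For the covering number, I would parameterize $f_a=\sum_{j=1}^J a_j u_j$ by $a\in[-z,z]^J$ and reduce to a Euclidean covering of the coefficient cube. Using \ref{proposition:uniform_norm_u_j} ($\|u_j\|_{L^\infty(\nu)}\leq N$) and Cauchy--Schwarz,
\[
\|f_a-f_b\|_{L^\infty(\nu)}\;\leq\;\sum_{j=1}^J |a_j-b_j|\,\|u_j\|_{L^\infty(\nu)}\;\leq\;N\sqrt{J}\,\|a-b\|_2.
\]
Hence any $\eta$-net of $[-z,z]^J\subset\RR^J$ in the $\ell^2$-metric with $\eta=\varepsilon/(N\sqrt{J})$ induces, via $a\mapsto f_a$, an $\varepsilon$-net of $\FF_{z,J,h}$ in $\|\cdot\|_{L^\infty(\nu)}$. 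Standard volumetric arguments (covering a Euclidean ball of radius $z\sqrt{J}$ containing $[-z,z]^J$) give
\[
N\bigl([-z,z]^J,\eta,\|\cdot\|_2\bigr)\;\leq\;\Bigl(\tfrac{3z\sqrt{J}}{\eta}\Bigr)^{J}\;=\;\Bigl(\tfrac{3zNJ}{\varepsilon}\Bigr)^{J}.
\]
Finally, since $J\leq N$, for $\varepsilon\leq \varepsilon_1$ small enough we have both $3zNJ/\varepsilon\geq 1$ and $J\leq N\leq (3zN/\varepsilon)$, so that $3zNJ/\varepsilon\leq (3zN/\varepsilon)^2$ and raising to the power $J$ yields the claimed bound $(C_1zN/\varepsilon)^{2J}$ with $C_1=3$.

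There is no real obstacle; the only mild subtlety is managing constants so that the stated form $(C_1 zN/\varepsilon)^{2J}$ (with the factor of $2$ in the exponent) is convenient for the later sieve computations in \ref{appendix:proof_posterior_contraction_rates:non_adaptive_priors}--\ref{appendix:proof_posterior_contraction_rates:adaptive_priors}, where one only needs $\log N(\varepsilon,\FF_n,\|\cdot\|_{L^\infty(\nu)})\lesssim J\log N$. The bound $\|u_j\|_{L^\infty(\nu)}\leq N$ from \ref{proposition:uniform_norm_u_j} is quite loose but is harmless here because the cost is absorbed into a $\log N$ factor in the final rate.
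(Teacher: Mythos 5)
Your proof is correct and follows essentially the same route as the paper: a union bound over the $J$ tail events for the prior mass, and a discretization of the coefficient cube $[-z,z]^J$ combined with $\|u_j\|_{L^\infty(\nu)}\leq N$ from Proposition~\ref{proposition:uniform_norm_u_j} for the covering number. The only cosmetic difference is that the paper covers the cube in the $\ell^\infty$ metric (mesh $\varepsilon/(JN)$) while you cover it in $\ell^2$ via a volumetric bound; both yield the stated $(C_1 zN/\varepsilon)^{2J}$ after using $J\leq N$, $z\geq 1$ and $\varepsilon\leq\varepsilon_1$.
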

	\begin{proof}{Proof of lemma \ref{lem:entropy}}\\
		We drope the subscript $h$ for ease of notation. Using the representation $f = \sum_{j=1}^J Z_j u_j$, the upper bound
		\[
		\Pi \c{\FF_{z,J}^c | J,h} \leq \sum_{j=1}^J \Pi \c{\abs{Z_j} > z | J,h} \leq J \Psi \p{[-z,z]^c} \leq J e^{-b_1z^{b_2}}
		\]
		follows by the assumption on the tails of $\Psi$. Moreover by proposition \ref{proposition:uniform_norm_u_j}, if $\mathcal{E}$ is an $\frac{\varepsilon}{JN}$-net of $[-z,z]^J$ then
		$
		\b{\sum_{j=1}^J f_j u_j : f \in \mathcal{E}}
		$
		is an $\varepsilon$-net of $\FF_{z,J}$ (in $L^{\infty}(\nu)$). Since we can always take 
		\[
		\# \mathcal{E} \leq \p{\frac{C_1 z JN}{\varepsilon}}^J \leq \p{\frac{C_1 z N^2}{\varepsilon}}^J \leq \p{\frac{C_1^{1/2} z^{1/2} N}{\varepsilon^{1/2}}}^{2J} \leq \p{\frac{C_1^{1/2} c N}{\varepsilon}}^{2J}
		\] 
		for $\varepsilon \leq \varepsilon_1$ and some $C_1, \varepsilon_1 > 0$ this gives the result by changing $C_1^{1/2}$ into a new $C_1$.
	\end{proof}

\end{document}